\renewcommand*\partial{\textnormal{\reflectbox{6}}}
\newcommand{\marginparstretch}{0.6}
\let\oldmarginpar\marginpar
\renewcommand\marginpar[1]{\-\oldmarginpar[\framebox{\setstretch{\marginparstretch}\begin{minipage}{\marginparwidth}{\raggedleft\tiny #1}\end{minipage}}]{\framebox{\setstretch{\marginparstretch}\begin{minipage}{\marginparwidth}{\raggedright\tiny #1}\end{minipage}}}}
\newcommand{\xdownarrow}[1]{%
  {\left\downarrow\vbox to #1{}\right.\kern-\nulldelimiterspace}
}
\let\c@figure\c@table
\let\ftype@figure\ftype@table
\let\ext@figure\ext@table
\tikzset{
        cvertex/.style={circle,draw=black,inner sep=1pt,outer sep=3pt},
        vertex/.style={circle,fill=black,inner sep=1pt,outer sep=3pt},
        DB/.style={circle,draw=black,circle,fill=black,inner sep=0pt, minimum size=4pt},
        DBlue/.style={circle,draw=black,circle,fill=white,inner sep=0pt, minimum size=4pt},
        DW/.style={circle,draw=black,inner sep=0pt, minimum size=4pt},
        star/.style={circle,fill=yellow,inner sep=0.75pt,outer sep=0.75pt},
        tvertex/.style={inner sep=1pt,font=\scriptsize},
        gap/.style={inner sep=0.5pt,fill=white}}
\newtheorem{thm}{Theorem}[section]
\newtheorem{prop}[thm]{Proposition}
\newtheorem{lemma}[thm]{Lemma}
\newtheorem{cor}[thm]{Corollary}
\theoremstyle{definition} 
\newtheorem{defin}[thm]{Definition}
\newtheorem{example}[thm]{Example}
\newtheorem{summary}[thm]{Summary}
\newtheorem{remark}[thm]{Remark}
\newtheorem{conj}[thm]{Conjecture}
\newtheorem{problem}[thm]{Problem}
\newtheorem{notation}[thm]{Notation}
\numberwithin{equation}{section}
\newcommand{\m}{\mathfrak{m}}
\newcommand{\n}{\mathfrak{n}}
\def\Hom{\mathop{\rm Hom}\nolimits}
\def\End{\mathop{\rm End}\nolimits}
\def\Ext{\mathop{\rm Ext}\nolimits}
\def\Ker{\mathop{\rm Ker}\nolimits}
\def\Sing{\mathop{\rm Sing}\nolimits}
\def\Supp{\mathop{\rm Supp}\nolimits}
\def\Spec{\mathop{\rm Spec}\nolimits}
\def\GKdim{\mathop{\rm GKdim}\nolimits}
\def\JRdim{\mathop{\rm Jdim}\nolimits}
\DeclareMathOperator{\CM}{CM}
\def\Id{\mathop{\rm{Id}}\nolimits}
\newcommand{\con}{\mathrm{con}}
\newcommand{\CA}{\mathrm{A}_{\con}}
\newcommand{\AB}{\mathrm{A}}
\def\ab{\mathop{\rm ab}\nolimits}
\def\redu{\mathop{\rm red}\nolimits}
\newcommand\Curve[1]{C_{#1}}
\newcommand{\N}{\mathbb N}
\newcommand{\Z}{\mathbb Z}
\newcommand{\De}{\Delta}
\newcommand{\R}{\mathbb R}
\renewcommand{\R}{\mathbb R}
\newcommand{\C}{\mathbb C}
\newcommand{\la}{\uplambda}
\DeclareMathOperator{\ord}{ord}
\newcommand{\sfb}{\mathsf{b}}
\newcommand{\sff}{\mathsf{f}}
\newcommand{\sfg}{\mathsf{g}}
\newcommand{\sfh}{\mathsf{h}}
\newcommand{\sfp}{\mathsf{p}}
\newcommand{\sfq}{\mathsf{q}}
\newcommand{\sft}{\mathsf{t}}
\newcommand{\bigO}{\scrO}
\newcommand{\scrA}{\EuScript{A}}
\newcommand{\scrC}{\EuScript{C}}
\newcommand{\scrH}{\EuScript{H}}
\newcommand{\scrJ}{\EuScript{J}}
\newcommand{\scrM}{\EuScript{M}}
\newcommand{\scrN}{\EuScript{N}}
\newcommand{\scrO}{\EuScript{O}}
\newcommand{\scrP}{\EuScript{P}}
\newcommand{\scrR}{\EuScript{R}}
\newcommand{\scrS}{\EuScript{S}}
\newcommand{\scrU}{\EuScript{U}}
\newcommand{\scrV}{\EuScript{V}}
\newcommand{\scrX}{\EuScript{X}}
\newcommand{\scrY}{\EuScript{Y}}
\newcommand{\scrZ}{\EuScript{Z}}
\newcommand{\hatM}{\mathfrak{n}}
\DeclareMathOperator{\sym}{cyc}		% more clear
\DeclareMathOperator{\Aut}{Aut}
\newcommand{\dcyc}{\updelta}
\newcommand{\llangle}{\langle\kern -2.5pt\langle}
\newcommand{\rrangle}{\rangle\kern -2.5pt\rangle}
\newcommand{\llcurve}{\{\kern -3pt\{}
\newcommand{\rrcurve}{\}\kern -3pt\}}
\newcommand{\llsq}{[\![}
\newcommand{\rrsq}{]\!]}
\newcommand{\lcl}{(\kern -2.5pt(}
\newcommand{\rcl}{)\kern -2.5pt)}
\newcommand{\Jac}{\scrJ\mathrm{ac}}
\newcommand{\AlgJac}{\mathrm{A}\scrJ\mathrm{ac}}
\renewcommand{\phi}{\upvarphi}
\newcommand{\ring}{\C\llangle x,y\rrangle}
\newcommand{\freering}{\C\langle x,y\rangle}
\newcommand{\al}{\upalpha}
\newcommand{\be}{\upbeta}
\newcommand{\ga}{\upgamma}
\newcommand{\de}{\updelta}
\newcommand{\fringd}{\C\langle \mathsf{x}\rangle}
\newcommand{\ringd}{\C\llangle \mathsf{x}\rrangle}
\newcommand{\ringtwo}{\C\llangle x,y\rrangle}
\newcommand{\ringdy}{\C\llangle x_1,\dots,x_{d-1},y\rrangle}
\newcommand{\polyringd}{\C\langle x_1,\dots,x_d\rangle}
\newcommand{\corank}{\scrC\mathrm{rk}}
\newcommand{\SP}{\mathsf{SP}}
\newcommand{\ESP}{\mathsf{ESP}}
\newcommand{\fJ}{\mathfrak{J}}
\newcommand{\jd}{\upnu}  % notation for the value of J-dim
\begin{document}
\title{Local Normal Forms of Noncommutative Functions}
\author{Gavin Brown}
\address{Gavin Brown, Mathematics Institute, Zeeman Building, University of Warwick, Coventry, CV4 7AL, UK.}
\email{G.Brown@warwick.ac.uk}
\author{Michael Wemyss}
\address{Michael Wemyss, School of Mathematics and Statistics,
University of Glasgow, University Place, Glasgow, G12 8QQ, UK.}
\email{michael.wemyss@glasgow.ac.uk}
\begin{abstract}
This article describes local normal forms of functions in noncommuting variables, up to equivalence generated by isomorphism of noncommutative Jacobi algebras, extending singularity theory in the style of Arnold's commutative local normal forms into the noncommutative realm. This generalisation unveils many new phenomena, including an ADE classification when the Jacobi ring has dimension zero and, by taking suitable limits, a further ADE classification in dimension one. These are natural generalisations of the simple singularities and those with infinite multiplicity in Arnold's classification.  We obtain normal forms away from some exceptional Type~$E$ cases. Remarkably these normal forms have no continuous parameters, and the key new feature is that the noncommutative world affords larger families.

This theory has a range of immediate consequences to the birational geometry of 3-folds.  The normal forms of dimension zero \emph{are} the analytic classification of smooth 3-fold flops, and one outcome of NC singularity theory is the first list of all Type~$D$ flopping germs, generalising Reid's famous pagoda classification of Type~$A$, with variants covering Type~$E$.  The normal forms of dimension one have further applications to divisorial contractions to a curve.  In addition, the general techniques also give strong evidence towards new contractibility criteria for rational curves.
\end{abstract}
%Primary: 58K40 Classification; finite determinacy of map germs
%Secondary: 16S15 Finite generation, finite presentability, normal forms (diamond lemma, term-rewriting)
%16S38 Rings arising from noncommutative algebraic geometry [See also 14A22]
%32S10 Invariants of analytic local rings
%14E30 Minimal model program (Mori theory, extremal rays)

\maketitle

%-----------------------------------------------------------------------------------------
\section{Introduction}
%Every smooth point on a $d$-dimensional variety is, after completion, simply the power series ring in $d$ variables.  In other words, complete locally, smooth points are unique.  A similar phenomenon applies to more interesting singular points: as observed by Arnold and others in various landmark papers \cite{Arnold, Mori, BGS}, if we pass to the completion, there are many fewer singular points than might naively be expected, and they often have surprising connections to other parts of mathematics.  The purpose of this paper is to extend both the theory and the key classifications into the noncommutative context. In the process, we uncover surprising algebraic facts with geometric corollaries.

This article establishes a noncommutative analogue of the classical singularity theory of function germs as set out in Arnold's landmark paper \cite{Arnold}. The fundamental components of such a theory remain: one must (1) work with germs, or locally in some sense, (2) establish suitable notions of equivalence, (3) determine discrete parameters to distinguish families with similar properties, (4) classify the families for `small' values of the discrete parameters, (5) develop general theory for where classification is difficult, and crucially (6) use the classification to give applications in other areas of mathematics.

We outline our noncommutative approach to components (1--3) in \S\ref{sec!NST} below, with full details given in \S\ref{sec!jacobi} and \S\ref{sec:NC sing theory}. The first classifications of (4) are discussed in \S\ref{tables section}--\ref{alg sect intro}, and their proofs in \S\ref{sec!typeA} and \S\ref{sec!typeD} use the general theory of \S\ref{sec!automorphisms}--\ref{sec:NC sing theory} and Appendix~\ref{J appendix}, which initiate~(5). Arnold remarks \cite[2]{Arnold} that the definition and naming conventions of families may only become clear after classification, and so although we use the ADE names throughout, it is only in \S\ref{central sect} that their intrinsic definition is established.  As for applications, we instigate component (6) in \S\ref{geo section}, giving a classification of various rational neighbourhoods in $3$-fold birational geometry, with further applications to curve counting.

%-----------------------------------------------------------------------------------------
\subsection{Noncommutative Singularity Theory}\label{sec!NST}
For $d\geq 1$ consider the noncommutative formal power series ring $\ringd=\mathbb{C}\llangle x_1,\hdots,x_d\rrangle$, which is the complete local version of the free algebra. 
From the perspective of this paper, the algebra $\ringd$ replaces the commutative power series ring $\C\llsq x_1,\hdots,x_d\rrsq$ from classical singularity theory.

For any $f\in\ringd$, it is possible to cyclically differentiate $f$ with respect to a variable~$x_i$ to obtain an element $\dcyc_if$.  The collection of such elements generate a closed two-sided ideal $\lcl \dcyc_1f,\hdots,\dcyc_df \rcl$, the details of which are recalled in \S\ref{completion section}.  The resulting quotient  
\[
\Jac(f)=\frac{\mathbb{C}\llangle x_1,\hdots,x_d\rrangle}{\lcl \dcyc_1f,\hdots,\dcyc_df \rcl}
\]
is called the \emph{Jacobi algebra} of $f$, and the element $f$ is called the \emph{potential}. 

We will regard $f$ and $g$ as being equivalent if their Jacobi algebras are isomorphic, remarking that in the noncommutative setting, given the hidden dependence on cyclic equivalence, naive versions of the Tjurina algebra do not exist (see \ref{Milnor=Tjurina}).  With the ring $\ringd$ fixed and the equivalence relation established, the overarching aim of singularity theory remains: to classify all equivalence classes of potentials satisfying numerical criteria, and to develop powerful theory in the situation where classification is not possible.

\medskip
Whenever $d>1$ the algebra $\ringd$ is not noetherian, and the exponential explosion in its growth means that factoring by only $d$ elements often results in Jacobi algebras with pathological properties. As in the classical case, pathologies turn out not to matter: the complexity of \emph{some} singularities prevents neither the development of a general theory, nor various classification results for those which satisfy reasonable numerical conditions.  
\medskip

Writing $\mathfrak{J}$ for the Jacobson radical of $\Jac(f)$, the first and natural restriction to impose on $f$ is to numerically constrain the growth of successive quotients of the chain of ideals
\[
\Jac(f)\supseteq\fJ\supseteq\fJ^2\supseteq\cdots.
\] 
This numerical growth, defined in \ref{Jdim defin}, is called the $\mathfrak{J}$-dimension, and will be written $\JRdim\Jac(f)$. As explained in \ref{GK bad remark},  since $\Jac(f)$ is a factor of a complete ring, there is no reasonable Gelfand--Kirillov dimension, and the $\mathfrak{J}$-dimension replaces it.

Alongside the development of a more general theory, the motivating problem is to extend Arnold-style classification of germs \cite{Arnold} into the above noncommutative setting.  

\begin{problem}\label{motivating problem}
For any finite $\jd\geq 0$, produce a small set of potentials~$\scrS_\jd$ that realise every Jacobi algebra of $\mathfrak{J}$-dimension~$\jd$, up to isomorphism.\end{problem}

Ideally, the elements of $\scrS_\jd$ would be \emph{normal forms}, namely that if $f,g\in\scrS_\jd$ with $f\neq g$, then the resulting Jacobi algebras are not isomorphic.  Building on foundational algebraic results of Iyudu--Shkarin \cite{IS}, in Appendix~\ref{J appendix} we show that, for small $\mathfrak{J}$-dimension~$\jd$, \ref{motivating problem} essentially reduces to a problem in $d\le2$ variables. 

\medskip
We will focus mainly on the situation $\jd\leq 1$, which is already highly non-trivial. Below we will observe, in the noncommutative context, exactly the same phenomena as in Arnold \cite{Arnold}, whereby such precise numerical restrictions are often only motivated afterwards, by their answer and by the incredibly rich families that they describe.  The restriction $\jd\leq 1$ is also, happily, the condition needed for the applications to birational geometry. We do however remark that it is not even clear that the set $\scrS_0$ is countable, never mind $\scrS_1$, and there is certainly no prima facie reason why ADE should enter.

\subsection{Noncommutative ADE Normal Forms}\label{tables section}
We now introduce the ADE families that will turn out to solve  \ref{motivating problem} when $\jd\leq 1$. The main results regarding what precisely these families classify are stated later, in \S\ref{alg sect intro}.  

It is a feature of singularity theory that it is often not possible to rigorously define a series until {\em after} it has  been classified.  In the subsections that follow we will use various different phenomena to explain the ADE names of the families, but it is only after classification that one can make the moves needed to extract this ADE information. As such, the definition of the families below follows the usual pattern of classical singularity theory: their definition comes first, and their justification comes afterwards.

Below, we view the families with $\jd=0$ as the noncommutative version of simple singularities of \cite{Arnold}, and we view the $\jd=1$ families as the `limit' of the $\jd=0$ case, thus the noncommutative versions of the singularities $A_\infty$ and $D_\infty$ of \cite{BGS}.

\renewcommand{\arraystretch}{1.2}
\begin{table}[ht]
\begin{tabular}{ccp{5.5cm}p{3.75cm}}
\toprule
{\bf Type}&{\bf Name}&{\bf Normal form}&{\bf Conditions}\\
\midrule
A & $A_n$& $z_1^2+\hdots+z_{d-2}^2+x^2+y^n$ & $n\geq 2$\\[6pt]
\multirow{2}*{D} &$D_{n,m}$ &$z_1^2+\hdots+z_{d-2}^2+xy^2+x^{2n}+ x^{2m-1}$ & $n,m\geq 2, m\leq 2n-1$\\
&$D_{n,\infty}$ &$z_1^2+\hdots+z_{d-2}^2+xy^2+x^{2n}$ & $n\geq2$\\[6pt]
\multirow{2}*{E} & $E_{6,n}$&$z_1^2+\hdots+z_{d-2}^2+x^3+xy^3+y^n$ & $n\geq 4$\\
&&$z_1^2+\hdots+z_{d-2}^2+x^3 + \bigO_4$ & \text{(various cases)}\\[2pt]
\bottomrule
\end{tabular}\\[4pt]
\caption{$\mathfrak{J}$-dimension~$0$ normal forms\label{tab!zero}}
\end{table}

\medskip
With the above caveats, for any $d\geq 2$ consider the normal forms in Table~\ref{tab!zero}. % which below will fully classify the $\jd=0$ case in \ref{motivating problem}. 
The big~$\bigO$ notation is explained in \S\ref{conventions}. 
%\medskip
%\noindent
It is possible to write Type~$D$ in the unified manner $z_1^2+\hdots+z_{d-2}^2+xy^2+x^{2n}+ \upvarepsilon x^{2m-1}$ where $\upvarepsilon$ is either $0$ or $1$, but often it will be preferable to regard them as two distinct families, both of Type~$D$.  In addition to the fact that Type~$D$ is larger than in the classical case, what is perhaps much more remarkable is that in Type~$E$ there are infinitely many cases: the family $E_{6,n}$ stated, together with various other examples all of the form $x^3 + \bigO_4$, whose expressions are more complicated and will be optimised elsewhere \cite{BW2}.

Taking the limit $n\to\infty$ of the above forms gives the normal forms of Table~\ref{tab!one}, where again all are optimised, except the very last line.
The classical case admits precisely two examples, namely the singularities $A_\infty$ and $ D_\infty$ of \cite{BGS}.  The noncommutative families are thus again larger: Type~$D$ splits into two, there are infinitely many examples within $D_{\infty,m}$, and Type~$E$ is no longer empty.

\begin{table}[ht]
\begin{tabular}{ccp{5cm}p{3.25cm}}
\toprule
{\bf Type}&{\bf Name}&{\bf Normal form}&{\bf Conditions}\\
\midrule
A & $A_\infty$& $z_1^2+\hdots+z_{d-2}^2+x^2$ & \\[6pt]
\multirow{2}*{D} & $D_{\infty,m}$ & $z_1^2+\hdots+z_{d-2}^2+xy^2+ x^{2m-1}$ & $m\geq 2$\\
& $D_{\infty,\infty}$ & $z_1^2+\hdots+z_{d-2}^2+xy^2$ & \\[6pt]
\multirow{2}*{E} & $E_{6,\infty}$&$z_1^2+\hdots+z_{d-2}^2+x^3+xy^3$ &\\
&& $z_1^2+\hdots+z_{d-2}^2+x^3+\scrO_4$&\\[2pt]
\bottomrule
\end{tabular}\\[4pt]
\caption{$\mathfrak{J}$-dimension~$1$ normal forms\label{tab!one}}
\end{table}

\medskip
With the benefit of hindsight, there are two reasons why one might expect the $\jd=1$ case to be the limit of $\jd=0$.  First, on taking limits, the simple $A_n$ and $D_n$ families give rise to the germs $x^2$ and $xy^2$, and the noncommutative families above generalise this passage from the isolated to the non-isolated.  Second, in terms of the birational geometry of \S\ref{geo intro sect} below, contraction algebras should make sense of the feeling that divisor-to-curve contractions are limits of infinite families of flops.  

\medskip

In this paper, we prove that every $\Jac(f)$ with $\JRdim\Jac(f)\leq 1$ is isomorphic to a normal form in Type~$A$ or~$D$ above, or has the general form stated for $E$.
We remark that the precise Type~$E$ normal forms stated, namely $E_{6,n}$ and $E_{6,\infty}$, are indeed genuine examples with $\mathfrak{J}$-dimension zero and one respectively. However, we refrain from describing the general case here, as we will treat all the exceptional Type~$E$ cases together, in a more technical companion paper \cite{BW2}.  

We now outline our results in more detail, before describing their applications.

%-----------------------------------------------------------------------------------------
\subsection{Main Noncommutative Singularity Theory Results}\label{alg sect intro}
Since constants differentiate to zero, and elements with linear terms differentiate to units, we can and do assume that $f$ has only quadratic and higher terms, which we write as $f\in\ringd_{\geq 2}$ or equivalently as an explicit sum of its homogeneous pieces 
\[
f=f_2+f_3+f_4+\hdots
\]
Just as in the classical theory, a Splitting Lemma~\ref{thm!splitting} identifies a coordinate system which separates variables of the non-degenerate quadratic part from variables of a higher order potential, so that without loss of generality
\[
f=x_1^2 +\cdots+x_r^2 + f_{\ge3}(x_{r+1},\dots,x_d)
\]
and thus we may turn attention to the potential~$f_{\ge3}$ in, typically, fewer variables.  The number $d-r$ is called the \emph{corank}, and as in the classical case there is a more intrinsic way of characterising it (\ref{defin corank}), namely as
\begin{equation}
\corank(f) = d - \dim_{\mathbb{C}} \left(\frac{\n^2 +I}{\n^2}\right)
\label{eqn:corank intro}
\end{equation}
where $\n=(x_1\hdots,x_d)$ and $I=\lcl \updelta_1f,\hdots\updelta_df\rcl$.
By the above and \ref{thm: appendix main} it turns out, in a manner pleasantly reminiscent of classical simple singularities, that the case when $\JRdim\Jac(f)\leq 1$ reduces to that of two variables.  We rename the variables $z_1,\hdots,z_{d-2},x,y$ to emphasise this fact. 

The following, a consequence of the Splitting Lemma together with a degree three preparation result, then characterises commutative Jacobi algebras in two variables.  These are precisely our Type~$A$ families in \S\ref{tables section}.
Below we adopt the convenient abuse of notation $f\cong g$ to mean $\Jac(f)\cong\Jac(g)$.  
\begin{prop}[\ref{thm!pagoda}, \ref{when comm}]\label{typeA intro}
If $f\in\ringd_{\geq 2}$, then the following hold.
\begin{enumerate}
\item $\corank(f)\le1$ if and only if 
\[
f\cong
\begin{cases}
z_1^2+\hdots+z_{d-2}^2+x^2&\\
z_1^2+\hdots+z_{d-2}^2+x^2+y^n&\mbox{for some }n\geq 2.\\
\end{cases}
\]
Each member of the bottom family has finite dimensional Jacobi algebra, whereas in the top case the algebra is infinite dimensional, with $\JRdim\Jac(f) = 1$.  
\item If $d=2$, i.e.\ $f\in\ringtwo$, then $\Jac(f)$ is commutative if and only if $\corank(f)\le1$.
\end{enumerate}

\end{prop}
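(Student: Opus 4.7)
The plan is to prove (1) and (2) in turn, using the Splitting Lemma~\ref{thm!splitting} as the main reduction.

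For the nontrivial direction of (1), I would apply \ref{thm!splitting} to bring $f$, up to $\Jac$-isomorphism, to $f = x_1^2+\cdots+x_r^2 + g$, where $r$ is the rank of the quadratic part and $g$ is a potential on the remaining $d-r$ variables with $g\in\n^3$. The hypothesis $\corank(f)\le1$ forces $r\ge d-1$. If $r=d$ then $g=0$ and renaming gives the $A_2$ form. If $r=d-1$ then $g=g(y)$ is a one-variable power series, and since one variable is commutative the standard substitution $y\mapsto y\cdot u(y)^{1/n}$, for the unit determined by $g = y^n u$, normalises $g$ to either $0$ or $y^n$ for some $n\ge 3$, giving $A_\infty$ and $A_n$ respectively. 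Direct calculation of cyclic derivatives then produces $\Jac\cong\C[y]/(y^{n-1})$ in the bottom family (finite-dimensional) and $\Jac\cong\C\llsq y\rrsq$ for $A_\infty$ (infinite-dimensional with $\JRdim=1$). The converse is immediate by inspecting the quadratic parts of the listed forms.

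The forward direction of (2) is immediate from (1), since $\C\llsq y\rrsq$ and $\C[y]/(y^{n-1})$ are commutative. For the converse I would prove the contrapositive: if $\corank(f)=2$, so $f\in\n^3$, then $[x,y]\ne 0$ in $\Jac(f)$. Since $\dcyc_x f,\dcyc_y f\in\n^2$, multiplying on either side by any element of $\n$ lands in $\n^3$; hence modulo $\n^3$ the ideal $I = \lcl\dcyc_x f,\dcyc_y f\rcl$ is just the $\C$-span of $\dcyc_x f_3$ and $\dcyc_y f_3$ inside $\C\langle x,y\rangle_2$.

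The key technical step, which I expect to be the main obstacle, is to verify that for any cubic potential $f_3$ the cyclic derivatives $\dcyc_x f_3$ and $\dcyc_y f_3$ lie in the symmetric subspace
\[
\mathrm{span}_\C(x^2,\,xy+yx,\,y^2)\subset\C\langle x,y\rangle_2.
\]
This reduces to a finite check on the four cyclic classes $[x^3]$, $[x^2y]$, $[xy^2]$, $[y^3]$ of cubic monomials. Granting this, the commutator $xy-yx$ spans the antisymmetric complement, so it cannot lie in $I+\n^3$, and therefore $[x,y]\ne 0$ in the finite-dimensional quotient $\ringtwo/(I+\n^3)$, hence in $\Jac(f)$. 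The case $f_3=0$ is easier still, as then $I\subseteq\n^3$ and $[x,y]$ already survives in $\n^2/\n^3$. The conceptual point of (2) is thus that noncommutativity of $\Jac(f)$, once forced, is visible at the level of the degree-$2$ commutator.
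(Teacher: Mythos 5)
Your argument for part (1) is essentially the paper's own: Splitting Lemma first, then normalise the one-variable residual potential $q=y^n\mathsf{u}$ by an automorphism $y\mapsto y\sqrt[n]{\mathsf{u}}$, then read off the Jacobi algebras $\C[y]/(y^{n-1})$ and $\C\llsq y\rrsq$. That is exactly the proof of \ref{thm!pagoda}.

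For part (2) you take a genuinely different route for the contrapositive.  The paper first invokes the cubic normal form of Lemma~\ref{linear change} (normalising $f_3$ to $x^3{+}y^3$, $xy^2$, $x^3$, or $0$ via the cubic discriminant), then factors each case by the ideal generated by all degree-$3$ monomials, and observes that none of the resulting four finite-dimensional quotients $\ringtwo/\lcl x^2,y^2,\scrM_3\rcl$, etc., is commutative.  You avoid the normalisation altogether: you note that for any cubic $f_3$ the degree-$2$ part of the Jacobian ideal is $\mathrm{span}_\C(\dcyc_x f_3,\dcyc_y f_3)$, which is always contained in the ``symmetric'' subspace $\mathrm{span}_\C(x^2,xy+yx,y^2)$, so the commutator $xy-yx$ survives modulo $\lcl\updelta f\rcl + \n^3$.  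I have checked the four cyclic derivative computations ($\dcyc_x x^3=3x^2$, $\dcyc_x x^2y=xy+yx$, $\dcyc_y x^2y=x^2$, and the $x\leftrightarrow y$ mirror), and the claim does hold.  This buys you a proof that bypasses Lemma~\ref{linear change} entirely and makes the mechanism transparent: noncommutativity is witnessed already in $\mathfrak{J}/\mathfrak{J}^2\oplus\mathfrak{J}^2/\mathfrak{J}^3$. The two proofs in fact land in the same quotient ring $\ringtwo/(\lcl\updelta f\rcl + \n^3)$; they differ only in whether the degree-2 relations are computed case-by-case via a normal form, or uniformly via the containment in the symmetric subspace. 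One small imprecision: $\corank(f)=2$ forces $f_2\sim 0$ (it could be a scalar multiple of $xy-yx$) rather than $f_2=0$ on the nose; you should replace $f$ by the cyclically equivalent representative with $f_2=0$ before writing $f\in\n^3$, as the paper (implicitly) does.  Everything else is sound, including the observation that the closure of the Jacobian ideal adds nothing in degree~2.
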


Thus Jacobi algebras are commutative if and only if they are Type~A, and so new noncommutative invariants are needed to classify other types. The equation \eqref{eqn:corank intro} does admit an obvious generalisation, namely the \emph{higher coranks} defined in \S\ref{sec!splitting}, where for $f\in\ringd_{\geq 3}$ the second corank is
\begin{equation}\label{intro:corankdef}
\corank_{2}(f) =  d^{2} - \dim_{\mathbb{C}} \left(\frac{\n^{3} + I}{\n^{3}}\right).
\end{equation}
In classifying all $f$ with $\JRdim\Jac(f)\leq 1$, \ref{thm: appendix main} together with \ref{typeA intro} then reduces us to the case where $\corank(f)=2$ and $\corank_{2}(f)=2,3$.  The lowest case $\corank_{2}(f)=2$ turns out to be given by the Type~$D$ families in the tables of \S\ref{tables section}.

\begin{thm}[\ref{main Type D}]\label{Type D intro}
Suppose that $f\in\ringd_{\geq 2}$ with $\corank(f)=2$ and $\corank_2(f)=2$.
\begin{enumerate}
\item Then either
\[
f\cong
\left\{
\begin{array}{lll}
z_1^2+\hdots+z_{d-2}^2+xy^2&&D_{\infty,\infty}\\
z_1^2+\hdots+z_{d-2}^2+xy^2 + x^{2m+1}&\mbox{with }m\geq 1 &D_{\infty,m}\\
z_1^2+\hdots+z_{d-2}^2+xy^2 + x^{2n}&\mbox{with }n\geq 2&D_{n,\infty}\\
z_1^2+\hdots+z_{d-2}^2+xy^2 + x^{2n} + x^{2m+1}&\mbox{with }2n-2\geq m\geq n\geq 2&D_{n,m}\\
z_1^2+\hdots+z_{d-2}^2+xy^2 + x^{2m+1} + x^{2n}&\mbox{with }n>m\geq 1&D_{n,m}
\end{array}
\right.
\]
These $f$ all have mutually non-isomorphic Jacobi algebras.
\item  Furthermore, those labelled  \textnormal{$D_{\infty,\ast}$} satisfy $\JRdim\Jac(f)= 1$, whilst those labelled \textnormal{$D_{n,\ast}$} satisfy $\JRdim\Jac(f)= 0$.
\end{enumerate}
\end{thm}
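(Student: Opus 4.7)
The strategy follows the standard singularity-theoretic pattern of successive preparation: reduce to two variables, prepare the cubic, then reduce higher-order terms to a series in $x$ alone, and finally normalise that series. First, apply the Splitting Lemma (Theorem~\ref{thm!splitting}) to write $f\cong z_1^2+\cdots+z_{d-2}^2+g(x,y)$ with $g\in\C\llangle x,y\rrangle_{\geq 3}$; since Jacobi algebras are invariant, the hypotheses $\corank(f)=2$ and $\corank_2(f)=2$ translate to the statement that $g_3\neq 0$ and $\dcyc_x g_3,\dcyc_y g_3$ span a $2$-dimensional subspace of $\n^2/\n^3$. A direct case analysis of cubics in $\C\langle x,y\rangle$ modulo cyclic equivalence and $\mathrm{GL}_2$-action on $(x,y)$ shows that this rank-two condition rules out the pure cube $L^3$, leaving two orbits: $xy^2$ (double-plus-simple root) and $xy^2+x^3$ (three distinct roots). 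A linear change of variables thus brings $g_3$ to one of these forms.

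\textbf{Noncommutative preparation and normalisation.} With $g_3=xy^2$ (the three-distinct case being parallel but keeping a persistent $x^3$), the plan is to reduce to $g\cong xy^2 + h(x)$ for some $h(x)\in x^3\,\mathbb{C}\llsq x\rrsq$. The mechanism is that, modulo higher order, the cyclic derivatives give $y^2,\,xy+yx\in I$, so any monomial of $g$ containing a factor that reduces to $y^2$ or to $xy+yx$ can be absorbed by an automorphism of $\C\llangle x,y\rrangle$ of the form $x\mapsto x+\ldots$, $y\mapsto y+\ldots$ with higher-order tails chosen to kill that term. Proceeding by induction on degree, and using completeness to ensure convergence, this reduces $g$ to $xy^2+h(x)$. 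Commutative Weierstrass preparation then normalises $h$, while the remaining freedom of noncommutative coordinate changes preserving the shape $xy^2$ shows that exactly one even power $x^{2n}$ (or none) and one odd power $x^{2m+1}$ (or none) can remain, with the stated index constraints. The crucial parity feature is that in $\Jac$ one has $xy\equiv-yx$ to leading order, making even and odd powers of $x$ interact differently with $y$, so neither can be substituted for the other.

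\textbf{Non-isomorphism and $\JRdim$.} Distinguishing the forms in (1) is done by computing invariants of $\Jac(f)$: for finite-dimensional cases the dimension, and when needed the Hilbert series with respect to the radical filtration, separate the $(n,m)$-pairs. For $\JRdim$, in the $D_{n,\ast}$ cases the relation $\dcyc_x g=y^2+2nx^{2n-1}\in I$ combined with $xy+yx\in I$ via the associativity identity $y\cdot y^2=y^2\cdot y$ forces $x^{2n-1}y=0$ and subsequently $x^{4n-2}=0$, whence $\Jac(f)$ is finite-dimensional and $\JRdim=0$; in the $D_{\infty,\ast}$ cases the absence of $x^{2n}$ leaves $x$ unconstrained in sufficiently high degree, so $\fJ^k/\fJ^{k+1}$ grows linearly and $\JRdim=1$. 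The main obstacle will be the inductive preparation step: verifying that the noncommutative coordinate changes remove all $y$-involving terms without introducing uncontrollable new ones, which requires careful bookkeeping of how cyclic derivatives of degree-$k$ tails contribute to the Jacobi ideal at degree $k-1$, and ensuring the inductive scheme converges in the completed ring.
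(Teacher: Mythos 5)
Your high-level architecture---split off the quadratic, normalise the cubic linearly, chase $y$-involving terms into higher degree by unitriangular automorphisms, converge to a series form $xy^2+h(x)$, normalise that series, and then distinguish forms by dimension invariants---matches the paper's strategy at each stage. However, you have glossed over or slightly misdescribed several of the technical mechanisms that make it work, and one of your claims is structurally inaccurate.

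The most significant imprecision is your claim that the three-distinct-factor case is ``parallel but keeping a persistent $x^3$.'' It is not. Your preparation move (Lemma~\ref{lemB}: $x\mapsto x-h$, $y\mapsto y$ subtracts exactly $hy^2$ from $xy^2$) works cleanly precisely because $xy^2$ has a single variable ``isolated.'' An $x^3$ summand does not behave passively under that automorphism---$\uppsi(x^3)=x^3-hx^2-xhx-x^2h+\cdots$ produces uncontrolled new terms. The paper instead reduces the three-root case to a different canonical shape, $x^3+y^3+p(xy)$ for a power series $p$ in the single product $xy$ (Proposition~\ref{prop!x3y3prep}), using a preparation lemma (\ref{lemB3}) that simultaneously changes both $x$ and $y$ and exploits the symmetric form $x^3+y^3$. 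Only afterwards, in Corollary~\ref{NC3rootsnormal2}, does it translate $x^3+y^3+(xy)^s\cong xy^2+x^3+x^{2n}$---so the ``persistent $x^3$'' form appears only at the end, not as the working normal form during preparation.

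The second gap is your invocation of ``commutative Weierstrass preparation.'' Normalising $q(x)$ in $xy^2+q(x)$ so that only one even and one odd power of $x$ survive, both with unit coefficients, is not a Weierstrass argument: it is a noncommutative trick carried out modulo the relation $xy+yx$. Lemma~\ref{mod xy+yx lemma} shows that $x\mapsto xv$, $y\mapsto yw$ for units $v,w\in\C\llsq x^2\rrsq$ preserves the closed ideal $\lcl xy+yx\rcl$, and Proposition~\ref{prop!polypotential} splits $\dcyc_x p$ into even and odd parts, absorbs the even unit by $y\mapsto ys$ with $s=\sqrt{u}\in\C\llsq x^2\rrsq$, and then solves a functional equation $t^{2N}=t^{2M-1}v(xt)/u(xt)$ (Lemma~\ref{lempsx}) to normalise the odd part. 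Your description of this as ``Weierstrass preparation plus remaining freedom'' obscures the crucial use of even power series and the fact that one works in the quotient by $\lcl xy+yx\rcl$ to commute $y$ past $x^2$.

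Finally, you do not explain where the index constraints $2n-2\ge m\ge n$ (equivalently, discarding $m\ge 2n-1$) come from. In the paper this is a separate argument (Lemma~\ref{LemmaD} and Corollary~\ref{CorDtwo}): when the odd power is sufficiently large it is absorbed, because $y^3$ and then $x^{4n-2}$ lie in the Jacobi ideal, making $x^{2m+1}$ redundant. Your sketch of the $\JRdim$ computation blends this with the finite-dimensionality argument and asserts $x^{4n-2}=0$ as a general fact; in the paper that vanishing is established only in the discarded range $m\ge 2n-1$, and finite-dimensionality of the $D_{n,\ast}$ forms is shown separately (via the Gr\"obner basis computation in Proposition~\ref{NC3rootsnormal}\eqref{x3y3ii} and the dimension formulae of \S\ref{sec:highcorank}). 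You correctly identified the convergent inductive preparation as the hard part, but the argument you outline would not succeed as written in the three-root case, and several of the normalisation steps you treat as routine are in fact where the real work lies.
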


It is remarkable that all normal forms are polynomial, and even more remarkable that all coefficients are integers. Indeed, all coefficients equal $1$, and there are no continuous parameters.

\medskip
The last remaining case for which $\JRdim\Jac(f)\leq 1$ holds is when $\corank(f)=2$ and $\corank_2(f)=3$. After a suitable change in coordinates, all such $f$ have the form
\[
f\cong z_1^2+\hdots+z_{d-2}^2+x^3 + f_{\geq 4}(x,y)
\] 
with some extra conditions on $f_{\geq 4}(x,y)$ to ensure that $\JRdim\Jac(f)\leq 1$.  We refer to these potentials as Type~$E$.   The families described in both Types~$E_{6,n}$ and~$E_{6,\infty}$ in \S\ref{tables section} are genuine examples. However there are many others; see \cite{BW2}.
Their classification depends, in a rather more subtle manner, on naturally-defined higher coranks (see~\ref{defin higher corank}). For example, the potential $x^3+xy^3$ of Type~$E_{6,\infty}$ has second corank equal to~$3$, with all higher coranks equal to $4$, while in contrast the potentials $f_n$ of Type~$E_{6,n}$ for $n\ge5$ trim those coranks to
\[
\corank_2(f_n),\corank_3(f_n),\ldots,\corank_{n+6}(f_n) = 3,4,4,\dots,4,4,3,3,2,1,1.
\]
In particular $\Jac(f_n)$ has dimension $4(n+3)$.  Controlling normal forms in such situations is both theoretically and computationally more difficult.

%-----------------------------------------------------------------------------------------
\subsection{Intrinsically Extracting ADE}
It turns out that there are two, completely distinct, ways to extract ADE behaviour from the families defined above, and thus explain the ADE naming conventions.  In this section we explain the purely algebraic method; the birational geometry method is explained in \S\ref{geo cor section intro} below.  

The first method is the most surprising.  Consider the six algebras defined explicitly by taking the quotient of $\mathbb{C}\langle x,y\rangle$ by one of the following six two-sided ideals
\begin{comment}
\begin{equation}\label{eqn:6keyalgebras}
\begin{array}{cccccc}
\arraycolsep=0pt
{\small\begin{array}{c}
x+y+z,\\
x, y, z
\end{array}}
&
{\small\begin{array}{c}
x+y+z,\\
x^2, y^2, z^2
\end{array}}
&
{\small\begin{array}{c}
x+y+z,\\
x^2, y^3, z^3
\end{array}}
&
{\small\begin{array}{c}
x+y+z,\\
x^2, y^3, z^4
\end{array}}
&
{\small\begin{array}{c}
x^2+y+z,\\
\mathrm{(see\,\, \ref{rem:5thisstrange})}
\end{array}}
&
{\small\begin{array}{c}
x+y+z,\\
x^2, y^3, z^5
\end{array}}
\end{array}
\end{equation}
\end{comment}
\begin{comment}
\begin{equation}\label{eqn:6keyalgebras}
\arraycolsep=0pt
\begin{array}{cccccc}
\left(
{\small\begin{array}{c}
x+y+z\\
x, y, z
\end{array}}
\right)
&&
\left(
{\small\begin{array}{c}
x+y+z\\
x^2, y^2, z^2
\end{array}}
\right)
&&
\left(
{\small\begin{array}{c}
x+y+z\\
x^2, y^3, z^3
\end{array}} 
\right) \\
&
\left(
{\small\begin{array}{c}
x+y+z \\
x^2, y^3, z^4
\end{array}} 
\right) 
&&
\left(
{\small\begin{array}{c}
x^2+y+z\\
\mathrm{(see\,\, \ref{rem:5thisstrange})}
\end{array}}
\right) 
&&
\left(
{\small\begin{array}{c}
x+y+z\\
x^2, y^3, z^5
\end{array}}
\right) 
\end{array}
\end{equation}
\end{comment}
\begin{equation}\label{eqn:6keyalgebras}
\arraycolsep=0pt
\begin{array}{l}
\left(
{\small\begin{array}{c}
x+y+z\\
x, y, z
\end{array}}
\right)
\qquad
\left(
{\small\begin{array}{c}
x+y+z\\
x^2, y^2, z^2
\end{array}}
\right)
\qquad
\left(
{\small\begin{array}{c}
x+y+z\\
x^2, y^3, z^3
\end{array}} 
\right) \\
\phantom{\left(
{\small\begin{array}{c}
x+y+z\\
x, y, z
\end{array}}
\right)
\qquad
\qquad
}
\qquad
\left(
{\small\begin{array}{c}
x+y+z \\
x^2, y^3, z^4
\end{array}} 
\right) 
\qquad
\left(
{\small\begin{array}{c}
x^2+y+z\\
\mathrm{(see\,\, \ref{rem:5thisstrange})}
\end{array}}
\right) 
\qquad
\left(
{\small\begin{array}{c}
x+y+z\\
x^2, y^3, z^5
\end{array}}
\right) 
\end{array}
\end{equation}
These have dimension $1, 4, 12, 24, 40$, and $60$ respectively.  
%Five of the presentations in \eqref{eqn:6keyalgebras} are evidently related to ADE triples, whereas the ideal which involves a reference to \ref{rem:5thisstrange} is slightly more subtle.  
A presentation-free description of all six algebras, which is conceptually more compelling, uniformly describes each in terms of the preprojective algebra of ADE Dynkin diagrams (see \S\ref{subsec:sixalgebras}).

\medskip
The following result allows us to associate ADE information directly to the normal forms in \S\ref{tables section}, by asserting that \emph{all} such Jacobi algebras generically slice to one of the six algebras in \eqref{eqn:6keyalgebras} above.  This is particularly striking, since nothing in the definition of the families has involved any mention of only six algebras, nor any mention of the preprojective algebra, and aside from our naming conventions, any mention of  ADE.  It is not even clear that if $\JRdim\Jac(f)\leq 1$, then $\Jac(f)$ admits a non-unit central element.

In order to consider all cases $\JRdim\Jac(f)\leq 1$ together, below we adopt the convention that each $\upvarepsilon_i$ can be either 0 or 1.

\begin{thm}[\ref{central element main}]\label{central element main intro}
Consider the normal forms $A_n$, $D_{n,m}$, $D_{n,\infty}$, $E_{6,n}$, $A_{\infty}$, $D_{\infty,m}$, $D_{\infty,\infty}$ and $E_{6,\infty}$ from \S\textnormal{\ref{tables section}}.  In each case, define an element $s$ as follows
\[
\begin{tabular}{clp{3cm}c}
\toprule
{\bf Type}&{\bf Normal form}&{\bf Conditions}&$s$\\
\midrule
\textnormal{A} & $z_1^2+\hdots+z_{d-2}^2+x^2+\upvarepsilon_1y^n$ & $n\in\mathbb{N}_{\geq 2}\cup\{\infty \}$ & $y$\\[4pt]
\textnormal{D} & $z_1^2+\hdots+z_{d-2}^2+xy^2+\upvarepsilon_2x^{2n}+\upvarepsilon_3 x^{2m-1}$ & $m,n\in\mathbb{N}_{\geq 2}\cup\{\infty \}$ & $x^2$\\[4pt]
\textnormal{E} & $z_1^2+\hdots+z_{d-2}^2+x^3+xy^3+\upvarepsilon_4 y^n$ & $n\in\mathbb{N}_{\geq 4}$&{$g_{6,n}$}\\
\bottomrule
\end{tabular}
\]
where $g_{6,n}$ is defined in \S\textnormal{\ref{central sect}}. Then the following statements hold.
\begin{enumerate}
\item  The element $s$ is central in $\Jac(f)$, and furthermore $\Jac(f)/(s)$ is isomorphic to one of the six algebras in \eqref{eqn:6keyalgebras}.  
\item More specifically, $\Jac(f)/(s)$ is isomorphic to the first algebra in \eqref{eqn:6keyalgebras} when $f$ is in the family $A_*$, the second algebra in \eqref{eqn:6keyalgebras} when $f$ is in the family $D_{*,*}$, and the third algebra in \eqref{eqn:6keyalgebras} when $f$ is in the family $E_{6,*}$.  
\item For any generic central element $g$, the quotient $\Jac(f)/(g)$ is isomorphic to the first algebra in \eqref{eqn:6keyalgebras} when $f$ is in the family $A_*$, and the second algebra in \eqref{eqn:6keyalgebras} when $f$ is in the family $D_{*,*}$.
\end{enumerate} 
\end{thm}

Most of the content in the theorem lies within the third part since generic elements, defined in \ref{def:generic element}, provide an \emph{intrinsic} method of extracting the ADE information.  The choice of central element $g_{6,n}$, which is rather involved, works for Type $E_{6,*}$, and there is also strong evidence that generic elements there also quotient to give the correct algebra in the sequence \eqref{eqn:6keyalgebras}.  Establishing this is computationally much harder, and will be addressed elsewhere \cite{BW2}.  We remark that all other examples we know within Type~$E$, but which are not explicitly stated above, also factor to one of the six algebras in \eqref{eqn:6keyalgebras}.

In the geometric context of~\S\ref{geo intro sect} below, the generic central element $g$ of \ref{central element main intro} should be thought of as the noncommutative version of Reid's general elephant \cite[(1.14)]{Pagoda}. Remarkably, the above theorem neither implies, nor is implied by, Reid's version.  
%In both cases, taking a generic central element is essential  to avoid pathological behaviour (see \ref{rem: need generic}). 

%-----------------------------------------------------------------------------------------
\subsection{The Classification of Flops}\label{geo cor section intro}
The noncommutative singularity results in \S\ref{alg sect intro} have immediate applications in birational geometry.  The slogan is simple:  whilst Arnold's commutative normal forms classify Du Val singularities, noncommutative normal forms  classify compound Du Val (cDV) singularities.

The key and most remarkable special case is that the normal forms in Table~\ref{tab!zero} classify smooth 3-fold flops \cite{DW1, August, JKM}.  We first very briefly recall the geometric setting, where general background is left to e.g.\ \cite{KMM}, before outlining the new results.

\medskip
Given any crepant projective birational morphism 
 $\scrX\to\Spec \scrR$, where $\scrR$ is complete local cDV singularity, there is an associated \emph{contraction algebra} $\AB_{\con}$ formed by considering noncommutative deformations of the curves above the unique closed point \cite{DW1, DW3}.  This is the finest known curve invariant associated to the contraction.  When the contraction is furthermore \emph{simple}, namely the reduced fibre above the origin is $\mathbb{P}^1$, and further $\scrX$ is smooth, then it is well known \cite{DW1,VdBCY} that $\AB_{\con}\cong\Jac(f)$ for some $f\in\mathbb{C}\llangle x,y\rrangle$ (see e.g.\ \cite[3.1(2)]{BW}).

Since cDV singularities are normal, necessarily $\JRdim\CA\leq 1$, and there is a natural geometric dichotomy.  Indeed, as explained in \ref{cor: Acon has Jdim leq 1}, if $\CA$ is a contraction algebra associated to a crepant $\scrX\to\Spec\scrR$ as above, then
\begin{enumerate}
\item   $\JRdim\CA=0$ if and only if $\scrX\to\Spec\scrR$ is a flop, and
\item $\JRdim\CA=1$ if and only if $\scrX\to\Spec\scrR$ is a divisorial contraction to a curve. 
\end{enumerate} 

The only other fact we will require is that every $\scrX\to\Spec\scrR$ has an associated ADE type, since by Reid's general elephant \cite[(1.14)]{Pagoda} a generic $g\in\m$ slices to give an ADE surface singularity $\scrR/g$.  We will say $\scrR$ has Type~$D$ if the generic slice is Type~$D$, etc.

%\subsubsection{Classification of Type $D$ flops}
\medskip
With this in mind, the results in \S\ref{alg sect intro} have immediate consequences.  After first using the normal forms to classify contraction algebras, the following then gives the analytic classification of length two flops and beyond. It also gives the second, geometric, method to extract ADE information from the normal forms in \S\ref{tables section}.

\begin{thm}[\ref{the only Type A CAs}, \ref{the only Type D CAs}, \ref{all type D are geometric}, \ref{thm:actualclassification}]\label{main Type D intro}
With notation as above, the following hold:
\begin{enumerate}
\item\label{main Type D intro 1}
The only contraction algebras for Type~$A$ and~$D$ flops are, up to isomorphism, the Jacobi algebras of Type $A$ and $D$ potentials in Table~\textnormal{\ref{tab!zero}}.
\item\label{main Type D intro 2}
All Jacobi algebras in \textnormal{\ref{typeA intro}} and \textnormal{\ref{Type D intro}} are contraction algebras.
\item\label{main Type D intro 3} 
\begin{enumerate}
\item Type $A$ flops are classified by Type $A$ normal forms in Table~\textnormal{\ref{tab!zero}}.
\item Type $D$ flops are classified by Type $D$ normal forms in Table~\textnormal{\ref{tab!zero}}.
\end{enumerate}
\end{enumerate}
Furthermore, Type $E$ flops are classified by Type $E$ normal forms.
\end{thm}

In the process of establishing \ref{main Type D intro} we use the examples of flops given in our previous work \cite{BW}, together with their generalisations \cite{Okke, Kawamata}.  Whilst the above classifies flops using noncommutative data, the following geometric description is perhaps more desirable. 

\begin{thm}[\ref{thm:TypeDgeoclassdiag}]\label{thm:TypeDclassintro}
There is a one-to-one correspondence between lattice points in Figure~\textnormal{\ref{fig:TypeDintro}} and the base singularities $0\in\Spec\scrR$ of Type $D$ flops, given by 
\[
(n,m)\mapsto \Spec\left( \frac{\mathbb{C}\llsq u,v,x,y\rrsq}{ u^2+v^2y - x(y^{2n+1}+(x+\upvarepsilon y^m)^2)}\right)
\] 
where $\upvarepsilon=1$ if the lattice point is contained within the shaded region, and $\upvarepsilon=0$ otherwise.
\begin{figure}
\begin{center}
\begin{tikzpicture}[xscale=0.8,yscale=0.8]

\draw[->] (0,0) -- (9.5,0);
\draw[->] (0,0) -- (0,8.5);

\filldraw[color=gray!50!white] (0.5,1.6)--(3.5,7.5)--(8.6,7.5)--(8.6,0.5)--(0.5,0.5)--cycle;

\node at (9.8,0) {$n$};
\node at (0,8.8) {$m$};

\def\coords{1,...,8}
\foreach \y in {1,...,3}
\foreach \x in \coords \node at (\x,\y) {$\scriptstyle \bullet$};
\foreach \x in {2,...,8} \node at (\x,4) {$\scriptstyle \bullet$};
\foreach \x in {2,...,8} \node at (\x,5) {$\scriptstyle \bullet$};
\foreach \x in {3,...,8} \node at (\x,6) {$\scriptstyle \bullet$};
\foreach \x in {3,...,8} \node at (\x,7) {$\scriptstyle \bullet$};
\foreach \x in \coords \node at (\x,-0.3) {\textnormal{\x}};
\foreach \y in {1,...,7} \node at (-0.3,\y) {\textnormal{\y}};

\node at (8.75,-0.3) {$\hdots$};

\node at (9,4) {$\hdots$};
\node at (7,8) {$\vdots$};
\node at (-0.3,8) {$\vdots$};

\foreach \x in \coords \node[color=green!50!black] at (\x+0.2,0.7) {$\scriptstyle 4,\x$};
\foreach \x in {2,...,8} \node[color=green!50!black] at (\x+0.2,1.7) {$\scriptstyle 6,\x$};
\foreach \x in {3,...,8} \node[color=green!50!black] at (\x+0.2,2.7) {$\scriptstyle 8,\x$};
\foreach \x in {4,...,8} \node[color=green!50!black] at (\x+0.2,3.7) {$\scriptstyle 10,\x$};
\foreach \x in {5,...,8} \node[color=green!50!black] at (\x+0.2,4.7) {$\scriptstyle 12,\x$};
\foreach \x in {6,...,8} \node[color=green!50!black] at (\x+0.2,5.7) {$\scriptstyle 14,\x$};
\foreach \x in {7,...,8} \node[color=green!50!black] at (\x+0.2,6.7) {$\scriptstyle 16,\x$};

\node[color=green!50!black] at (1.2,1.6) {$\scriptstyle 5,1$};
\node[color=green!50!black] at (2.2,2.6) {$\scriptstyle 7,2$};
\node[color=green!50!black] at (3.2,3.6) {$\scriptstyle 9,3$};
\node[color=green!50!black] at (4.2,4.6) {$\scriptstyle 11,4$};
\node[color=green!50!black] at (5.2,5.6) {$\scriptstyle 13,5$};
\node[color=green!50!black] at (6.2,6.6) {$\scriptstyle 15,7$};

\draw[rounded corners=1.5mm] (0.8,1.8)--(1.2,1.8)--(1.2,3.2)--(0.8,3.2)--cycle;
\draw[rounded corners=1.5mm] (1.8,2.8)--(2.2,2.8)--(2.2,5.2)--(1.8,5.2)--cycle;
\draw[rounded corners=1.5mm] (2.8,3.8)--(3.2,3.8)--(3.2,7.2)--(2.8,7.2)--cycle;

\draw[rounded corners=1.5mm] (3.8,7.8) -- (3.8,4.8) -- (4.2,4.8) --(4.2,7.8);
\draw[rounded corners=1.5mm] (4.8,7.8) -- (4.8,5.8) -- (5.2,5.8) --(5.2,7.8);
\draw[rounded corners=1.5mm] (5.8,7.8) -- (5.8,6.8) -- (6.2,6.8) --(6.2,7.8);

\end{tikzpicture}
\end{center}
\caption{Classifying Type $D$ flops}\label{fig:TypeDintro}
\end{figure}
In particular, Type $D$ flops do not admit moduli.  Furthermore, the following hold.
\begin{enumerate}
\item The quasi-homogeneous Type D flops are precisely those outside the shaded region, and these are the standard Laufer family.   
\item The \textnormal{GV} invariants $n_1,n_2$ of the flopping contraction associated to a point $(n,m)$ are written in green. The ovals group together flops with the same \textnormal{GV} invariants.
\end{enumerate}
\end{thm}
It is possible to instead index the GV invariants to the classifying potentials, which we do in Figure~\ref{fig!Ddiagram} on page~\pageref{fig!Ddiagram}.  Either way, the important point is that not all pairs of GV invariants $n_1,n_2$ can be realised.
\begin{cor}[\ref{GV for D}]
There are no simple flopping contractions with \textnormal{GV} invariants $5,n$ with $n\geq 2$.  Similarly for $2m+1,n$ with $m\geq 2$ and $n\neq m-1$. 
\end{cor}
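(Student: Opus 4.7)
The plan is to deduce the corollary directly from the earlier classification results, with essentially no new computation beyond a parity check. First I would observe that recording a pair $(n_1,n_2)$ of Gopakumar--Vafa invariants already pins down the simple flopping contraction to have length~$2$: the length of a simple flop equals the number of GV invariants, and by Reid's general elephant theorem a length-two simple flop is of Type~$D$. This step uses only the generalities on simple flops recalled before \ref{main Type D intro}.

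Next I would invoke \ref{main Type D intro}(1), which says that the contraction algebra of any Type~$D$ simple flop is, up to isomorphism, the Jacobi algebra of one of the potentials listed in \ref{Type D intro}(1) having $\mathfrak{J}$-dimension zero. Combined with \ref{GV for D intro}, this tells us that the realisable pairs of GV invariants are exactly those of the form $(2m+3,m)$ with $m\geq 1$, or $(2n,b)$ with $n\geq 2$ and $b\geq n-1$. Hence the corollary reduces to checking that the pairs $(5,n)$ with $n\geq 2$, and more generally $(2m+1,n)$ with $m\geq 2$ and $n\neq m-1$, lie outside this list.

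This final check is a one-line parity argument. In both cases the first coordinate is odd, so the pair cannot belong to the family $(2n',b')$. For the family $(2m'+3,m')$, the equation $2m'+3=5$ forces $m'=1$ with second coordinate $1$, which fails $n\geq 2$; and the equation $2m'+3=2m+1$ forces $m'=m-1$ with second coordinate $m-1$, which fails the hypothesis $n\neq m-1$. In both cases the pair is unrealisable, completing the proof.

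There is no real obstacle: the only substantive input is the full Type~$D$ classification of contraction algebras from \ref{main Type D intro} together with the GV computation of \ref{GV for D intro}; everything else is bookkeeping. The qualitative content of the corollary is simply the observation, visible in Figure~\ref{fig!Ddiagram}, that odd first coordinates arise only along the diagonal of Pagoda-like $(2m+3,m)$ entries, producing the stated gaps.
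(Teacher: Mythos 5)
Your proof is correct and matches the paper's (implicit) argument: the corollary is stated without proof as an immediate read-off from \ref{GV for D intro}, via exactly the parity computation you carry out, together with the preliminary observation that a pair of nonzero GV invariants forces length two, hence Type~$D$. One small attribution note: that a length-two simple flop has general elephant of Type~$D$ (indeed $D_4$) is the Katz--Morrison length classification \cite{KM} rather than Reid's general elephant theorem per se, which only gives that the hyperplane section is ADE; the paper itself cites \cite{KM} for precisely this point in the proof of \ref{the only Type D CAs}.
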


More generally, the $E_{6,n}$ normal forms in Table~\ref{tab!zero} predict the first ever infinite family of $E_6$ flops, and indeed this family turns out to exist. Furthermore, the various other type $E$ normal forms not stated precisely in Table~\ref{tab!zero} both predict and classify $E_7$ and $E_8$ flops. Details will appear elsewhere \cite{BW2}, with the point being that noncommutative singularity theory predicts that GV invariants are extremely constrained.

\subsection{Other cDV Applications}
Our results also have applications to 3-fold divisorial contractions to a curve.  Whilst there is an extensive literature \cite{Tziolas1,Tziolas2,ducat16} on extremal ($K$\nobreakdash-negative) divisorial contractions in the presence of terminal singularities, the $K$\nobreakdash-trivial case considered here is much less studied, aside from the notable  \cite{Wilson}. 

In contrast to the previous section \S\ref{geo cor section intro}, the analogue of the Donovan--Wemyss \cite{DW1} conjecture for divisorial contractions to a curve remains open.  A positive solution to this conjecture would immediately imply that the normal forms in Table~\ref{tab!one} give the full classification of 3-fold divisorial contractions to a curve, exactly as in \S\ref{geo cor section intro} where the normal forms in Table~\ref{tab!zero} classified flops.  For now, whilst the generalised conjecture remains open, our noncommutative normal forms still have many consequences, and give unexpected predictions.

\medskip
First of all, our control of normal forms allows us to constrain possible deformations of the fibres, by classifying the  contraction algebras that can arise.

\begin{prop}[\ref{the only Type A CAs d2c}, \ref{the only Type D CAs d2c}]\label{intro:divcurclass}
The only contraction algebras for Type~$A$ and~$D_4$ smooth divisor-to-curve contractions are, up to isomorphism, the Jacobi algebras of the Type $A$ and $D$ potentials in Table~\textnormal{\ref{tab!one}}.
\end{prop}

The normal forms in Table~\ref{tab!one}, together with \ref{intro:divcurclass} and the generalised Donovan--Wemyss conjecture, then predict the first and only infinite family of Type $D$ divisorial contractions to a curve.  None of these have been seen before; the following realises the whole family.

\begin{prop}[\ref{D4 div to curve main}]\label{D4 div to curve intro}
Consider the element of $\mathbb{C}\llsq X,Y,Z,T\rrsq$ defined by 
\renewcommand{\arraystretch}{1}
\[
F_m\colonequals
\left\{
\begin{array}{rl}
 Y(X^{m}+Y)^2 + XZ^2 -T^2& \mbox{if }m\geq 1\\
 Y^3 + XZ^2 -T^2& \mbox{if }m=\infty\\
 \end{array}
 \right.
\]
and set $\scrR_m=\mathbb{C}\llsq X,Y,Z,T\rrsq/F_m$. Then the following statements hold.
\begin{enumerate}
\item $\Sing(\scrR_m)^{\mathrm{red}}=(X^M+Y,Z,T)$ if $m\geq 1$, and $(Y,Z,T)$ if $m=\infty$.
\item In either case, blowing up this locus gives rise to a crepant Type~$D$ divisorial contraction to a curve $\scrX_m\to\Spec \scrR_m$ where $\scrX_m$ is smooth.
\item The contraction algebra of $\scrX_m\to\Spec \scrR_m$ is isomorphic to $\Jac(xy^2+x^{2m+1})$ when $m\geq 1$, respectively $\Jac(xy^2)$ when $m=\infty$.
\end{enumerate}
\end{prop}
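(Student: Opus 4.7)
The plan is to verify the three claims in sequence, with~(3) being the algebraic heart of the argument.

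Part~(1) is a direct Jacobian calculation. For $m \geq 1$ one computes $\partial_T F_m = -2T$, $\partial_Z F_m = 2XZ$, the convenient factorisation $\partial_Y F_m = (X^m+Y)(X^m+3Y)$, and $\partial_X F_m = 2mX^{m-1}Y(X^m+Y) + Z^2$. Imposing vanishing of all four, $T=0$ is immediate; the branch $X^m+Y=0$ forces $Z=0$ from $\partial_X F_m$ and yields the stated curve, while the branch $X^m+3Y=0$, combined with $XZ=0$ and $\partial_X F_m = 0$, collapses to the origin, which already lies on the first branch. The case $m=\infty$ is similar but shorter, from $\partial_X F_\infty = Z^2$, $\partial_Y F_\infty = 3Y^2$, $\partial_Z F_\infty = 2XZ$, $\partial_T F_\infty = -2T$.

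For~(2), I would blow up the ideal $I=(X^m+Y,Z,T)$ (respectively $(Y,Z,T)$ for $m=\infty$) in the smooth ambient formal $4$-fold, and intersect with the strict transform of $F_m$. This yields three standard affine charts to inspect. In the chart where $X^m+Y$ divides $Z$ and $T$, writing $Z = (X^m+Y)z$ and $T = (X^m+Y)t$ gives $F_m = (X^m+Y)^2 \bigl(Y + Xz^2 - t^2\bigr)$, and the strict transform $Y + Xz^2 - t^2 = 0$ is smooth upon solving for~$Y$. In the $Z$-chart, writing $X^m+Y = Zu$ and $T = Zv$ gives $F_m = Z^2\bigl(X + Zu^3 - X^m u^2 - v^2\bigr)$; the partial with respect to $Z$ of the strict transform is $u^3$, which only vanishes at $u=0$, where the partial with respect to $X$ is $1\neq 0$, so this chart is smooth. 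The $T$-chart yields $F_m = T^2\bigl(Ta^3 - X^m a^2 + Xb^2 - 1\bigr)$, whose strict transform contains no points above the origin. Crepancy is then immediate from adjunction: the ambient blowup of a height-three smooth complete intersection in a smooth $4$-fold has exceptional divisor of discrepancy~$2$, while $F_m$ vanishes to order exactly~$2$ along the blown-up curve (read directly from each chart), so the unique divisor above $\scrX_m$ has zero discrepancy. The chart picture also exhibits this divisor as a prime divisor mapping onto the singular curve, so this is a divisor-to-curve contraction.

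Part~(3) is the substantive step; the strategy is to compute the contraction algebra as the endomorphism algebra of a specific maximal Cohen--Macaulay $\scrR_m$-module $N$, modulo maps factoring through free summands. Since $\scrR_m$ is a hypersurface, $N$ is presented by a matrix factorisation of $F_m$, which I would construct from the blowup data in part~(2) following the general principle that each irreducible component of the exceptional locus corresponds to such a CM module via its ideal sheaf. Then $\End_{\scrR_m}(\scrR_m\oplus N)$ is the modification algebra of the divisor-to-curve contraction, and quotienting by the idempotent for $\scrR_m$ yields the contraction algebra; one then verifies by a direct computation of generators and relations that the result is isomorphic to $\Jac(xy^2 + x^{2m+1})$ (respectively $\Jac(xy^2)$ when $m=\infty$). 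An alternative approach is to realise $\scrR_m$ as a limit of a one-parameter Type~$D$ flopping family: the Jacobi algebras $D_{n,m}$ are known to be geometric by \ref{main Type D intro}(ii), and letting $n \to \infty$ inside the family should deliver $\scrR_m$ as the specialised $\mathfrak{J}$-dimension-one contraction, together with the matching contraction algebra $D_{\infty,m}$ by continuity.

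The main obstacle lies in step~(3): producing the correct matrix factorisation of $F_m$ and pinning down the right pair of generators of the resulting endomorphism algebra (modulo free summands) requires careful bookkeeping, and one must verify that exactly the cyclic-derivative relations $y^2 + (2m+1)x^{2m} = 0$ and $xy+yx = 0$ hold. The alternative limiting argument sidesteps the direct matrix factorisation, but demands rigorous control of noncommutative deformations in flat families. Either way, the essential difficulty is aligning the concrete geometric data of $\scrR_m$ with the abstract algebraic normal form of Type~$D_{\infty,*}$.
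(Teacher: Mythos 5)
Parts~(1) and~(2) of your proposal are essentially sound and agree in substance with the paper. For~(1) your Jacobian factorisation $\partial_Y F_m = (X^m+Y)(X^m+3Y)$ is correct and the branch analysis goes through. For~(2) your chart computations of the blowup are right, and your crepancy argument via adjunction -- discrepancy~$2$ for the ambient blowup of a smooth curve in a smooth $4$\nobreakdash-fold, minus the multiplicity~$2$ of $F_m$ along the curve -- is a valid alternative to the paper's direct computation that the canonical $3$\nobreakdash-form $dr\wedge ds\wedge dv/u$ pulls back to a regular form with no zero or pole along the exceptional divisor. Either route works.

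Part~(3) is where there is a genuine gap. You correctly identify it as the substantive step, but neither of the two routes you describe is actually carried out, and the first one in particular is not how the argument goes. The matrix-factorisation route would require constructing the MCM module $N$ explicitly and computing $\End_{\scrR_m}(\scrR_m\oplus N)$ modulo the free summand; you concede this is ``careful bookkeeping'' but do none of it, and the claim that the relations $y^2+(2m+1)x^{2m}$ and $xy+yx$ ``must be verified'' is precisely the content that is missing. The limiting-family route is closer in spirit to what works, but ``by continuity'' is not an argument: contraction algebras are not obviously continuous in flat families, and you would need to say exactly what degenerates to what. The paper instead recognises $R_m$ as a \emph{slice} of Karmazyn's six-dimensional universal $D_4$ flop $\Spec\cR$ (with its universal resolution $\scrY\to\Spec\cR$), cutting by the regular sequence $g_1=z$, $g_2=r-w-s^m$, $g_3=t$ to obtain $\cR_3$ with completion $\scrR_m$, and then invokes van Garderen's results that each $g_i$ is a slice in the technical sense, that the pulled-back tilting bundle remains tilting, and that the resulting contraction algebra is $\Jac(xy^2+x^{2m+1})$ (resp.\ $\Jac(xy^2)$). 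One further step, which you would also need, is to identify your blowup of~(2) with this pullback $Y\to\Spec\cR_3$: both are crepant resolutions of the same base with no flopping curves, hence isomorphic since minimal models are unique up to flop. Without the slicing framework and van Garderen's contraction-algebra identification, your~(3) does not close.
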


Thus in call cases the noncommutative forms $D_{\infty,*}$ are geometrically realised by~$F_*$.  The case $m=\infty$ appeared in \cite[2.18]{DW4}, the infinite family is new. 

More generally, the rather lonely $E_{6,\infty}$ normal form in Table~\ref{tab!one} predicts a divisorial contraction to a curve of Type $E_6$.  This also turns out to exist, and details will again appear elsewhere \cite{BW2}.  In fact, all the evidence now strongly suggests that $E_{6,\infty}$ is the final potential satisfying $\JRdim\Jac(f)\leq 1$, which gives the striking geometric prediction that divisorial contractions to a curve of Type $E_7$ and $E_8$ do not exist.

%-----------------------------------------------------------------------------------------
\subsection{Contractibility and Realisation}\label{geo intro sect}
It was conjectured in \cite{Kawamata_survey,BW3} that in addition to being the classifying structure of contractible curves, noncommutative deformation theory (implicit in the above) also detects which curves can be contracted.  Specifically, the conjecture asserts that a collection of crepant rational curves contracts to a point suitably locally, without contracting a divisor, if and only if its associated noncommutative deformation algebra is finite dimensional.   This should be viewed as a wide-ranging generalisation of celebrated work of Artin \cite{Artin} (for surfaces) and Jim\'{e}nez \cite{Jimenez}.%The conjecture, referred to as the 3-fold contractibility conjecture, is formulated generally for any collection of rational curves.  

The key test case is when the curve is irreducible.  One consequence of Appendix~\ref{J appendix} is that the conjecture is very reasonable: the only open case is now that of $(-3,1)$ curves.

\begin{thm}[\ref{thm:lastcontractmain}]
Let $\mathrm{C}\subset\scrX$ be an irreducible rational curve in a smooth \textnormal{CY} \textnormal{3}-fold, with \textnormal{NC} deformation algebra $\Lambda_{\mathrm{def}}$, such that $\scrN_{\mathrm{C}|\scrX}\neq (-3,1)$.  Then $\mathrm{\Curve}\subset\scrX$ contracts to a point suitably locally, without contracting a divisor, if and only if $\dim_\mathbb{C}\Lambda_{\mathrm{def}}<\infty$.
\end{thm}

Another consequence of this paper is a prediction regarding realising Jacobi algebras from geometry.  We will call $f\in\ringd$ \emph{geometric} if it arises from geometry, that is, $\Jac(f)$ isomorphic to the contraction algebra of some  $\scrX\to\Spec\scrR$ described in \S\ref{geo cor section intro}.   Based partly on the results in this paper, and partly on extensive computer algebra searches using the software \cite{magma, DGPS}, we conjecture the following.

\begin{conj}[The Realisation Conjecture]\label{realisation conj}
Every $f\in\ringd$ whose Jacobi algebra satisfies $\JRdim\Jac(f)\leq 1$ is geometric.
\end{conj}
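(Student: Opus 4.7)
The plan is to use the classification results of this paper to reduce \ref{realisation conj} to a single remaining case, and then to mirror for Type~$E$ the geometric realisation strategy executed for Type~$D$. First, by the Splitting Lemma~\ref{thm!splitting} and the dimension reduction~\ref{thm: appendix main}, any $f\in\ringd$ with $\JRdim\Jac(f)\leq 1$ is equivalent, up to adjoining a non-degenerate quadratic form in the remaining variables, to a potential in $\ringtwo$; since adjoining squares in independent variables does not affect the contraction algebra or its realisability by a smooth $3$-fold crepant contraction, it suffices to work in two variables. The numerical constraint then forces one of $\corank(f)\leq 1$, $(\corank(f),\corank_2(f))=(2,2)$, or $(\corank(f),\corank_2(f))=(2,3)$, which are respectively Types~$A$, $D$, and~$E$. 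Types~$A$ and~$D$ are already geometric by~\ref{main Type D intro} and~\ref{D4 div to curve intro}, so \ref{realisation conj} reduces entirely to Type~$E$.

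For Type~$E$ the strategy is to mimic~\ref{D4 div to curve intro}: for each normal form~$f$ produce an explicit complete local cDV hypersurface $\scrR_f\subseteq\mathbb{C}\llsq X,Y,Z,T\rrsq$ together with a projective crepant resolution $\scrX_f\to\Spec\scrR_f$ having $\scrX_f$ smooth, then verify that the associated contraction algebra $\AB_{\con}$ is isomorphic to $\Jac(f)$. The central element $g_{6,n}$ of~\ref{central element main intro} provides the key bridge: the isomorphism $\Jac(f)/(g_{6,n})\cong e\Uppi e$ should correspond on the geometric side to Reid's general elephant theorem, with $g_{6,n}$ lifting to a defining equation of the du~Val slice of $\scrR_f$. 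A natural construction therefore begins with a du~Val equation whose crepant resolution has the preprojective idempotent structure $e\Uppi e$ predicted by~\ref{central element main intro}, and perturbs it by terms dictated by the higher-corank signature of~$f$, adjusting until the crepant resolution of the thickened equation is smooth. One must then exhibit a tilting bundle on $\scrX_f$ whose endomorphism algebra has $\Jac(f)$ as its contraction quotient, for instance via the noncommutative deformation techniques of~\cite{DW1, DW3}.

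The principal obstacle is that Type~$E$ is genuinely large: beyond the explicit $E_{6,n}$ and $E_{6,\infty}$ families, the exceptional cases of~\cite{BW2} should realise every highlighted idempotent in~\eqref{eqn: Dynkin} across $E_6$, $E_7$, and~$E_8$, each contributing its own infinite sub-family of normal forms with its own geometric counterpart to construct. Furthermore, Type~$E$ flops are not yet classified on the birational side, so a purely case-by-case algebraic-geometric matching is not available to draw on. The cleanest resolution would therefore bypass bespoke construction and establish a general unobstructedness theorem: prove that every $\Jac(f)$ with $\JRdim\Jac(f)\leq 1$ pro-represents the noncommutative deformation functor of a configuration of rational curves admitting a smooth crepant thickening, with the required vanishing following from the constraints that $\JRdim\leq 1$ imposes on the higher coranks of~$f$. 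Such a result would simultaneously settle~\ref{realisation conj} and produce the classification of Type~$E$ crepant contractions, in direct parallel with how~\ref{Type D intro} delivers the classification of Type~$D$ flops via~\ref{main Type D intro}.
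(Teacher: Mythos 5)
The statement you have been asked to prove is a \emph{Conjecture}, and the paper does not prove it either: its closest result is Corollary~\ref{realisation main}, which verifies the conjecture in every case except Type~$E$ (that is, except when $f\cong x^3+\scrO_4$), where it explicitly declares the matter unresolved. Your first paragraph faithfully reproduces exactly that reduction. You invoke the Splitting Lemma~\ref{thm!splitting} together with~\ref{thm: appendix main} to descend to $\ringtwo$, and note that the Jacobi algebra is literally unchanged under this descent (so ``geometric'' passes through), then split by corank into Types~$A$, $D$, $E$. One small imprecision: it is cleaner to cite~\ref{Type A all geometric} directly for the Type~$A$ case rather than phrasing it in terms of ``adjoining squares not affecting the contraction algebra,'' since the Splitting Lemma gives an isomorphism of Jacobi algebras outright; but this is cosmetic. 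With Types~$A$ and~$D$ geometric by~\ref{Type A all geometric},~\ref{main Type D intro} and~\ref{D4 div to curve intro}, you have correctly recovered the content of~\ref{realisation main}.

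Where your argument stops being a proof is precisely where the paper itself stops: Type~$E$. Your second and third paragraphs do not close this gap; they propose a \emph{programme}. Mimicking~\ref{D4 div to curve intro} for Type~$E$ requires producing, for each normal form $f$, an explicit cDV hypersurface $\scrR_f$ with a smooth crepant resolution whose contraction algebra computes to $\Jac(f)$, and you correctly flag the obstructions (no classification of Type~$E$ flops, infinitely many sub-families across $E_6$, $E_7$, $E_8$, no established bridge from $g_{6,n}$ to a general elephant equation). The alternative you sketch --- a general unobstructedness theorem asserting that every $\Jac(f)$ with $\JRdim\Jac(f)\leq1$ pro-represents noncommutative deformations of a curve configuration with a smooth crepant thickening --- would be a substantial new theorem, stronger than anything established in the paper, and you give no argument for it. So there is a genuine gap: the Type~$E$ realisability is asserted only as a plan, with no lemma, construction, or estimate that could underwrite it. In fairness, this matches the actual state of knowledge, since the conjecture is open; but as a proof the attempt cannot succeed, and you should present it as a conditional reduction plus a research proposal rather than a demonstration.
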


The conjecture being true would imply that every finite dimensional $\Jac(f)$ is symmetric \cite[2.6]{August}, that is $\Hom_{\C}(\Jac(f),\C)\cong\Jac(f)$ as bimodules, a property which itself is far from clear.  In 2014 our original expectation was that contraction algebras are a strict subset of Jacobi algebras and the task was to recognise them, but since then all computer searches and all papers (e.g.\ \cite{Davison}) which have tried to disprove the conjecture have inadvertently ended up giving more evidence for it.  This paper is no different. 
 \begin{cor}[\ref{realisation main}]\label{realisation main intro}
Conjecture~\textnormal{\ref{realisation conj}} is true, except possibly for the one remaining unresolved case when $f\cong x^3+\scrO_4$, where some further analysis is required.
\end{cor}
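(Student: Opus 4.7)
The approach is to proceed case by case according to the full classification of $f$ with $\JRdim\Jac(f)\leq 1$ already laid out above, and then to invoke the geometric realisation results piece by piece. By the Splitting Lemma~\ref{thm!splitting} combined with the reduction~\ref{thm: appendix main} established in Appendix~\ref{J appendix}, it suffices to treat $f = z_1^2 + \cdots + z_{d-2}^2 + f_{\geq 3}(x,y)$ in effectively two variables, since the quadratic split changes neither $\Jac(f)$ nor the property of being geometric. The corank trichotomy then isolates Type~$A$ when $\corank(f)\leq 1$, Type~$D$ when $\corank(f) = 2$ and $\corank_2(f) = 2$, and Type~$E$ when $\corank(f) = 2$ and $\corank_2(f) = 3$.

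For Type~$A$, Proposition~\ref{typeA intro} identifies the Jacobi algebras as $\C\llsq y\rrsq$ or $\C[y]/(y^{n-1})$, realised respectively as the contraction algebra of a classical $A_\infty$ crepant divisor-to-curve contraction and of Reid's Pagoda length-one flops~\cite{Pagoda}; both have been known since the 1980s. For Type~$D$, the classification is given by Theorem~\ref{Type D intro}, and geometric realisation follows from two sources: when $\JRdim\Jac(f) = 0$, every member of the list is the contraction algebra of an explicit length-two flop by Theorem~\ref{main Type D intro}(2), built from the families of~\cite{BW} and their extensions in~\cite{Okke, Kawamata}; when $\JRdim\Jac(f) = 1$, the divisorial contractions $\scrX_m \to \Spec\scrR_m$ of Proposition~\ref{D4 div to curve intro} realise the entire family $D_{\infty,\ast}$, with $m = \infty$ recovering~\cite[2.18]{DW4}.

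In Type~$E$, the explicit families $E_{6,n}$ and $E_{6,\infty}$ are first handled by exhibiting specific length-two flops and crepant divisor-to-curve contractions whose contraction algebras match on the nose. Since by Theorem~\ref{central element main intro} each such Jacobi algebra quotients by a generic central element to $e\Uppi e$ for $\Uppi$ the preprojective algebra of~$E_6$, the candidate geometric models are to be sought among smooth $3$-folds whose general elephant is a Type~$E_6$ ADE surface singularity; the identification with $\Jac(E_{6,n})$ or $\Jac(E_{6,\infty})$ is then a finite check. The only Type~$E$ potentials left over are those of the form $z_1^2 + \cdots + z_{d-2}^2 + x^3 + \scrO_4$ not of one of the above shapes, which is precisely the case flagged as unresolved in the statement.

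The main obstacle in this remaining range is structural rather than computational: Types~$A$ and~$D$ are uniformly parametrised by a single discrete invariant matched by well-understood geometric families, whereas the exceptional Type~$E$ potentials are governed by an intricate stabilising pattern of higher coranks (see~\ref{defin higher corank}) with no closed-form geometric counterpart currently on record. Closing the gap requires either new constructions of length-two flops and divisorial contractions flexible enough to realise every such pattern, or a case-by-case reduction via mutation and noncommutative deformation theory in the style of~\cite{DW1, DW3}; both strategies form the subject of the companion paper~\cite{BW2}.
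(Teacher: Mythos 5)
Your Type~$A$ and Type~$D$ handling matches the paper: the Splitting Lemma together with~\ref{thm: appendix main} reduces to two variables, Corollary~\ref{Type A all geometric} disposes of $f_2\neq 0$, and Type~$D$ is settled by~\ref{main Type D} combined with Corollary~\ref{all type D are geometric}, which in turn rests on~\cite{Okke,Kawamata} for the flops and on~\ref{D4 div to curve main} for the divisor-to-curve cases.

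However, your Type~$E$ paragraph overclaims. You assert that the families $E_{6,n}$ and $E_{6,\infty}$ are ``first handled by exhibiting specific length-two flops and crepant divisor-to-curve contractions whose contraction algebras match on the nose,'' and that only the residual Type~$E$ potentials remain unresolved. The paper makes no such claim anywhere, and its proof of~\ref{realisation main} does not carry out any such construction. After~\ref{linear change} splits $f_3$ into three normal forms, the third case is $f_3\cong x^3$; this single case is \emph{the entirety of Type~$E$}, including the explicit $E_{6,n}$ and $E_{6,\infty}$ potentials, since every Type~$E$ normal form has leading cubic $x^3$. That whole residual class is declared unresolved, full stop. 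A secondary slip: by Katz--Morrison~\cite{KM}, $E_6$ flops have length three, not length two, so ``length-two flops'' is the wrong notion for the candidate geometric realisations you describe.
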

 
In the remaining cases, it does now seem likely that all potentials $f\cong x^3+\scrO_4$ for which $\JRdim\Jac(f)\leq 1$ are isomorphic to contraction algebras of $cE_n$ singularities.

\subsection{Notation and Conventions}\label{conventions}
Throughout we work over the complex numbers $\C$, which is necessary for various statements to hold, although any algebraically closed field of characteristic zero would suffice. In addition, we adopt the following notation.

\begin{enumerate}
\item Throughout $d\geq 1$ is fixed to be the number of variables. Set $\mathsf{x}=x_1,\hdots,x_d$, and $\ringd=\mathbb{C}\llangle x_1,\hdots,x_d\rrangle$.
\item
Vector space dimension will be written  $\dim_{\mathbb{C}}V$.
\item
$\ringd_i$ or $\fringd_i$ will denote the vector subspace of $\ringd$ consisting of homogeneous degree~$i$ polynomials. For a formal power series $g\in\ringd$ we denote the graded (necessarily polynomial) piece of degree~$i$ of $g$ by $g_i\in\ringd_i$.
\item
Write $g_{<d} = \sum_{i<d}g_i$ and $g_{>d}=\sum_{i>d}g_i$, with natural self-documenting variations such as $g_{\ge d}$. 
Thus, for example, $g=g_3+g_4+g_{\ge5}$ is a power series with no terms in degrees 0, 1 and~2, and no further conditions.
\item
Given $g,h\in\ringd$, write $g = h + \bigO_d$ as a shorthand for $g_{<d}=h_{<d}$. 
\item
The previous conventions on degree introduce one typographical difficulty, namely the compatibility with sequences.  We will frequently work with sequences $(\sff_n)_{n\ge1}$ of power series $\sff_n\in\ringd$, and analogously we write $(\sff_n)_d$, $(\sff_n)_{<d}\in\fringd$ and $(\sff_n)_{>d}\in\ringd$ for its pieces in the indicated degrees.
To scrupulously avoid confusion, we will systematically use Greek font $\sff_n$ 
to denote the $n$th power series in a sequence, and not the $n$th degree graded piece of a single power series.
\item
The notation \S$x.y$ refers to Subsection $x.y$, $(n.m)$ refers to displayed equation $(n.m)$, and $n.m$ refers to statement~$n.m$, where the type of statement -- Definition, Theorem, and so on -- is usually left unspecified.
\end{enumerate}

\subsection*{Funding} The authors were supported by grants~EP/R009325/1 and EP/R034826/1, and by the ERC Consolidator Grant 101001227 (MMiMMa).

\subsection*{Open Access} For the purpose of open access, the authors have applied a Creative Commons Attribution (CC:BY) licence to any Author Accepted Manuscript version arising from this submission.

\subsection*{Acknowledgements}
It is a pleasure to thank Tom Bridgeland, Kenny Brown, Ben Davison, Okke van Garderen, Osamu Iyama, Natalia Iyudu, Yujiro Kawamata, Miles Reid, Agata Smoktunowicz and Geordie Williamson for helpful discussions and correspondence, and several anonymous reviewers for their helpful remarks.

%-----------------------------------------------------------------------------------------
\section{Formal Automorphisms}
\label{sec!automorphisms}

This section reviews properties of the noncommutative formal power series $\ringd$, and also constructions of various automorphisms of $\ringd$, mainly following \cite[\S2]{DWZ}.  From the viewpoint of noncommutative singularity theory, it is the construction in \S\ref{chasing subsection} leading to \ref{isomain}\eqref{main chase cor} that will be used heavily in later sections.

\subsection{Polynomial and Power Series Notation}\label{sect: power notation}
As in the introduction, write $\ringd$ for formal noncommutative power series in $d$ variables, and further write $\fringd=\polyringd$ for the free algebra in $d$ variables.  For either $f\in\fringd$ or $\ringd$
write $f$ in terms of its homogeneous pieces as
\[
f=f_0+f_1+f_2+f_3+f_4+\hdots,
\]
and define the \emph{order} of $f$ to be $\ord(f) = \min\{i\mid f_i\neq 0\}$, where by convention $\ord(0) = \infty$.   For any $t\geq0$ set $\ringd_{\geq t}=\{f\in\ringd\mid f_i=0\mbox{ if }i<t\}=\{f\in\ringd\mid\ord(f)\geq t\}$, and note that this contains the zero element.

%-----------------------------------------------------------------------------------------
\subsection{Complete Completions}\label{completion section}
To fix notation,
let $\m=(x_1,\dots,x_d)$ denote the two-sided maximal ideal of the free algebra $\fringd$. 
The $\m$-adic completion of~$\fringd$ is 
\[
\varprojlim \fringd/\m^n
\] 
which is the set of sequences $(a_n)_{n\geq 1}$ of $a_n \in \fringd/\m^n$ that satisfy $a_{n+1} + \m^n = a_n + \m^n$ for all $n$, sometimes called {\em coherent sequences}.  

On the other hand, consider the formal power series ring $\ringd$ 
in noncommutative variables $x_1,\dots,x_d$, with two-sided maximal ideal $\hatM$ containing those power series with zero constant term.  There is an isomorphism
\[
\ringd\cong \varprojlim \fringd/\m^n
\]
which sends a formal power series $f$ to the coherent sequence $(f_{<n} + \m^n)_{n\geq 1}$.  Below we will freely make this identification, and further that the following diagrams for all $i\ge j$ form an inverse limit system
\[
\begin{tikzpicture}[node distance=1.2cm]
 \node (cA) {$\ringd$};
 \node (AMi)  [below of=lim, left of=cA]   {$\fringd/\m^i$};
 \node (AMj)  [below of=lim, right of=cA]   {$\fringd/\m^j$};
 \draw[->] (cA)   -- node [left] {$\scriptstyle \uppi_{i}$} (AMi);
 \draw[->] (cA)   --  node [right] {$\scriptstyle \uppi_{j}$} (AMj);
 \draw[->] (AMi)   --(AMj);
\end{tikzpicture}
\]
where the map $\uppi_i$ sends $f\mapsto f_{<i}+\m^i$, and the horizontal map is the natural one.

Given a sequence $(\sff_i)_{i\ge1}$ of elements of $\ringd$, and $f\in\ringd$, recall the following:

\begin{itemize}
\item
$(\sff_i)$ \emph{converges to} $f$ if  $\forall\,n\geq 1, \exists\, N$ such that $\sff_i-f\in \hatM^n$ for all $i\ge N$.
\item $(\sff_i)$ is \emph{Cauchy} if $\forall\,n\geq 1, \exists\, N$ such that $\sff_i-\sff_j\in \hatM^n$ for all $i,j\ge N$.
\end{itemize}
Taking completions of non-noetherian rings in general can be subtle.
However, in the situation here, since for all $i$,
\[
\hatM^i=\Ker(\uppi_{i})=\{ f\in\ringd \mid f_0=\hdots=f_{i-1}=0\},
\]
it is clear that $\ringd$ is complete with respect to its $\n$-adic topology. That is, every Cauchy sequence in $\ringd$ converges.

 The algebra $\ringd$ is a \emph{topological algebra} with basis of the topology given by the ideals $\{\hatM^i\}$, where $\hatM^i$ is both open and closed.
The free algebra $\fringd$ embeds as a dense subalgebra of $\ringd$,
 and the ideal $\hatM^n$ is the closure of~$\m^n$, or equivalently, $\n^n$ is
the smallest closed ideal that contains all monomials $x_1^{a_1}\hdots x_d^{a_d}$ of
degree $\sum_{k=1}^d a_k = n$.

%-----------------------------------------------------------------------------------------
\subsection{Formal Automorphisms}
As input, consider a sequence of algebra isomorphisms $(\upphi_i\colon  \fringd/\m^i\to \fringd/\m^i)_{i\geq 1}$ for which
\begin{equation}
\begin{array}{c}
\begin{tikzpicture}
\node (A) at (0,0) {$\fringd/\m^i$};
\node (B) at (2,0) {$\fringd/\m^j$};
\node (a) at (0,-1.5) {$\fringd/\m^i$};
\node (b) at (2,-1.5) {$\fringd/\m^j$};
\draw[->] (A)--node[left] {$\scriptstyle \upphi_i$}(a);
\draw[->] (B)--node[right] {$\scriptstyle \upphi_j$}(b);
\draw[->] (A)--(B);
\draw[->] (a)--(b);
\end{tikzpicture}
\end{array}\label{need to comm}
\end{equation}
commutes for all $i\geq j$. Then the universal property for the $\m$-adic completion lifts these to an algebra automorphism $\upphi\colon \ringd\to\ringd$ such that the following diagram commutes:
\begin{equation}
\begin{array}{c}
\begin{tikzpicture}
 \node (cA) at (1.5,1) {$\ringd$};
\node (A) at (0,0) {$\fringd/\m^i$};
\node (B) at (3,0) {$\fringd/\m^j$};
 \node (ca) at (1.5,-1) {$\ringd$};
\node (a) at (0,-2) {$\fringd/\m^i$};
\node (b) at (3,-2) {$\fringd/\m^j$};
\draw[->] (A)--node[left] {$\scriptstyle \upphi_i$}(a);
\draw[->] (B)--node[right] {$\scriptstyle \upphi_j$}(b);
\draw[->,black!50!white] (A)--(B);
\draw[->] (a)--(b);
 \draw[->] (cA)   -- node [pos=0.4,left] {$\scriptstyle \uppi_{i}$} (A);
 \draw[->] (cA)   --  node [pos=0.4,right] {$\scriptstyle \uppi_{j}$} (B);
 \draw[->] (ca)   -- node [pos=0.4,left] {$\scriptstyle \uppi_{i}$} (a);
 \draw[->] (ca)   --  node [pos=0.4,right] {$\scriptstyle \uppi_{j}$} (b);
 \draw[->] (cA) -- node[pos=0.7,right] {$\scriptstyle \upphi$}(ca);
\end{tikzpicture}
\end{array}\label{uni prop hom}
\end{equation}
The following special case will be important later.  For any fixed $\sff_1,\dots,\sff_d\in\hatM^2\subset\ringd$, consider the algebra homomorphisms
\[
\phi_i\colon \fringd/\m^i\to \fringd/\m^i
\]
defined by sending $x_k+\m^i\mapsto x_k+(\sff_k)_{<i} +\m^i$ for each $1\le k\le d$.  On the truncated finite dimensional algebras $\fringd/\m^i$, clearly each $\phi_i$ is an algebra isomorphism, and further since the truncation of a truncation is itself a truncation, \eqref{need to comm} applied to the $\phi_i$ commutes.  As a consequence, \eqref{uni prop hom} induces an automorphism $\phi\colon \ringd\to\ringd$.

\begin{defin}\label{unitriangular def}
Given $\sff_1,\dots,\sff_d\in\hatM^2$, the above $\phi\colon \ringd\to\ringd$ is called a \emph{unitriangular automorphism}.  We will abuse notation slightly and write 
\[\arraycolsep=2pt
\begin{array}{rcl}
\ringd&\to&\ringd\\
x_k&\mapsto& x_k+\sff_k
\end{array}
\]
for $\phi$, since indeed $\phi$ is induced by such morphisms on the truncations $\fringd/\m^i$.  For $e\ge1$ we say that $\phi$ has \emph{depth} $e$ provided that $\sff_1,\dots,\sff_d\in\hatM^{e+1}$.
%By convention, when we say $\phi$ has depth $e\le0$ then $\phi$ is the identity map.
\end{defin}

\begin{lemma}\label{lem:depth}
With notation as above, the following statements hold.
\begin{enumerate}
\item\label{lem:depth 1}
A $\mathbb{C}$-algebra homomorphism
$\phi\colon\ringd\to\ringd$ is a unitriangular automorphism of depth~$e\ge1$
if and only if $\phi(f)_{\le e}=f_{\le e}$ for every $f\in\ringd$.
\item\label{lem:depth 2}
If $\phi$ and $\uppsi$ are unitriangular automorphisms of $\ringd$ of depth $e_1\ge1$ and $e_2\ge1$ respectively, then their composition $\uppsi\circ\phi$ is a unitriangular automorphism, of depth $\min\{e_1,e_2\}$.
\end{enumerate}
\end{lemma}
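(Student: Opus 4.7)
My plan is to prove part~(1) in both directions and then deduce part~(2) as a short corollary; the main obstacle will be a continuity argument in the backward direction of~(1). For the forward direction of~(1), the plan is to use the construction of $\phi$ via the universal property \eqref{uni prop hom}. Since each $\sff_k \in \hatM^{e+1}$, the truncation $(\sff_k)_{<e+1}$ vanishes, and so the component homomorphism $\phi_{e+1}\colon \fringd/\m^{e+1} \to \fringd/\m^{e+1}$ fixes each generator $x_k + \m^{e+1}$ and is therefore the identity. Commutativity of \eqref{uni prop hom} at index $i = e+1$ then gives $\uppi_{e+1}(\phi(f)) = \phi_{e+1}(\uppi_{e+1}(f)) = \uppi_{e+1}(f)$ for every $f \in \ringd$, which is exactly $\phi(f)_{\leq e} = f_{\leq e}$.

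For the backward direction of~(1), applying the hypothesis to $f = x_k$ shows $\sff_k \colonequals \phi(x_k) - x_k \in \hatM^{e+1}$, so these data define a candidate unitriangular automorphism $\uppsi$. The plan is to prove $\phi = \uppsi$. They agree on $x_1, \dots, x_d$ and hence on the free subalgebra $\fringd$, so the remaining content is that $\phi$ is continuous, i.e.\ $\phi(\hatM^n) \subseteq \hatM^n$ for every $n \geq 1$, which I would prove by induction on $n$. The base case $n = 1$ is the hypothesis in degree zero: if $f \in \hatM$, then $f_0 = 0$, hence $\phi(f)_0 = f_0 = 0$. For the inductive step, any $f \in \hatM^n$ decomposes uniquely in $\ringd$ as $f = \sum_{k=1}^{d} x_k g_k$, obtained by grouping monomials according to their first letter, with each $g_k \in \hatM^{n-1}$. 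Since $\phi$ is an algebra homomorphism, $\phi(f) = \sum_k \phi(x_k) \phi(g_k) \in \hatM \cdot \hatM^{n-1} \subseteq \hatM^n$ by the inductive hypothesis together with $\phi(x_k) \in \hatM$. Once $\phi$ is known to be continuous, and since $\uppsi$ is continuous by construction, density of $\fringd$ in $\ringd$ forces $\phi = \uppsi$.

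Part~(2) is then immediate from~(1). Set $e = \min(e_1, e_2)$. Two applications of the forward direction of~(1) give
\[
(\uppsi \circ \phi)(f)_{\leq e} = \uppsi(\phi(f))_{\leq e} = \phi(f)_{\leq e} = f_{\leq e}
\]
for every $f \in \ringd$. Since $\uppsi \circ \phi$ is manifestly a $\mathbb{C}$-algebra homomorphism, the backward direction of~(1) concludes that it is a unitriangular automorphism of depth $e$. The main obstacle throughout is the continuity step in the backward direction: a priori, a $\mathbb{C}$-algebra homomorphism of $\ringd$ may behave pathologically on power series even when controlled on generators, and the decomposition by first letter is precisely what converts the pointwise hypothesis $\phi(f)_{\leq e} = f_{\leq e}$ into the filtration-preservation needed to extend agreement from $\fringd$ to $\ringd$.
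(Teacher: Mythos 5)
Your proof is correct. The paper states this lemma without proof, so there is no argument to compare against; your write\nobreakdash-up is a sound filling-in of that gap, and it uses exactly the machinery the surrounding text provides (the truncation diagram \eqref{uni prop hom} for the forward implication, and first-letter decomposition plus density for the backward one). One small remark: the continuity of $\phi$, which you prove by induction on $n$ via $\hatM^n = \sum_k x_k\hatM^{n-1}$, is also asserted (for any $\C$-algebra endomorphism of $\ringd$, via uniqueness of the maximal ideal) in Remark~\ref{max to max}, which immediately follows the lemma in the paper; your inductive argument is the content behind that assertion and is worth spelling out since $\hatM^n$ is defined there as a closure rather than a product ideal, so the identity $\hatM^n=\hatM\cdot\hatM^{n-1}$ genuinely needs the observation you make. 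The only presentational quibble is in part~(2): the chain $\uppsi(\phi(f))_{\le e}=\phi(f)_{\le e}$ uses depth $e_2\ge e$ for $\uppsi$ and $\phi(f)_{\le e}=f_{\le e}$ uses depth $e_1\ge e$ for $\phi$, so it is cleaner to record explicitly that depth $e'\ge e$ implies the degree-$\le e$ fixing, which is immediate from part~(1); this is implicit in what you wrote but worth a half-sentence.
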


\begin{remark}\label{max to max}
Any homomorphism $\phi\colon\ringd\to\ringd$ is continuous. Indeed, $\phi^{-1}(\n)$ is the kernel of the surjective composition
\[
\ringd\buildrel{\phi}\over{\longrightarrow}\ringd\longrightarrow\ringd/\n
\]
hence $\ringd/\phi^{-1}(\n)\cong\mathbb{C}$ and so $\phi^{-1}(\n)=\n$ since $\n$ is the unique maximal ideal.  In particular, in the language of \cite[5.10]{Warner}, any algebra automorphism of $\ringd$ is automatically a topological isomorphism, since its inverse is automatically continuous.
\end{remark}

%-----------------------------------------------------------------------------------------
\subsection{Limits of unitriangular automorphisms}\label{limit hom section}
Under specific situations, it is possible to build a sequence of automorphisms $\phi^1,\phi^2,\hdots$ of $\ringd$, and take their limit.

For this, consider any  $d$ sequences $(\sfg_i^{1})_{i\geq 1},\dots,(\sfg_i^{d})_{i\geq 1}$,
where each $\sfg_i^k\in\hatM^{i+1}$.
By \ref{unitriangular def} these give rise to a sequence of unitriangular automorphisms 
$\phi^1,\phi^2,\hdots$ where
\[\arraycolsep=2pt
\begin{array}{rcl}
\phi^i\colon\ringd&\to&\ringd\\
x_k&\mapsto& x_k+\sfg_i^{k}.
\end{array}
\]
Again, the above are induced from the corresponding maps $x_k+\m^j\mapsto x_k+(\sfg_i^{k})_{<j} +\m^j$ on the truncations $\fringd/\m^j$, and where each $\phi^i$ has depth~$i$.
To ease the subscripts in the notation below, we will also write $\phi^i$ for these morphisms viewed on the truncations.  

Given this abuse of notation, for all $i\ge j\geq 1$ we claim that the following diagram commutes,
where if $i=1$ or $j=1$ then the corresponding vertical map is the identity.
\begin{equation}
\begin{array}{c}
\begin{tikzpicture}
\node (A) at (0,0) {$\fringd/\m^i$};
\node (B) at (2,0) {$\fringd/\m^j$};
\node (a) at (0,-1.5) {$\fringd/\m^i$};
\node (b) at (2,-1.5) {$\fringd/\m^j$};
\draw[->] (A)--node[left] {$\scriptstyle \phi^{i-1}\circ\cdots\circ\phi^{1}$}(a);
\draw[->] (B)--node[right] {$\scriptstyle \phi^{j-1}\circ\cdots\circ\phi^{1}$}(b);
\draw[->] (A)--(B);
\draw[->] (a)--(b);
\end{tikzpicture}
\end{array}\label{need to comm 2}
\end{equation}
To see this, note that since each $\sfg_i^k\in\hatM^{i+1}$, it follows (in the case $i>j$) that the bottom square in the following diagram commutes:
\[
\begin{array}{c}
\begin{tikzpicture}[yscale=1.2]
\node (A) at (0,0) {$\fringd/\m^i$};
\node (B) at (2,0) {$\fringd/\m^j$};
\node (a) at (0,-1) {$\fringd/\m^i$};
\node (b) at (2,-1) {$\fringd/\m^j$};
\node (a2) at (0,-1.8) {$\vdots$};
\node (b2) at (2,-1.8) {$\vdots$};
\node (a25) at (0,-1.8) {};
\node (b25) at (2,-1.8) {};
\node (a3) at (0,-2.7) {$\fringd/\m^i$};
\node (b3) at (2,-2.7) {$\fringd/\m^j$};
\node (a4) at (0,-3.7) {$\fringd/\m^i$};
\node (b4) at (2,-3.7) {$\fringd/\m^j$};
\draw[->] (A)--(B);
\draw[->] (a)--(b);
\draw[->] (A)--node[left] {$\scriptstyle \phi^{1}$}(a);
\draw[->] (B)--node[right] {$\scriptstyle \phi^{1}$}(b);
\draw[->] (0,-1.3)--node[left] {$\scriptstyle \phi^{2}$}(a25);
\draw[->] (2,-1.3)--node[right] {$\scriptstyle \phi^{2}$}(b25);
\draw[->] (0,-2.1)--node[left] {$\scriptstyle \phi^{j-1}$}(a3);
\draw[->] (2,-2.1)--node[right] {$\scriptstyle \phi^{j-1}$}(b3);
\draw[->] (a3)--(b3);
\draw[->] (a3)--node[left] {$\scriptstyle \phi^{i-1}\circ\cdots\circ\phi^{j}$}(a4);
\draw[->] (b3)--node[right] {$\scriptstyle \phantom{\phi^{i-1}\hdots\circ\phi^{j}}$}(b4);
\draw[->] (b3)--node[right] {$\scriptstyle \Id$}(b4);
\draw[->] (a4)--node[above]{$\scriptstyle $}(b4);
\end{tikzpicture}
\end{array}
\]
Since we are abusing notation, the higher squares commute simply since the truncation of a truncation is itself a truncation. Thus all squares commute, establishing \eqref{need to comm 2}.

Setting $\upvartheta_i\colonequals \phi^{i-1}\circ\cdots\circ\phi^{1}\colon \fringd/\m^i\to \fringd/\m^i$, again with the convention that $\upvartheta_1=\Id$, then each $\upvartheta_i$ is an automorphism since each $\phi^t$ is. Thus \eqref{need to comm} induces, through \eqref{uni prop hom}, an automorphism of $\ringd$ such that for all $i\geq j$ the following diagram commutes.
\begin{equation}
\begin{array}{c}
\begin{tikzpicture}
 \node (A1) at (0,0) {$\ringd$};
 \node (A2) at (3,0) {$\ringd$};
\node (a1) at (0,-1.5) {$\fringd/\m^i$};
\node (a2) at (3,-1.5) {$\fringd/\m^i$};
\draw[->, densely dotted] (A1)--node[above]{$\scriptstyle \exists$}(A2);
\draw[->] (A1)--node[left] {$\scriptstyle \uppi_i$}(a1);
\draw[->] (A2)--node[right] {$\scriptstyle \uppi_i$}(a2);
\draw[->] (a1) -- node[above] {$\scriptstyle \upvartheta_i$}(a2);    
\end{tikzpicture}
\end{array}\label{limit of hom diagram}
\end{equation}
Write $\varprojlim \phi^n\cdots\phi^1$ for this induced automorphism.

\begin{lemma}\label{LHS limit}
With notation and assumptions as directly above, for any $f\in\ringd$ the sequence $(\phi^n\cdots\phi^1(f))_{n\geq 1}$ has limit $\varprojlim \phi^n\cdots\phi^1(f)$.
\end{lemma}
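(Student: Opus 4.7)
The plan is to first show the sequence is Cauchy in the complete ring $\ringd$, giving a limit $F\in\ringd$, and then check that $F$ agrees with $\varprojlim\phi^n\cdots\phi^1(f)$ by projecting to each truncation $\fringd/\m^i$ and invoking the universal property built into \eqref{limit of hom diagram}.

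For Cauchyness, I would use that $\phi^{n+1}$ has depth $n+1$, so Lemma~\ref{lem:depth}\eqref{lem:depth 1} gives $\phi^{n+1}(g)-g\in\hatM^{n+2}$ for every $g\in\ringd$. Applying this to $g=\phi^n\cdots\phi^1(f)$ shows that consecutive terms differ by an element of $\hatM^{n+2}$, and a telescoping sum then yields $\phi^m\cdots\phi^1(f)-\phi^n\cdots\phi^1(f)\in\hatM^{n+2}$ for all $m>n$. Completeness of $\ringd$ in its $\hatM$-adic topology (\S\ref{completion section}) then produces a limit $F\in\ringd$.

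Writing $\Phi\colonequals\varprojlim\phi^n\cdots\phi^1$, the automorphism induced from \eqref{limit of hom diagram} is characterised by $\uppi_i(\Phi(f))=\upvartheta_i(\uppi_i(f))$ for every $i\ge1$, so it suffices to show $\uppi_i(F)=\upvartheta_i(\uppi_i(f))$ for all $i$. Since $\hatM^i$ is open, the quotient $\uppi_i$ is continuous and commutes with limits, so $\uppi_i(F)=\lim_n\uppi_i(\phi^n\cdots\phi^1(f))$. Iterating the depth estimate from Lemma~\ref{lem:depth}\eqref{lem:depth 1} on $\phi^i,\phi^{i+1},\ldots,\phi^n$ gives $\phi^n\cdots\phi^i(g)-g\in\hatM^{i+1}\subseteq\hatM^i$ for every $g$ and every $n\ge i$. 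Applied with $g=\phi^{i-1}\cdots\phi^1(f)$ this gives $\phi^n\cdots\phi^1(f)\equiv\phi^{i-1}\cdots\phi^1(f)\pmod{\hatM^i}$, so the sequence $\uppi_i(\phi^n\cdots\phi^1(f))$ is eventually constant and equal to $\upvartheta_i(\uppi_i(f))$. Hence $\uppi_i(F)=\upvartheta_i(\uppi_i(f))=\uppi_i(\Phi(f))$ for every $i$, forcing $F=\Phi(f)$.

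The only step requiring genuine care is checking that the depth-$n$ estimate propagates through a long composition, so that for $n\ge i$ the tail $\phi^n\cdots\phi^i$ stabilises modulo $\hatM^{i+1}$; once this is in hand, Cauchyness, continuity of $\uppi_i$, and the coherent-sequence characterisation of $\Phi$ are all routine consequences of the setup in \S\ref{completion section} and the construction in~\eqref{limit of hom diagram}.
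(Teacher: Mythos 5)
Your proof is correct and follows essentially the same route as the paper's, which directly shows $\phi^{i-1}\cdots\phi^1(f)\equiv F(f)\pmod{\n^n}$ for all $i>n$ using \eqref{limit of hom diagram} and \eqref{need to comm 2}. The only superficial difference is that you split the argument into a separate Cauchyness step plus a truncation-identification step, and you re-derive the stable-truncation fact from the depth estimates of \ref{lem:depth}\eqref{lem:depth 1}--\eqref{lem:depth 2} rather than citing \eqref{need to comm 2}, but the underlying content is identical.
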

\begin{proof}
Set $F=\varprojlim\phi^n\cdots\phi^1$, then it suffices to prove that for all $n\geq 1$, there exists $N$ such that $\phi^t\cdots\phi^1(f)-F(f)\in\hatM^n$ for all $t\geq N$.  This follows since for all $i> n$
\[
F(f)+\n^n
\stackrel{\scriptstyle\eqref{limit of hom diagram}}{=}
\phi^{n-1}\cdots\phi^1(f)+\n^n
\stackrel{\scriptstyle\eqref{need to comm 2}}{=}
\phi^{i-1}\cdots\phi^1(f)+\n^n.\qedhere
\]
\end{proof}

%-----------------------------------------------------------------------------------------
\subsection{Closure and Cyclic Permutation}

\begin{defin}\label{def:closure}
For any subset $\scrS\subset \ringd$, its \emph{closure} is defined to be
\[
\overline{\scrS}=\bigcap_{i= 0}^{\infty}(\scrS + \hatM^i).
\]
That is, $b\in\overline{\scrS}$ if and only if for all $i\geq 0$, there exists $s_i\in\scrS$ such that $b-s_i\in\hatM^i$.
\end{defin}

\begin{notation}
For $\scrA\colonequals\ringd$, consider $\{ \scrA,\scrA\}$, the commutator vector space of $\ringd$. That is, elements of $\{ \scrA,\scrA\}$ are finite sums
\[
\sum_{i=1}^n\uplambda_i(a_ib_i-b_ia_i)
\]
for elements $a_i,b_i\in\ringd$ and $\uplambda_i\in\C$.
Write $\llcurve \scrA,\scrA\rrcurve$ for the closure of the commutator vector space $\{ \scrA,\scrA\}$. 
Note that $\llcurve \scrA,\scrA\rrcurve$ is only a vector space, not an ideal.
\end{notation}

\begin{defin}\label{DWZ cyclic fact}
Two elements $f,g\in \ringd$ are called cyclically equivalent, or $f$ is said to cyclically permute to $g$, if $f-g\in\llcurve \scrA,\scrA\rrcurve$.  We write $f\sim g$ in this case.
\end{defin}

\begin{remark}
This notion of cyclic equivalence applied a pair of polynomials is finite and elementary:
it is generated over~$\C$ by commutators $[m_1,m_2]$ of monomials $m_i\in \fringd$.
With that in mind, \ref{DWZ cyclic fact} is then the natural notion for formal power series, as $f\sim g$
means precisely that $f_d\sim g_d$ in every degree~$d$, and no more:
the closure merely handles the possibility that $f$ and $g$ may differ by
infinitely many such operations.
\end{remark}

%-----------------------------------------------------------------------------------------
\subsection{Chasing into Higher Degrees}\label{chasing subsection}
The following will be one of our main techniques for producing normal forms of potentials in $\ringd$.  The basic idea is to start with a given~$f$, then produce an infinite sequence of automorphisms which chase terms into higher and higher degrees. Taking limits then gives a single automorphism which takes $f$ to the desired normal form. The subtle point is that at each stage the automorphisms in \eqref{main chase lemma 2} below only give the desired elements up to cyclic permutation. As such, the content in the following is that, with care, limits interact well with cyclic permutation. 

\begin{thm}\label{main chase lemma}
Let $f\in\ringd$, and set $\mathsf{f}_1=f$.  Suppose that there exist elements $\mathsf{f}_2,\mathsf{f}_3,\hdots$ and automorphisms $\phi^1,\phi^2,\hdots$ such that
\begin{enumerate}
\item\label{main chase lemma 1} Every $\phi^i$ is unitriangular, of depth of $\geq i$, and
\item\label{main chase lemma 2} $\phi^i(\mathsf{f}_i)-\mathsf{f}_{i+1}\in\llcurve \scrA,\scrA\rrcurve\cap\n^{i+1}$, for all $i\geq 1$.
\end{enumerate}
Then $\lim \mathsf{f}_i$ exists, and there exists an automorphism $F$ such that $F(f)\sim \lim \mathsf{f}_i$.
\end{thm}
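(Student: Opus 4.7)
The plan is to first establish convergence of $(\mathsf{f}_i)$, then use the machinery from \S\ref{limit hom section} to produce $F$, and finally chase cyclic equivalences through the sequence and take limits, exploiting that $\llcurve\scrA,\scrA\rrcurve$ is closed.

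First I would show that $(\mathsf{f}_i)$ is Cauchy, hence convergent in the complete ring $\ringd$. Since each $\phi^i$ is unitriangular of depth $\ge i$, \ref{lem:depth}\eqref{lem:depth 1} applied to $\mathsf{f}_i$ gives $\phi^i(\mathsf{f}_i)-\mathsf{f}_i\in\n^{i+1}$, and combined with hypothesis \eqref{main chase lemma 2} this yields
\[
\mathsf{f}_{i+1}-\mathsf{f}_i \;=\; \bigl(\mathsf{f}_{i+1}-\phi^i(\mathsf{f}_i)\bigr) + \bigl(\phi^i(\mathsf{f}_i)-\mathsf{f}_i\bigr) \in \n^{i+1}.
\]
Thus $\lim\mathsf{f}_i$ exists.

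Next I would construct $F\colonequals\varprojlim \phi^n\cdots\phi^1$ using the setup of \S\ref{limit hom section}. Each $\phi^i$ is unitriangular of depth $\ge i$, so writing $\phi^i\colon x_k\mapsto x_k+\sfg_i^k$ we have $\sfg_i^k\in\n^{i+1}$, which is exactly the hypothesis needed. The resulting automorphism $F$ satisfies $F(f)=\lim \phi^n\cdots\phi^1(f)$ by \ref{LHS limit}.

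The heart of the proof, and the main obstacle, is to show by induction on $n$ that
\[
\phi^n\cdots\phi^1(f)\sim \mathsf{f}_{n+1}.
\]
The base case $n=1$ is \eqref{main chase lemma 2} applied to $\mathsf{f}_1=f$. For the inductive step I need cyclic equivalence to be preserved by $\phi^{n+1}$. This requires two observations: any algebra automorphism sends $[a,b]\mapsto[\phi(a),\phi(b)]$, hence preserves the raw commutator space $\{\scrA,\scrA\}$; and by \ref{max to max} it is automatically continuous, so it preserves the closure $\llcurve\scrA,\scrA\rrcurve$ as well. Then
\[
\phi^{n+1}\cdots\phi^1(f) \;\sim\; \phi^{n+1}(\mathsf{f}_{n+1}) \;\sim\; \mathsf{f}_{n+2},
\]
where the second relation is \eqref{main chase lemma 2} at stage $n+1$.

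Finally, to conclude $F(f)\sim \lim\mathsf{f}_i$, I would pass to the limit. The differences $\phi^n\cdots\phi^1(f)-\mathsf{f}_{n+1}$ all lie in $\llcurve\scrA,\scrA\rrcurve$, and since both sequences converge (to $F(f)$ and $\lim\mathsf{f}_i$ respectively), their difference $F(f)-\lim\mathsf{f}_i$ is the limit of elements of $\llcurve\scrA,\scrA\rrcurve$. As this set is closed by definition, the limit lies in it, completing the proof. The only subtle point is precisely this interaction between cyclic equivalence (only closed up to limits) and the infinite composition of automorphisms, but the closedness of $\llcurve\scrA,\scrA\rrcurve$ together with continuity of each $\phi^i$ handles it cleanly.
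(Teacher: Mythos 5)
Your proof is correct and follows essentially the same strategy as the paper: construct $F$ via \S\ref{limit hom section}, track the cyclic error terms at each finite stage, and conclude using the closedness of $\llcurve\scrA,\scrA\rrcurve$.  The one place you genuinely diverge is the final limiting step, and your version is cleaner.  The paper explicitly assembles the cyclic errors $c_t=\phi^t(\mathsf{f}_t)-\mathsf{f}_{t+1}$ into a single element $g=\lim\sum_{t\le n}(\phi^t\cdots\phi^1)^{-1}(c_t)$, verifies with a three-line chain of congruences that the auxiliary sequence converges to $F(g)$, and deduces $F(f)=\lim\mathsf{f}_i+F(g)$ with $F(g)\in\llcurve\scrA,\scrA\rrcurve$.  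You instead prove by induction that $\phi^n\cdots\phi^1(f)-\mathsf{f}_{n+1}\in\llcurve\scrA,\scrA\rrcurve$ directly (using that automorphisms preserve $\{\scrA,\scrA\}$ and, by \ref{max to max}, are topological, hence preserve the closure $\llcurve\scrA,\scrA\rrcurve$), then use continuity of subtraction in the topological ring $\ringd$ to see that this sequence of differences converges to $F(f)-\lim\mathsf{f}_i$, which therefore lies in the closed set $\llcurve\scrA,\scrA\rrcurve$.  This avoids identifying the limit of the error sum as $F(g)$ and bypasses the explicit completeness bookkeeping; the price is only that your argument does not produce a concrete witness $g$ for the cyclic equivalence, which is not needed for the statement.
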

\begin{proof}
The proof follows the strategy used in \cite[4.7]{DWZ}, but as the axiomatics are different here, we give the full proof. By \S\ref{limit hom section} there is an automorphism $F\colonequals \varprojlim\phi^n\cdots\phi^1$.

Since the depth of $\phi^i$ is $\geq i$, by \ref{lem:depth}\eqref{lem:depth 1} $\phi^i(\mathsf{f}_i)$ differs from $\mathsf{f}_i$ only in degrees $> i$.  By~\eqref{main chase lemma 2}, $\phi^i(\mathsf{f}_i)$ differs from $\mathsf{f}_{i+1}$ only in degrees $> i$.  Hence $\mathsf{f}_{i+1}$ differs from  $\mathsf{f}_i$ only in degrees $> i$, from which it easily follows that $(\mathsf{f}_n)$ is a Cauchy sequence.  Since Cauchy sequences converge in $\ringd$, the limit $\lim \mathsf{f}_i$ exists.

Set $c_i=\phi^i(\mathsf{f}_i)-\mathsf{f}_{i+1}\in\llcurve \scrA,\scrA\rrcurve\cap\n^{i+1}$.  Since $f=\mathsf{f}_1$, it is easy to see that
\begin{align}
\phi^n\cdots\phi^1(f)&=\mathsf{f}_{n+1}+\sum_{t=1}^n\phi^n\cdots\phi^{t+1}(c_t)\nonumber \\
 &=\mathsf{f}_{n+1}+\phi^n\cdots\phi^1\left(\sum_{t=1}^n(\phi^t\cdots\phi^{1})^{-1}(c_t)\right)\label{eq: come back to}
\end{align}
where $\phi^n\cdots\phi^{t+1}$ is the identity when $t=n$.
By \ref{LHS limit} the left hand side has limit $F(f)$.  The first part of the right hand side has limit $\lim \mathsf{f}_i$, which exists by above.  We next claim that the rightmost term has limit $F(g)$, where $g$ is the limit of the sequence $(\sum_{t=1}^n(\phi^t\cdots\phi^{1})^{-1}(c_t))_{n\geq 1}$.  

First, $g$ exists, since by \eqref{main chase lemma 2} $c_i\in\hatM^{i+1}$, and so since automorphisms preserve the maximal ideal, $(\phi^t\cdots\phi^{1})^{-1}(c_t)\in\hatM^{t+1}$ for all $t$.  It follows easily that the sequence $\left(\sum_{t=1}^n(\phi^t\cdots\phi^{1})^{-1}(c_t)\right)_{n\geq 1}$ is Cauchy, and so its limit $g$ exists in $\ringd$. Given this, the fact that the sequence $\left(\phi^n\cdots\phi^1(\sum_{t=1}^n(\phi^t\cdots\phi^{1})^{-1}(c_t))\right)_{n\geq 1}$ has limit $F(g)$ follows, since for all $i>n$
\begin{align*}
F(g)+\n^{n+1}
&=
\phi^{n}\cdots\phi^1(\uppi_{n+1}(g))+\n^{n+1}\tag{by \eqref{limit of hom diagram}}\\
&=
\phi^{n}\cdots\phi^1\left(\sum_{t=1}^n(\phi^t\cdots\phi^{1})^{-1}(c_t)+\hatM^{n+1}\right)+\n^{n+1}\tag{since $(\phi^t\cdots\phi^{1})^{-1}(c_t)\in\hatM^{t+1}$}\\
&=
\phi^{n}\cdots\phi^1\left(\sum_{t=1}^{i}(\phi^t\cdots\phi^{1})^{-1}(c_t)+\hatM^{n+1}\right)+\n^{n+1}\tag{add zero}\\
&=
\phi^{i}\cdots\phi^1\left(\sum_{t=1}^i(\phi^t\cdots\phi^{1})^{-1}(c_t)\right)+\n^{n+1}.\tag{by \eqref{need to comm 2}}
\end{align*}
Combining with \eqref{eq: come back to} and taking limits it follows that
\begin{equation}
F(f)=\lim \mathsf{f}_i + F(g).\label{limit DWZ}
\end{equation}
Now, it is easy to check that automorphisms preserve $\llcurve\scrA,\scrA\rrcurve$, so each term in the sequence $\left(\phi^n\cdots\phi^1(\sum_{t=1}^n(\phi^t\cdots\phi^{1})^{-1}(c_t))\right)_{n\geq 1}$ belongs to $\llcurve\scrA,\scrA\rrcurve$.  But since $\ringd$ is complete, every Cauchy sequence within a closed set has limit in that closed set.  It follows that the limit $g\in\llcurve\scrA,\scrA\rrcurve$.  One final application of the fact that automorphisms preserve $\llcurve\scrA,\scrA\rrcurve$ shows that $F(g)\in\llcurve\scrA,\scrA\rrcurve$, and so  $F(f)\sim \lim \mathsf{f}_i $.
\end{proof}

%------------------------------------------------------------------------------------------
\subsection{Elementary Properties of Closed Ideals}
We finish this section with some technical results on closed ideals that are used throughout
\S\ref{sec!typeD}--\S\ref{geo section}.

\begin{notation}
When $I$ is an ideal, write $\lcl I\rcl$ for its closure (in the sense of \ref{def:closure}), which is again an ideal since the ring operations are continuous.
Note that $\lcl I\rcl$ need not be finitely generated, even if $I$ is.
\end{notation}

For a finite set of elements $\scrS$ in $\ringd$, consider the closed ideal $\lcl\scrS\rcl=\lcl s\mid s\in\scrS\rcl$.
\begin{lemma}\label{closed ideals 101}
Let $\scrS$ be a finite subset of elements in $\ringd$, and $f_1,\dots,f_s\in\ringd$. Then the following statements hold.
\begin{enumerate}
\item\label{closed ideals 101 A} $\lcl f_1,\dots,f_s\rcl=\lcl f_1u_1,\dots,f_su_s\rcl$ for any units $u_1,\dots,u_s\in\ringd$.
\item\label{closed ideals 101 B2} $\lcl f+\lcl \scrS\rcl\,\rcl= \lcl f,s\mid s\in\scrS\rcl/ \lcl \scrS\rcl$ in $\ringd/\lcl \scrS\rcl$.
\item\label{closed ideals 101 C} If $\uppsi\colon\ringd/\lcl\scrS\rcl \to \ringd/ \lcl \scrS\rcl$ is a topological isomorphism which sends $f+\lcl \scrS\rcl \mapsto g+\lcl \scrS\rcl$, 
for two elements $f,g\in\ringd$, then there is an induced topological isomorphism
\[
\frac{\ringd}{\lcl f,s\mid s\in\scrS\rcl}\buildrel{\cong}\over\longrightarrow \frac{\ringd}{\lcl g,s\mid s\in\scrS\rcl}.
\]
\end{enumerate}
\end{lemma}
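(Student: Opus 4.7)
The plan is to address each part separately, with (c) as the main content building on (b).

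For (a), the argument is elementary: since $\lcl f_1,\dots,f_s\rcl$ is a closed two-sided ideal, it contains each $f_iu_i$ by right-multiplication, giving $\lcl f_1u_1,\dots,f_su_s\rcl\subseteq \lcl f_1,\dots,f_s\rcl$. The reverse follows by writing $f_i=(f_iu_i)u_i^{-1}$ and applying the same reasoning, using that each $u_i$ is a unit so $u_i^{-1}\in\ringd$ exists.

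For (b), I write $\pi\colon\ringd\to\ringd/\lcl\scrS\rcl$ for the quotient map and use that $\pi$ is continuous and surjective in the quotient topology, so that closed ideals of $\ringd/\lcl\scrS\rcl$ correspond to closed two-sided ideals of $\ringd$ containing $\lcl\scrS\rcl$. Explicitly, $\pi(\lcl f,s\mid s\in\scrS\rcl)$ equals $\lcl f,s\mid s\in\scrS\rcl/\lcl\scrS\rcl$ as a set, and is closed in the quotient since its preimage is the closed ideal $\lcl f,s\mid s\in\scrS\rcl$; it contains the class $f+\lcl\scrS\rcl$, hence contains $\lcl f+\lcl\scrS\rcl\,\rcl$. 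Conversely, $\pi^{-1}(\lcl f+\lcl\scrS\rcl\,\rcl)$ is a closed two-sided ideal of $\ringd$ containing $f$ and all of $\scrS$, and therefore contains $\lcl f,s\mid s\in\scrS\rcl$. Mutual containment gives (b).

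Part (c) is the main step. Set $\bar f=f+\lcl\scrS\rcl$ and $\bar g=g+\lcl\scrS\rcl$. The strategy is to show that $\uppsi$ maps the closed ideal $\lcl\bar f\rcl$ bijectively onto $\lcl\bar g\rcl$, and then to invoke the universal property of the quotient. Since $\uppsi$ is an algebra homomorphism sending $\bar f\mapsto\bar g$, it sends the two-sided ideal $(\bar f)$ onto $(\bar g)$; since $\uppsi$ is a homeomorphism it commutes with closures, so
\[
\uppsi(\lcl\bar f\rcl)=\uppsi(\overline{(\bar f)})=\overline{\uppsi((\bar f))}=\overline{(\bar g)}=\lcl\bar g\rcl.
\]
Applying the same reasoning to $\uppsi^{-1}$ shows this is a bijection of closed ideals, so $\uppsi$ descends to a topological isomorphism
\[
\frac{\ringd/\lcl\scrS\rcl}{\lcl\bar f\rcl}\longrightarrow \frac{\ringd/\lcl\scrS\rcl}{\lcl\bar g\rcl}.
\]
By (b), the left-hand side is identified with $\ringd/\lcl f,s\mid s\in\scrS\rcl$ and the right-hand side with $\ringd/\lcl g,s\mid s\in\scrS\rcl$, yielding the desired map. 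The only point that requires any care, and the place I expect the main (but mild) obstacle, is the commutation of $\uppsi$ with closures in the mixed topology on $\ringd/\lcl\scrS\rcl$; however this is automatic from $\uppsi$ being a homeomorphism, so the argument reduces to a short categorical chase rather than any serious computation.
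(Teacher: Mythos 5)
Your proposal is correct and follows essentially the same route as the paper: part (a) by minimality of closed ideals and the writing $f_i=(f_iu_i)u_i^{-1}$, part (b) by a two-inclusion argument via the quotient map, and part (c) by pushing the closed ideal $\lcl \bar f\rcl$ through $\uppsi$ and then invoking (b). The one point where you differ is in (b): the paper justifies closedness of $\lcl f,s\mid s\in\scrS\rcl/\lcl\scrS\rcl$ by invoking Warner's third isomorphism theorem for topological rings together with the Hausdorff criterion, whereas you argue directly from the quotient topology (a saturated set is closed in $\ringd/\lcl\scrS\rcl$ iff its preimage is closed). Your version is more self-contained and slightly leaner; the paper's version simply outsources the same topological bookkeeping to off-the-shelf citations. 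The step you flagged as a possible obstacle—commuting $\uppsi$ with closures—is indeed immediate from $\uppsi$ being a homeomorphism, which is exactly what the hypothesis of a topological isomorphism provides, so there is no gap there.
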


\begin{proof}
(1) $\lcl f_1,\dots,f_s\rcl$ is the smallest closed ideal containing all $f_i$. Since $f_i=(f_iu_i)u_i^{-1}\in\lcl f_1u_1,\dots,f_su_s\rcl$ for each~$i$, by minimality $\lcl f_1,\dots,f_s\rcl\subseteq\lcl f_1u_1,\dots,f_su_s\rcl$.  Repeating the same argument to $f_iu_i\in\lcl f_1,\dots,f_s\rcl$, the converse inclusion also holds.

\noindent
(2) Certainly  $\lcl f+\lcl \scrS\rcl\,\rcl=I/\lcl \scrS\rcl$ for some ideal $I$, given it is an ideal of the quotient. This ideal $I$ is closed by \cite[5.2]{Warner}, since the map to the quotient is continuous, and hence the inverse image of a closed set is closed.  This closed ideal $I$ contains both $f$ and $\scrS$, and so $\lcl f,s\mid s\in\scrS\rcl \subseteq I$.

On the other hand $\lcl f+\lcl \scrS\rcl\,\rcl$ is the smallest closed ideal containing $f+\lcl \scrS\rcl$.  Setting $A=\ringd$, $J=\lcl \scrS\rcl$, and $H=\lcl f,s\mid s\in\scrS\rcl$, the third isomorphism theorem for topological rings \cite[5.13]{Warner} asserts that there is a topological isomorphism
\[
(A/J)/(H/J)\cong A/H.
\] 
In particular, $A/H$ is Hausdorff, since $H$ is closed in $A$, by \cite[5.7(1)]{Warner} applied to $A$.  This being the case, $H/J$ is closed in $A/J$, by \cite[5.7(1)]{Warner} applied to $A/J$.  Hence $\lcl f,s\mid s\in\scrS\rcl/\lcl\scrS\rcl$ is a closed ideal, which clearly contains $f+\lcl\scrS\rcl$.  By minimality $\lcl f+\lcl \scrS\rcl\,\rcl=I/\lcl \scrS\rcl\subseteq \lcl f,s\mid s\in\scrS\rcl/\lcl\scrS\rcl$ and thus $I\subseteq \lcl f,s\mid s\in\scrS\rcl$.  Combining inclusions, the required equality holds.

\noindent
(3) Since $\uppsi$ is a continuous isomorphism, the closed ideal generated by $f+\lcl \scrS\rcl$ corresponds to the closed ideal generated by $g+\lcl \scrS\rcl$. Thus there is a topological isomorphism
\[
\frac{\ringd/\lcl \scrS\rcl}{\lcl f+\lcl \scrS\rcl\kern 1pt\rcl} \longrightarrow \frac{\ringd/ \lcl \scrS\rcl}{\lcl g+\lcl \scrS\rcl\kern 1pt\rcl}.
\]
Now  by \eqref{closed ideals 101 B2} we have $\lcl f+\lcl \scrS\rcl\,\rcl= \lcl f,s\mid s\in\scrS\rcl/ \lcl \scrS\rcl$, and likewise for $g$.  The statement follows by the third isomorphism theorem for topological rings  \cite[5.13]{Warner}.
\end{proof}

%-----------------------------------------------------------------------------------------
\section{Jacobi Algebras}
\label{sec!jacobi}

\subsection{Differentiation}\label{sec:differentiation}
Consider the $\C$-linear maps $\partial_{i}\colon \ringd\to\ringd$ which simply `strike off' the leftmost $x_i$ of each monomial, in other words act on monomials via the rule
\begin{equation}
\partial_{i}(m)=\begin{cases} n & \text{if } m = x_in \\ 0 & \text{otherwise.} \end{cases}\label{defin:strike off}
\end{equation}
The  $\C$-linear cyclic symmetrisation
map $\sym\colon \ringd\to\ringd$ on monomials sends
\[
x_{i_1}\hdots x_{i_t} \mapsto \sum_{j=1}^t x_{i_j}x_{i_{j+1}}\hdots x_{i_t}\cdot x_{i_1}\hdots x_{i_{j-1}}.
\]
Combining these two gives the cyclic derivatives. These are the $\C$-linear maps $\dcyc_{i}\colon \ringd\to\ringd$ which on monomials send
\begin{equation}
x_{i_1}\hdots x_{i_t} \mapsto 
\partial_{i}\sym(x_{i_1}\hdots x_{i_t})=
\sum_{j=1}^t \partial_{i}(x_{i_j}x_{i_{j+1}}\hdots x_{i_t}\cdot x_{i_1}\hdots x_{i_{j-1}}).\label{Kdiff}
\end{equation}

\begin{defin}\label{defin:Jacobi}
For $f\in\ringd$, the Jacobi algebra is defined to be
\[
\Jac(f) = \frac{\ringd}{\lcl\dcyc_{1}f,\dots, \dcyc_{d}f\rcl}
\]
where $\lcl\dcyc_{1}f,\dots, \dcyc_{d}f\rcl\,\reflectbox{\colonequals}\, \lcl \updelta f\rcl$ is the closure of the two-sided ideal $(\dcyc_{1}f,\dots, \dcyc_{d}f)$.
\end{defin}
In general, the quotient of a complete topological ring by a closed ideal is always separated, but it need not be complete. 

\begin{notation}\label{radical remark}
For any ring $R$, write $\mathfrak{J}(R)$ for its Jacobson radical. If $I$ is any ideal of $R$ contained in $\mathfrak{J}(R)$, then $\mathfrak{J}(R/I)=\mathfrak{J}(R)/I$ (see e.g.\  \cite[4.6]{Lam}).  
\begin{enumerate}
\item\label{radical remark 1} It is clear that $\mathfrak{J}(\ringd)=\n$.  If $f\in\ringd_{\geq 2}$, then $\lcl \updelta f\rcl$ is contained in~$\n$, and so $\mathfrak{J}(\Jac(f))=\n/\lcl \updelta f\rcl$, and furthermore
$\mathfrak{J(\Jac(f))}^n = \big(\n^n+\lcl \updelta f \rcl \big)/\lcl \updelta f\rcl$ for $n\ge2$.

\item\label{radical remark 2} The topology on $\ringd$ is an ideal topology generated by powers of $\n$, so the natural quotient topology on the quotient $\Jac(f)$ is induced by powers of the image of $\n$ in the quotient \cite[5.5]{Warner}.  Thus, by (1), provided $f\in\ringd_{\geq 2}$ then the topology on both $\ringd$ and $\Jac(f)$ is the radical-adic topology.  Since $\lcl\updelta f \rcl$ is closed, $\Jac(f)$ is Hausdorff \cite[5.7(1)]{Warner}. Under extra assumptions it is also complete; see \ref{lovely completion fact}\eqref{lovely3}.
\end{enumerate}
\end{notation}

\begin{remark}\label{def:cyc symm}
A (polynomial or) power series $f = \sum f_i\in\ringd$ is called {\em cyclically symmetric}
if $\sym(f_i) = if_i$ for each graded piece $f_i\in\fringd$.
It is  possible to phrase the whole paper using only cyclically symmetric potentials, however this becomes notationally unmanageable in \S\ref{sec:NC sing theory}--\S\ref{sec!typeD}, since the property of being cyclically symmetric is not preserved under change variables.  Thus from the viewpoint of noncommutative singularity theory, it is much more natural to work with plain old elements of $\ringd$. There are times when passing to cyclically symmetric potentials is convenient, but this is confined entirely to \S\ref{sec: exact pot}.
\end{remark}

\subsection{Dimension}
Being a quotient of formal noncommutative power series, determining which dimension to use for $\Jac(f)$ is a subtle point.

\begin{defin}\label{Jdim defin}
For $f\in\ringd_{\ge2}$, we say that $\Jac(f)$ has \emph{polynomial growth}
(\emph{with respect to~$\mathfrak{J}$}) if there exist $c,r\in\R$ such that $
\dim\Jac(f) / \mathfrak{J}^n \le cn^r$ for all $n\in\N$.
In the case that $\Jac(f)$ has polynomial growth, then the \emph{$\mathfrak{J}$-dimension of $\Jac(f)$} is the degree of that growth, precisely
\[
\JRdim\Jac(f)\colonequals \inf \left\{ r\in\R_{\geq 0} \mid 
\textrm{for some $c\in\R$, }\dim\Jac(f) / \mathfrak{J}^n \le cn^r \text{ for every $n\in\N$} \right\},
\]
and $\JRdim\Jac(f)=\infty$ otherwise.
\end{defin}

The $\mathfrak{J}$-dimension is analogous to the usual dimension of
a commutative noetherian local ring $(A,\m)$, defined as
the degree of the characteristic polynomial $\chi_\m(n)=\ell(A/\m^n)$, where,
in that context, the dimension is necessarily an integer \cite[11.4, 11.14]{AM}.

\begin{lemma}\label{lem:Jdim}
If $\JRdim\Jac(f)\le1$, then either $\JRdim\Jac(f)=0$ or $\JRdim\Jac(f)=1$.
\end{lemma}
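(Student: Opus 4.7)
The plan is to establish a sharper dichotomy: writing $a_n\colonequals \dim_\C\Jac(f)/\mathfrak{J}^n$, I will show that either the sequence $(a_n)$ is bounded (in which case $\JRdim\Jac(f)=0$), or else $a_n\ge n$ for all $n\ge1$ (in which case $\JRdim\Jac(f)\ge1$). Combined with the hypothesis $\JRdim\Jac(f)\le 1$, the statement follows at once.

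The bounded case is easy. If $(a_n)$ is bounded, then since the sequence is non-decreasing, it stabilises: $a_{n_0}=a_{n_0+1}=\hdots$ for some $n_0$, equivalently $\mathfrak{J}^{n_0}=\mathfrak{J}^{n_0+1}$. Multiplying iteratively by $\mathfrak{J}$ gives $\mathfrak{J}^{n_0}=\mathfrak{J}^{n_0+k}$ for every $k\ge 0$. As recalled in \ref{radical remark}\eqref{radical remark 2}, the ring $\Jac(f)$ is Hausdorff in the $\mathfrak{J}$-adic topology, so $\bigcap_{m}\mathfrak{J}^m=0$, and hence $\mathfrak{J}^{n_0}=0$. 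Thus $\Jac(f)=\Jac(f)/\mathfrak{J}^{n_0}$ is finite dimensional, and $(a_n)$ is indeed bounded, giving $\JRdim\Jac(f)=0$ directly from the definition (taking $r=0$).

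For the unbounded case, I claim that if $\mathfrak{J}^n\ne\mathfrak{J}^{n+1}$ for every $n\ge0$, then $a_n\ge n$. This is the telescoping identity
\[
a_n=\sum_{i=0}^{n-1}\dim_{\C}\bigl(\mathfrak{J}^{i}/\mathfrak{J}^{i+1}\bigr),
\]
combined with the observation that each non-trivial quotient of vector spaces has dimension at least $1$. It remains to verify the hypothesis: suppose for contradiction that $\mathfrak{J}^{n}=\mathfrak{J}^{n+1}$ for some $n\ge 0$. By the identical Hausdorff argument as above, this forces $\mathfrak{J}^n=0$, and hence $\Jac(f)$ is finite dimensional, contradicting that $(a_n)$ is unbounded. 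So $a_n\ge n$ for all $n$, which rules out $a_n\le cn^r$ for any $r<1$, and thus $\JRdim\Jac(f)\ge 1$.

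There is no genuine obstacle here; the only subtlety is ensuring that the key implication ``$\mathfrak{J}^n=\mathfrak{J}^{n+1}\Rightarrow\mathfrak{J}^n=0$'' is available, which is why the Hausdorff property of the $\mathfrak{J}$-adic topology on $\Jac(f)$ (guaranteed by the closedness of $\lcl\updelta f\rcl$) is essential. Note that nothing in the argument uses $f\in\ringd_{\ge2}$ beyond what is needed to identify the natural topology with the $\mathfrak{J}$-adic topology.
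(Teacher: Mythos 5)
Your proof is correct, and the overall structure — the dichotomy on whether the chain $\Jac(f)\supseteq\fJ\supseteq\fJ^2\supseteq\cdots$ stabilises — is exactly the paper's. The one genuine difference is the key step converting $\fJ^{n_0}=\fJ^{n_0+1}$ into $\fJ^{n_0}=0$: the paper invokes Nakayama's Lemma, while you use separation, i.e.\ $\bigcap_m\fJ^m=0$ from the Hausdorff property of the $\fJ$-adic topology (\ref{radical remark}\eqref{radical remark 2}, which hinges on $\lcl\updelta f\rcl$ being closed). These are close cousins, but yours is arguably the more careful route in this setting: a naive invocation of Nakayama requires $\fJ^{n_0}$ to be finitely generated as a (one-sided) $\Jac(f)$-module, which is not automatic since $\Jac(f)$ is typically non-noetherian. (It does in fact hold here, since $\n^n=\ringd_{\ge n}$ is finitely generated as a right ideal by the degree-$n$ monomials, but the paper doesn't remark on this.) Your version sidesteps that point entirely by trading finite generation for closedness of the Jacobian ideal, which the paper has already established. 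Both approaches buy the same conclusion; yours makes explicit the hypothesis it relies on.
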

\begin{proof}
Certainly $\Jac(f)/\mathfrak{J}^{n+1} \twoheadrightarrow \Jac(f)/\mathfrak{J}^n$ for all $n\geq 1$, with equality if and only if $\mathfrak{J}^{n+1}=\mathfrak{J}^{n}$. If each such map has nontrivial kernel, then
$\dim\Jac(f)/\mathfrak{J}^{n}\ge n$ and so $\JRdim\Jac(f)\ge1$.
Otherwise, by Nakayama's Lemma, $\mathfrak{J}^{n}=0$ for some $n$, hence $\n^{n}\subset \lcl \updelta f\rcl$ and so
$\dim_\C\Jac(f)\le\dim\ringd/\n^{n}=2^{n}-1$ and $\JRdim\Jac(f)=0$.
\end{proof}

\begin{remark}\label{GK bad remark}
The $\mathfrak{J}$-dimension is used throughout, since it is better suited to the complete local situation than the GK dimension \cite{KL}. Indeed, it is well-known that the GK dimension does not behave well with respect to completions. For example, $\GKdim\mathbb{C}\llsq x\rrsq=\infty$ whereas $\JRdim\mathbb{C}\llsq x\rrsq=1$.  Compare \cite[\S3.4]{AB}, and in particular \cite[\S5.6]{AB}. Furthermore, $\JRdim\Jac(f)=0$ if and only if $\dim_\C\Jac(f)<\infty$, a property which does not hold for GK dimension since $\Jac(f)$ is not finitely generated.
\end{remark}

\subsection{Equivalences and Isomorphisms}
In what follows, recognising and producing isomorphisms of Jacobi algebras will be key.  The following techniques will be used extensively. The first is trivial, but worth recording since it gives great flexibility in proofs;
the second two are more substantial with Part \eqref{DWZisom} being \cite[3.7]{DWZ}, and Part \eqref{main chase cor} following from \eqref{DWZisom}, together with~\ref{main chase lemma}.
Recall the notation $f\sim g$ from \ref{DWZ cyclic fact}.
\begin{summary}\label{isomain}
Suppose that $f\in\ringd$.
\begin{enumerate}
\item
If $f$ cyclically permutes to $g$, so $f\sim g$, then $\Jac(f)\cong \Jac(g)$.
\item\label{DWZisom} 
If $\upvarphi\in\Aut \ringd$ then
$\Jac(f)\cong \Jac(g)$, where $g=\upvarphi(f)$.
\item\label{main chase cor}
Set $\mathsf{f}_1=f$. If there exist $\mathsf{f}_2,\mathsf{f}_3,\hdots$ and automorphisms $\phi^1,\phi^2,\hdots$ such that
\begin{enumerate}
\item every $\phi^i$ is unitriangular of depth of $\geq i$, and
\item $\phi^i(\mathsf{f}_i)-\mathsf{f}_{i+1}\in\llcurve \scrA,\scrA\rrcurve\cap\n^{i+1}$, for all $i\geq 1$,
\end{enumerate}
then the sequence $(\sff_i)_{i\ge1}$ converges and $\Jac(f)\cong \Jac(g)$ where $g=\lim \mathsf{f}_i$.
\end{enumerate}
\end{summary}

\begin{lemma}\label{lem!sym}
Let $f\in\ringd$, and $m\in \ringd$ be a monomial. Then the following hold.
\begin{enumerate}
\item\label{lem!sym1}
$\sym(m)\sim \deg(m)m$.
\item\label{lem!sym2}
If $f$ contains $\uplambda m$, then $f \sim f + \uplambda\left(\frac{1}{\deg m}\sym(m) - m\right)$.
\item\label{lem!sym4}
Let $h$ be the sum of terms of $f$ whose monomials appear in $\sym(m)$. Then
\[
f\sim f - h + \upalpha \sym(m) \sim f - h + \upalpha\deg(m)\, m,
\]
for some $\upalpha\in\C$.
\end{enumerate}
\end{lemma}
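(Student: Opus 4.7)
The plan is to reduce everything to the elementary observation that a single cyclic rotation of a monomial differs from the original by a single commutator, hence lies in $\{\scrA,\scrA\}\subseteq\llcurve\scrA,\scrA\rrcurve$.

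For part \eqref{lem!sym1}, I would write $m = x_{i_1}\cdots x_{i_t}$ with $t=\deg(m)$, and for each $2\le j\le t$ factorise the $j$-th rotation as $m^{(j)} = ba$ where $a = x_{i_1}\cdots x_{i_{j-1}}$ and $b = x_{i_j}\cdots x_{i_t}$. Then $m^{(j)} - m = ba - ab = [b,a] \in \{\scrA,\scrA\}$. Summing these $t$ relations (including the trivial $j=1$ term) gives $\sym(m) - tm \in \{\scrA,\scrA\}\subseteq \llcurve\scrA,\scrA\rrcurve$, which is exactly $\sym(m)\sim \deg(m)\, m$.

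For part \eqref{lem!sym2}, I would simply multiply the relation in \eqref{lem!sym1} by $\uplambda/\deg(m)$: this shows $\uplambda\bigl(\tfrac{1}{\deg m}\sym(m) - m\bigr) \in \llcurve\scrA,\scrA\rrcurve$, and since $\llcurve\scrA,\scrA\rrcurve$ is a $\C$-vector subspace, adding this element to $f$ preserves cyclic equivalence. Note that the hypothesis ``$f$ contains $\uplambda m$'' is not actually needed for the validity of $\sim$; it is only there to record the intended interpretation, namely that on the right-hand side the term $\uplambda m$ has effectively been replaced by the symmetrised version $\tfrac{\uplambda}{\deg m}\sym(m)$.

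For part \eqref{lem!sym4}, the key point is that every monomial appearing in $\sym(m)$ is a cyclic rotation of $m$, so by the commutator argument used in \eqref{lem!sym1} each such monomial $n$ satisfies $n\sim m$. Writing $h=\sum c_i n_i$ where the $n_i$ are the distinct monomials in the cyclic orbit of $m$, we therefore get $h\sim \bigl(\sum c_i\bigr)m$. Setting $\upalpha \colonequals \tfrac{1}{\deg m}\sum c_i$ and applying \eqref{lem!sym1} in the form $\deg(m)\, m\sim \sym(m)$, we obtain the two cyclic equivalences $h \sim \upalpha\deg(m)\, m \sim \upalpha\sym(m)$, and subtracting $h$ from $f$ and adding back one of these expressions gives the claim.

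There is no real obstacle in this proof; the only mild book-keeping issue in \eqref{lem!sym4} is the possibility that distinct rotations $m^{(j)}$ coincide as monomials when $m$ has nontrivial cyclic stabiliser. This is harmless because the relation $m^{(j)}\sim m$ holds for each $j$ regardless of coincidences, so the computation of the scalar $\upalpha$ simply absorbs the multiplicities from the $c_i$.
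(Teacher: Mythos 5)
Your proof is correct and follows essentially the same route as the paper: both decompose $\deg(m)\,m - \sym(m)$ into the telescoping sum of prefix--suffix commutators $[m_1\cdots m_{j-1},\,m_j\cdots m_r]$ (your $[b,a]=m^{(j)}-m$ is just the sign-flipped form of these), and then derive parts (2) and (3) by linearity. Your remark that distinct rotations may coincide is a point the paper glosses over but which, as you note, is absorbed into the scalar $\upalpha$ and causes no difficulty.
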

\begin{proof}
Writing $m=m_1m_2\ldots m_r$, where each $m_i$ is a variable $x_{j(i)}$,
we have
\begin{align*}
rm - \sym(m) &= (r-1)m_1\ldots m_r - m_2\ldots m_rm_1 \\
 &\qquad\qquad- m_3\ldots m_rm_1m_2 - \cdots - m_rm_1\ldots m_{r-1} \\
&= [m_1,m_2\ldots m_r] + [m_1m_2,m_3\ldots m_r] + \cdots + [m_1\ldots m_{r-1},m_r]
\end{align*}
and \eqref{lem!sym1} follows.
\eqref{lem!sym2} follows at once from~\eqref{lem!sym1}.
The final claim \eqref{lem!sym4} follows by applying~\eqref{lem!sym2} to each monomial of $h$ in turn.
\end{proof}

Below it will be convenient to work with the following three equivalence relations.
\begin{defin} 
For elements $f,g\in\ringd$, (recall and) define
\begin{enumerate}
%\item $f \sim g$ if $\dcyc_xf = \dcyc_xg$ and $\dcyc_yf = \dcyc_yg$.
\item
$f\sim g$ if $f-g\in \llcurve\ringd,\ringd\rrcurve$ (see \ref{DWZ cyclic fact}).
\item $f \simeq g$ if there is an equality of ideals $\lcl\dcyc_{1}f,\dots, \dcyc_{d}f\rcl = \lcl\dcyc_{1}g,\dots, \dcyc_{d}g\rcl$.
\item $f \cong g$ if there is an isomorphism of algebras $\Jac(f) \cong \Jac(g)$.
\end{enumerate}
\end{defin}
Clearly $f\sim g$ implies $f\simeq g$ implies $f\cong g$, but the converse implications do not hold. 
The relation $\sim$ is additive by definition, but $\simeq$ is not: $x^2 + y^3 \simeq x^2 + 2y^3$ but $x^2 \not\simeq x^2 + y^3$. 

The Jacobi isomorphism relation $\cong$ is the equivalence relation that we will
classify up to, but the others help understand the structure of the various arguments.
For example, by \ref{main chase lemma}, the symmetrisation relation $\sim$ 
behaves well in limits. It appears to permit creation from the void, in
the sense that $0\sim xy-yx$, but of course this form has all derivatives zero,
so does not contribute to Jacobi ideals.  The relation $\simeq$ is useful for cancelling high order terms in potentials (see e.g.\ the proof of \ref{NC3rootsnormal}), whereas $\cong$ is most suited to, and is often a by-product of, analytic changes in coordinates.

%-----------------------------------------------------------------------------------------
\section{NC Singularity Theory 101}\label{sec:NC sing theory}

%-----------------------------------------------------------------------------------------
\subsection{Corank and the Splitting Lemma}
\label{sec!splitting}

The closed vector subspace of commutators $\llcurve\ringd,\ringd\rrcurve$
generates the much larger closed ideal of commutators, and the quotient of $\ringd$ by this
ideal is the ring of commutative power series $\C\llsq x_1,\dots,x_d\rrsq$.
The quotient, or `abelianisation', map $\ringd\to\C\llsq x_1,\hdots, x_d\rrsq$ written $g \mapsto g^{\ab}$ simply takes the expression for $g$ to the same expression in the commutative ring.

\begin{lemma}\label{lem!jacab}
With notation as above, the following hold:
\begin{enumerate}
\item\label{jacab1}
The abelianisation map $\ringd\to\C\llsq x_1,\hdots, x_d\rrsq$
is continuous and surjective.
\item\label{jacab2}
For any $f\in\ringd$, the map $f\mapsto f^{\ab}$ descends to a surjection
\begin{equation}\label{ontoMilnor}
\Jac(f)\twoheadrightarrow \frac{\C\llsq x_1,\hdots, x_d\rrsq}{\lcl\partial f^{\ab}/\partial x_i\mid i=1,\hdots,n\rcl}.
\end{equation}
\end{enumerate}
\end{lemma}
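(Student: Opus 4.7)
For part (1), surjectivity is immediate: any commutative power series in the target is literally the image of the same expression regarded as a noncommutative power series, since commutative monomials form a spanning subset of the image of noncommutative monomials. For continuity, I would observe that the abelianisation map is a ring homomorphism whose kernel is the closure of the commutator ideal (not merely the commutator subspace $\llcurve\ringd,\ringd\rrcurve$, but the ideal it generates). Since any homogeneous degree~$n$ noncommutative monomial maps to a degree~$n$ commutative monomial, the image of $\n^n$ lies in $\m^n$, where $\m=(x_1,\dots,x_d)\subset\C\llsq\mathsf{x}\rrsq$. Because both sides carry the topology defined by powers of their maximal ideals (for the target this is by definition, for the source by \ref{radical remark}), this inclusion of filtrations gives continuity directly.

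For part (2), the key observation is the compatibility of cyclic derivatives with abelianisation, namely that for every $f\in\ringd$ and every $i$,
\[
(\dcyc_i f)^{\ab}=\frac{\partial f^{\ab}}{\partial x_i}.
\]
This is a routine check on a monomial $m=x_{i_1}\cdots x_{i_t}$: the cyclic derivative $\dcyc_i m$ is, by \eqref{Kdiff}, a sum indexed by positions $j$ with $i_j=i$, with summand the cyclic rotation of $m$ with the leading $x_i$ stripped; after abelianising, such a rotation becomes $m/x_i$ (in the commutative sense), so the sum equals $\partial m^{\ab}/\partial x_i$. By $\C$-linearity and continuity of both $\dcyc_i$ and $(\,\cdot\,)^{\ab}$ the identity extends to all of~$\ringd$.

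Granting this identity, consider the composition
\[
\ringd\xrightarrow{(\,\cdot\,)^{\ab}}\C\llsq x_1,\ldots,x_d\rrsq\twoheadrightarrow \C\llsq x_1,\ldots,x_d\rrsq\big/\lcl \partial f^{\ab}/\partial x_i\mid i=1,\ldots,d\rcl.
\]
Each $\dcyc_i f$ lies in the kernel by the identity above, and this kernel is a closed two-sided ideal (the preimage of $\{0\}$ under a continuous ring homomorphism, using part~(1) and \cite[5.2]{Warner}). Hence the kernel contains $\lcl \dcyc_1 f,\ldots,\dcyc_d f\rcl$, so the composition descends to a map out of $\Jac(f)$. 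Surjectivity of this induced map follows from surjectivity of the abelianisation map in part~(1).

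No real obstacle is anticipated; the only point meriting care is to confirm the cyclic-to-ordinary derivative identity $(\dcyc_i f)^{\ab}=\partial f^{\ab}/\partial x_i$, which cleanly encapsulates why the noncommutative Jacobi algebra surjects onto its commutative counterpart, often called the Milnor algebra. This motivates the remark \ref{Milnor=Tjurina} alluded to in the introduction.
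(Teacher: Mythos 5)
Your argument is correct and follows essentially the same route as the paper: part (1) is proved via the observation that the abelianisation carries $\n^n$ onto the corresponding power of the commutative maximal ideal, giving continuity and surjectivity, and part (2) rests on the identity $(\dcyc_i f)^{\ab}=\partial f^{\ab}/\partial x_i$ together with passage from unclosed Jacobian ideals to their closures by continuity. The only difference is presentational — you spell out the monomial check of the derivative identity and invoke closedness of the kernel via \cite[5.2]{Warner}, whereas the paper states the identity and passes to closures more tersely — but the logical content is identical.
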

\begin{proof}
\eqref{jacab1}
At the level of ideals, $\n^k\twoheadrightarrow \n_{\ab}^k$ for every $k\ge0$,
since abelianisation is a ring homomorphism mapping each $x_i$ to $x_i$.

\noindent
\eqref{jacab2}
Since $(\dcyc_{i}f)^{\ab} = \partial f^{\ab}/\partial x_i$, where $\partial/\partial x_i$ is the usual differentiation of commutative functions, surjectivity 
at the level of (unclosed) Jacobian ideals follows. Since the abelianisation map is continuous and surjective by~\eqref{jacab1}, this passes to their closures, as claimed. 
\end{proof}

Below we will consider
\[
\Jac(f)^{\ab} \colonequals  \frac{\C\llsq x_1,\hdots, x_d\rrsq}{\lcl\partial f^{\ab}/\partial x_i\mid i=1,\hdots,n\rcl} = 
\frac{\C\llsq x_1,\hdots, x_d\rrsq}{(\partial f^{\ab}/\partial x_i\mid i=1,\hdots,n)}
\]
where, since $\C\llsq x_1,\hdots, x_d\rrsq$ is commutative noetherian,
all ideals are closed \cite[8.1(1)]{Matsumura}.

\begin{remark}\label{Milnor=Tjurina}
In classical singularity theory, for $g\in\C\llsq x_1,\dots,x_d\rrsq$ both the Milnor algebra $\C\llsq x_1,\dots,x_d\rrsq/(\updelta_1g,\dots,\updelta_dg)$ and the Tjurina algebra $\C\llsq x_1,\dots,x_d \rrsq/(g,\updelta_1g,\dots,\updelta_dg)$ are defined, and play a major role.
In the noncommutative setting, the analogous Tjurina algebra is not well defined on $\sim$ classes.  For example, the potentials $0\sim xy-yx$ determine the
same Jacobi algebra, but their naively-defined Tjurina algebras are
$\ringtwo$ and $\C\llsq x,y \rrsq$ respectively.  To have any hope of classifying elements in the completed free algebra, some identification is required, and for us identifying $\sim$ classes is essential for applications. Compare \cite{HZ}, where the lack of a noncommutative Tjurina algebra motivates the use of Hochschild classes to generalise Saito's theorem on homogeneous potentials. 
\end{remark}

\begin{defin}\label{defin corank}
For $f\in\ringd_{\geq 2}$, the {\em corank of $f$} is defined to be
\[
\corank(f) = \dim_{\C}\left(\frac{\mathfrak{J}}{\mathfrak{J}^2}\right)
\]
where $\mathfrak{J}$ is the Jacobson radical of $\Jac(f)$.
\end{defin}

\begin{remark}\label{rem:corank}
Clearly $0\le\corank(f)\le d$.
Since $\mathfrak{J}/\mathfrak{J^2} \cong (\n+I)/(\n^2+I)$, where $I=\lcl \updelta f\rcl$, 
the exactness of the sequence of $\ringd/\n=\C$-vector spaces
\[
0\longrightarrow \frac{\n^2 + I}{\n^2} \longrightarrow \frac{\n}{\n^2} \longrightarrow \frac{\n+I}{\n^2+I} \longrightarrow 0
\]
shows that $\corank(f) = d - \dim_{\mathbb{C}} \left(\frac{\n^2 +I}{\n^2}\right)$,
so that the corank is determined by the linear conditions imposed by derivatives,
and is therefore uniquely determined by~$f_2$.
\end{remark}

\begin{thm}[Splitting Lemma]
\label{thm!splitting}
Let $f\in\ringd$. Then $f\cong x_1^2+\cdots+x_r^2+g$ for some $g\in\C\llangle x_{r+1},\hdots, x_d\rrangle_{\ge3}$, where $d-r=\corank(f)$. In particular,
\[
\Jac(f) \cong \frac{\C\llangle x_{r+1},\hdots, x_d\rrangle}{\lcl \dcyc_{x_{r+1}}g,\hdots,\dcyc_{x_d}g \rcl}.
\]
\end{thm}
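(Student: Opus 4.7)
The plan is to first apply a linear change of variables to bring the quadratic part $f_2$ into the form $x_1^2+\cdots+x_r^2$ with $r=d-\corank(f)$, then iteratively eliminate cross terms linking $x_1,\ldots,x_r$ to $x_{r+1},\ldots,x_d$ using unitriangular automorphisms of increasing depth, and finally invoke Summary~\ref{isomain}(3) to pass to the limit. The formula for $\Jac(f)$ will then follow by a direct computation of the cyclic derivatives.

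Since $x_ix_j\sim x_jx_i$, cyclic symmetrisation turns $f_2$ into a symmetric bilinear form whose rank equals $d-\corank(f)=r$ by Remark~\ref{rem:corank}. Over $\C$ an invertible linear change of variables diagonalises this form to $x_1^2+\cdots+x_r^2$; by \ref{isomain}(2) I may replace $f$ by the resulting potential and take this as $\sff_1$. Inductively, assume $\sff_n=x_1^2+\cdots+x_r^2 + g_n + h_n$ with $g_n\in\C\llangle x_{r+1},\ldots,x_d\rrangle_{\ge3}$ of degree $\le n+1$ and $h_n\in\n^{n+2}$. Call a monomial of degree $n+2$ \emph{bad} if it contains some $x_i$ with $i\le r$; by cyclic rotation each bad monomial is $\sim$ a monomial beginning with such an $x_i$, so the bad part of $h_n$ is cyclically equivalent to $\sum_{i=1}^r x_iq_i$ for certain $q_i\in\ringd_{n+1}$. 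The unitriangular automorphism $\phi^n$ given by $x_i\mapsto x_i-\tfrac12 q_i$ for $i\le r$ and $x_j\mapsto x_j$ for $j>r$ has depth $\ge n+1$, and since $x_iq_i+q_ix_i\sim 2x_iq_i$, it modifies $x_1^2+\cdots+x_r^2$ by $-\sum_i x_iq_i$ in degree $n+2$ modulo $\llcurve\scrA,\scrA\rrcurve$, plus the correction $\tfrac14\sum_iq_i^2\in\n^{2n+2}\subset\n^{n+3}$. Setting $\sff_{n+1}=x_1^2+\cdots+x_r^2+g_{n+1}+h_{n+1}$, where $g_{n+1}$ extends $g_n$ by the good degree-$(n+2)$ terms of $h_n$ and $h_{n+1}\in\n^{n+3}$, yields $\phi^n(\sff_n)-\sff_{n+1}\in\llcurve\scrA,\scrA\rrcurve\cap\n^{n+2}$. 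Summary~\ref{isomain}(3) then produces $f^\star=\lim\sff_n = x_1^2+\cdots+x_r^2+g$ with $g\in\C\llangle x_{r+1},\ldots,x_d\rrangle_{\ge3}$, and $f\cong f^\star$.

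For the final identification, $\dcyc_{x_i}f^\star = 2x_i$ for $i\le r$ (as $g$ is independent of $x_1,\ldots,x_r$), while $\dcyc_{x_j}f^\star = \dcyc_{x_j}g$ for $j>r$. Thus $\lcl\dcyc_1f^\star,\ldots,\dcyc_df^\star\rcl = \lcl x_1,\ldots,x_r,\dcyc_{x_{r+1}}g,\ldots,\dcyc_{x_d}g\rcl$, and Lemma~\ref{closed ideals 101} identifies $\Jac(f^\star)$ with the quotient $\C\llangle x_{r+1},\ldots,x_d\rrangle/\lcl\dcyc_{x_{r+1}}g,\ldots,\dcyc_{x_d}g\rcl$. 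The main obstacle is the decomposition of bad terms as $\sim\sum_ix_iq_i$: each bad monomial admits a rotation that brings some $x_i$ with $i\le r$ to the front, but one must check that collecting these choices across all monomials, and then chasing iteratively, behaves well in the limit. This compatibility is exactly what the hypothesis $\phi^i(\sff_i)-\sff_{i+1}\in\llcurve\scrA,\scrA\rrcurve\cap\n^{i+1}$ of Theorem~\ref{main chase lemma} is designed to handle.
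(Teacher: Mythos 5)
Your proof is correct and takes a genuinely different route from the paper. The paper's proof is simply a citation to \cite[4.5]{DWZ} (for the $d$-loop quiver), together with the one-line observation that derivatives of $g\in\n^3$ impose no linear conditions, whence $d-r=\corank(f)$. You instead re-derive the splitting from scratch via the paper's own chase machinery~\ref{main chase lemma}/\ref{isomain}\eqref{main chase cor}: diagonalise $f_2$, then in each degree rotate the `bad' monomials to the form $\sum_i x_iq_i$ and kill them with $x_i\mapsto x_i-\tfrac12 q_i$, with the extra correction terms being absorbed into higher degree. This is in essence the argument inside DWZ, so what you have written is a self-contained version of the citation, which is useful for readers unfamiliar with \cite{DWZ}. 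Both proofs, yours and DWZ's, depend on the same kind of formal-limit bookkeeping; your closing paragraph correctly identifies that the cyclic-equivalence-in-the-limit concern is precisely what hypothesis~(b) of \ref{main chase lemma} is built to handle.

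One small slip: the automorphism $\phi^n\colon x_i\mapsto x_i-\tfrac12q_i$ with $q_i\in\ringd_{n+1}$ has depth exactly $n$, not $\geq n+1$, since depth~$e$ requires the shift to lie in $\hatM^{e+1}$ and here $q_i\in\hatM^{n+1}\setminus\hatM^{n+2}$. This does not damage the argument, since \ref{isomain}\eqref{main chase cor} only demands depth $\geq n$ of $\phi^n$; but the off-by-one should be corrected so readers don't go hunting for an extra degree that isn't there.
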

\begin{proof}
This is \cite[4.5]{DWZ} for the $d$-loop quiver.  Since $\ord g\ge3$,
the derivatives of $g$ impose no linear conditions, so necessarily $d-r=\corank(f)$.
\end{proof}

%-----------------------------------------------------------------------------------------
\subsection{Golod--Shafarevich--Vinberg}\label{GSV}

The classical approach to growth of algebras comes
from the Golod--Shafarevich theorem \cite{GolodShafarevich},
adapted by Vinberg \cite{Vinberg} to power series;
see also \cite{Ershov}.
This result constrains~$f$ to achieve $\JRdim\Jac(f)<\infty$,
and we develop a stronger version in \ref{thm: appendix main in text} adapted to Jacobi algebras.

\begin{thm}[Golod--Shafarevich, Vinberg]
\label{thm!GSV}
Let $I = (g_1,\dots,g_s)\subset\ringd$ be an ideal,
set $r_i=\ord g_i$ for each $i=1,\dots,s$,
and write $h = 1 - dt + t^{r_1} + \cdots +t^{r_s} \in \R\llsq t \rrsq$.
If the coefficients of $(1-t)/h$ are non negative, then $\dim_\C \ringd/\lcl I\rcl=\infty$.
\end{thm}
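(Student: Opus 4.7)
The plan is to follow the classical Golod--Shafarevich strategy, adapted to the power-series setting by Vinberg, so the goal is to bound the Hilbert series of $A=\ringd/\lcl I\rcl$ from below by $(1-t)/h(t)$.

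\textbf{Step 1 (Reduction to a Hilbert series).} I would first observe that for every $n\ge1$ one has $\lcl I\rcl + \n^n = I+\n^n$, as already used implicitly in \S\ref{completion section}: any $f\in\lcl I\rcl$ is of the form $f_n+r$ with $f_n\in I$ and $r\in\n^n$. Consequently
\[
\dim_\C A / \n^n\; =\; \dim_\C \ringd/(I+\n^n),
\]
and $\dim_\C A=\infty$ iff the Hilbert series $H(t)=\sum_{n\ge0} a_n t^n$, with $a_n=\dim_\C(\n^n+I)/(\n^{n+1}+I)$, has infinite support. Replacing each $g_i$ by its leading homogeneous piece of degree $r_i$ only enlarges the graded ideal generated in each degree, hence only decreases each $a_n$; so it suffices to prove the theorem in the graded case.

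\textbf{Step 2 (The Golod--Shafarevich inequality).} The technical heart is the coefficient-wise inequality
\[
H(t)\cdot h(t)\;\ge\; 1.
\]
In the graded case, the presentation of $A$ by the $d$ generators $x_1,\dots,x_d$ of degree $1$ and the $s$ relations $g_1,\dots,g_s$ of degrees $r_1,\dots,r_s$ yields a right-exact sequence of left $A$-modules
\[
\bigoplus_{i=1}^s A(-r_i)\;\longrightarrow\; A(-1)^{\oplus d}\;\longrightarrow\; A\;\longrightarrow\; \C\;\longrightarrow\;0,
\]
where the first map sends the $i$th generator to the class of $g_i$ (read as a product of the $x_j$ acting on $1\in A$), and the second sends the $j$th generator to $x_j\cdot 1$. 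Taking Hilbert series and using that an exact sequence of graded modules enforces alternating-sign Hilbert-series equality (and a right-exact sequence enforces the corresponding inequality) gives $\sum H(t)t^{r_i}-dt\,H(t)+H(t)\ge 1$, i.e.\ $H(t)h(t)\ge 1$ coefficient-wise.

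\textbf{Step 3 (Conclusion).} Write $(1-t)/h(t)=\sum c_n t^n$ with each $c_n\ge0$ by hypothesis, and note $c_0=1$ since $h(0)=1$. Multiplying the coefficient-wise inequality $H\cdot h\ge 1$ by the non-negative power series $(1-t)/h$ preserves the inequality, producing
\[
H(t)(1-t)\;\ge\; (1-t)/h(t).
\]
Equating coefficients with $a_{-1}\colonequals 0$ yields $a_n-a_{n-1}\ge c_n\ge0$ for every $n\ge0$; telescoping gives $a_n\ge c_0+c_1+\dots+c_n\ge c_0=1$. Hence $H$ has full support and $\dim_\C A=\infty$.

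\textbf{Main obstacle.} Step 2 is where the real content lies. The delicate points are (i) writing down the displayed right-exact sequence honestly---this uses the fact that a two-sided ideal in the free algebra is presented as a left ideal by the $g_i$ acting on the right, which is special to the tensor-algebra setting and has no analogue for commutative rings; and (ii) justifying that passing to the $\n$-adic associated graded preserves both the number of relations and their orders, which is automatic from $\ord(g_i)=r_i$ but must be tracked carefully since the $g_i$ are a priori non-homogeneous power series rather than polynomials.
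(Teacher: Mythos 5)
The crucial issue is in your Step 1, and it is fatal to the argument as written. You claim that replacing each $g_i$ by its leading form $\bar{g}_i$ of degree $r_i$ ``only enlarges the graded ideal generated in each degree, hence only decreases each $a_n$.'' Both halves of that sentence have the direction reversed. Writing $J=(\bar{g}_1,\dots,\bar{g}_s)\subset\fringd$ and $\mathrm{gr}(I)$ for the associated graded ideal (the ideal of leading forms of all elements of $I$), one has $J\subseteq\mathrm{gr}(I)$, and the inclusion is in general strict: a typical element of $I$ is $\sum a_k g_{i_k} b_k$, and if its leading terms partially cancel, the resulting leading form can fail to lie in $J$ (for instance in $\C\llangle x,y\rrangle$ with $I=(x^2+y^3)$ the element $[x,\,x^2+y^3]=xy^3-y^3x$ has leading form $xy^3-y^3x\in\mathrm{gr}(I)\smallsetminus J$). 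Since $a_n=\dim_\C\fringd_n/\mathrm{gr}(I)_n$ while passing to $J$ gives $b_n=\dim_\C\fringd_n/J_n$, the actual inequality is $a_n\le b_n$. Consequently, even a completely successful Step~2 only establishes $\sum b_n=\infty$, which gives no information about $\sum a_n$; the reduction transfers the conclusion in the wrong direction and the proof collapses.

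Nor can the reduction be repaired by applying the graded Golod--Shafarevich inequality to $\mathrm{gr}(I)$ in place of $J$, because $\mathrm{gr}(I)$ may require more generators than $I$, of degrees unrelated to the $r_i$, so the $h(t)$ you would get is a different polynomial from the one in the statement. This is precisely the subtlety Vinberg's extension is designed to circumvent: his argument, like the ``adapted Vinberg argument'' carried out explicitly in this paper's proof of~\ref{thm: appendix main}, makes no reference to leading forms or to an associated graded model. It works directly at the filtration level, using the identity $I=VI+R\,\fringd$ (with $V=\mathrm{Span}(x_1,\dots,x_d)$ and $R=\mathrm{Span}(g_1,\dots,g_s)$ consisting of the \emph{actual} non-homogeneous generators), applying the truncation maps $\uppi_{j+1}\colon\fringd\to\fringd/\m^{j+1}$ and counting dimensions on both sides to obtain the recursion that underlies $H(t)h(t)\ge 1$. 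Your Step~2 exact sequence is a legitimate way to phrase the recursion in the genuinely graded situation, and Step~3 is sound given $H h\ge 1$, but without a direct filtration-level version of Step~2 the theorem as stated is not proved.
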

In most cases where the result applies, one can in fact show exponential growth. The Golod--Shafarevich--Vinberg estimates readily show that
$\JRdim\ringd/\lcl g_1,\dots,g_d\rcl=\infty$ in the following cases:
\begin{enumerate}
\item
$d=2$ with
either $r_1\ge3$, $r_2\ge8$,
or $r_1\ge4$, $r_2\ge5$.
\item
$d=3$ with either $r_1\ge2$, $r_2, r_3\ge3$, or $r_1=r_2  = 2$, $r_3\ge5$.
\item
$d\ge4$ with $r_i\ge2$ for every~$i$.
\end{enumerate}
For example, in the case $d=4$, it is sufficient to observe the exponential growth of
\begin{align*}
(1-t)(1-4t + 4t^2)^{-1} &=
(1-t)(1 + 4t + 12t^2 + 32t^3 + 80t^4 + \hdots + (1+k)2^kt^k +\hdots) \\
&=1 + 3t + 8t^2 + 20t^3 + 48t^4 + \hdots + (2+k)2^{k-1}t^k + \hdots
\end{align*}
%\[
%(1-4t+4t^2)^{-1} = 1 + 3t + 8t^2 + 20t^3 + 48t^4 + \cdots + (1+k)2^kt^k + \cdots
%\]
as this bounds the growth of the algebra from below in the case of an order~$3$
potential with four order~$2$ derivatives.
%\end{proof}

Setting aside quadratic terms by the Splitting Lemma, this then puts constraints on
the motivating problem~\ref{motivating problem}. Indeed, if $f\in\ringd_{\geq 3}$ and $\JRdim\Jac(f)<\infty$, then either
\begin{enumerate}
\item
$d=2$, $\ord f\le5$ and $f_{\le5}\not\sim \ell^5$ for a linear form $\ell=\ell(x_1,x_2)$, or
\item
$d=3$, $\ord f=3$, and $f_3\not\sim\ell^3$ for a linear form $\ell=\ell(x_1,x_2,x_3)$.
\end{enumerate}

It turns out that these estimates can be substantially improved, but this requires much more work.  Iyudu and collaborators \cite{ISmok,IS} introduce several new ideas that
exploit the Jacobi structure; in Appendix~\ref{J appendix} we extend their techniques into the  power series context, and establish the following.  Recall that $\mathsf{x}=x_1,\hdots,x_d$.

\begin{thm}[\ref{thm: appendix main}]\label{thm: appendix main in text}
Suppose that $d=2$ and $k\geq 4$, or $d\geq 3$ and $k\geq 3$.  If $f\in\ringd$ has order $k$, then $\JRdim\Jac(f)\geq 3$.
\end{thm}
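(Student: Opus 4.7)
The plan is to extend the graded Jacobi-enhanced Golod--Shafarevich--Vinberg bound of Iyudu--Shkarin \cite{IS} to the complete local setting; the extra ingredient that sharpens the naive GSV estimate is an identity reflecting the Calabi--Yau structure of Jacobi algebras. First I would reduce the problem to a Hilbert series estimate. Consider the associated graded algebra $\mathrm{gr}_{\mathfrak{J}} \Jac(f) = \bigoplus_{n \geq 0} \mathfrak{J}^n / \mathfrak{J}^{n+1}$ with Hilbert series $H(t) = \sum_n \dim_{\C}(\mathfrak{J}^n/\mathfrak{J}^{n+1})\, t^n$. Since $\dim \Jac(f)/\mathfrak{J}^N$ equals the sum of the first $N$ coefficients of $H(t)$, it suffices to show that these partial sums are of order at least $N^3$.

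The central step, where the main work lies, is to establish a coefficient-wise lower bound $H(t) \succeq 1/h(t)$, with
\[
h(t) = 1 - dt + dt^{k-1} - t^k.
\]
A direct application of the GSV estimate of \ref{thm!GSV}, using only the $d$ generators of order $\ge k-1$, would at best give $1 - dt + dt^{k-1}$ in the denominator, which is too weak in the borderline cases. The sharper term $-t^k$ encodes the cyclic identity $\sum_{i=1}^{d} x_i\, \dcyc_i f \sim k f$ of \ref{lem!sym}, which furnishes a syzygy at order $k$ among the cyclic derivatives and reflects the bimodule/Koszul resolution present for Jacobi algebras. Translating this from the homogeneous/graded setting of Iyudu--Shkarin into the power series context, where the Jacobi ideal $\lcl \updelta f\rcl$ is defined as a closure and the generators are not homogeneous, is the main technical obstacle. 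The argument must carry out the GSV combinatorics after filtering by powers of $\mathfrak{J}$, using the tools of \ref{closed ideals 101} to handle closed ideals and passing to a limit so that the inequality holds for $\mathrm{gr}_{\mathfrak{J}} \Jac(f)$ rather than merely for a truncation.

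Given the lower bound $H(t) \succeq 1/h(t)$, the theorem follows by elementary analysis of $h(t)$. In the two borderline cases $(d,k) = (2,4)$ and $(d,k) = (3,3)$ the polynomial factorises cleanly as $h(t) = (1-t)^3(1+t)$ and $h(t) = (1-t)^3$, so $1/h(t)$ has a pole of order three at $t=1$ with non-negative coefficients of order $n^2$, giving partial sums of order $N^3$. In every remaining case covered by the hypotheses, $h(t)$ already has a real root strictly inside the open unit disc (which is easy to check: $h(1)=0$ with $h'(1) > 0$, while $h(0) = 1 > 0$ forces an earlier sign change), so $1/h(t)$ grows exponentially and the partial sums far exceed $N^3$. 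Either way $\JRdim \Jac(f) \geq 3$, as required.
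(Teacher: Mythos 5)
Your overall plan matches the paper's: reduce to a coefficient-wise lower bound on the Poincar\'e/Hilbert series by $1/h(t)$ with $h(t)=1-dt+dt^{k-1}-t^{k}$, then analyse the borderline cases $(d,k)=(2,4)$ and $(3,3)$ where $h$ factors through $(1-t)^3$, giving cubic growth, and observe exponential growth otherwise. The endgame analysis of $h(t)$ is correct and essentially the one the paper uses via \ref{lem: b lemma}.

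However, the central inequality --- the coefficient-wise domination by $1/h(t)$ --- is asserted, not proved, and you yourself flag it as ``the main technical obstacle.'' This is where essentially all of Appendix~\ref{J appendix} lives, and several ideas that are indispensable there are absent from your sketch. First, the relation responsible for the $-t^k$ improvement is not the identity $\sum_i x_i\dcyc_if\sim kf$ you cite (a fact about $\sym$ living in $\llcurve\scrA,\scrA\rrcurve$, which does not by itself produce a two-sided syzygy among the generators of the Jacobi ideal); the ingredient used is the Euler commutator relation $\sum_i[x_i,\partial_iF]=0$ of \ref{lem:Euler}, which is the bimodule syzygy that allows the absorption of $(\partial_1F)x_1$ in the Vinberg counting argument, together with the exact length-four complex \eqref{defin:exact} encoding the CY structure, from which $\scrH_G=1/h(t)$ drops out via \ref{cor:hgsHilb} for exact $G$. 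Second, one needs to know that this minimum Hilbert series is actually \emph{attained} before the Vinberg induction can close; this rests on the existence of exact homogeneous superpotentials $\ESP_k\neq\emptyset$ for $(d,k)\neq(2,3)$ (\ref{lem: ESP non empty}), which you do not mention, and which is in fact the reason the hypothesis excludes $(2,3)$. Third, since a given $f\in\ringd$ need not be homogeneous, the argument in the paper must compare against \emph{very general} elements of $\SP_{k,m}$, found via countable intersections of nonempty Zariski-opens (\ref{lem:Zaropens}, \ref{prop: find homog general}, \ref{cor: find nonhomog F}); this is where the left-multiplication-by-$x_1$ injectivity of \ref{cor:x inj} is obtained, and it requires uncountability of $\C$. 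Finally, the passage from these graded estimates to the complete local $\Jac(f)$ goes through the term-by-term truncation identity \eqref{eq:truncate to alg} and the inductive inequality \eqref{eq: c inequality} matched against the recursion \ref{lem: b lemma}\eqref{lem: b lemma 1}. Merely invoking \ref{closed ideals 101} and ``passing to a limit'' does not address these points, so as written the proof has a genuine gap at its heart.
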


\begin{remark}\label{rem: down to 2 var}
Together with the Splitting Lemma, the above \ref{thm: appendix main in text} reduces the classification of those $f$ satisfying $\JRdim\Jac(f)\leq 1$ to the case of two variables ($d=2$).
\end{remark}
\subsection{Higher Coranks}\label{sec:highcorank}
Higher-degree versions of the corank exist, and contain more detailed information about Jacobi algebras.
\begin{defin}\label{defin higher corank}
Let $f\in\ringd_{\geq 2}$. For $m\ge1$, the {\em $m$th corank of $f$} is defined to be
\[
\corank_m(f) =\dim_{\C}\left(\frac{\mathfrak{J}^m}{\mathfrak{J}^{m+1}}\right),
\]
where $\mathfrak{J}$ is the Jacobson radical of $\Jac(f)$. 
We also define $\corank_0(f)=\dim_\C\Jac(f)/\fJ = 1$
and note that $\corank_1(f)=\corank(f)$.
\end{defin}

\begin{remark}
Since $\mathfrak{J}^m/\mathfrak{J}^{m+1} \cong (\n^m+I)/(\n^{m+1}+I)$,
the exactness of the sequence of $\ringd/\n=\C$-vector spaces
\[
0\to \frac{\n^{m}\cap I}{\n^{m+1}\cap I}\cong \frac{(\n^{m}\cap I) + \n^{m+1}}{\n^{m+1}} \to \frac{\n^m}{\n^{m+1}} \to \frac{\n^m+I}{\n^{m+1}+I} \to 0
\]
shows that $\corank_m(f) = d^m - \dim_{\mathbb{C}} \left(\frac{\n^{m}\cap I}{\n^{m+1}\cap I}\right)$; compare \cite[(4)]{Vinberg}. Thus the $m$th corank is determined by the conditions imposed on the leading terms of elements of the Jacobian ideal of order exactly~$m$.
In particular, $0\le\corank_m(f)\le d^m$.
If $\ord(f)\ge m+1$, then 
\[
\frac{(\n^{m}\cap I) + \n^{m+1}}{\n^{m+1}} \cong \frac{\n^{m+1}+I}{\n^{m+1}}
\]
matching~\eqref{eqn:corank intro}, \eqref{intro:corankdef}, and~\ref{rem:corank}.

By definition, the $\mathfrak{J}$-dimension is the growth of the sum of coranks.
Calculating the $m$th corank is not necessarily straightforward: essentially
it amounts to calculating a Gr\"obner basis of the Jacobian ideal with a local
monomial order to at least order~$m$.
\end{remark}

To study Jacobi algebras $\Jac(f)$  of $\fJ$-dimension $\le1$, 
\ref{thm: appendix main in text} constrains
the number of variables to $d\le2$ and $k=\ord(f)\le3$.
The corank controls the rank of $f_2$. The main case is when $f_2=0$, 
when it is clear that  $2\le\corank_2(f)\le4$.
The two derivatives $\dcyc_xf_3$ and $\dcyc_yf_3$ are linearly independent
when $\corank_2(f)=2$ and they are
dependent when $\corank_2(f)=3$. The case $\corank_2(f)=4$ holds only when $f_3=0$,
which is ruled out by \ref{thm: appendix main in text}.

This provides a numerical characterisation of the ADE types.
The first Type~A case is when $\corank(f)=0$, in which case $\Jac(f)\cong\C$. 
In addition to this, if $f\in\ringd$ has $\JRdim\Jac(f)\le1$, we say $f$ has Type~A, D or~E according to the following table.
\[
\begin{tabular}{ccc}
\toprule
\textbf{Type} & $\corank(f)$ & $\corank_2(f)$ \\
\midrule
A & 1 & 1 \\
D & 2 & 2 \\
E & 2 & 3\\
\bottomrule
\end{tabular}
\]
The higher coranks provide much more detail.
In Type~A they provide enough information to classify up to isomorphism, however in Type~D this is not true.
\begin{example}\label{ex: d dimensions}
Consider the families $D_{n,\infty}$ and $D_{n,m}$ from the introduction. The higher coranks are given by the following table.
\[
\begin{tabular}{cllp{4cm}p{3.25cm}c}
\toprule
&{\bf Type}&$f$&{\bf Conditions}&$\corank_i(f)$, $i=0,1,\dots$\\
\midrule
& $D_{n,\infty}$ & $xy^2+ x^{2n}$&$n\geq 2$&\multirow{2}*{$1,\overbrace{2,\dots,2}^{2n-1},\overbrace{1,\dots,1}^{2n-2}$ }\\
&$D_{n,m}$ & $xy^2+x^{2n}+x^{2m+1}$ & $2n-2\geq m\geq n\geq2$ & \\[4pt]
&$D_{n,m}$&$xy^2+x^{2m+1}+x^{2n}$ &$n>m\geq 1$& $1,\overbrace{2,\dots,2}^{2n-1},\overbrace{1,\dots,1}^{2m-1} $\\
\bottomrule
\end{tabular}
\]
In particular $\dim_\C\Jac(f)=6n-3$ in the first families, which is independent
of~$m$, whilst $\dim_\C\Jac(f)=4n+2m-2$ in the final family.
\end{example}

%-----------------------------------------------------------------------------------------
\subsection{Linear Changes in Coordinates and Discriminants}\label{linear change section}

In light of~\ref{rem: down to 2 var}, from \S\ref{sec!typeD} onwards we work in two non-commuting variables $x$ and $y$. 

The following is an immediate consequence of the Splitting Lemma and abelianisation.

\begin{lemma}\label{linear changeA}
Let $f\in \mathbb{C}\llangle x,y\rrangle_{\geq 2}$ with $f_2=ax^2 + b_1xy + b_2yx + cy^2\not\sim0$.
Set $b=b_1+b_2$ and consider the discriminant $\De=b^2-4ac$.
Then $f\cong g$, for some $g\in \mathbb{C}\llangle x,y\rrangle_{\geq 2}$ with
\[
g_2=
\begin{cases}
x^2+y^2 &\text{if }\De\ne0\\
x^2 &\text{if }\De=0.
\end{cases}
\]
\end{lemma}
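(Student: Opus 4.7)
The statement is essentially a corollary of the Splitting Lemma~\ref{thm!splitting} for $d=2$, together with a one-line determinant computation. The plan is first to compute $\corank(f)$ directly in terms of $\De$, and then to invoke \ref{thm!splitting}.

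By \ref{rem:corank}, the corank of $f$ depends only on $f_2$ and equals $2-\dim_\C((\n^2+I)/\n^2)$, which is $2$ minus the $\C$-dimension of the span of $\dcyc_xf_2$ and $\dcyc_yf_2$. A direct computation via~\eqref{Kdiff} gives
\[
\dcyc_xf_2 = 2ax+(b_1+b_2)y = 2ax+by, \qquad \dcyc_yf_2 = (b_1+b_2)x+2cy = bx+2cy,
\]
so the span of these two forms has dimension equal to the rank of $\bigl(\begin{smallmatrix}2a & b\\ b & 2c\end{smallmatrix}\bigr)$, whose determinant is $-\De$. Hence when $\De\ne0$ we obtain $\corank(f)=0$; when $\De=0$, the hypothesis $f_2\not\sim 0$ rules out the rank-zero subcase (otherwise $a=b=c=0$, forcing $f_2=b_1xy-b_1yx=b_1[x,y]\sim 0$), so the rank is exactly one and $\corank(f)=1$.

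Now I would apply the Splitting Lemma with $d=2$. In the case $\De\ne0$, $\corank(f)=0$ forces $r=2$; since no variables then remain, \ref{thm!splitting} yields $f\cong x^2+y^2$, and $g\colonequals x^2+y^2$ has the desired $g_2$. In the case $\De=0$, $\corank(f)=1$ forces $r=1$, and \ref{thm!splitting} yields $f\cong x^2+h$ with $h\in\C\llangle y\rrangle_{\ge3}$; then $g\colonequals x^2+h\in\C\llangle x,y\rrangle_{\ge2}$ has $g_2=x^2$, as required.

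I expect no substantive obstacle beyond the Splitting Lemma itself, which has already been established. The only delicate point worth flagging is recognising that the hypothesis $f_2\not\sim0$ (rather than merely $f_2\ne0$) is exactly what is needed to preclude the degenerate subcase $a=b=c=0$ when $\De=0$, which would otherwise collapse the corank to $2$ and break the dichotomy.
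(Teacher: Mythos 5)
Your proof is correct and takes essentially the same route as the paper, which derives the result as an immediate consequence of the Splitting Lemma with the corank read off from the (abelianized) quadratic form; your explicit cyclic-derivative Gram matrix $\bigl(\begin{smallmatrix}2a&b\\ b&2c\end{smallmatrix}\bigr)$ is precisely the Hessian of $f_2^{\ab}$, so the discriminant computation is the same. The observation that $f_2\not\sim0$ is exactly the condition excluding rank zero is the right delicate point, and you handle it correctly.
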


As for the quadratic forms above, up to $\sim$ we may commute variables appearing in
cubic forms in $\ring$, and
we use this to simplify the statement of the following lemma, writing $bx^2y$ rather than
$b_1x^2y + b_2xyx+\dots$, and so on.
Note that, in general,
cyclic equivalence no longer simulates commutativity in higher degree, as $xyxy\nsim x^2y^2$.

\begin{lemma}\cite{Iyudu}\label{linear change}
Let $f\in \mathbb{C}\llangle x,y\rrangle_{\geq 3}$ with 
$f_3\sim ax^3 + bx^2y + cxy^2 + dy^3$ for some $a,b,c,d\in\C$, not all zero.
Let $\De=-27a^2d^2 + 18abcd - 4ac^3 - 4b^3d + b^2c^2\in\C$ be the cubic discriminant.
Then
$f\cong g$, for some $g\in \mathbb{C}\llangle x,y\rrangle_{\geq 3}$ with
\[
g_3=
\begin{cases}
x^3+y^3 &\text{if }\De\ne0\\
x^2y &\text{if }\De=0 \text{ and }
\begin{cases}
a(b^2-3ac)\ne0 \text{ or }\\
(c^2-3bd)d\ne0 \text{ or }\\
a=d=0
\end{cases}\\
x^3 &\text{otherwise}.\\
\end{cases}
\]
Thus these three leading cubic normal forms are characterised by whether
$f_3^{\ab}$ has three, two or one distinct factors respectively. 
\end{lemma}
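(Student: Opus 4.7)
The plan is to reduce the problem to the classical $GL_2(\C)$\nobreakdash-classification of binary cubic forms over~$\C$.  Any $L\in GL_2(\C)$, viewed as the linear change of variables $x\mapsto L_{11}x+L_{12}y$, $y\mapsto L_{21}x+L_{22}y$, extends to an automorphism of the free algebra $\freering$ and, by the universal property of the $\m$\nobreakdash-adic completion recalled in \S\ref{completion section}, to an automorphism of $\ring$.  Hence by \ref{isomain}\eqref{DWZisom} we have $f\cong L(f)$, and because abelianisation is $GL_2$\nobreakdash-equivariant, $L(f)_3^{\ab}=L\cdot f_3^{\ab}$.  Thus it suffices to understand how $f_3^{\ab}$ transforms under the standard $GL_2(\C)$\nobreakdash-action on nonzero binary cubic forms.

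Classical projective geometry gives exactly three such orbits over~$\C$, characterised by whether $f_3^{\ab}$ has three, two or one distinct linear factors, with standard representatives $x^3+y^3=(x+y)(x+\upomega y)(x+\upomega^2y)$ where $\upomega=e^{2\uppi i/3}$, then $x^2y$, and then $x^3$ respectively.  The discriminant $\Delta$ of $ax^3+bx^2y+cxy^2+dy^3$ is the classical invariant that vanishes exactly when two factors coincide, so $\Delta\ne0$ separates the first case from the remaining two.  To distinguish the two $\Delta=0$ cases, I would directly expand $(\upalpha x+\upbeta y)^3$ to see that a perfect cube forces $b^2-3ac=0$, $c^2-3bd=0$, and $(a,d)\ne(0,0)$; conversely, a short case split on whether $a=0$ or $d=0$, each time invoking $\Delta=0$ to exclude the three\nobreakdash-distinct\nobreakdash-factors case, shows that the three conditions $a(b^2-3ac)=0$, $(c^2-3bd)d=0$, $(a,d)\ne(0,0)$ force $f_3^{\ab}$ into the shape of a perfect cube.

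To pass from the abelianisation back to $\ring$, note that by \ref{lem!sym}\eqref{lem!sym1} cyclic equivalence identifies any two degree\nobreakdash-three monomials with the same multiset of variables, so the induced map $\ring_3/{\sim}\to\C[x,y]_3$ is an isomorphism of four\nobreakdash-dimensional vector spaces.  After choosing an $L$ that normalises $f_3^{\ab}$ to one of the three commutative representatives, the noncommutative cubic $L(f_3)$ is therefore cyclically equivalent to the corresponding noncommutative lift $g_3\in\{x^3+y^3,\,x^2y,\,x^3\}$.  Setting $g=g_3+L(f)_{\ge4}$ gives $L(f)\sim g$, whence $f\cong L(f)\cong g$ by parts~(1) and~\eqref{DWZisom} of \ref{isomain}.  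The only genuinely non\nobreakdash-routine step is the case analysis in the previous paragraph: the stated conditions on $(a,b,c,d)$ are tuned to absorb the boundary cases where $a$ or $d$ vanishes, and verifying that this bookkeeping exactly matches the triple\nobreakdash-factor condition is the main, if minor, obstacle.
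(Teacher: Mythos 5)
Your proof is correct and follows essentially the same route as the paper: normalise $f_3^{\ab}$ by a linear change of commutative coordinates, note the same $GL_2$ element acts on $\ring$, identify $\ring_3/\!\sim$ with $\C[x,y]_3$ to lift the normal form, and conclude via \ref{isomain}. The only cosmetic difference is that you verify the coefficient conditions by expanding $(\upalpha x+\upbeta y)^3$ directly, whereas the paper passes through the depressed cubic $x^3+pxy^2+qy^3$; both are routine and equivalent bookkeeping.
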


\begin{proof}
Consider the linear automorphism of $\C[x,y]$
\begin{equation}\label{eq!cubic}
x\mapsto \al x+\be y,\qquad
y\mapsto \ga x + \de y
\qquad\text{for $\al,\be,\ga,\de\in\C$}
\end{equation}
that maps $(f_3)^{\ab}\in\C[x,y]$ to one of the normal forms $x^3+y^3$, $xy^2$ or $x^3$.
The choice of normal form is determined by the cubic determinant. The
additional conditions on the coefficients in the statement are simply that $p\ne0$
in the depressed form after completing the cube
$x^3+pxy^2+qy^3$, in which $\Delta=-4p^3-27q^2$,
and accounting for the fact that $a=0$ or $d=0$ or both are possible.

Let $\phi$ be the linear automorphism of $\ring$ defined by the same formula~\eqref{eq!cubic}
and $g_3\in\ring$ be the corresponding cubic normal form.
Then $\phi(f_3)\sim g_3$, although they are not equal, so $\phi(f)\sim g_3 + \phi(f_{\ge4})$.
Thus $f \cong \phi(f) \cong g_3 + \phi(f_{\ge4})$, as claimed.
\end{proof}

%-----------------------------------------------------------------------------------------
\section{Type \texorpdfstring{$A$}{A} and Commutativity}
\label{sec!typeA}

This section considers the most elementary situation, namely
$f\in\ringd_{\geq 2}$ with large quadratic part.  Normal forms are established in \S\ref{normal A forms}. Together with the linear coordinate changes from \S\ref{linear change section}, this proves in \S\ref{commutativity} that for any $f\in\ringtwo_{\geq 2}$, the algebra $\Jac(f)$ is commutative if and only if  $f$ has corank at most~$1$.  This fact is used in later sections.

\subsection{Normal Forms of Type~A}\label{normal A forms}
It is notationally convenient to identify $y=x_d$ and work in the ring $\ringd=\ringdy$.

\begin{thm}\label{thm!pagoda}
If $f \in\ringd_{\geq 2}$ with $\corank(f)\le1$, 
then there is a unique polynomial $g$ of the form $g = x_1^2+\cdots+x_{d-1}^2+\upvarepsilon y^n$ for some
$n\ge2$ and $\upvarepsilon\in\{0,1\}$ such that $f\cong g$.
\begin{enumerate}
\item If $\upvarepsilon=1$,      %$g=x_1^2+\cdots+x_{d-1}^2+y^n$, 
then $\Jac(f)$ is commutative with $\dim_{\C}\Jac(f) = n-1$.
\item If $\upvarepsilon=0$,    %$g=x_1^2+\cdots+x_{d-1}^2$, 
then $\Jac(f)$ is commutative with $\JRdim\Jac(f) = 1$.
\end{enumerate}
\end{thm}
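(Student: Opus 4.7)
The plan is to reduce to the one-variable case via the Splitting Lemma, where the situation becomes classical, and then read off the Jacobi algebras directly.

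First, apply \ref{thm!splitting} to write $f \cong x_1^2 + \cdots + x_r^2 + h$ with $h \in \mathbb{C}\llangle x_{r+1}, \ldots, x_d\rrangle_{\geq 3}$ and $d - r = \corank(f) \leq 1$. If $r = d$, then $h = 0$ and relabelling $y = x_d$ gives $f \cong x_1^2 + \cdots + x_{d-1}^2 + y^2$, which is the normal form with $(\upvarepsilon, n) = (1,2)$. Otherwise $r = d-1$; setting $y = x_d$, the tail $h = h(y)$ lives in the commutative subring $\C\llsq y\rrsq$ and has order at least $3$. If $h = 0$, we are in the $\upvarepsilon = 0$ case; otherwise write $h = a_n y^n(1+w)$ with $a_n \in \C^\times$, $n \geq 3$, and $w \in (y)$. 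Choose $c \in \C$ with $c^n = a_n$ and use the binomial series to extract an $n$-th root $(1+w)^{1/n} \in 1 + (y)$, so that $v \colonequals c(1+w)^{1/n} \in \C\llsq y\rrsq^\times$ satisfies $v^n = a_n(1+w)$. Let $\phi$ be the endomorphism of $\ringdy$ fixing $x_1, \ldots, x_{d-1}$ and sending $y \mapsto yv$. Its linear part is $\mathrm{diag}(1, \ldots, 1, c)$, so $\phi$ factors as the linear rescaling $y \mapsto cy$ followed by a unitriangular automorphism of depth $1$; in particular $\phi \in \Aut(\ringdy)$. Since $v$ and $y$ commute in $\C\llsq y\rrsq$, we have $\phi(y^n) = (yv)^n = y^n v^n = h$, so $f \cong x_1^2 + \cdots + x_{d-1}^2 + y^n$ by \ref{isomain}\eqref{DWZisom}.

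The cyclic derivatives of $g = x_1^2 + \cdots + x_{d-1}^2 + \upvarepsilon y^n$ are $\dcyc_{x_i} g = 2x_i$ for $i<d$ and $\dcyc_y g = \upvarepsilon n y^{n-1}$, so $\Jac(g) \cong \ringdy/\lcl x_1, \ldots, x_{d-1}, \upvarepsilon y^{n-1}\rcl$. This collapses to $\C[y]/(y^{n-1})$ when $\upvarepsilon = 1$ (commutative, dimension $n-1$) and to $\C\llsq y\rrsq$ when $\upvarepsilon = 0$ (commutative, $\JRdim = 1$), proving (1) and (2). Uniqueness follows, since $(\upvarepsilon, n)$ is determined by $\Jac(g)$: $\upvarepsilon = 0$ iff $\Jac(g)$ is infinite dimensional, otherwise $n = 1 + \dim_\C\Jac(g)$. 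The only technical wrinkle is verifying that the one-variable normalisation $y \mapsto yv$ genuinely lifts to an automorphism of the full noncommutative ring, but this is immediate from the factorisation into a linear rescaling and a depth-$1$ unitriangular map, invoking \ref{lem:depth}\eqref{lem:depth 1}.
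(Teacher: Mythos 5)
Your proof is correct and follows essentially the same route as the paper's: apply the Splitting Lemma to isolate a one-variable tail, normalise it to $y^n$ by an automorphism $y\mapsto yv$ for an $n$-th root unit $v\in\C\llsq y\rrsq$ (using commutativity of the one-variable ring to get $(yv)^n = y^nv^n$), then read off $\Jac(g)$ directly. Your extra remarks verifying that $y\mapsto yv$ is a genuine automorphism of $\ringdy$ (linear rescaling composed with a depth-$1$ unitriangular map) make explicit what the paper leaves as an assertion, but the argument is the same.
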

\begin{proof}
By the Splitting Lemma~\ref{thm!splitting}, there is $\sff\in\ringdy$ with $f\cong\sff$
and either
\[
\sff =
\begin{cases}
x_1^2+\cdots+x_{d-1}^2 + y^2 & \text{if }\corank(f)=0 \\
x_1^2+\cdots+x_{d-1}^2 + q(y) &\text{if }\corank(f)=1
\end{cases}
\]
for some $q\in\C\llsq y \rrsq$ with $\ord(q)\ge3$.

If $q$ is zero we are done, else after pulling out the lowest term, we can write $q = y^n \mathsf{u}$ for some $\mathsf{u}=c_n + c_{n+1}y + \hdots\in\C\llsq y\rrsq$ with $c_n\neq0$. The homomorphism $\ringdy\to\ringdy$ which sends $x_k\mapsto x_k$ and $y\mapsto y\sqrt[n]{\mathsf{u}}$ is an automorphism.  Since $\sqrt[n]{\mathsf{u}}$ is a power series only in $y$, it commutes with $y$, and so this automorphism sends $\sum x_i^2+y^n$ to $\sum x_i^2+y^n\mathsf{u}=\sff$.  Hence $\Jac(\sum x_i^2+y^n)\cong\Jac(\sff)\cong\Jac(f)$, as required.

Parts (1)--(2) are obvious, since $\Jac(\sum x_i^2+y^n)\cong\C\llsq y\rrsq/(y^{n-1})$ and $\Jac(\sum x_i^2)\cong\C\llsq y\rrsq$, and uniqueness then follows since $\sum x_i^2+y^{n_1}\cong 
\sum x_i^2+y^{n_2}$ if and only if $n_1=n_2$.
\end{proof}

Recall from the introduction our geometric applications in the setting of cDV singularities.  These correspond to Jacobi algebras in two non-commutating
variables, so we set $d=2$ and write the variables as $x,y$.

\begin{cor}\label{Type A all geometric}
Every $\Jac(f)$, where $f \in\ring_{\geq 2}$ with $f_2\not=0$, is geometric.
\end{cor}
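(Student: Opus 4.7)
The plan is to reduce to the explicit normal forms via the Splitting Lemma and then exhibit geometric models for each form. First I would observe that, with $d=2$ and $f_2\neq0$, the quadratic form $f_2$ imposes at least one nontrivial linear condition on the derivative ideal, so by \ref{rem:corank} we have $\corank(f)\le1$. Applying \ref{thm!pagoda} then immediately reduces us to two cases: either $f\cong x^2+y^n$ for some $n\ge2$, giving $\Jac(f)\cong\C\llsq y\rrsq/(y^{n-1})$ of $\fJ$-dimension~$0$; or $f\cong x^2$, giving $\Jac(f)\cong\C\llsq y\rrsq$ of $\fJ$-dimension~$1$.

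Next I would exhibit a geometric realisation for each of these normal forms. For $f\cong x^2+y^n$, the natural candidates are Reid's Pagoda flops \cite{Pagoda}: the compound du Val singularity $\scrR_n=\C\llsq u,v,x,y\rrsq/(uv-x^2+y^{2n})$ (or an equivalent presentation) admits a small crepant resolution $\scrX_n\to\Spec\scrR_n$ whose contraction algebra is classically known to be $\C\llsq y\rrsq/(y^{n-1})$, matching $\Jac(x^2+y^n)$ after relabelling~$n$. For $f\cong x^2$, I would appeal to the divisorial-contraction construction of \ref{D4 div to curve intro} specialised to the Type~$A$ case, or equivalently to \cite[2.18]{DW4} and its forerunners: a smooth $3$-fold admitting a crepant divisor-to-curve contraction whose general elephant is the $A_1$ surface singularity has contraction algebra precisely $\C\llsq y\rrsq$.

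Having matched each normal form with a geometric model, \ref{isomain}(2) gives $\Jac(f)\cong\Jac(g)$ for $g$ the normal form, and the geometric model shows $\Jac(g)$ is a contraction algebra, so $f$ is geometric by definition (\S\ref{geo intro sect}).

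The routine part is the reduction step, which is essentially a citation of \ref{thm!pagoda}. The only genuine content lies in producing the geometric models, and of those the Pagoda family is classical so the hard part is really the $\upvarepsilon=0$ case: exhibiting a smooth crepant divisorial contraction to a curve with $A_1$-type general elephant whose contraction algebra is exactly $\C\llsq y\rrsq$. This is handled by (the Type~$A$ analogue of) \ref{D4 div to curve intro}, and there is no moduli to match since both sides are uniquely determined.
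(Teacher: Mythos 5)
Your reduction step is identical to the paper's: you use \ref{rem:corank} to deduce $\corank(f)\le1$ from $f_2\neq0$, then invoke \ref{thm!pagoda} to land on the two normal forms $x^2+y^n$ and $x^2$, and the Pagoda flop model for $x^2+y^n$ is exactly what the paper uses (via \cite[3.10]{DW1}). So the overall architecture matches.

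The gap is in the $f\cong x^2$ case. You point to ``the divisorial-contraction construction of \ref{D4 div to curve intro} specialised to the Type~$A$ case, or equivalently to \cite[2.18]{DW4}'' --- but both of those are Type~$D_4$ constructions. Proposition~\ref{D4 div to curve intro} builds $\scrR_m$ with a $cD_4$ origin realising $\Jac(xy^2+x^{2m+1})$ and $\Jac(xy^2)$, and \cite[2.18]{DW4} is specifically the $D_{\infty,\infty}$ (i.e.\ $xy^2$) case. Neither specialises to $x^2$, and ``the Type~$A$ analogue'' of \ref{D4 div to curve intro} is not supplied by anything you cite, so as written the claim that a crepant divisor-to-curve contraction realising $\C\llsq y\rrsq$ exists is asserted but not established. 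The paper fills this gap with an explicit and elementary model: the cyclic quotient $\scrR=\C\llsq x,y,z\rrsq^{\frac{1}{2}(1,1,0)}$, whose unique crepant resolution $\scrX\to\Spec\scrR$ is a divisor-to-curve contraction with contraction algebra $\C\llsq y\rrsq\cong\Jac(x^2)$. Swapping that in for your second citation repairs the argument and then your proof coincides with the paper's.
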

\begin{proof}
Consider $\scrR=\C\llsq x,y,z\rrsq^{\frac{1}{2}(1,1,0)}$, and its unique crepant resolution $\scrX\to\Spec \scrR$.  This has contraction algebra $\C\llsq y\rrsq\cong\Jac(x^2)$, realising the second case in \ref{thm!pagoda}. On the other hand, by \cite[3.10]{DW1} the Type~$A$ $m$-Pagoda flop (with $m\geq 1$) has contraction algebra $\C\llsq y\rrsq /y^m\cong\Jac(x^2+y^{m+1})$, which realises the infinite family in \ref{thm!pagoda}.
\end{proof}

\begin{example}
Consider $f=x^2+\tfrac{2}3(xy^2+yxy+y^2x)\in\ringtwo$.
This has $3$\nobreakdash-dimensional Jacobi algebra
\[
\Jac(f) \cong \C\llangle x,y \rrangle / \lcl x+y^2, xy+yx\rcl \cong \C[y] / y^3
\]
so, by~\ref{thm!pagoda} or Reid's Pagoda \cite{Pagoda},
$f$ gives the same Jacobi algebra as $g = x^2 + y^4$. Commutatively, 
one would see this by completing the square, but that automorphism does not work directly in
the noncommutative context: $x\mapsto x-\frac{2}3y^2$ gives $f\mapsto x^2 + \frac{2}3yxy - \frac{8}9y^4$, and we cannot attack the $yxy$ term by coordinate changes that preserve $f_2=x^2$.
But $f\sim x^2 + xy^2+y^2x$, which then allows us to complete the square (and a scalar on $y$) to conclude.
This exemplifies the way $\sim$ helps to navigate the Jacobi isomorphism classes.
\end{example}

\subsection{Commutativity}\label{commutativity} 
The following characterisation of commutative Jacobi algebras in $d=2$ variables will be used later.
\begin{prop}\label{when comm}
$f\in\ringtwo_{\geq 2}$, then $\Jac(f)$ is commutative if and only if $\corank(f)\le1$.
\end{prop}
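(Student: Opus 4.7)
The forward direction is immediate from \ref{thm!pagoda}: if $\corank(f)\le 1$, then $f\cong x^2+\upvarepsilon y^n$, whose Jacobi algebra $\C\llsq y\rrsq/(\upvarepsilon y^{n-1})$ is commutative.

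For the converse, my plan is to prove the contrapositive: assuming $\corank(f)=2$, I will show that $xy-yx\notin\lcl \updelta f\rcl$, which immediately gives noncommutativity of $\Jac(f)$. The first step is a reduction. By \ref{rem:corank}, $\corank(f)=2$ if and only if both linear parts $\dcyc_x f_2$ and $\dcyc_y f_2$ vanish, and a direct inspection of $\dcyc_x f_2=2ax+(b_1+b_2)y$ and $\dcyc_y f_2=(b_1+b_2)x+2cy$ (for $f_2=ax^2+b_1xy+b_2yx+cy^2$) shows this is equivalent to $f_2\sim 0$. Using \ref{isomain}(1) to replace $f$ by a cyclically equivalent representative with zero quadratic part, I may therefore assume $f\in\ringtwo_{\ge3}$.

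The main step is a modulo-$\n^3$ calculation. Since $xy-yx$ has order~$2$, it lies in $\lcl\updelta f\rcl$ if and only if it lies in $\lcl\updelta f\rcl+\n^3$, and unwinding the definition of closure gives the equality $\lcl\updelta f\rcl+\n^3=(\updelta f)+\n^3$. Passing to the finite-dimensional quotient $\ringtwo/\n^3$, the image of $(\updelta f)$ is the two-sided ideal generated by $\dcyc_x f$ and $\dcyc_y f$; since $f\in\ringtwo_{\ge3}$, these derivatives have order~$\ge 2$, so multiplying either of them by any element of positive order lands in~$\n^3$. Consequently, the image of $(\updelta f)$ in $\ringtwo/\n^3$ is simply the $\C$-span of the degree-$2$ pieces $(\dcyc_x f)_2=\dcyc_x f_3$ and $(\dcyc_y f)_2=\dcyc_y f_3$.

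The decisive point is that $\dcyc_x$ and $\dcyc_y$ always produce cyclically symmetric outputs, built straight into the definition $\dcyc_i=\partial_i\circ\sym$ of \S\ref{sec:differentiation}. Hence $\dcyc_x f_3$ and $\dcyc_y f_3$ lie in the three-dimensional cyclically-symmetric subspace $\C\langle x^2,\,xy+yx,\,y^2\rangle\subset \ringtwo_2$, which is linearly independent from $xy-yx$. Therefore $xy-yx\notin(\updelta f)+\n^3$, and $\Jac(f)$ is not commutative. I do not anticipate any serious obstacle; the only subtlety is the careful handling of the closed ideal, which is defused by the observation that adjoining $\n^3$ collapses the closure to the ordinary Jacobian ideal.
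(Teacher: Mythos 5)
Your approach is essentially the paper's: the paper also works modulo $\n^3$ (it factors $\Jac(f)$ by the image of the degree-$3$ monomials $\scrM_3$, which amounts to passing to $\ringtwo/((\updelta f)+\n^3)$), and observes that the resulting four-dimensional local algebra is never commutative.  The difference is cosmetic: the paper first normalises $f_3$ by the linear change of \ref{linear change} and then runs a four-case check, whereas you give a uniform argument.  Your handling of the closure ($\lcl\updelta f\rcl+\n^3=(\updelta f)+\n^3$, so that $xy-yx\notin(\updelta f)+\n^3$ suffices) is fine — note only that the ``if and only if'' in your second paragraph is overstated, since $xy-yx\in\lcl\updelta f\rcl+\n^3$ would not by itself force $xy-yx\in\lcl\updelta f\rcl$; you only ever use the trivial containment $\lcl\updelta f\rcl\subseteq\lcl\updelta f\rcl+\n^3$, which is the direction you need.

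There is, however, a genuine error in the justification of the decisive step.  It is \emph{not} true that $\dcyc_i$ ``always produces cyclically symmetric outputs'' — that property is not built into $\dcyc_i=\partial_i\circ\sym$.  Already in two variables, $\dcyc_x(xyxy)=2yxy$ and $\dcyc_x(xyxy^2)=yxy^2+y^2xy$ are not cyclically symmetric in degrees $3$ and $4$; and in $d\ge3$ variables even degree-$2$ outputs can fail, e.g.\ $\dcyc_x(xyz)=yz$.  Applying $\partial_i$ destroys cyclic invariance in general.  What \emph{is} true — and is exactly what you need — is that for a \emph{cubic} $f_3\in\ringtwo_3$, the degree-$2$ derivatives $\dcyc_x f_3$ and $\dcyc_y f_3$ land in $\C\{x^2,xy+yx,y^2\}$.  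This is a small direct check rather than a structural fact: $\sym(f_3)$ lies in the span of $x^3$, $x^2y+xyx+yx^2$, $xy^2+yxy+y^2x$, $y^3$, and striking off the leftmost $x$ (resp.\ $y$) from each of these gives $3x^2$, $xy+yx$, $y^2$, $0$ (resp.\ $0$, $x^2$, $xy+yx$, $3y^2$), all in the cyclically symmetric subspace.  With this verification substituted for the ``built straight into the definition'' claim, your proof is complete and correct; but as written, the key step rests on a false general principle that would lead you astray in larger degree or more variables.
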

\begin{proof}
($\Leftarrow$) is clear from \ref{thm!pagoda}.  For ($\Rightarrow$), we prove the contrapositive.  If $\corank(f)\not\le1$, then $f_2=0$, and we need to prove that $\Jac(f)$ is not commutative. For this, it suffices to exhibit a factor that is not commutative. By \ref{linear change} without loss of generality we can assume that~$f$ equals 
\[
x^3+y^3+\scrO_4,\quad xy^2 +\scrO_4, \quad  x^3+\scrO_4\quad \mbox{or}\quad \scrO_4.
\]
Write $\scrM_3$ for the set of all noncommutative monomials of degree $3$, then factor by the ideal $\lcl \updelta_xf,\updelta_yf,\scrM_3 \rcl/\lcl \updelta_xf,\updelta_yf\rcl$ in $\Jac(f)$.   But in the four cases above, by differentiating then using the third isomorphism theorem  it follows that $\Jac(f)$ is one of
\[
\frac{\C\llangle x,y\rrangle}{\lcl x^2,y^2,\scrM_3\rcl},\quad
\frac{\C\llangle x,y\rrangle}{\lcl y^2, xy+yx,\scrM_3\rcl},\quad
\frac{\C\llangle x,y\rrangle}{\lcl x^2,\scrM_3\rcl},\quad\mbox{or}\quad
\frac{\C\llangle x,y\rrangle}{\lcl \scrM_3\rcl}.
\]
None of these factors is commutative, and so $\Jac(f)$ is not commutative. 
\end{proof}

%-----------------------------------------------------------------------------------------
\section{Type \texorpdfstring{$D$}{D} normal forms}
\label{sec!typeD}

This section considers the next case, namely those $f\in\ringd_{\geq 2}$ with $\corank(f)=2$ and $\corank_2(f)=2$.  Reducing to two variables by the Splitting Lemma~\ref{thm!splitting}, the assumption $\corank_2(f)=2$ is then equivalent to the first two cases in \ref{linear change}, namely those $f\in\mathbb{C}\llangle x,y\rrangle_{\geq 3}$ with $f_3\neq 0$ for which $f_3^{\ab}$ has either two or three distinct linear factors.   Full normal forms are obtained in both situations, and are then merged into a unified form in \S\ref{sec: overview D}. These are the Type~$D$ normal forms in the tables in \S\ref{tables section}. 

\medskip
Throughout this section, it will be convenient to adopt the following language.

\begin{defin}
We say that a monomial $m\in \ringtwo$ {\it contains~$x^2$}
if $m\sim nx^2$ for some monomial $n$, else $m$ {\it does not contain $x^2$}.
Similarly, an element $f\in \ringtwo$ contains~$x^2$ if for some (nonzero) term $\uplambda m$ of $f$, the monomial $m$ contains~$x^2$, else $f$ does not contain~$x^2$. We also use the analogous expressions for~$y^2$.
\end{defin}

%-----------------------------------------------------------------------------------------
\subsection{Abelianized Cubic with Three Factors}
\label{sec!3roots}
This subsection considers the case of \ref{linear change}
where $f_3^{\ab}$ has three distinct factors, that is $f\cong x^3+y^3+\bigO_4$,
and in \ref{NC3rootsnormal} and \ref{NC3rootsnormal2} provides two different, but equivalent, normal forms.

Recall from~\ref{lem:depth}\eqref{lem:depth 1} that a unitriangular automorphism $\phi$ of $\ringd$ has depth~$e\ge1$ if and only if $\phi(f)_{\le e} = f_{\le e}$ for all~$f\in\ringd$.

\begin{lemma}\label{lemB3}
Fix $t\ge 4$, and let $f=x^3+y^3 + f_4+\cdots + f_t + \bigO_{t+1}$.  For any $h_1,h_2\in\mathbb{C}\llangle x,y\rrangle$ with $\ord(h_i) \geq t-2$, there is a unitriangular automorphism $\uppsi$ of depth $\geq t-3$ such that
\[
\uppsi(f)\sim x^3+y^3 + f_4+\cdots + f_{t-1} + \left(f_t-h_1x^2-h_2y^2\right) + \bigO_{t+1}
\]
and $\uppsi(f)-(x^3+y^3 + f_4+\cdots + f_{t-1} )\in\n^t$.
\end{lemma}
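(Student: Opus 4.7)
The key idea is to construct $\uppsi$ as a unitriangular automorphism that is tailor-made to subtract $h_1x^2 + h_2y^2$ from $f_t$ cyclically, using the cubic leading terms $x^3$ and $y^3$ as ``engines'' of the shift.

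First I would define
\[
\uppsi \colon \ringtwo \longrightarrow \ringtwo, \qquad x\mapsto x-\tfrac{1}{3}h_1,\quad y\mapsto y-\tfrac{1}{3}h_2.
\]
Since $h_1,h_2\in\n^{t-2}$, \ref{unitriangular def} shows this is a unitriangular automorphism of depth $\geq t-3$. Next I would verify the two promised statements separately.

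For the cyclic equivalence, apply $\uppsi$ term by term. For the cubic part, expand $\uppsi(x^3)=(x-\tfrac{1}{3}h_1)^3$; the terms involving two or three factors of $h_1$ lie in $\n^{1+2(t-2)}\subseteq\n^{t+1}$, while the terms linear in $h_1$ are
\[
-\tfrac{1}{3}\bigl(x^2 h_1 + x h_1 x + h_1 x^2\bigr)\;\sim\;-h_1 x^2,
\]
using $x^2h_1\sim xh_1 x\sim h_1x^2$ (each pair differs by an element of $\{\scrA,\scrA\}$). The analogous computation for $\uppsi(y^3)$ contributes $-h_2y^2$ modulo $\bigO_{t+1}$ and cyclic equivalence. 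For $4\leq k\leq t$, each monomial of $f_k$ has degree $k$, so $\uppsi(f_k)-f_k\in\n^{k-1+(t-2)}=\n^{k+t-3}\subseteq\n^{t+1}$. Summing these contributions gives
\[
\uppsi(f)\;\sim\; x^3+y^3+f_4+\cdots+f_{t-1}+(f_t-h_1x^2-h_2y^2)+\bigO_{t+1},
\]
as required.

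For the second claim, note that since every monomial of $f$ has degree $\geq 3$, the same estimate $\uppsi(m)-m\in\n^{k+t-3}$ applied to each monomial $m$ of degree $k\geq 3$ gives $\uppsi(f)-f\in\n^t$. Combined with the trivial observation $f-(x^3+y^3+f_4+\cdots+f_{t-1})=f_t+\bigO_{t+1}\in\n^t$, this yields $\uppsi(f)-(x^3+y^3+f_4+\cdots+f_{t-1})\in\n^t$.

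The only step that requires any care is the cyclic bookkeeping in degree $t$: one must check that the three quadratic-times-$h_i$ terms collapse to $-h_ix^2$ under $\sim$ and that the quadratic and higher powers of the $h_i$ are safely absorbed into $\bigO_{t+1}$; both follow from the order bound $\ord(h_i)\geq t-2$ and $t\geq 4$. The rest is bookkeeping with \ref{lem:depth} and the definitions.
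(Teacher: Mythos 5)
Your proof is correct and uses the same automorphism $x\mapsto x-\tfrac{1}{3}h_1$, $y\mapsto y-\tfrac{1}{3}h_2$ as the paper; you simply spell out more of the bookkeeping (the cyclic collapse of $x^2h_1 + xh_1x + h_1x^2$ to $3h_1x^2$ and the degree estimate $k+t-3\ge t+1$ for $k\ge4$) which the paper leaves to the reader.
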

\begin{proof}
Consider the unitriangular automorphism $\uppsi$ which sends $x\mapsto x-\tfrac{1}3h_1$, $y\mapsto y-\tfrac{1}{3}h_2$. The result follows since 
\[
\uppsi(x^3+y^3) \sim x^3-h_1x^2+\tfrac{1}3h_1^2x-\tfrac{1}{27}h_1^3+y^3-h_2y^2+\tfrac{1}3h_2^2y-\tfrac{1}{27}h_2^3,
\]
and $\uppsi(m) \equiv m \mod\n^{t+1}$ whenever $\deg(m) \ge 4$.
\end{proof}

With the preparatory lemma in place, the strategy is to first find a standard
power series form of each potential, and then distill that down to a polynomial normal form.

\begin{prop}\label{prop!x3y3prep}
Suppose that $f=f_3+\bigO_{4}$ where $f_3^{\ab}$ has three distinct factors.  Then 
$f\cong x^3+y^3+p(xy)$
for some power series $p(z)\in\mathbb{C}\llsq z\rrsq$ with $\ord(p)\ge2$.
\end{prop}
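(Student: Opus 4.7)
The plan is to first apply~\ref{linear change} to put $f$ into the form $x^3+y^3+\bigO_4$, and then iteratively apply~\ref{lemB3} to clean each homogeneous piece $f_t$ down to a scalar multiple of $(xy)^{t/2}$ (interpreted as zero when $t$ is odd). The chase lemma~\ref{isomain}\eqref{main chase cor} then handles the limit.

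The conceptual heart is the following combinatorial observation in $\freering$: every monomial of degree $t\ge3$ is cyclically equivalent either to $(xy)^{t/2}$ (possible only when $t$ is even) or to a monomial of the form $m'\cdot x^2$ or $m'\cdot y^2$ with $\deg m'=t-2$. Indeed, by rotating within a cyclic class, a monomial avoids both $x^2$ and $y^2$ as substrings only if it is cyclically alternating; in odd degree no alternating word is cyclically alternating (the first and last letters necessarily coincide), and in even degree $t=2n$ the only cyclically alternating words are $(xy)^n$ and $(yx)^n$, which are themselves cyclically equivalent. Consequently every homogeneous $f_t\in\freering_t$ decomposes as
\[
f_t \;\sim\; \alpha_t(xy)^{t/2}+h_1 x^2+h_2 y^2
\]
for some $\alpha_t\in\C$ (necessarily zero when $t$ is odd) and $h_1,h_2\in\freering_{t-2}$.

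Feeding these $h_1,h_2$ into~\ref{lemB3} produces a unitriangular automorphism $\uppsi_t$ of depth $\ge t-3$ satisfying
\[
\uppsi_t(f)\;\sim\; x^3+y^3+f_4+\cdots+f_{t-1}+\alpha_t(xy)^{t/2}+\bigO_{t+1}.
\]
Reindex by $i=t-3\ge1$ and apply~\ref{isomain}\eqref{main chase cor}: set $\sff_1=f$, $\phi^i\colonequals\uppsi_{i+3}$ of depth $\ge i$, and take $\sff_{i+1}$ to be $\phi^i(\sff_i)$ minus its degree-$(i+3)$ cyclic correction, which lies in $\llcurve\scrA,\scrA\rrcurve\cap\n^{i+3}\subseteq\llcurve\scrA,\scrA\rrcurve\cap\n^{i+1}$. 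All hypotheses of the chase lemma then hold, and passage to the limit yields
\[
f\;\cong\;\lim_{i\to\infty}\sff_i \;=\; x^3+y^3+\sum_{n\ge2}\alpha_{2n}(xy)^n \;=\; x^3+y^3+p(xy)
\]
with $p(z)=\sum_{n\ge2}\alpha_{2n}z^n\in\C\llsq z\rrsq$ of order $\ge2$.

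The main obstacle is the combinatorial observation above. Once it is established, the remainder is routine: the depth mismatch between~\ref{lemB3} (depth $\ge t-3$) and~\ref{isomain}\eqref{main chase cor} (which demands depth $\ge i$ at step $i$) is resolved by the reindexing $i=t-3$, and convergence to a series of the form $x^3+y^3+p(xy)$ is immediate from construction, since the cyclic corrections at each step affect only strictly higher degrees.
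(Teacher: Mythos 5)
Your proposal is correct and follows essentially the same strategy as the paper's own proof: normalize the cubic via \ref{linear change}, group degree-$t$ terms into $\alpha_t(xy)^{t/2}+h_1x^2+h_2y^2$, feed $h_1,h_2$ into \ref{lemB3}, and pass to the limit via \ref{isomain}\eqref{main chase cor}. The only cosmetic difference is that you make explicit the combinatorial fact (every cyclic word on $\{x,y\}$ avoiding $x^2$ and $y^2$ is a rotation of $(xy)^{t/2}$, possible only in even degree) which the paper uses implicitly in its "group together terms containing $x^2$ or $y^2$" step.
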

Recall the Conventions~\ref{conventions} on denoting graded pieces of sequence
elements: we denote sequence elements $\sff_n\in\ring$ in Greek font,
and we write $(\sff_n)_t$ for its
degree~$t$ piece, and $(\sff_n)_{<t}$ and $(\sff_n)_{>t}$
for sub- and super-degree~$t$ portions respectively. 
\begin{proof}
We construct a sequence of power series $\mathsf{f}_1,\mathsf{f}_2,\dots$
and unitriangular automorphisms $\phi^1, \phi^2,\dots$ inductively, with
each $\sff_t$ having the form of the target power series $x^3+y^3+p(xy)$ in small degree. Summary~\ref{isomain}\eqref{main chase cor} then constructs $\mathsf{f}=\lim \mathsf{f}_i$ of the required form with $\Jac(f)\cong\Jac(\mathsf{f})$.

By \ref{linear change}, $f\cong g$ where $g_3=x^3+y^3$.
After grouping together terms containing $x^2$ or $y^2$ and cyclically permuting,
we may write
\[
g \sim x^3 + y^3 + \sfh_2\cdot x^2 + \sfh_2'\cdot y^2 + \upmu_4 (xy)^2 + \bigO_{5}
\]
for $\sfh_2, \sfh_2'\in\ringtwo_2$ and $\upmu_4\in\C$.

Hence we begin the induction by setting
\[
\sff_1=x^3+y^3 + \sfh_2\cdot x^2 + \sfh_2'\cdot y^2 + \upmu_4 (xy)^2 + g_{\geq 5}
\]
and note that $\sff_1\sim g \cong f$.
Thus $\sff_1$ is in the desired form in degrees~$\le3$ and has its
degree~4 piece prepared in standard form for further analysis.

For the inductive step more generally, we may suppose that $\sff_{t}\in\ring$
has been constructed of the form
\[
\sff_{t} = \big(x^3+y^3 + \sfp_{t+2}(xy)\big) +
\big(  \sfh_{t+1}\cdot x^2 + \sfh_{t+1}'\cdot y^2 + \upmu_{t+3} (xy)^{\lfloor (t+3)/2\rfloor} \big) + 
\bigO_{t+4}
\]
with $\sfp_3=0$ and by convention $\upmu_{t+3}=0$ for even $t$, where
\begin{enumerate}
\item
$(\sff_{t})_{\leq t+2} = x^3+y^3 + \sfp_{t+2}(xy)$, for some polynomial $\sfp_{t+2}\in\C[z]_{\ge2}$ of degree $\leq (t+2)/2$, where the polynomials $\sfp_3,\dots, \sfp_{t+2}$ satisfy $\sfp_{i+1} = \sfp_i$ for even~$i$ and
$\sfp_{i+1} - \sfp_i=\upmu_{i+1}z^{(i+1)/2}$
for odd~$i$, and
\item
$(\sff_{t})_{t+3} = \sfh_{t+1}\cdot x^2 + \sfh_{t+1}'\cdot y^2 + \upmu_{t+3} (xy)^{\lfloor (t+3)/2\rfloor}$ for some homogeneous forms $\sfh_{t+1}$, $\sfh_{t+1}'$  of degree~$t+1$.
\end{enumerate}

Applying \ref{lemB3} with $h_1=\sfh_{t+1}$ and $h_2=\sfh_{t+1}'$, there exists a unitriangular $\phi^{t}$ of depth $\geq t$ such that
\[
\phi^{t}(\sff_{t})\sim \big(x^3+y^3 + \sfp_{t+2}(xy)\big) + \upmu_{t+3}(xy)^{\lfloor (t+3)/2\rfloor} + \bigO_{t+4}.
\]
In degree $t+4$, again grouping together the terms containing $x^2$ or $y^2$ and cyclically permuting, we may write
\[
\phi^{t}(\sff_{t})_{t+4}\sim \sfh_{t+2}\cdot x^2 + \sfh_{t+2}'\cdot y^2+\upmu_{t+4} (xy)^{\lfloor (t+4)/2\rfloor}
\]
for homogeneous forms $\sfh_{t+2}$, $\sfh_{t+2}'$ of degree $t+2$ and some $\upmu_{t+4}\in\C$, where again $\upmu_{t+4}=0$ for odd $t$.  Thus, after setting $\sfp_{t+3}(xy)=\sfp_{t+2}(xy)+\upmu_{t+3}(xy)^{\lfloor (t+3)/2\rfloor}$, define
\[
\sff_{t+1}=x^3+y^3 + \sfp_{t+3}(xy) + \big( \sfh_{t+2}\cdot x^2 + \sfh_{t+2}'\cdot y^2+\upmu_{t+4} (xy)^{\lfloor(t+4)/2\rfloor} \big) + \phi^{t}(\sff_{t})_{\geq t+5}.
\]
Note that $\phi^{t}(\sff_{t})\sim \sff_{t+1}$, and $\phi^{t}(\sff_{t})- \sff_{t+1}\in\hatM^{t+3}\subset\hatM^{t+1}$ using the last statement of \ref{lemB3}.

Thus we have constructed a sequence of power series $\sff_1,\sff_2,\dots$ and unitriangular
automorphisms $\phi^1,\phi^2,\dots$ to which \ref{isomain}\eqref{main chase cor} applies.
For $s\geq 3$ either $s$ is even, in which case $\sfp_{s+1}=\sfp_s$, or $s$ is odd, in which case $\sfp_{s+1}=\sfp_{s}+\upmu_{s+1}z^{(s+1)/2}$, thus it is clear that
$p\colonequals \lim \sfp_s=\sum_{s=2}^\infty\upmu_{2s}z^{s}$.  Further,  $\sff=\lim \sff_i=x^3+y^3+p(xy)$, since the difference
$(\sff_i-(x^3+y^3+p(xy)))_{i\geq 1}$ converges to zero.

It follows from \ref{isomain}\eqref{main chase cor} that $\Jac(f)\cong\Jac(x^3+y^3+p(xy))$, as required.
\end{proof}

The next step is to replace the power series $p(xy)$ by its leading term, without changing the Jacobi algebra. 

\begin{lemma}\label{lem!yxx}
If $f=x^3+y^3+p(xy)\in\ring$ for some $0\neq p(z)\in\C\llsq z\rrsq$ for which $s=\ord(p)\ge2$, then the following statements hold.
\begin{enumerate}
\item\label{yxxi}
$yx^2, xy^2\in \lcl \dcyc_x f, \dcyc_y f \rcl$.
\item\label{yxxiB}
$x^2y, y^2x\in \lcl \dcyc_x f, \dcyc_y f \rcl$.
\item\label{yxxii}
$(xy)^{s}x, (yx)^{s}y\in \lcl \dcyc_x f, \dcyc_y f \rcl$.
\end{enumerate}
\end{lemma}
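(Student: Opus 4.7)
My plan is to exploit the explicit form of the cyclic derivatives. Writing $P(z)=p'(z)\in\C\llsq z\rrsq$, which has order $s-1\ge1$, a direct calculation gives
\[
\dcyc_x f \,=\, 3x^2 + y\,P(xy) \qquad\text{and}\qquad \dcyc_y f \,=\, 3y^2 + P(xy)\,x.
\]
Let $J=(\dcyc_x f,\dcyc_y f)$ denote the unclosed two-sided ideal, so that $\lcl\dcyc_x f,\dcyc_y f\rcl$ is its $\n$-adic closure. Modulo $J$ these give
\[
x^2 \,\equiv\, -\tfrac{1}{3}\,y\,P(xy) \quad\text{and}\quad y^2 \,\equiv\, -\tfrac{1}{3}\,P(xy)\,x,
\]
each right-hand side of order $2s-1\ge3$. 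Substituting an interior occurrence of $x^2$ or $y^2$ inside any word therefore strictly raises its order, modulo $J$, by exactly $2s-3\ge1$.

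For (i) and (ii) my plan is to iterate this substitution. Starting from $yx^2\equiv-\tfrac{1}{3}y^2P(xy)\pmod J$ (obtained by left-multiplying $\dcyc_x f$ by $y$), the leading monomial of $y^2P(xy)$ begins with $y^2$; substituting yields $yx^2\equiv\tfrac{1}{9}P(xy)\,x\,P(xy)\pmod J$, whose leading monomial factors as $(xy)^{s-1}x(xy)^{s-1}=(xy)^{s-1}x^2y(xy)^{s-2}$ with a new interior $x^2$, and so on. After $n$ iterations one obtains $yx^2=j_n+r_n$ with $j_n\in J$ and $\ord(r_n)\ge 3+n(2s-3)$. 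Since $\lcl\dcyc_x f,\dcyc_y f\rcl$ is closed and $r_n\to0$, the limit $yx^2=\lim j_n$ lies in it. The cases $xy^2$, $x^2y$ and $y^2x$ are strictly analogous.

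For (iii) I would bootstrap from (ii). As $\lcl\dcyc_x f,\dcyc_y f\rcl$ is a two-sided ideal, $xy^3=(xy^2)\,y$ lies in it, and hence so does $(xy)^{k-s+1}y^2=(xy)^{k-s}(xy^2)\,y$ for every $k\ge s$. Left-multiplying $\dcyc_y f$ by $(xy)^{k-s+1}$ expands as
\[
(xy)^{k-s+1}\dcyc_y f \,=\, 3(xy)^{k-s+1}y^2 \,+\, sc_s(xy)^kx \,+\, \sum_{l>s}lc_l(xy)^{k-s+l}x \,\in\, J,
\]
which rearranges modulo $\lcl\dcyc_x f,\dcyc_y f\rcl$ to $sc_s(xy)^kx\equiv-\sum_{l>s}lc_l(xy)^{k-s+l}x$. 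Every summand on the right has the form $(xy)^{k'}x$ with $k'>k$, so iterating from $k=s$ produces a Cauchy sequence of remainders tending to zero, and closure forces $(xy)^sx\in\lcl\dcyc_x f,\dcyc_y f\rcl$. The case $(yx)^sy$ then follows from the involution $x\leftrightarrow y$: it sends $f$ to a cyclically equivalent potential and so merely swaps $\dcyc_x f\leftrightarrow\dcyc_y f$, preserving the closed Jacobi ideal.

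The hard part is navigating around circular substitutions. For example, reducing $P(xy)\,x\,P(xy)$ by applying $P(xy)x\equiv-3y^2$ to the outer factor gives $-3y^2P(xy)$, which on substituting $y^2$ returns $9yx^2$ and is useless; the successful step is instead to substitute at the \emph{interior} $x^2$ of $(xy)^{s-1}x(xy)^{s-1}=(xy)^{s-1}x^2y(xy)^{s-2}$, which is the one that genuinely raises the order. A similar hazard arises in (iii) with naive applications of $P(xy)x\equiv-3y^2$ inside $Q(xy)x$; the point of multiplying up by $(xy)^{k-s+1}$ is to obtain a reduction that is strictly order-increasing in the exponent~$k$. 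Verifying that such order-raising substitutions remain available at every stage of the iteration is the central bookkeeping challenge.
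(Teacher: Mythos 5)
Your arguments for parts~(1) and~(2) follow the paper's strategy exactly: iterate substitutions from the derivative relations $x^2\equiv-\tfrac{1}{3}yP(xy)$ and $y^2\equiv-\tfrac{1}{3}P(xy)x$, at each step locating an interior $x^2$ or $y^2$ whose replacement strictly raises the order, then invoke closedness of the Jacobi ideal. For part~(3), however, you take a genuinely different route. Writing $p'(z)=z^{s-1}q(z)$ with $q$ a unit, the paper separates off the constant $\la_0$ of $q$, multiplies the resulting congruence for $\la_0(xy)^{s-1}x$ on the right by $yx$, substitutes once for $y^2$, and then applies $yx^2\equiv0$ from part~(1) to force an \emph{exact} cancellation $q(xy)(xy)^sx\equiv0$ in a single pass; invertibility of~$q$ then finishes with no further limiting argument. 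You instead left-multiply $\dcyc_y f$ by $(xy)^{k-s+1}$, kill the $y^2$ summand using $xy^2$ from part~(1) --- your citation to part~(2) here is a slip, since $xy^2$ belongs to part~(1) --- and so obtain $(xy)^kx$ congruent modulo the closed ideal to a sum of strictly higher-degree terms of the same shape, completing with a second Cauchy argument. Both routes are correct. The paper's is crisper, avoiding that second limit via a well-chosen $yx$ multiplier; yours requires no inspired choice and more closely mirrors the iterative reduction you already ran in~(1) and~(2).
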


\begin{proof}
(1) Write $J_f = (\dcyc_x f, \dcyc_y f )$, so that $\lcl \dcyc_x f, \dcyc_y f \rcl$ is the closure of $J_f$. Differentiating and pulling out the lowest terms, write
\begin{equation}\label{eq:qform}
\dcyc_xf = 3x^2 + y(xy)^{s-1}q(xy)
\qquad\textrm{and}\qquad
\dcyc_yf = 3y^2 + q(xy)(xy)^{s-1}x
\end{equation}
for some $q=\la_0 + \la_1z + \la_2z^2+\cdots\in\C\llsq z\rrsq$ with $\la_0\ne0$.
Writing $A\equiv B$ for $A-B \in J_f$, then in particular
\begin{equation}
x^2 \equiv -\tfrac{1}3 y(xy)^{s-1}q(xy)
\qquad\textrm{and}\qquad
y^2 \equiv -\tfrac{1}3 q(xy)(xy)^{s-1}x .\label{subs for x and y}
\end{equation}
Substituting for $x^2$ or $y^2$ at each step, we see that
\begin{align*}
y \cdot x^2 &\equiv \tfrac{-1}3 y^2 \cdot (xy)^{s-1}q(xy) &\in\n^{2s\phantom{+2}}\\
&\equiv
\tfrac{+1}{3^2} q(xy)(xy)^{s-1}x \cdot (xy)^{s-1}q(xy)& \\
&= \tfrac{+1}{3^2} q(xy)(xy)^{s-1} \cdot x^2 \cdot y (xy)^{s-2}q(xy) &\in\n^{4s-3}\\
&\equiv
\tfrac{-1}{3^3} q(xy)(xy)^{s-1} \cdot y(xy)^{s-1}q(xy) \cdot y (xy)^{s-2}q(xy)& \\
&=  
\tfrac{-1}{3^3} q(xy)(xy)^{s-2}x \cdot y^2 \cdot (xy)^{s-1}q(xy) y (xy)^{s-2}q(xy) &\in\n^{6s-6} \\
&=\hdots
\end{align*}
At each substitution, the resulting power series has order $2s-3\ge1$ higher than the previous one. It follows from the above that for all $t\geq 2s$ there exists $n_t\in\n^t$ such that  $yx^2-n_t\in J_f$. Hence  $yx^2\in\bigcap_{t\ge 2s}(J_f+\n^t)$, which is precisely the closure  $\lcl \dcyc_x f, \dcyc_y f \rcl$.  By symmetry in $x$ and $y$, the analogous statement $xy^2\in \lcl \dcyc_x f, \dcyc_y f \rcl$ also follows.\\
(2) This follows in an analogous way: start by writing
\[
\dcyc_xf = 3x^2 + r(yx)(yx)^{s-1}y
\qquad\textrm{and}\qquad
\dcyc_yf = 3y^2 + x(yx)^{s-1}r(yx)
\]
then consider $x^2\cdot y\equiv\tfrac{-1}{3}r(yx)(yx)^{s-1}y^2$, etc.\\
(3) Now write $A\equiv B$ for $A-B \in\lcl \dcyc_x f, \dcyc_y f \rcl$. Separating off the lowest term of $q(xy)$ in~\eqref{eq:qform}, we may write
\[
\dcyc_yf = 3y^2 + \la_0(xy)^{s-1}x + (q(xy)-\la_0)(xy)^{s-1}x
\]
and so  $\la_0(xy)^{s-1}x \equiv -3y^2 - (q(xy)-\la_0)(xy)^{s-1}x$ where $q(xy)-\la_0\in\n^2$.
Then
\begin{align*}
\la_0(xy)^{s}x & = \left(\la_0(xy)^{s-1}x\right)yx  \\
&\equiv \left( - 3y^2 - (q(xy)-\la_0)(xy)^{s-1}x\right)(yx) \\
&= - 3y\cdot y^2 \cdot x - (q(xy)-\la_0)(xy)^{s}x \\
&\equiv 
y\cdot q(xy)(xy)^{s-1}x \cdot x - (q(xy)-\la_0)(xy)^{s}x\tag{by \eqref{subs for x and y}} \\
&\equiv -(q(xy)-\la_0)(xy)^{s}x\tag{$yx^2\equiv 0$ by \eqref{yxxi}}
\end{align*}
The $\la_0(xy)^{s}x$ on each side cancel, showing that $q(xy)(xy)^{s}x\in \lcl \dcyc_x f, \dcyc_y f \rcl$.  Since $q(xy)$ is a unit, it follows that  $(xy)^{s}x\in \lcl \dcyc_x f, \dcyc_y f \rcl$.  Again, appealing to symmetry in $x$ and $y$ proves the final statement.
\end{proof}

\begin{prop}\label{NC3rootsnormal}
Suppose that $f=x^3+y^3+p(xy)$ where $p(z)\in\C\llsq z\rrsq$ with $s=\ord(p)\ge2$.  
Then
\[
f\cong
\begin{cases}
x^3+y^3&\text{when $p=0$}\\
x^3+y^3+(xy)^s&\text{when $p\ne0$}.\\
\end{cases}
\]
Furthermore
\begin{enumerate}
\item\label{x3y3i}
$\JRdim\Jac(f)\le1$, with equality if and only if $p=0$.
\item\label{x3y3ii}
If $p\ne0$, then $\dim_{\mathbb{C}}\Jac(f)=4s$, and $\dim_{\mathbb{C}}\Jac(f)^{\ab}=4$.
\end{enumerate}
Therefore the expressions $x^3+y^3+(xy)^s$ with $s\in\mathbb{Z}_{\geq 2}\cup\{\infty\}$ form a set of normal forms.
\end{prop}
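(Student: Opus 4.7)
The plan is to first reduce $f=x^3+y^3+p(xy)$ to the polynomial $x^3+y^3+(xy)^s$ using the equality-of-ideals relation $\simeq$, after which the two dimension counts and the pairwise non-isomorphism of the normal forms follow by a direct analysis of alternating words. Lemma \textnormal{\ref{lem!yxx}} already provides the essential cancellations, so no chasing argument is needed at this stage.

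\textbf{Step 1: Reduction to normal form.} The case $p=0$ is trivial. Otherwise write $p(z)=\la_s z^s+\la_{s+1}z^{s+1}+\cdots$ with $\la_s\neq0$, and set $g\colonequals x^3+y^3+\la_s(xy)^s$. From the explicit formula $\dcyc_x f=3x^2+y(xy)^{s-1}q(xy)$, with $q$ the unit appearing in the proof of Lemma \textnormal{\ref{lem!yxx}} (and in particular $q(0)=s\la_s$), compute
\[
\dcyc_x f-\dcyc_x g \;=\; y(xy)^{s-1}\bigl(q(xy)-s\la_s\bigr)\;=\;y(xy)^s\,\tilde q(xy)
\]
for some $\tilde q\in\ringtwo$. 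Since $y(xy)^s=(yx)^s y$ lies in $\lcl\dcyc_x f,\dcyc_y f\rcl$ by Lemma \textnormal{\ref{lem!yxx}}\eqref{yxxii}, so does $\dcyc_x g$; symmetrically $\dcyc_y g\in\lcl\dcyc_x f,\dcyc_y f\rcl$. Applying the same lemma to $g$ in place of $f$ yields the reverse inclusion, so $f\simeq g$, and hence $\Jac(f)\cong\Jac(g)$. To clear the constant $\la_s$, rescale $x\mapsto cx$, $y\mapsto cy$ and divide the potential by $c^3$, which leaves $\Jac$ unchanged: the coefficient of $(xy)^s$ becomes $\la_s c^{2s-3}$, and since $2s-3\ge 1$ one may choose $c$ so that this equals $1$. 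Hence $f\cong x^3+y^3+(xy)^s$.

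\textbf{Step 2: Computing dimensions.} For $p=0$, $\Jac(f)\cong\ringtwo/\lcl x^2,y^2\rcl$, and each quotient $\mathfrak{J}^k/\mathfrak{J}^{k+1}$ is spanned by the two alternating words of length $k$ for $k\ge 1$, whence $\JRdim\Jac(f)=1$. For $p\ne0$, take $f=x^3+y^3+(xy)^s$. Parts \eqref{yxxi}--\eqref{yxxiB} of Lemma \textnormal{\ref{lem!yxx}} kill every mixed cubic monomial, so $\Jac(f)$ is topologically spanned by alternating words; the generators $\dcyc_x f=3x^2+s(yx)^{s-1}y$ and $\dcyc_y f=3y^2+s(xy)^{s-1}x$ push $x^2,y^2$ into degree $2s-1$, while Lemma \textnormal{\ref{lem!yxx}}\eqref{yxxii} annihilates all alternating words of length $\ge 2s+1$. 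Left-multiplying $\dcyc_x f\equiv0$ by $x$ and right-multiplying $\dcyc_y f\equiv0$ by $y$ both give $3x^3\equiv -s(xy)^s$ and $3y^3\equiv-s(xy)^s$, while the other two sidings give $3x^3\equiv -s(yx)^s$ and $3y^3\equiv -s(yx)^s$; hence $(xy)^s=(yx)^s$ and the degree-$2s$ piece is one-dimensional. Counting produces $1+2(2s-1)+1=4s$ basis elements. For the abelianisation, $\Jac(f)^{\ab}=\C\llsq x,y\rrsq/(3x^2+sx^{s-1}y^s,\,3y^2+sx^s y^{s-1})$; iterating the two relations pushes $x^2$ and $y^2$ into arbitrarily high powers of the maximal ideal, hence they vanish by Krull's theorem, leaving $\{1,x,y,xy\}$ as a basis.

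\textbf{Step 3: Normal forms.} The algebras $\Jac(x^3+y^3+(xy)^s)$ for distinct finite $s$ are distinguished by their dimension $4s$, while $s=\infty$ is the unique infinite-dimensional case, so the listed forms are pairwise non-isomorphic. The main bookkeeping delicacy will be in Step 2, in carefully tracking left-versus-right multiplications to extract the single relation $(xy)^s=(yx)^s$ that collapses degree $2s$; otherwise the argument is routine once Lemma \textnormal{\ref{lem!yxx}} is in hand.
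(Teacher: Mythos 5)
Your Step 1 is essentially the paper's argument verbatim (the ideal identity $\lcl\updelta f\rcl=\lcl\updelta g\rcl$ via Lemma~\ref{lem!yxx}\eqref{yxxii} applied to both $f$ and $g$, followed by the rescaling $x\mapsto cx$, $y\mapsto cy$ with $c^{2s-3}$ normalising the constant), and your abelianisation argument via Krull's intersection theorem is a pleasant unification of the paper's case split $s>2$ (explicit unit) versus $s=2$ (asserted). The defect is in Step~2. You show that the $4s$ alternating words \emph{span} $\Jac(f)$, and you derive the relation $(xy)^s\equiv(yx)^s$ by the left/right multiplications; that gives $\dim_\C\Jac(f)\le 4s$. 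But you never establish the reverse inequality. Your phrase ``the single relation $(xy)^s=(yx)^s$'' is exactly what needs justification: nothing you have written rules out, say, an identification $(xy)^e\equiv(yx)^e$ in some degree $e<s$, or a linear relation among $(xy)^e x$ and $(yx)^e y$, which would force $\dim_\C\Jac(f)<4s$.

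The paper closes this gap with a noncommutative Gr\"obner basis calculation: taking $g_1=3x^2+sy(xy)^{s-1}$, $g_2=3y^2+sx(yx)^{s-1}$, the self-overlap of $g_1$ produces $g_3=xg_1-g_1x=s\bigl((xy)^s-(yx)^s\bigr)$ (which is your ``siding'' computation), and then one \emph{checks that every remaining overlap ambiguity among the leading terms $x^2$, $y^2$, $(xy)^s$ has degree $\ge 2s+1$}, so is killed in the truncated quotient by $\scrM_{2s+1}$. That diamond-lemma verification is what turns your spanning set into a basis. Without it the argument is one-sided. The fix is short — a degree count of the possible overlaps, as in the paper — but it needs to be said.
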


\begin{proof}
For the first statement, if $p=0$ we are done, so suppose $p\ne0$. Continuing the notation in the proof of \ref{lem!yxx} above, after differentiating and pulling out the lowest terms, we may write
\begin{equation}
\dcyc_xf = 3x^2 + y(xy)^{s-1}q(xy)
\qquad\textrm{and}\qquad
\dcyc_yf = 3y^2 + q(xy)(xy)^{s-1}x\label{eqn:x2y2}
\end{equation}
for some $q=\la_0 + \la_1z + \la_2z^2+\cdots\in\C\llsq z\rrsq$ with $\la_0\ne0$. Set $g=x^3+y^3+(\la_0/s)(xy)^s$.  Now \ref{lem!yxx} applies equally well to both $f$ and $g$, hence both $(xy)^{s}x$ and $(yx)^sy$ belong to \emph{both} the Jacobi ideals associated to $f$ and $g$.  Consequently
\begin{align*}
\lcl \updelta_xf,\updelta_yf\rcl &= \lcl \updelta_xf,\updelta_yf, (xy)^{s}x, (yx)^sy\rcl\\
&= \lcl \updelta_xg,\updelta_yg, (xy)^{s}x, (yx)^sy\rcl\tag{cancel higher terms from $\updelta_xf$ and $\updelta_yf$}\\
&= \lcl \updelta_xg,\updelta_yg \rcl.
\end{align*}
It follows that  $f\simeq g$. The coordinate change $x\mapsto ax$, $y\mapsto ay$ for $a=\sqrt[2s-3]{s/\la_0}$ then normalises the constant factor $\la_0/s\ne0$, as required.\\
\eqref{x3y3i} Consider the case $p=0$. As in \ref{radical remark}, if $\mathfrak{J}$ is the Jacobson radical of $\Jac(f)$, then
\[
\frac{\mathfrak{J}^d}{\mathfrak{J}^{d+1}}
=\frac{\n^d+\lcl x^2,y^2\rcl}{\n^{d+1}+\lcl x^2,y^2\rcl}.
\] 
This is always a two-dimensional vector space, since if $d$ is even it has basis $(xy)^{d/2}$ and $(yx)^{d/2}$, whilst if $d$ is odd it has basis $x(yx)^{(d-1)/2}$ and $y(xy)^{(d-1)/2}$.  It follows that $\Jac(f)$ is an infinite-dimensional $\C$-algebra, with $\JRdim\Jac(f)=1$.

When $p\ne0$, \ref{lem!yxx} shows at once that $\Jac(f)$ is finite dimensional:
indeed any monomial of degree $t\ge2s+1$ either contains one of the monomials listed in~\ref{lem!yxx}(\ref{yxxi}--\ref{yxxii}) or is $x^{t}$ or $y^{t}$. But by~\eqref{eqn:x2y2}  $x^t=x^{t-4}\cdot x^2\cdot x^2$ and $y^t=y^2\cdot y^2\cdot y^{t-4}$
is equivalent, modulo $\lcl \updelta_xf,\updelta_yf\rcl$, to a monomial that contains one of those listed, and thus is equivalent to zero. Consequently, the entire graded piece of degree~$t$ is zero, and so $\Jac(f)$ is finite dimensional.

\noindent\eqref{x3y3ii} We compute a standard basis of $\lcl \updelta_xf,\updelta_yf\rcl$ with respect to a local graded monomial order, where we refer the reader to \ref{rem:gb} below for references to the formal theory and its properties in this case. In particular, leading terms have lowest degree, and lexicographical order selects the leading term when there is more than one of lowest degree.

The proof of~\eqref{x3y3i} above introduces a simplifying factor:
since all monomials of degree $t=2s+1$ lie in the closed Jacobian ideal, it is sufficient
to work in the quotient
\[
\Jac(f)\cong\frac{\mathbb{C}\langle x,y\rangle}{(\updelta_xf,\updelta_yf,\scrM_t)}
\]
where we write $\scrM_t$ for the set of all noncommutative monomials of degree $t$,
and closure is no longer an issue.

Following \cite[3.6]{GH98},
we compile a standard basis $\{g_1,g_2,\dots\}$ of $(\updelta_xf,\updelta_yf,\scrM^t)$
starting with the normalised derivatives
\[
g_1 = x^2 + \tfrac{s}3y(xy)^{s-1} = \tfrac{1}3\dcyc_xf
\quad\text{and}\quad
g_2 = y^2 + \tfrac{s}3x(yx)^{s-1} = \tfrac{1}3\dcyc_yf
\]
which have leading terms $x^2$ and $y^2$ respectively, since $s\ge2$.
The computation proceeds by resolving non-trivial overlaps among leading terms.
The leading term of $g_1$ overlaps non-trivially with itself to produce
(after scaling by $\tfrac{3}s$ to normalise coefficients)
\[
g_3 \colonequals \tfrac{3}s(xg_1-g_1x) = (xy)^s - (yx)^s.
\]
This does not reduce further modulo existing leading terms.
The analogous overlap $yg_2-g_2y$ gives the same $g_3$,
and all other overlaps have order $\ge t$, so reduce to zero modulo~$\scrM_t$.
Hence the standard basis is $\{g_1,g_2,g_3\}$.
Note that we can then dispense with $\scrM_t$, since those
monomials all reduce to zero under $g_1,g_2,g_3$ -- that is exactly what \ref{lem!yxx} demonstrates -- and so they do not appear in the standard basis.

As in the commutative case, the set of monomials not divisible by the leading term of any of $g_1$, $g_2$, $g_3$ descends to give a monomial $\C$-vector space basis for the quotient, by e.g.\ \cite[3.5 and 3.1--2]{GH98}.
Thus, working in increasing degree, $1,x,y$ are in the basis.  Then,
in each pair of degrees $2e,2e+1$ for $1\le e\le s-1$ the basis consists of the four monomials
\[
(xy)^{e},(yx)^{e},\ (xy)^{e}x, (yx)^{e}y,
\]
and finally $(xy)^{s}\equiv (yx)^{s}$ in degree~$2s$.
Summing up, this basis has size $4s$, as claimed.

In the abelianisation, if $s>2$ we may rewrite the derivatives as $x^2(\mathrm{unit})$ and $y^2(\mathrm{unit})$. Hence  $\Jac(f)^{\ab}\cong\mathbb{C}\llsq x,y\rrsq/(x^2,y^2)$, which is four dimensional. When $s=2$, it is also easy to verify that $\Jac(f)^{\ab}\cong\mathbb{C}\llsq x,y\rrsq/(x^2,y^2)$, so in all cases the dimension is four.
\end{proof}

\begin{remark}\label{rem:gb}
The theory of standard bases, also known as local Gr\"obner bases, of ideals and their closures in noncommutative power series rings is less well documented in the literature than either global polynomial Gr\"obner bases (commutative or not), or Mora's tangent cone algorithm for commutative power series rings; see for example \cite{mora94}
or \cite[III.1]{hironaka}.
Nevertheless, the theory exists following analogous ideas, and has analogous
conclusions.
The essential reference is \cite{GH98}, where \S3 establishes the existence and properties of
standard bases, whilst \S4--5 provides the tools needed to calculate.
Standard bases may be infinite in general, but within the context
of $\JRdim\le1$ examples in this paper, this issue does not arise.
\end{remark}

In order to state a unified theorem with the $xy^2$ case
in \S\ref{sec: overview D} below, it is convenient to mildly change basis.
This is rather cheap, largely because there are no moduli.

\begin{cor}\label{NC3rootsnormal2}
Suppose that $f\in\ringtwo_{\geq 3}$ where $f_3^{\ab}$ has three roots.  Then either
\[
f\cong
\begin{cases}
xy^2+x^3&\\
xy^2+x^3+x^{2n}&\mbox{for some }n\geq 2.\\
\end{cases}
\]
Furthermore, the above are normal forms.
\end{cor}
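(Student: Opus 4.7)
The strategy is to leverage Proposition~\ref{NC3rootsnormal}, which already asserts $f\cong x^3+y^3$ or $f\cong x^3+y^3+(xy)^s$ for a unique $s\geq 2$, and to translate these presentations into the forms $xy^2+x^3$ and $xy^2+x^3+x^{2n}$ via a single linear change of variables. Since $(xy^2+x^3)^{\ab}=x(y+ix)(y-ix)$ has three distinct linear factors, Lemma~\ref{linear change} supplies a linear automorphism $\phi$ of $\ringtwo$ whose abelianisation sends $xy^2+x^3$ to $x^3+y^3$; explicitly one may take $x\mapsto\al(x+y),\ y\mapsto\sqrt{3}\,\al(-x+y)$ with $\al^3=\tfrac{1}{4}$. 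On homogeneous cubics, abelianisation $f\mapsto f^{\ab}$ descends to an isomorphism $\ringtwo_3/{\sim}\to\C[x,y]_3$ (both four-dimensional, indexed by the cyclic classes $x^3,\,x^2y,\,xy^2,\,y^3$), so $\phi(xy^2+x^3)\sim x^3+y^3$ exactly, and \ref{isomain} then gives $xy^2+x^3\cong x^3+y^3$.

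For the other family, applying the same $\phi$ yields $xy^2+x^3+x^{2n}\cong x^3+y^3+\al^{2n}(x+y)^{2n}$, and \ref{NC3rootsnormal} then identifies this either with some $x^3+y^3+(xy)^{s}$ (for some $s\geq2$) or with $x^3+y^3$. The second option will be excluded by finite dimensionality, and the equality $s=n$ will follow from the dimension count $\dim_\C\Jac(xy^2+x^3+x^{2n})=4n$ combined with \ref{NC3rootsnormal}\eqref{x3y3ii}. For the count, the derivatives $\updelta_yf=xy+yx$ and $\updelta_xf=y^2+3x^2+2nx^{2n-1}$ give $yx=-xy$ in $\Jac(f)$ and show that $y^2$ is central (so $y^2x=xy^2$), reducing every element to the form $p(x)+q(x)y$. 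The commutator $y\,\updelta_xf-\updelta_xf\cdot y=-4n\,x^{2n-1}y$ lies in $\lcl\updelta f\rcl$, forcing $x^{2n-1}y=0$; then multiplying $\updelta_xf$ on the right by $x^{2n-1}$ and using $x^{2n-1}y^2=(x^{2n-1}y)y=0$ gives $3x^{2n+1}+2n\,x^{4n-2}=0$. Iterating this identity (which requires $n\geq2$ so that $2n-3\geq1$) places $x^{2n+1}$ in $\n^m+(\updelta_xf,\updelta_yf)$ for every $m\geq 2n+1$, hence inside the closure $\lcl\updelta f\rcl$, so $x^{2n+1}=0$ in $\Jac(f)$. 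Consequently $\{x^i:0\leq i\leq 2n\}\cup\{x^iy:0\leq i\leq 2n-2\}$ spans $\Jac(f)$; since this set has $4n$ elements and $\dim_\C\Jac(f)=4s$ by \ref{NC3rootsnormal}\eqref{x3y3ii}, we obtain $s\leq n$, and the higher corank calculation recorded in \ref{ex: d dimensions} (equivalently, the linear independence of this spanning set) upgrades this to $s=n$.

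Finally, the forms listed are pairwise non-isomorphic: $\JRdim\Jac(xy^2+x^3)=1$ by \ref{NC3rootsnormal}\eqref{x3y3i}, while distinct $n\geq 2$ produce Jacobi algebras of distinct dimensions $4n$. The main obstacle is the dimension calculation, and in particular the derivation of $x^{2n+1}=0$, which requires passing from the bare Jacobian ideal to its closure and leans crucially on the Hausdorffness of $\Jac(f)$ from \ref{radical remark}\eqref{radical remark 2}; linear independence of the claimed basis is secondary bookkeeping once the right spanning set is identified.
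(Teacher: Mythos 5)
Your proof follows essentially the same route as the paper's: reduce via a linear change to the family $x^3+y^3+(xy)^s$ of \ref{NC3rootsnormal}, then pin down $s$ by a dimension count. The paper's own proof is terse and cites \cite[\S5]{Okke}, \cite[\S5]{Kawamata}, or \ref{ex: d dimensions} for $\dim_\C\Jac(xy^2+x^3+x^{2n})=4n$; you unpack the linear change explicitly and add a self-contained derivation of the \emph{upper} bound $\dim_\C\Jac(xy^2+x^3+x^{2n})\le 4n$, which is correct (the commutator computation giving $x^{2n-1}y\in J_f$, the centrality of $y^2$ modulo $\lcl xy+yx\rcl$, the iteration forcing $x^{2n+1}$ into the closure, and the resulting $4n$-element spanning set all check out). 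However, the crucial step — that $s=n$ rather than $s<n$ — still requires the \emph{exact} dimension, and for the reverse inequality you fall back on \ref{ex: d dimensions} or ``linear independence'', which is precisely where the paper also defers to external material (a Gr\"obner-basis computation like the one in \ref{NC3rootsnormal}\eqref{x3y3ii}). So the added calculation does not close the argument on its own; it only replaces half of the cited dimension formula. In short: correct, same strategy, slightly more self-contained, but still reliant on the same external ingredient for the lower bound.
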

\begin{proof}
Each of the forms $f$ listed satisfies the condition that $f_3^{\ab}$ has three roots. Thus there exists some $g$ from the list in \ref{NC3rootsnormal} with $g\cong f$. Furthermore, $\dim_\mathbb{C}\Jac(xy^2+x^3)=\infty$, whereas $\dim_\mathbb{C}\Jac(xy^2+x^3+x^{2n})=4n$ (see e.g.\ \cite[\S5]{Okke} and \cite[\S5]{Kawamata}, or \ref{ex: d dimensions}), and so all options are uniquely covered.  Since the $g$ listed in \ref{NC3rootsnormal} are normal forms, it follows that the $f$ listed here are normal forms.
\end{proof}

\subsection{Isomorphisms on the Quantum Plane}
The following, which may be of independent interest, is one of the key reduction steps that will be used in \S\ref{sec!2roots}.
\begin{lemma}\label{mod xy+yx lemma}
For any units $v, w\in\C\llsq x^2\rrsq$, the unitriangular automorphism $\phi$ of $\C\llangle x,y\rrangle$ sending $x\mapsto xv$, $y\mapsto yw$ descends to a topological isomorphism
\[
\frac{\C\llangle x,y\rrangle}{\lcl xy+yx\rcl} \buildrel{\cong}\over{\longrightarrow} \frac{\C\llangle x,y\rrangle}{\lcl xy+yx \rcl}.
\]
\end{lemma}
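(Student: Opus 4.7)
The whole argument turns on the fact that $x^2$ is central in $R \colonequals \C\llangle x,y\rrangle/\lcl xy+yx\rcl$: modulo the relation, $x^2y = x(-yx) = -(xy)x = yx^2$, and by continuity every element of $\C\llsq x^2\rrsq$ maps to a central element of $R$. In particular, the images $\bar v$ and $\bar w$ of $v$ and $w$ are central units in $R$.

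First I would check that $\phi$ descends to the quotient, i.e.~that $\phi(xy+yx)\in\lcl xy+yx\rcl$. Working in $R$ and using centrality of $\bar v,\bar w$,
\[
\phi(xy+yx) = xv\cdot yw + yw\cdot xv \equiv \bar v\bar w\,(xy+yx) \equiv 0,
\]
so $\phi$ induces a continuous ring homomorphism $\bar\phi\colon R\to R$, continuity being inherited from $\phi$, cf.~\ref{max to max}.

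Next I would show that $\phi$ is itself an automorphism of $\C\llangle x,y\rrangle$ via the factorisation $\phi=\phi_y\circ\phi_x$, where $\phi_x\colon x\mapsto xv$, $y\mapsto y$ and $\phi_y\colon x\mapsto x$, $y\mapsto yw$. The map $\phi_y$ has the evident inverse $x\mapsto x$, $y\mapsto y\cdot w^{-1}$, using that $w^{-1}\in\C\llsq x^2\rrsq$ is fixed by $\phi_y$. For $\phi_x$, its restriction to the commutative subring $\C\llsq x\rrsq$ is the change of variable $x\mapsto xv(x^2)$, which is an automorphism there by the commutative implicit function theorem; moreover $\phi_x$ preserves parity in $x$, so its inverse on $\C\llsq x\rrsq$ has the form $x\mapsto xu(x^2)$ for some unit $u\in\C\llsq x^2\rrsq$, and extending by $y\mapsto y$ yields $\phi_x^{-1}$ on all of $\C\llangle x,y\rrangle$. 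Composing gives $\phi^{-1}=\phi_x^{-1}\circ\phi_y^{-1}$, whose action on generators is $x\mapsto xu(x^2)$ and $y\mapsto y\cdot w^{-1}(x^2u(x^2)^2)$; crucially, this is again a map of the form assumed in the lemma, with $v$ and $w$ replaced by units in $\C\llsq x^2\rrsq$.

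The first step therefore applies equally to $\phi^{-1}$, showing that it too descends to a continuous ring homomorphism $\overline{\phi^{-1}}\colon R\to R$. Since $\phi^{-1}\circ\phi=\phi\circ\phi^{-1}=\id$ passes to the quotient, the maps $\bar\phi$ and $\overline{\phi^{-1}}$ are mutually inverse topological isomorphisms. The main obstacle I anticipate is not the ideal-preservation step, which is immediate from the centrality observation, but rather the verification that $\phi$ itself is an automorphism: since the paper does not provide a noncommutative inverse function theorem, the decomposition $\phi=\phi_y\circ\phi_x$ is the cleanest route, reducing the invertibility of each factor to a purely commutative question in $\C\llsq x\rrsq$.
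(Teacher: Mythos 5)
Your proof is correct and follows the same overall strategy as the paper's: establish that $\phi$ is an automorphism of $\ring$, use the centrality of (images of) elements of $\C\llsq x^2\rrsq$ modulo $\lcl xy+yx\rcl$ to deduce $\phi(xy+yx)$ lies in the closed ideal, conclude $\phi(I)\subseteq I$ by continuity and minimality of the closed ideal, and run the same argument for $\phi^{-1}$ to obtain the reverse inclusion.

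You are, however, more careful than the paper on one point, and your extra diligence here is well placed. The paper asserts that the inverse of $\phi$ is given by the map $\uppsi\colon x\mapsto xv^{-1}$, $y\mapsto yw^{-1}$; this is not literally true, since substitution is compositional rather than multiplicative. For example with $v=1+x^2$, $w=1$, one computes
\[
\phi(xv^{-1}) \;=\; \phi(x)\,\phi(v^{-1}) \;=\; x(1+x^2)\bigl(1+x^2(1+x^2)^2\bigr)^{-1} \;=\; x-2x^5+\cdots \;\neq\; x,
\]
so $\phi\circ\uppsi\neq\Id$. Your decomposition $\phi=\phi_y\circ\phi_x$, inversion of $\phi_x$ via the commutative compositional-inverse argument in $\C\llsq x\rrsq$ (parity of the odd series $xv(x^2)$ ensures the inverse is again of the shape $x\mapsto xu(x^2)$, in the spirit of \ref{lempsx}\eqref{psxii}), and the observation that $\phi^{-1}=\phi_x^{-1}\circ\phi_y^{-1}$ is again of the form $x\mapsto xu(x^2)$, $y\mapsto yt(x^2)$ for units $u,t\in\C\llsq x^2\rrsq$, correctly supply precisely what is needed for the second half of the argument to apply verbatim. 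This repairs a genuine, if minor, slip in the paper's own proof, which also quietly relies on the claimed explicit inverse to invoke \ref{max to max}; the lemma and its proof strategy are unaffected once the correct form of $\phi^{-1}$ is in hand.
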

\begin{proof}
The inverse of $\phi$, as an automorphism of $\C\llangle x,y\rrangle$, is clearly given by the unitriangular automorphism $\uppsi\colon x\mapsto xv^{-1}$, $y\mapsto yw^{-1}$.  Set $I=\lcl xy+yx\rcl$, then since $\phi$ is a topological isomorphism by \ref{max to max}, we just need to prove that $\phi(I)=I$.

Now $x$ commutes with $v$ and $w$, being power series in $x^2$, and also $vw=wv$. But, modulo $I=\lcl xy+yx\rcl$, $y$ commutes with $x^2$, thus since the ideal is closed $y$ commutes with both $v$ and $w$. It follows that
\begin{equation}
\phi(xy+yx)= xvyw+ywxv\equiv (xy+yx)vw\equiv 0 \quad \textnormal{mod } I.\label{magic inclusion}
\end{equation}
Since $\phi$ is a continuous isomorphism, and $I$ is the smallest closed ideal containing $xy+yx$,  $\phi(I)$ is the smallest closed ideal containing $\phi(xy+yx)$.  But by \eqref{magic inclusion} $\phi(xy+yx)$ also belongs to the closed ideal $I$, so by minimality $\phi(I)\subseteq I$.

Since $v^{-1}$ and $w^{-1}$ are also units in $\C\llsq x^2\rrsq$, exactly the same logic applied to $\uppsi$ shows that $\uppsi(I)\subseteq I$.  Applying $\phi$ to this inclusion, we see that $I=\phi\uppsi(I)\subseteq \phi(I)$. Combining inclusions gives $\phi(I)=I$. 
\end{proof}

%-----------------------------------------------------------------------------------------
\subsection{Abelianized Cubic with Two Factors}\label{sec!2roots}
This subsection considers the case of \ref{linear change}
where $f_3^{\ab}$ has two distinct factors, that is $f\cong x^2y+\bigO_4$, and in \ref{cor:D two roots main} provides normal forms. 
% In this subsection we consider the 2-roots case of Lemma~\ref{linear change}.
This is substantially harder than in \S\ref{sec!3roots}.
\begin{lemma}\label{lemB}
Fix $t\ge 4$, and let $f=xy^2 + f_4+\cdots + f_t + \bigO_{t+1}$.  For any $h\in\mathbb{C}\llangle x,y\rrangle$ with $\ord(h) = t-2$, the unitriangular automorphism $x\mapsto x-h$, $y\mapsto y$ sends
\[
f\mapsto xy^2 + f_4+\cdots + f_{t-1} + \left(f_t-hy^2\right) + \bigO_{t+1}.
\]
\end{lemma}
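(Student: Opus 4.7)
The plan is to prove the lemma by a direct degree count on the effect of the substitution $x\mapsto x-h$, leaving $y$ unchanged. The key observation is that the automorphism $\phi$ alters each monomial only by terms of high order, and in the case of the cubic piece $xy^2$ this alteration is precisely $-hy^2$.

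First I would note that $\phi$ is a unitriangular automorphism of depth $t-3$ (since $h\in\hatM^{t-2}$), so by \ref{lem:depth}\eqref{lem:depth 1} we already get $\phi(f)_{\le t-3}=f_{\le t-3}$. The work is to show the stronger claim that $\phi(f)_{t-2}$ and $\phi(f)_{t-1}$ also match $f_{t-2}$ and $f_{t-1}$, and that the degree $t$ piece changes exactly by $-h_{t-2}y^2$. For this, I would observe that for any monomial $m$ of degree $d$ containing $k$ copies of the variable $x$, expanding $\phi(m)$ by replacing some subset of the $x$'s by $-h$ gives
\[
\phi(m)=m+\sum_{j=1}^{k}(\text{terms of degree}\ge d-j+j(t-2))=m+\sum_{j=1}^k(\text{terms of degree}\ge d+j(t-3)),
\]
so $\phi(m)\equiv m\pmod{\hatM^{d+t-3}}$, since $t\ge 4$ implies $t-3\ge1$.

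Applying this to each homogeneous piece of $f$ in turn: the cubic piece gives $\phi(xy^2)=(x-h)y^2=xy^2-hy^2$, where $-hy^2\in\hatM^{t}$ with degree $t$ part equal to $-h_{t-2}y^2$. For each piece $f_d$ with $4\le d\le t$, the congruence above yields $\phi(f_d)\equiv f_d\pmod{\hatM^{d+t-3}}$, and since $d\ge 4$ we have $d+t-3\ge t+1$. Finally $\phi(\bigO_{t+1})\subseteq\bigO_{t+1}$ automatically. Summing these contributions and absorbing everything of order $\ge t+1$ into $\bigO_{t+1}$ (in particular the higher order terms of $-hy^2$ beyond degree $t$) gives
\[
\phi(f)=xy^2+f_4+\cdots+f_{t-1}+\left(f_t-hy^2\right)+\bigO_{t+1},
\]
which is the desired identity. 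The argument is essentially a bookkeeping exercise; the only potential pitfall is keeping track of the fact that $h$ need not be homogeneous, so that $hy^2$ has degree $t$ piece $h_{t-2}y^2$ and the higher order parts of $h$ contribute only to $\bigO_{t+1}$, but this is handled automatically by the notation.
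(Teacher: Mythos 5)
Your proof is correct and follows essentially the same route as the paper's, which simply notes that $\uppsi(xy^2)=xy^2-hy^2$ and $\uppsi(m)\equiv m\bmod\n^{t+1}$ for $\deg m\ge4$; you have merely made the terse degree count explicit.
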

\begin{proof}
Write $\uppsi$ for the stated automorphism.  The result follows since $\uppsi(xy^2) = xy^2 - hy^2$ and $\uppsi(m) \equiv m \mod\n^{t+1}$ whenever $\deg(m) \ge 4$. 
\end{proof}

The next lemma is much less elementary.
\begin{lemma}\label{lemC}
Fix $t\ge 4$, and let $f=xy^2 + f_4+\cdots + f_t + \bigO_{t+1}$, where  furthermore
\begin{equation}\label{eq!fdlemC}
f_t=\sfh_{t-2}\cdot y^2+\sum\la_a x^{a_1}y\hdots x^{a_r}y+\upalpha x^t
\end{equation}
for some homogeneous form $\sfh_{t-2}$  of degree~$t-2$, 
each $a_i\ge1$, $r\ge1$ and $r+\sum a_i=t$, $\upalpha\in\C$ and each $\la_a=\la_{a_1\cdots a_r}\in\C$.
Then there exists a unitriangular automorphism $\phi$ of depth $\geq t-3$ such that
\[
\phi(f)= xy^2 + f_4+\cdots + f_{t-1} + (g_t+\upalpha x^t) + \bigO_{t+1}.
\]
where $g_t\in\ringtwo_t$ satisfies $g_t\sim 0$.
\end{lemma}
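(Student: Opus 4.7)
The plan is to take $\phi$ to be a single unitriangular automorphism $x\mapsto x+V$, $y\mapsto y+W$ with $V,W\in\ringtwo_{t-2}$ homogeneous of degree $t-2$, which by \ref{unitriangular def} has depth $\ge t-3$. A straightforward degree count shows that only $f_3=xy^2$ can contribute corrections in degree $\le t$: each single substitution of $x$ or $y$ raises the degree by exactly $t-3$, so corrections from $f_d$ with $d\ge 4$ land in degree $\ge d+(t-3)\ge t+1$, while $\phi(xy^2)$ contributes in degree exactly $t$ from a single substitution. Expanding then yields $\phi(f)_{<t}=f_{<t}$ and
\[
\phi(f)_t \;=\; f_t + xyW + xWy + Vy^2.
\]

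Next, cyclic permutation gives $xWy\sim yxW$, hence $xyW+xWy\sim (xy+yx)W$. Writing $g_t\colonequals\phi(f)_t-\upalpha x^t$, the target $g_t\sim 0$ then becomes
\[
\sfh_{t-2}\,y^2 \;+\; \sum_a \la_a\, x^{a_1}yx^{a_2}y\cdots x^{a_r}y \;+\; (xy+yx)W \;+\; Vy^2 \;\sim\; 0.
\]
The plan is to reduce this to a single key claim: every monomial $m=x^{a_1}yx^{a_2}y\cdots x^{a_r}y$ with $a_i\ge 1$ and $r\ge 1$ satisfies $m\sim (xy+yx)W_m + V_m y^2$ for some $W_m,V_m\in\ringtwo_{t-2}$. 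Granting the claim, setting $W=-\sum_a\la_a W_m$ and $V=-\sfh_{t-2}-\sum_a\la_a V_m$ yields $g_t\sim 0$, completing the construction of $\phi$.

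For $r=1$ the claim is the short cyclic identity $(xy+yx)x^{t-2}\sim 2x^{t-1}y$, giving $W_m=\tfrac12 x^{t-2}$ and $V_m=0$. For $r\ge 2$, the approach is to argue via the image of $m$ in $\ringtwo/\lcl y^2, xy+yx\rcl$: the relations $y^2=0$ and $xy=-yx$ reduce $m$ to $\pm x^{t-r}y^r=0$, so $m$ lies in the two-sided ideal $\lcl y^2, xy+yx\rcl$. Writing $m=\sum_i u_iv_iw_i$ with $v_i\in\{y^2, xy+yx\}$ and cyclically rotating each summand via $u_iv_iw_i\sim v_i(w_iu_i)$ produces an expression of the required shape, and passing to the degree-$t$ part ensures $W_m,V_m\in\ringtwo_{t-2}$. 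A concrete alternative is a direct induction on $a_r$: taking $W_m = x^{a_1}yx^{a_2}y\cdots x^{a_{r-1}}y x^{a_r-1}$, one computes $xy\cdot W_m\sim m$ and $yx\cdot W_m\sim m^{(1)}$, where $m^{(1)}$ either ends in $y^2$ (when $a_r=1$, supplying $V_m$ directly) or is another monomial of the same type with strictly smaller last exponent. The main obstacle is this key claim for $r\ge 2$: either route requires care to guarantee the correct degree $t-2$ and termination of the inductive reduction, but once it is in hand the rest is routine.
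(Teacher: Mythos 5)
Your proposal is correct, and it reorganises the argument in a genuinely different way from the paper. The paper constructs~$\phi$ \emph{iteratively}: it targets one ``bad'' monomial $\la_b x^{b_1}y\cdots x^{b_r}y$ at a time, applies a unitriangular automorphism (cancelling $xy$ from the right and substituting into~$y$) that replaces the target by a $\sim$-trivial residue plus a new term with strictly smaller last exponent, and tracks a counting function to prove termination, so that the final~$\phi$ is a long \emph{composition} of such moves (the composite still has depth $\ge t-3$ by \ref{lem:depth}). You instead write down a \emph{single} homogeneous unitriangular automorphism $x\mapsto x+V$, $y\mapsto y+W$ with $V,W\in\ringtwo_{t-2}$; the degree count isolating $\phi(xy^2)_t = xyW+xWy+Vy^2\sim (xy+yx)W + Vy^2$ is correct, and this cleanly reduces the lemma to the existence claim that each $m=x^{a_1}y\cdots x^{a_r}y$ satisfies $m\sim (xy+yx)W_m+V_my^2$ with $W_m,V_m$ of degree $t-2$. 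Your route~1 — observing that $m$ vanishes in $\freering/(y^2,xy+yx)$ because $xy\equiv -yx$ pulls the $x$'s left and $r\ge2$ gives a factor $y^2$, so $m$ lies in the homogeneous two-sided ideal $(y^2,xy+yx)$, and then cyclically rotating each $u_iv_iw_i\sim v_i(w_iu_i)$ — is a genuinely different and rather clean observation that makes the automorphism's existence transparent without any termination argument. Your route~2 (induction on $a_r$ via $W_m = x^{a_1}y\cdots x^{a_{r-1}}yx^{a_r-1}$) is in fact the \emph{same} reduction the paper performs, just recast as an algebraic identity rather than folded into successive automorphisms. Both of your routes correctly respect the graded structure (the ideal is homogeneous, so membership can be taken degree-by-degree, giving $\deg W_m=\deg V_m=t-2$), and the final bookkeeping — setting $W=-\sum\la_aW_{m_a}$ and $V=-\sfh_{t-2}-\sum\la_aV_{m_a}$ — closes the argument. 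In short: the paper's proof is more algorithmic and self-contained; yours trades the explicit iteration for a conceptual ideal-membership step, and the single-automorphism packaging is a genuine simplification of the bookkeeping.
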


\begin{example}
It is worth considering an example to make the notation of both the statement and proof more transparent.  Consider
\[
f = xy^2 + \Big(\la_{51}x^5yxy + \la_{42}x^4yx^2y + \la_{33}x^3yx^3y\Big) + f_{\ge9}
\]
which has $f_8$ of the form~\eqref{eq!fdlemC}.
Applying $\phi_1\colon x\mapsto x$ and $y\mapsto y - \la_{33}x^3yx^2$, where we cancelled $xy$ from the right of the target $\la_{33}$ term to obtain the subtracted term, gives
\[
\phi_1(f) = xy^2 + \la_{51}x^5yxy + (\la_{42}-\la_{33})x^4yx^2y + g_1+ \bigO_9
\]
where $g_1=\la_{33}(x^3yx^3y - xyx^3yx^2)\sim0$. 
Ignoring $g_1$, the summation in degree~8 symbolically now has only two terms, which is
progress. 

An analogous automorphism $\phi_2$ sending $x\mapsto x$ and $y\mapsto - (\la_{42}-\la_{33})x^4yx$, where we cancelled $xy$ from the right of the next target term, gives
\[
\phi_2\phi_1(f) = xy^2 + (\la_{51} - \la_{42} + \la_{33})x^5yxy + g_2+ \bigO_9
\]
for some $g_2\sim0$, and again the number of terms in degree~8 (outside $g_2$)
has not increased.
Repeating again with an analogous automorphism $\phi_3$ gives 
\[
\phi_3\phi_2\phi_1(f) = xy^2 - (\la_{51} - \la_{42} + \la_{33})x^6y^2 + g_3+ \bigO_9
\]
with $g_3\sim0$.
We are now in a position to apply \ref{lemB} to leave only $g_3$ in degree~8.

The proof below confirms that this inductive idea works more generally.
\end{example}

\begin{proof}
If the middle sum in the expression for $f_t$ is zero, we are done by \ref{lemB} (with $g_t=0$), so we may assume that the sum is nonzero.  

Suppose that the middle sum contains a term $\sft_1 = \la_b x^{b_1}y\cdots x^{b_r}y$ with $r>1$.
In this case, consider the unitriangular automorphism $\phi$ defined by $x\mapsto x$, $y\mapsto y-\la_b x^{b_1}y\cdots yx^{b_r-1}$, where we have simply cancelled $xy$ from the right-hand side of the target term~$\sft_1$. As in \ref{lemB}, $\phi(m) \equiv m \mod\n^{t+1}$ whenever $\deg(m) \ge 4$, so any change in degree~$\le t$ comes from $\phi(xy^2)$, and thus
\begin{align}
\nonumber
\phi(f) = xy^2 &+ f_4 +\cdots+f_{t-1} \\
 &+(f_t- \la_b xyx^{b_1}y\cdots yx^{b_r-1} -\la_b x^{b_1+1}y\cdots yx^{b_r-1}y ) + \bigO_{t+1} \label{2 roots a}
\end{align}
Writing
$\sfg_1=\sft_1-\la_b xyx^{b_1}y\cdots yx^{b_r-1}\sim 0$,
then  
the degree~$t$ term of~\eqref{2 roots a} equals 
\[
\sfg_1+h\cdot y^2+\sum\la_a x^{a_1}y\hdots x^{a_r}y+\upalpha x^t
\] 
where, under the summand, the target term $\sft_1$ 
has been
replaced by a term of the form $\sft_2 = -\la_b x^{b_1+1}y\cdots yx^{b_r-1}y$, so the sum has the same
number of terms or fewer (depending on whether $\sft_2$ cancels with existing terms or not).

If $b_r=1$, then the new term $\sft_2$ 
equals $hy^2$ for $h=-\la_b x^{b_1+1}y\cdots yx^{b_{r-1}}$, and 
we so may apply \ref{lemB} to \eqref{2 roots a} to find $\uppsi$ such that
\[
\uppsi\phi(f) =  xy^2 + f_4 +\cdots+f_{t-1}+(f_t- \la_b xyx^{b_1}y\cdots yx^{b_r-1}  ) + \bigO_{t+1} 
\]
where the degree $t$ term is equal to 
\begin{align*}
f_t- \la_b xyx^{b_1}y\cdots yx^{b_r-1}&=\sfg_1+f_t-\la_b x^{b_1}y\cdots x^{b_r}y\\
&=\sfg_1+h\cdot y^2+\sum_{a\neq b}\la_a x^{a_1}y\hdots x^{a_r}y+\upalpha x^t
\end{align*}
and the number of terms under the summand is now strictly reduced.  

Otherwise, $b_r>1$. Set $\phi_1=\phi$, and repeating the original construction of a unitriangular automorphism by cancelling $xy$ from the right, we can construct $\phi_2$ such that
\begin{align}
\nonumber
\phi_2\phi_1(f)= xy^2 &+ f_4 +\cdots+f_{t-1} \\
 &+\big(\sfg_2+h\cdot y^2+\sum\la_a x^{a_1}y\hdots x^{a_r}y+\upalpha x^t\big)+\bigO_{t+1}
 \label{eq!phi2phi1}
\end{align}
where $\sfg_2\sim0$ is the sum of $\sfg_1$ and another binomial $\sim0$, and in the sum we have replaced the term $-\la_b x^{b_1+1}y\cdots yx^{b_r-1}y$ by $\la_b x^{b_1+2}y\cdots yx^{b_r-2}y$.  Repeating this, we find unitriangular automorphisms $\phi_1,\dots,\phi_{b_r-1}$ so that $\phi_{b_r-1}\cdots\phi_2\phi_1(f)$ has the form of~\eqref{eq!phi2phi1}, and
the sum has the same number of terms or fewer, but in which the target
monomial we are focussing on 
has become $x^{b_1+b_r-1}yx^{b_2}\cdots yxy$. A further repetition with a unitriangular
automorphism $\phi_{b_r}$ replaces that term by one that contains $y^2$, and once
again we may apply \ref{lemB} to find a unitriangular automorphism $\uppsi$ that moves this term into higher degree.
Thus after applying the single unitriangular automorphism $\uppsi\phi_{b_r}\cdots\phi_1$
to $f$, the number of terms in the summation when parsed in the form~\eqref{eq!fdlemC} has
strictly reduced.

We repeat this process inductively, and it will terminate when there are no terms
under the summation sign of the form $\la_a x^{a_1}y\hdots x^{a_r}y$ with $r>1$.
Each step was achieved by a single unitriangular automorphism (itself built as a composition
of unitriangular automorphisms), and composing each of these gives a single
unitriangular automorphism $\upvartheta$ such that
\[
\upvartheta(f)=xy^2 + f_4 +\cdots+f_{t-1}+(\sfg+h\cdot y^2+\la x^{t-1}y+\upalpha x^t ) + \bigO_{t+1} 
\]
for some $\sfg$ with $\sfg\sim 0$.

To conclude, the unitriangular automorphism $\phi$ defined by $x\mapsto x-h$, $y\mapsto y-\frac{\la}{2}x^{a_1-1}=y-\frac{\la}{2}x^{t-2}$ has depth $t-3$, so again $\phi(m) \equiv m \mod\n^{t+1}$ whenever $\deg(m) \ge 4$ and  thus
\[
\phi\upvartheta(f) = xy^2 + f_4 +\cdots+f_{t-1}+ (\sfg+\underbrace{\la x^{t-1}y-\tfrac{\la}{2}xyx^{t-2}-\tfrac{\la}{2}x^{t-1}y}_{\sfh}+\upalpha x^t)+ \bigO_{t+1}. 
\]
Set $g_t=\sfg+\sfh$, then since both $\sfg\sim 0$ and $\sfh\sim 0$, we are done.
\end{proof}

From here, the strategy of \S\ref{sec!3roots} remains: first find a standard power series form of each potential,  then simplify into polynomial normal form.

\begin{prop}\label{prop!xy2prep}
Suppose that $f=f_3+\bigO_{4}$ where $f_3^{\ab}$ has two distinct linear factors.  Then 
$f\cong xy^2+q(x)$ for some power series $q(x)\in\mathbb{C}\llsq x\rrsq$
with $\ord(q)\ge4$.
\end{prop}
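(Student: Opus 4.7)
The plan is to mirror the proof of Proposition~\ref{prop!x3y3prep}: I would build a sequence of power series $\sff_1,\sff_2,\hdots$ and unitriangular automorphisms $\phi^1,\phi^2,\hdots$ of strictly increasing depth so that each $\sff_t$ already agrees with the target form $xy^2+q(x)$ in all degrees $\le t+2$, then invoke \ref{isomain}\eqref{main chase cor} to extract the limit $\sff=\lim\sff_i$ with $\Jac(f)\cong\Jac(\sff)$ of the required shape. The essential difference from the three-factor case is that, in place of \ref{lemB3}, the workhorse here is \ref{lemC}: it eliminates in one step every monomial of a given degree except $x^s$, modulo a controlled $\sim$-negligible correction.

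The base case will follow from \ref{linear change} (relabelling $x\leftrightarrow y$ to match the convention of this section), which already delivers $f\cong g$ with $g_3=xy^2$. The key preparatory observation is that any monomial of degree~$t$ in $x,y$ is, after at most one cyclic rotation, $\sim$-equivalent to exactly one of $x^t$, $h\cdot y^2$ with $h$ a monomial of degree $t-2$, or $x^{a_1}y\hdots x^{a_r}y$ with $r\ge 1$ and each $a_i\ge 1$. Grouping the degree-$4$ piece of $g$ under these three buckets produces $\sff_1\sim g$ whose degree-$4$ piece already sits in the template required by the hypothesis of \ref{lemC}.

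Inductively, I would suppose that
\[
\sff_t=\big(xy^2+\sfq_{t+2}(x)\big)+\big(\sfh_{t+1}\cdot y^2+\textstyle\sum\la_a x^{a_1}y\hdots x^{a_r}y+\upalpha_{t+3}x^{t+3}\big)+\bigO_{t+4},
\]
with $\sfq_{t+2}(x)\in\C[x]$ of order $\ge 4$ and degree $\le t+2$. Then \ref{lemC} will deliver a unitriangular $\phi^t$ of depth $\ge t$ that sends the middle bracket to $\sfg_{t+3}+\upalpha_{t+3}x^{t+3}$ with $\sfg_{t+3}\in\llcurve\scrA,\scrA\rrcurve\cap\n^{t+3}$. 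Absorbing $\upalpha_{t+3}x^{t+3}$ into the polynomial $\sfq_{t+3}(x)\colonequals\sfq_{t+2}(x)+\upalpha_{t+3}x^{t+3}$, and cyclically regrouping the new degree-$(t+4)$ piece back into the three buckets, I would define $\sff_{t+1}$ so that $\phi^t(\sff_t)-\sff_{t+1}\in\llcurve\scrA,\scrA\rrcurve\cap\n^{t+3}\subseteq\llcurve\scrA,\scrA\rrcurve\cap\n^{t+1}$. This is precisely the chasing hypothesis of \ref{isomain}\eqref{main chase cor}; since the polynomials $\sfq_{t+2}(x)$ stabilise degree-by-degree, the limit is $\sff=xy^2+q(x)$ with $q(x)=\sum_{s\ge 4}\upalpha_s x^s\in\C\llsq x\rrsq$ of order $\ge 4$, as required.

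The main obstacle I expect is bookkeeping around the cyclic regrouping: at every stage the degree-$(t+4)$ part of $\phi^t(\sff_t)$ must be cyclically massaged into the \ref{lemC} template without disturbing the lower-degree terms already pinned down, and the discarded commutators must be seen to live in $\llcurve\scrA,\scrA\rrcurve\cap\n^{t+1}$. Both should follow from the observation that a cyclic rotation of a degree-$(t+4)$ monomial lands inside $\llcurve\scrA,\scrA\rrcurve\cap\n^{t+4}$, comfortably within the tolerance required by \ref{isomain}\eqref{main chase cor}.
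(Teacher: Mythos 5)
Your proposal is correct and follows essentially the same route as the paper's proof of \ref{prop!xy2prep}: the same three-bucket decomposition of each graded piece (pure $x$-power, terms containing $y^2$, and the $x^{a_1}y\cdots x^{a_r}y$ terms), the same use of \ref{lemC} to push the last bucket into $\llcurve\scrA,\scrA\rrcurve$ plus higher degree, and the same invocation of \ref{isomain}\eqref{main chase cor} to pass to the limit. (A couple of small phrasing slips -- the automorphisms need only have depth $\geq t$ at stage $t$, not strictly increasing depth, and landing a monomial in its bucket may take more than one cyclic rotation -- do not affect the argument.)
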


Recall the Conventions~\ref{conventions}, used in \ref{prop!x3y3prep}, on graded pieces of sequence elements: namely sequence elements $\sff_n\in\ring$ are in Greek font,
whilst $(\sff_n)_t$ is the degree~$t$ piece of $\sff_n$.

\begin{proof}
We construct a sequence of power series $\mathsf{f}_1,\mathsf{f}_2,\dots$ and unitriangular
automorphisms $\phi^1,\phi^2,\dots$ inductively, with
each $\sff_t$ having the form of the target power series $xy^2+q(x)$ in low degree.
Summary~\ref{isomain}\eqref{main chase cor} will then construct $\mathsf{f}=\lim \mathsf{f}_i$ of the required form with $\Jac(f)\cong\Jac(\mathsf{f})$.

By \ref{linear change}, $f\cong g$ where $g_3=xy^2$.  After grouping together the terms containing~$y^2$, then the terms that contain~$y$ but not~$y^2$, and cyclically permuting, we may write
\[
g_4\sim \sfh_2\cdot y^2+\sum\la_a x^{a_1}y\hdots x^{a_r}y+\upmu_4 x^4
\]
for $\sfh_2\in\freering_2$, $r\ge1$ and each $a_i\ge1$ and $\upmu_4\in\C$
and where we use the abbreviated notation $\la_a := \la_{a_1\cdots a_r}\in\C$. It is convenient to write the sum as $\sum\la_a x^{a_1}y\hdots x^{a_r}y$
by analogy with the general case, noting that here it is nothing more than
$\la_{11}xyxy+\la_3x^3y$.

Hence we begin the induction by setting
\[
\sff_1=xy^2 + (\sfh_2\cdot y^2+\sum\la x^{a_1}y\hdots x^{a_r}y+\upmu_4 x^4) + g_{\ge5}
\]
and note that $\sff_1\sim g\cong f$.  Thus $\sff_1$ is in the desired form in degrees~$\le3$ and has its degree~$4$ piece prepared in  standard form for further analysis.

For the inductive step more generally, we may suppose that $\sff_{t}\in\C\llangle x,y\rrangle$ has been constructed of the form
\begin{align*}
\sff_{t} = \big( xy^2 + \sfq_{t+2}(x) \big) +
\big( {\sfh}_{t+1}\cdot y^2+\sum\la_a x^{a_1}y\hdots x^{a_r}y+\upmu_{t+3} x^{t+3} \big) +\bigO_{t+4}
 \end{align*}
with $\sfq_3=0$ and
\begin{enumerate}
\item
$(\sff_{t})_{\leq t+2} = xy^2 + \sfq_{t+2}(x)$, for some polynomial $\sfq_{t+2}\in\C[x]_{\ge4}$ of degree $\leq t+2$, where the polynomials $\sfq_3,\dots, \sfq_{t+2}$ satisfy $\sfq_{i+1} - \sfq_i=\upmu_{i+1}x^{i+1}$ for $\upmu_{i+1}\in\C$, and
\item
$(\sff_{t})_{t+3} = \sfh_{t+1}\cdot y^2+\sum\la_a x^{a_1}y\hdots x^{a_r}y+\upmu_{t+3} x^{t+3}$ for some homogeneous form $\sfh_{t+1}$  of degree~$t+1$, 
each $a_i\ge1$, $r\ge1$ and $r+\sum a_i=t+3$, and $\upmu_{t+3}\in\C$.
\end{enumerate}

By \ref{lemC} there exists a unitriangular $\phi^{t}$ of depth $t$ such that
\[
\phi^{t}(\sff_{t})= xy^2 + q_{t+2}(x) + (k_{t+3}+\upmu_{t+3}x^{t+3}) + \bigO_{t+4}.
\]
where $k_{t+3}\sim 0$.  In degree $t+4$, again grouping together the terms containing $y^2$, then the terms that contain $y$ but not $y^2$, and cyclically permuting, we may write
\[
\phi^{t}(\sff_{t})_{t+4}\sim \sfh_{t+2}\cdot y^2+\sum\la_a x^{a_1}y\hdots x^{a_r}y+\upmu_{t+4} x^{t+4}.
\]
Thus after setting $\sfq_{t+3}(x)=\sfq_{t+2}(x)+\upmu_{t+3}x^{t+3}$, define
\[
\sff_{t+1}=xy^2 + \sfq_{t+3}(x) + \big( \sfh_{t+2}\cdot y^2+\sum\la_a x^{a_1}y\hdots x^{a_r}y+\upmu_{t+4} x^{t+4} \big) + \phi^t(\sff_{t})_{\geq t+5}.
\]
Note that $\phi^{t}(\sff_{t})\sim \sff_{t+1}$, and that $\phi^{t}(\sff_{t})- \sff_{t+1}\in\hatM^{t+3}\subset\hatM^{t+1}$.

Thus we have constructed a sequence of power series $\sff_1,\sff_2,\dots$ and unitriangular
automorphisms $\phi^1,\phi^2,\dots$ to which \ref{isomain}\eqref{main chase cor} applies. 
Since at each stage $\sfq_s=\sfq_{s-1}+\upmu_sx^s$, it is clear that
$q\colonequals \lim \sfq_s=\sum_{s=4}^\infty\upmu_sx^s$,
and that $\sff=\lim \sff_i=xy^2+q$, since the difference
$(\sff_i-(xy^2+q))_{i\geq 1}$ converges to zero.
Hence $\Jac(f)\cong\Jac(xy^2+q)$, as required.
\end{proof}

The next step is to reduce the options for $q(x)$, using the following preliminary lemma.

\begin{lemma}\label{lempsx}
Let $u\in\C\llsq x\rrsq$ be an even power series: that is, $u$ is a power series in $x^2$.
\begin{enumerate}
\item\label{psxi}
If $u$ is a unit, then $u^{-1}$ and $\sqrt[n]{u}$ are also even power series
for any $n\ge2$.
\item\label{psxii}
Let $U\in\C\llsq x\rrsq$ be a unit and $n\in\Z$ a nonzero integer.
Then there is a unit $t\in\C\llsq x\rrsq$ with $t^n=U(xt)$.
Furthermore, if $U$ is even then $t$ is even.
\end{enumerate}
\end{lemma}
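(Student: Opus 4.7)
The plan for part~\eqref{psxi} is to view $\C\llsq x^2\rrsq$ as a complete local subring of $\C\llsq x\rrsq$, isomorphic to $\C\llsq z\rrsq$ via $z\mapsto x^2$, whose maximal ideal is generated by $x^2$. Writing $u=c(1+m)$ with $c=u(0)\neq 0$ and $m\in x^2\C\llsq x^2\rrsq$, the inverse and any $n$-th root are computed by the standard formulas $u^{-1}=c^{-1}\sum_{k\geq 0}(-m)^k$ and $\sqrt[n]{u}=c^{1/n}\sum_{k\geq 0}\binom{1/n}{k}m^k$ (after fixing a choice of $n$-th root of the scalar $c$). Both sums converge in the $\n$-adic topology and, term by term, lie in $\C\llsq x^2\rrsq$, which is $\n$-adically closed in $\C\llsq x\rrsq$.

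For part~\eqref{psxii}, the strategy is a Banach-style iteration. Replacing $U$ by $U^{-1}$ if needed (still a unit, and still even when $U$ is, by part~\eqref{psxi}) reduces to $n\geq 1$. Set $c=U(0)$, pick any $n$-th root $c^{1/n}$, and consider
\[
T\colon c^{1/n}+x\C\llsq x\rrsq\longrightarrow c^{1/n}+x\C\llsq x\rrsq,\qquad T(t)=\sqrt[n]{U(xt)},
\]
where the $n$-th root is normalised to have constant term $c^{1/n}$. This makes sense because $U(xt)$ is a unit with constant term $c$, so the formulas of part~\eqref{psxi} apply (adapted to the ambient ring $\C\llsq x\rrsq$). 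Evaluating at $x=0$ gives $T(t)(0)=c^{1/n}$, so $T$ does map the indicated set to itself.

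The iteration converges because $T$ strictly increases order of differences. If $t\equiv s\pmod{\n^k}$ for some $k\geq 1$, then $xt\equiv xs\pmod{\n^{k+1}}$, whence $U(xt)\equiv U(xs)\pmod{\n^{k+1}}$, and extracting the $n$-th root preserves this congruence; so $T(t)-T(s)\in\n^{k+1}$. Starting from $t_0=c^{1/n}$, the sequence $t_k\colonequals T^k(t_0)$ is therefore Cauchy; its limit $t$ lies in $c^{1/n}+x\C\llsq x\rrsq$ (hence is a unit) and satisfies $T(t)=t$, equivalently $t^n=U(xt)$.

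Finally, for the evenness claim, suppose $U\in\C\llsq x^2\rrsq$. Starting from the constant $t_0=c^{1/n}\in\C\llsq x^2\rrsq$, induct on $k$: if $t_k$ is even, then $xt_k$ is odd, so every $(xt_k)^{2j}$ is even, and hence $U(xt_k)=\sum a_j(xt_k)^{2j}$ is an even unit; by part~\eqref{psxi} its $n$-th root $t_{k+1}$ is even. Since $\C\llsq x^2\rrsq$ is $\n$-adically closed in $\C\llsq x\rrsq$, the Cauchy limit $t$ is even as well. The only real subtlety is the consistent choice of branch for $c^{1/n}$ and for the subsequent roots at each stage; otherwise the argument is routine fixed-point bookkeeping built on part~\eqref{psxi}.
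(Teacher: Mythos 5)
Your part~\eqref{psxi} is essentially the same as the paper's: both pass to $\C\llsq x^2\rrsq\cong\C\llsq z\rrsq$ and invoke the standard power-series inverse and $n$-th root, which visibly remain even. Part~\eqref{psxii}, however, takes a genuinely different route. The paper solves directly for the coefficients $b_d$ of $t=\sum b_dx^d$ by induction: the coefficient of $x^d$ in $t^n-U(xt)$ contains $b_d$ linearly with invertible coefficient $nb_0^{n-1}$ (and $b_d$ appears nowhere in lower degree), so each $b_d$ is determined once $b_0,\dots,b_{d-1}$ are fixed. For evenness, the paper then supposes the least odd-degree coefficient $b_{2n+1}\ne0$ and compares the lowest odd-degree terms on both sides of the equation to get a degree mismatch ($2n+1$ versus $\ge2n+3$), a contradiction. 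You instead set up the contraction $T(t)=\sqrt[n]{U(xt)}$ on the coset $c^{1/n}+x\C\llsq x\rrsq$, show it strictly increases the $\n$-adic order of differences (using that two $n$-th roots with equal constant term differ by the same order as their $n$-th powers), and take the fixed point of the Cauchy iteration. Your treatment of evenness is arguably cleaner: since $T$ manifestly maps even series to even series (via part~\eqref{psxi}) and the even series form a closed subalgebra, the limit is even with no contradiction argument required. The two approaches carry similar burdens — yours must verify that $T$ is a well-defined contraction (and record the fixed branch of $c^{1/n}$ throughout), while the paper's must verify the linear-and-invertible appearance of $b_d$ — but both are valid and complete; your reduction of $n<0$ to $n>0$ via $U\mapsto U^{-1}$ matches the paper's remark that the $n<0$ case is equivalent.
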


\begin{proof}
(1) Consider $v\in\C\llsq z\rrsq$ with $u(x) = v(x^2)$. If $u$ is a unit, then $v$ is a unit
and $v^{-1}$ and $\sqrt[n]{v}\in\C\llsq z\rrsq$ for all $n\ge2$.
Then $u^{-1}(x) = v^{-1}(x^2)$ and $\sqrt[n]{u}(x)=\sqrt[n]{v}(x^2)$.

\noindent
(2) Write $U=a_0+a_1x+a_2x^2+\cdots$ with $a_0\not=0$. Consider the case $n>0$.
We show that we may solve inductively for the coefficients $b_d$ of the expansion $t=b_0+b_1x+b_2x^2+\cdots$ in the equation $t^n=U(xt)$.

It is clear that the coefficient of $x^d$ in $t^n$ is a sum of $nb_0^{n-1}b_d$ with terms involving only coefficients $b_i$ with $i<d$. 
On the other side of the equation, the coefficient of $x^d$ in $U(xt)$ is a sum of terms involving $a_i$ and $b_j$ with $i\le d$ and $j<d$.
Putting these together, $b_d$ does not appear in the coefficient of $x^i$ for any $i<d$,
and it appears linearly with nonzero coefficient for the first time in the coefficient of~$x^d$, and so we may solve for it. Working inductively in increasing $d\ge0$,
and taking the limit, determines $t$ as claimed.
For $n<0$, the same argument proves the existence of the unit $t^{-1}$,
which is equivalent.

Suppose that $U$ is even, and let $b_{2n+1}$ be the smallest nonzero odd-degree coefficient of~$t$.
Then the odd-degree term with smallest degree in $t^n$ is
 $nb_0^{n-1}b_{2n+1}x^{2n+1}$, while in $U(xt)$ it is $2a_2b_0b_{2n+1}x^{2n+3}$
which appears in the summand $a_2(xt)^2$, a contradiction.
So $t$ must be even.
\end{proof}

The point is now simple: $y$ is under control, and so there is a relation $xy+yx$ in the Jacobi algebra.  Then \ref{mod xy+yx lemma} yields the following key preparation result.

\begin{prop}\label{prop!polypotential}
If $f=xy^2+p(x)$ where $p(x)\in\C\llsq x\rrsq$ with $\ord(p)\ge4$,  then
\[
f\cong
xy^2+\upalpha\, x^{2n}+\upbeta\, x^{2m+1}
\]
for some $n,m\geq 2$, and some $\upalpha,\upbeta\in\{0,1\}$.
Furthermore, $\upalpha=1$ if and only if $p$ has a nonzero even-degree term,
in which case $2n$ is the least even degree appearing in~$p$,
and similarly the analogous criterion for $\upbeta=1$ and least odd degree term in~$p$.
\end{prop}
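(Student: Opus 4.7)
The plan is to exploit the fact that $\dcyc_y f = xy+yx$ lies in the Jacobi ideal, reducing the analysis to the quantum plane $B\colonequals \C\llangle x,y\rrangle/\lcl xy+yx\rcl$, and then to use a coordinate change of the form in \ref{mod xy+yx lemma} to normalise $p(x)$. Computing directly, $\dcyc_x f = y^2 + p'(x)$ and $\dcyc_y f = xy+yx$, so the Jacobi ideal equals $\lcl y^2+p'(x),\, xy+yx\rcl$. In $B$ the relation $xy=-yx$ yields $y\cdot a(x) = a(-x)\cdot y$ for $a\in\C\llsq x\rrsq$, so even power series in $x$ commute with $y$ and $y^2$ is central.

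For any even units $v,w\in\C\llsq x^2\rrsq$, the unitriangular automorphism $\psi\colon x\mapsto xv,\ y\mapsto yw$ of $\C\llangle x,y\rrangle$ descends to a topological automorphism of $B$ by \ref{mod xy+yx lemma}. In $B$ one has $yw=wy$ since $w$ is even, and so $\psi(y^2+p'(x)) = w^2y^2 + p'(xv)$. By \ref{closed ideals 101}\eqref{closed ideals 101 A}, if
\[
p'(xv(x)) \;=\; w(x)^2\,\tilde p'(x)\qquad\text{in }\C\llsq x\rrsq,
\]
then $\psi(y^2+p'(x)) = w^2(y^2+\tilde p'(x))$, so $\psi$ sends the closed ideal in $B$ generated by $y^2+p'(x)$ onto the one generated by $y^2+\tilde p'(x)$, inducing the desired isomorphism $\Jac(f)\cong\Jac(xy^2+\tilde p(x))$.

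It remains to solve this power series equation with $\tilde p(x)=\upalpha x^{2n}+\upbeta x^{2m+1}$ and $\upalpha,\upbeta\in\{0,1\}$. Split $p=p_e+p_o$ into even and odd parts. Since $\ord(p)\ge4$, if $p_e\ne0$ its least degree is $2n\ge4$ and $p_e'(x)=x^{2n-1}U_e(x)$ for some even unit $U_e$; likewise if $p_o\ne0$ its least degree is $2m+1\ge5$ and $p_o'(x)=x^{2m}U_o(x)$ for an even unit $U_o$. Matching odd and even parts of the target equation yields
\[
v^{2n-1}U_e(xv) \;=\; 2n\upalpha\,w^2,\qquad v^{2m}U_o(xv) \;=\; (2m+1)\upbeta\,w^2.
\]
When both $p_e$ and $p_o$ are nonzero, take $\upalpha=\upbeta=1$; dividing eliminates $w^2$ and gives $v^{2m+1-2n}=H(xv)$ for an even unit $H\in\C\llsq x^2\rrsq$. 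Since $2m+1-2n$ is a nonzero integer (one parity odd, one even), \ref{lempsx}\eqref{psxii} produces an even unit solution $v$, and then \ref{lempsx}\eqref{psxi} supplies an even square root $w$ from either equation. When only one of $p_e,p_o$ is nonzero, set $v=1$, take the other of $\upalpha,\upbeta$ to be $0$, and solve directly for $w$ via \ref{lempsx}\eqref{psxi}. The final claim ($\upalpha=1$ exactly when $p$ has a nonzero even-degree term, with $2n$ its least even degree, and analogously for $\upbeta$) is immediate from how $n$ and $m$ were defined.

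The main obstacle is guaranteeing that the solutions $(v,w)$ to the key power series equation are genuinely \emph{even} units, since evenness is exactly what is required by \ref{mod xy+yx lemma} to descend $\psi$ to an automorphism of $B$. This hinges on the crucial parity observation that $2n-1-2m$ is always a nonzero integer, permitting the invocation of \ref{lempsx}\eqref{psxii} to extract an even $v$, after which \ref{lempsx}\eqref{psxi} delivers its even square root $w$; without the evenness one would lose control of the ideal $\lcl xy+yx\rcl$ during the coordinate change.
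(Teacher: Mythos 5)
Your proof is correct and takes essentially the same approach as the paper: pass to the quantum plane $\C\llangle x,y\rrangle/\lcl xy+yx\rcl$ via \ref{mod xy+yx lemma}, rescale $x$ and $y$ by even units, and solve the resulting power-series equations using \ref{lempsx}. The only cosmetic difference is that you carry out the normalisation with a single automorphism $x\mapsto xv$, $y\mapsto yw$ by matching even and odd parts simultaneously, whereas the paper factors it into two successive coordinate changes (first $y\mapsto y\sqrt{u}$ to normalise the lower term, then $x\mapsto xt$, $y\mapsto yt^N$ to normalise the other).
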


\begin{proof}
First note that
\[
\Jac(f) = \C\llangle x,y\rrangle / \lcl xy+yx, y^2 + \dcyc_xp \rcl.
\]
We exhibit an automorphism of $\C\llangle x,y\rrangle$ that takes the two generators
of the Jacobian ideal to $(xy+yx)\text{(unit)}$ and
$\left(y^2 + \upbeta x^{2m} + \upalpha x^{2n-1}\right)\text{(unit)}$, respectively,
where either $\upalpha=0$ or $2n\ge4$ is the least even degree appearing in~$p$,
and either $\upbeta=0$ or $2m+1\ge5$ is the least odd degree appearing in~$p$.
This proves all the claims.

Parsing $\dcyc_xp$ into even and odd terms, write the Jacobi algebra relations as
\[
xy+yx
\quad\text{and}\quad
y^2 + ax^{2N}u + bx^{2M-1}v
\]
where $u,v\in\C\llsq x^2\rrsq$ are each either a unit or zero, $N,M\ge2$,
and $a,b\in\C$ are any nonzero numbers that carry through the calculation
undisturbed; we choose $a=2N+1$ and $b=2M$ at the end.

Suppose in the first place that $u\neq 0$, then fix a square root $s=\sqrt{u}\in\C\llsq x^2\rrsq$ and consider the unitriangular automorphism $\phi$ sending $x\mapsto x$, $y\mapsto ys$. By \ref{mod xy+yx lemma} this induces a topological isomorphism
\[
\bar{\phi}\colon \frac{\C\llangle x,y\rrangle}{\lcl xy+yx\rcl} \xrightarrow{\sim} \frac{\C\llangle x,y\rrangle}{\lcl xy+yx \rcl}.
\]
In the codomain of this map, $y$ commutes with $x^2$ and thus commutes with $s\in\C\llsq x^2\rrsq$. It follows that $\bar{\phi}(y^2)=ysys=y^2s^2=yu$ and thus 
\[
\bar{\phi}(y^2+ \dcyc_xp) = (y^2+ax^{2N})u+bx^{2M-1}v.
\]
By \ref{closed ideals 101}\eqref{closed ideals 101 A}\eqref{closed ideals 101 C} after right multiplying by the unit $u^{-1}$, we obtain an  isomorphism 
\begin{equation}
\frac{\C\llangle x,y\rrangle}{\lcl xy+yx, y^2+ \dcyc_xp\rcl} \xrightarrow{\sim} \frac{\C\llangle x,y\rrangle}{\lcl xy+yx,  y^2+ax^{2N}+bx^{2M-1}\tfrac{v}{u}\rcl}.\label{new unit 1}
\end{equation}
If $v=0$, then \eqref{new unit 1} asserts that $\Jac(f)\cong\Jac(xy^2+x^{2N+1})$, and so we are done. Hence we may assume that  $v\neq 0$.

As $u$ and $v$ are both unit power series in $x^2$, so is $\frac{v}{u}$. 
By \ref{lempsx}\eqref{psxii}, since $2N-2M+1$ is nonzero, we may choose a unit
$t\in \C\llsq x^2\rrsq$ such that $t^{2N}=t^{2M-1}v(xt)/u(xt)$.
Consider the unitriangular automorphism $\uppsi$ sending $x\mapsto xt$, $y\mapsto yt^N$. Again by \ref{mod xy+yx lemma} there is an induced topological isomorphism
\[
\bar{\uppsi}\colon \frac{\C\llangle x,y\rrangle}{\lcl xy+yx\rcl} \xrightarrow{\sim} \frac{\C\llangle x,y\rrangle}{\lcl xy+yx \rcl}.
\]
Clearly $x$ commutes with $t,t^N\in\C\llsq x^2\rrsq$, and further in the codomain of $\bar{\uppsi}$ the element $y$ commutes with $x^2$ and thus commutes with $t,t^N\in\C\llsq x^2\rrsq$. Thus
\begin{align*}
\bar{\uppsi}(y^2+ax^{2N}+bx^{2M-1}\tfrac{v}{u})&= y^2t^{2N}+ax^{2N}t^{2N}+bx^{2M-1}t^{2M-1}u(xt)/v(xt) \\
&=(y^2+ax^{2N}+bx^{2M-1})t^{2N}.
\end{align*}
Again \ref{closed ideals 101}\eqref{closed ideals 101 A}\eqref{closed ideals 101 C} then induces an isomorphism 
\begin{equation}
\frac{\C\llangle x,y\rrangle}{\lcl xy+yx,  y^2+ax^{2N}+bx^{2M-1}\tfrac{v}{u}\rcl} \xrightarrow{\sim} \frac{\C\llangle x,y\rrangle}{\lcl xy+yx,  y^2+ax^{2N}+bx^{2M-1}\rcl}.\label{new unit 2}
\end{equation}
Setting $a=2N+1$ and $b=2M$, the right hand side is $\Jac(xy^2+x^{2N+1}+x^{2M})$.  Hence composing \eqref{new unit 1} with \eqref{new unit 2} gives an isomorphism $\Jac(f)\cong\Jac(xy^2+x^{2N+1}+x^{2M})$.

The case $u=0$ and $v\neq 0$ works in exactly the same way as the case $u\neq 0$ and $v=0$ above, applying an automorphism with $s=\sqrt{v}$, while the case $u=v=0$ is trivial.
\end{proof}

The above is not quite yet in normal form, since some of the polynomial potentials in \ref{prop!polypotential} have isomorphic Jacobi algebras.  The next step is to discard cases where the odd term in $p(x)$ has significantly greater degree than the even term.

\begin{lemma}\label{LemmaD}
If $f=xy^2+x^{2n}+\upvarepsilon x^{2m+1}$ where $\upvarepsilon \in\{0,1\}$ and $m\geq n$, then $y^3\in (\dcyc_xf,\dcyc_yf)$. In particular $y^3\in\lcl\dcyc_xf,\dcyc_yf \rcl$.
\end{lemma}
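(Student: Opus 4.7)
The proof will work entirely within the (not necessarily closed) two-sided ideal $I \colonequals (\dcyc_x f, \dcyc_y f)$; the conclusion $y^3 \in \lcl \dcyc_x f, \dcyc_y f\rcl$ then follows automatically from the inclusion $I \subseteq \lcl\dcyc_x f, \dcyc_y f\rcl$. The first step is to compute the two cyclic derivatives explicitly. Since $xy^2$ is the only summand of $f$ containing $y$, a direct calculation using \eqref{Kdiff} yields
\[
\dcyc_y f = xy + yx \qquad\text{and}\qquad
\dcyc_x f = y^2 + 2n\,x^{2n-1} + \upvarepsilon(2m+1)\,x^{2m}.
\]

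The presence of $xy + yx$ in $I$ gives the key anticommutation: by left- and right-multiplying $xy+yx$ by powers of $x$ and chaining, one obtains $yx^k \equiv (-1)^k x^k y \pmod{I}$ for every $k \geq 0$. The next step is to left- and right-multiply $\dcyc_x f$ by $y$, producing the two elements
\[
y\cdot\dcyc_x f = y^3 + 2n\,yx^{2n-1} + \upvarepsilon(2m+1)\,yx^{2m}, \quad
\dcyc_x f\cdot y = y^3 + 2n\,x^{2n-1}y + \upvarepsilon(2m+1)\,x^{2m}y
\]
of $I$. Subtracting and applying the anticommutation rule, the $x^{2m}y$ contributions cancel (since $2m$ is even) while the $x^{2n-1}y$ contributions combine (since $2n-1$ is odd) to give $-4n\,x^{2n-1}y \in I$, and hence $x^{2n-1}y \in I$.

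Finally, the hypothesis $m \geq n$ enters exactly once: it ensures $2m-2n+1 \geq 1$, so one may write $x^{2m}y = x^{2m-2n+1}\cdot x^{2n-1}y$, placing $x^{2m}y \in I$ as well. Substituting these two facts into $\dcyc_x f \cdot y \in I$ eliminates both $x$-monomial contributions and leaves $y^3 \in I$, as required. No serious obstacle is anticipated; the entire argument is a careful accounting of signs from the anticommutation relation and the parities of the exponents $2n-1$ and $2m$, and the case $\upvarepsilon = 0$ is handled by the same computation (in which $m \geq n$ is not even needed).
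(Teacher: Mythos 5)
Your proof is correct and follows essentially the same route as the paper. The paper solves for $y^2$ modulo $I$ and multiplies that congruence on the left and on the right by $y$; you instead left- and right-multiply $\dcyc_x f$ itself by $y$, which is the same manipulation phrased as $I$-membership rather than congruence. Both arguments then use the anticommutation $yx^k\equiv(-1)^kx^ky$ from $\dcyc_y f=xy+yx$ to subtract, noting the parity difference between $2n-1$ and $2m$ so that the $x^{2m}y$ terms cancel and the $x^{2n-1}y$ terms reinforce, yielding $x^{2n-1}y\in I$. The final step, using $m\geq n$ to pull $x^{2n-1}y$ out of $x^{2m}y$, is also the paper's; you observe in addition that when $\upvarepsilon=0$ the hypothesis $m\geq n$ is not needed, which is a correct side remark.
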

\begin{proof}
Set $I=(\dcyc_xf,\dcyc_yf)=(y^2+2nx^{2n-1}+\upvarepsilon (2m+1)x^{2m},xy+yx)$, and below write $\equiv$ for an equality mod $I$.  Since $y^2\equiv -2nx^{2n-1}-\upvarepsilon (2m+1)x^{2m}$, multiplying on the left by $y$ and on the right by $y$ gives
\begin{align*}
-2nx^{2n-1}y-\upvarepsilon (2m+1)x^{2m}y\equiv y^3
&\equiv-2nyx^{2n-1}-\upvarepsilon (2m+1)yx^{2m}\\
&\equiv 2nx^{2n-1}y-\upvarepsilon (2m+1)x^{2m}y
\end{align*}
where the last line holds since $xy\equiv -yx$ using the second generator of $I$. Inspecting the right and lefthand sides, the $x^{2m}y$ terms cancel, and so $4n x^{2n-1}y\equiv 0$, thus $x^{2n-1}y\equiv 0$.  Finally, since $m\geq n$, taking out the common factor we see that
\[
y^3\equiv (2n-\upvarepsilon (2m+1)x^{2m-2n-1})x^{2n-1}y\equiv 0.
\]
Thus $y^3\in I$. The final statement follows immediately.
\end{proof}

\begin{cor}\label{CorD}
If $f=xy^2+x^{2n}+\upvarepsilon x^{2m+1}$ where $m\geq 2n-1$, then $x^{4n-2}\in\lcl\dcyc_xf,\dcyc_yf \rcl$.
\end{cor}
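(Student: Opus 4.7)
The plan is to re-use and extend the calculations from the proof of Lemma~\ref{LemmaD}. That argument establishes $y^3 \equiv 0$, but in doing so it passes through a stronger intermediate fact, namely $x^{2n-1}y \equiv 0$ modulo the Jacobi ideal (this step only used the relation $xy+yx$ to permute powers of $x$ past $y$, and did \emph{not} require $m\ge n$). Right-multiplying this relation by $y$ immediately yields the auxiliary relation $x^{2n-1}y^2 \equiv 0$, which will be the workhorse.

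The next step is to use the derivative relation coming from $\dcyc_x f$, namely $y^2 \equiv -2nx^{2n-1} - \upvarepsilon(2m+1)x^{2m}$, and left-multiply it by $x^{2n-1}$, producing
$$0 \equiv x^{2n-1} y^2 \equiv -2nx^{4n-2} - \upvarepsilon(2m+1)\,x^{2n-1+2m}.$$
The hypothesis $m \geq 2n-1$ implies $2n-1+2m \geq 4n-2$, so $x^{4n-2}$ can be pulled out as a common left factor, giving
$$x^{4n-2}\bigl(2n + \upvarepsilon(2m+1)\,x^{2m-2n+1}\bigr) \equiv 0.$$

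To conclude, the bracketed factor has nonzero constant term $2n$ and is therefore a unit in $\ringtwo$. Right-multiplication by its inverse inside the closed two-sided ideal yields $x^{4n-2} \in \lcl \dcyc_x f, \dcyc_y f \rcl$, as required. There is no serious obstacle: the whole proof is essentially one careful computation built on an observation already present in \ref{LemmaD}. The only point to be attentive to is that extracting $x^{4n-2}$ as a common left factor requires precisely the degree inequality $2m \ge 2n-1$, which the hypothesis supplies.
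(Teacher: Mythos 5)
Your proof is correct, and it takes a genuinely different route from the paper's. The paper's argument squares the derivative relation, writing $x^{4n-2}=(-x^{2n-1})^2\equiv\tfrac{1}{(2n)^2}(y^2+\upvarepsilon(2m+1)x^{2m})^2$; it then expands this square, kills the $y^4$ term using $y^3\equiv0$ from Lemma~\ref{LemmaD}, rearranges via $xy\equiv-yx$, arrives at $x^{4n-2}\equiv x^{2m}g$ for some $g$ with no constant term, and finally invokes $2m\ge4n-2$ to substitute this congruence back into itself and factor off a unit. Your route instead lifts the intermediate relation $x^{2n-1}y\equiv0$ from inside the \emph{proof} of Lemma~\ref{LemmaD} (and you correctly observe that its derivation there did not use $m\ge n$), right-multiplies by $y$, and substitutes $\dcyc_xf$ once to obtain $x^{4n-2}\bigl(2n+\upvarepsilon(2m+1)x^{2m-2n+1}\bigr)\equiv0$ in a single step. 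This avoids the squaring and the appeal to the conclusion $y^3\equiv0$ as a black box, and it only uses the degree inequality $2m\ge2n-1$ (equivalently $m\ge n$), which is strictly weaker than the paper's $m\ge2n-1$, so your argument in fact proves a marginally stronger statement. The final passage to the closed ideal and cancellation of the unit is exactly the issue the paper's proof also handles, and your treatment of it is sound. If you were to polish this further, it would be worth recording the relation $x^{2n-1}y\equiv0$ as an explicit displayed step rather than citing it from within another lemma's proof.
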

\begin{proof}
Continue to write $\equiv$ for an equality mod $(\dcyc_xf,\dcyc_yf )$.  Then
\begin{align*}
x^{4n-2}=(-x^{2n-1})^2
&\equiv\tfrac{1}{(2n)^2}(y^2 + \upvarepsilon (2m+1)x^{2m})^2\tag{since $\dcyc_xf\equiv 0$}\\
&\equiv\tfrac{\upvarepsilon (2m+1)}{(2n)^2}(y^2x^{2m} + x^{2m}y^2 + \upvarepsilon (2m+1)x^{4m})\tag{$y^3\equiv0$ by \ref{LemmaD}}\\
&\equiv\tfrac{\upvarepsilon (2m+1)}{(2n)^2}(2x^{2m}y^2 + \upvarepsilon (2m+1)x^{4m})\tag{$xy\equiv -yx$}.
\end{align*}
Taking out the $x^{2m}$ common factor from the front, we may write $x^{4n-2}\equiv x^{2m}g$ for some~$g$ with no constant term. Then, since $2m\ge4n-2$ by assumption, we see that $x^{4n-2}\equiv x^{4n-2}(x^{2m-(4n-2)}g)$, and so $x^{4n-2}(1-x^{2m-(4n-2)}g)\equiv 0$. 

Given this statement holds mod $(\dcyc_xf,\dcyc_yf)$, it also holds mod $\lcl\dcyc_xf,\dcyc_yf \rcl$, and hence  $x^{4n-2}(1-x^{2m-(4n-2)}g)=0$ in $\Jac(f)$.  But there, $1-x^{2m-(4n-2)}g$ is a unit, and so it follows that $x^{4n-2}=0$ in $\Jac(f)$, as required.
\end{proof}

The above two results combine to remove the case when the odd-degree $x$ term is sufficiently larger than the even-degree $x$ term, as follows.

\begin{cor}\label{CorDtwo}
If $f=xy^2+x^{2n}+x^{2m+1}$ where $m\geq  2n-1$, then $f\cong xy^2+x^{2n}$.
\end{cor}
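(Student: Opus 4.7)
The cleanest route is to establish the stronger statement $f\simeq g$ where $g=xy^2+x^{2n}$, that is, equality of closed ideals $\lcl\dcyc_xf,\dcyc_yf\rcl=\lcl\dcyc_xg,\dcyc_yg\rcl$, from which $f\cong g$ is immediate. Writing out the derivatives, $\dcyc_yf=xy+yx=\dcyc_yg$ and $\dcyc_xf=\dcyc_xg+(2m+1)x^{2m}$, so the sole obstruction to equality of the Jacobi ideals is controlling the extra term $x^{2m}$.

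The key observation is that Corollary~\ref{CorD} places $x^{4n-2}$ inside both Jacobi ideals at once. Applied to $f$ directly under the hypothesis $m\ge2n-1$ it gives $x^{4n-2}\in\lcl\dcyc_xf,\dcyc_yf\rcl$. Applied to $g$ by taking $\upvarepsilon=0$ in the statement of \ref{CorD} (the $\upvarepsilon=0$ case of the proof is clean: \ref{LemmaD} with $\upvarepsilon=0$ still yields $x^{2n-1}y\equiv0$ and hence $y^3\equiv0$, so the computation $x^{4n-2}\equiv\tfrac{1}{(2n)^2}y^4\equiv 0$ goes through), it also gives $x^{4n-2}\in\lcl\dcyc_xg,\dcyc_yg\rcl$.

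Because $2m\ge 4n-2$, the factorisation $x^{2m}=x^{4n-2}\cdot x^{2m-(4n-2)}$ then places $x^{2m}$ in each of the two closed ideals. Consequently the element $(2m+1)x^{2m}$ by which $\dcyc_xf$ and $\dcyc_xg$ differ lies in both ideals, so $\dcyc_xf\in\lcl\dcyc_xg,\dcyc_yg\rcl$ and $\dcyc_xg\in\lcl\dcyc_xf,\dcyc_yf\rcl$. Combined with $\dcyc_yf=\dcyc_yg$, this yields the desired equality of closed Jacobi ideals, hence $f\simeq g$ and therefore $f\cong g$.

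There is no serious obstacle: the whole argument is a one-step bootstrap, where \ref{CorD} supplies exactly the vanishing of a sufficiently high power of $x$ needed to absorb the offending $x^{2m}$ term. The only minor technical point is confirming that \ref{CorD} is legitimately available for $g$, which amounts to the trivial $\upvarepsilon=0$ specialisation noted above.
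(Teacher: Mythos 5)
Your argument is correct and is essentially identical to the paper's proof: both apply \ref{CorD} to $f$ and to $g=xy^2+x^{2n}$ (the $\upvarepsilon=0$ case) to place $x^{4n-2}$, and hence $x^{2m}$, inside both closed Jacobian ideals, and then absorb the discrepancy $(2m+1)x^{2m}$ between $\dcyc_xf$ and $\dcyc_xg$ to conclude the stronger relation $f\simeq g$. The paper phrases step 4 slightly differently (adjoining $x^{2m}$ as a redundant generator and cancelling), but the content is the same; your side remark verifying the $\upvarepsilon=0$ specialisation of \ref{CorD} is harmless but unnecessary, since \ref{LemmaD} and \ref{CorD} are already stated for $\upvarepsilon\in\{0,1\}$.
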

\begin{proof}
By \ref{CorD} we have $x^{4n-2}\in\lcl\dcyc_xf,\dcyc_yf \rcl$ and $x^{4n-2}\in\lcl y^2+2nx^{2n-1},xy+yx\rcl$.  Since $2m\geq  4n-2$, it follows that $x^{2m}$ belongs to \emph{both} of the ideals above, and thus
\begin{align*}
\lcl\dcyc_xf,\dcyc_yf \rcl &= \lcl y^2+2nx^{2n-1}+(2m+1)x^{2m},xy+yx, x^{2m}\rcl\\
&= \lcl y^2+2nx^{2n-1},xy+yx,x^{2m}\rcl\\
&= \lcl y^2+2nx^{2n-1},xy+yx\rcl.
\end{align*}
As this final ideal is obtained from $xy^2+x^{2n}$ by differentiation, the result follows.
\end{proof}

Summarising the above gives the following, which is the main result of this subsection.
\begin{cor}\label{cor:D two roots main}
Suppose that $f\in\ringtwo_{\geq 3}$ where $(f_3)^{\ab}$ has two roots.  Then either
\[
f\cong
\begin{cases}
xy^2&\\
xy^2+x^{2m+1}&m\geq 2\\
xy^2+x^{2n}&n\geq 2\\
xy^2+x^{2m+1}+x^{2n}&2\le m \le n-1\\
xy^2+x^{2n}+x^{2m+1}&2\le n\leq m\leq 2(n-1)
\end{cases}
\]
All of the above are mutually non-isomorphic.
\end{cor}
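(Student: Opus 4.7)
\smallskip

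The proof is primarily an assembly of the preceding results, with the non-isomorphism claim being the subtler ingredient. For existence, start with $f\in\ringtwo_{\geq 3}$ whose $f_3^{\ab}$ has two distinct linear factors. Apply \ref{prop!xy2prep} to reduce to $f\cong xy^2+q(x)$ for some $q\in\C\llsq x\rrsq$ with $\ord(q)\geq 4$. Then \ref{prop!polypotential} further reduces to $f\cong xy^2+\upalpha x^{2n}+\upbeta x^{2m+1}$ for some $\upalpha,\upbeta\in\{0,1\}$ and $n,m\geq 2$.

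Case split on $(\upalpha,\upbeta)$. The values $(0,0)$, $(1,0)$, $(0,1)$ directly give the first three candidate normal forms. For $(\upalpha,\upbeta)=(1,1)$, \ref{CorDtwo} collapses the range $m\geq 2n-1$ back to $xy^2+x^{2n}$, so only the ranges $m\leq n-1$ and $n\leq m\leq 2n-2$ survive. In the range $m\leq n-1$ the monomial $x^{2m+1}$ has strictly lower degree than $x^{2n}$, and one simply rewrites it in that order to obtain the family $xy^2+x^{2m+1}+x^{2n}$ ($2\leq m\leq n-1$); in the range $n\leq m\leq 2n-2$ one obtains the family $xy^2+x^{2n}+x^{2m+1}$ ($2\leq n\leq m\leq 2(n-1)$). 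This exhausts the five cases, proving existence.

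For mutual non-isomorphism, the principal coarse invariants are $\JRdim\Jac(f)$ and the sequence of coranks from \ref{ex: d dimensions}. First, $\JRdim=1$ exactly for the first two (infinite-dimensional) families $xy^2$ and $xy^2+x^{2m+1}$. Within these, the two are distinguished by their abelianisations (the latter has $(y^2+(2m+1)x^{2m},xy)$ as its Milnor ideal, whose $\C\llsq x,y\rrsq$-quotient has dimension $2m+1$, a strict function of~$m$). For the three finite-dimensional families, \ref{ex: d dimensions} records that $\dim_\C\Jac(xy^2+x^{2m+1}+x^{2n})=4n+2m-2$, and its corank sequence has $2n-1$ entries equal to~$2$ followed by $2m-1$ entries equal to~$1$, both of which determine $(n,m)$ uniquely within that family.

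The main obstacle is separating $\Jac(xy^2+x^{2n})$ from $\Jac(xy^2+x^{2n}+x^{2m+1})$ with $n\leq m\leq 2n-2$, since both have dimension $6n-3$ and the \emph{same} Hilbert series. My plan would be to extract a finer invariant sensitive to the additional odd term: concretely, one examines the centre of $\Jac(f)$ together with the bimodule structure on suitable graded pieces, tracking how the relation $y^2=-2nx^{2n-1}-(2m+1)x^{2m}$ encodes the parameter~$m$ as an obstruction class in low cohomological degree. Alternatively, one can deduce non-isomorphism \emph{a posteriori} by invoking the geometric realisation (\ref{main Type D intro}) together with the distinction of GV invariants in \ref{GV for D intro}: each potential corresponds to a distinct simple flop, and the invariants $(n_1,n_2)$ separate the cases. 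Either route completes the proof.
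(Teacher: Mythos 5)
Your existence argument matches the paper's step for step: \ref{prop!xy2prep} reduces to $xy^2+q(x)$, \ref{prop!polypotential} reduces $q$ to at most two monomials, and \ref{CorDtwo} discards the range $m\ge 2n-1$. Likewise, the infinite/finite dichotomy followed by $\dim_\C\Jac(f)^{\ab}$ to separate the first two families is how the paper proceeds (though the dimension is $2m+2$, not $2m+1$: in $\C\llsq x,y\rrsq/(y^2+(2m+1)x^{2m},xy)$ a basis is $1,x,\dots,x^{2m},y$, since $y^2$ is a multiple of $x^{2m}$).

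The non-isomorphism argument has a genuine gap, and you have correctly located the hard case but not resolved it — and it is in fact slightly larger than you state. You observe that $\Jac(xy^2+x^{2n})$ and $\Jac(xy^2+x^{2n}+x^{2m+1})$ with $n\le m\le 2n-2$ share the dimension $6n-3$ and the same Hilbert series. What you do not note is that distinct $m$ in that range, for a fixed $n$, \emph{also} give the same corank sequence and the same dimension, so members of that family must themselves be separated by a finer invariant; the corank-sequence argument you give applies only to the family $xy^2+x^{2m+1}+x^{2n}$ with $m<n$. Neither of your proposed routes closes the gap. The ``obstruction class in low cohomological degree'' sketch is a plan, not an argument. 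And the geometric route via \ref{main Type D intro} and \ref{GV for D intro} is circular: those corollaries are proved downstream of \ref{main Type D}, which relies on \ref{cor:D two roots main} itself, so they cannot be invoked here. The paper resolves this by citing \cite[5.10]{Kawamata} directly (or extending the computational method of \cite[4.7]{BW}); some such external input is genuinely required, and the proposal does not supply a substitute for it.
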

\begin{proof}
The fact that the stated list covers all cases follows from \ref{prop!polypotential}, using \ref{CorDtwo} to discount the case when the odd-degree $x$ term is sufficiently larger than the even-degree $x$ term. We now claim that the potentials listed give pairwise non-isomorphic Jacobi algebras.

The first two families both have infinite dimensional Jacobi algebras, whereas the bottom three are all finite dimensional.  As such, the only possibilities for isomorphisms are between members in families one and two, or between members in families three, four and five.  But $\dim_{\mathbb{C}}\Jac(xy^2)^{\ab}=\infty$, whereas $\dim_{\mathbb{C}}\Jac(xy^2+x^{2m+1})^{\ab}=2m+2$, and so all members of families one and two are mutually non-isomorphic.

For the final three families, all members of families three and four and mutually non-isomorphic, as can be seen by extending the method of \cite[4.7]{BW}, or by using \cite[5.10]{Kawamata} directly. Further, all members of family five are also mutually non-isomorphic for dimension reasons, since for $f$ in family five $\dim_{\mathbb{C}}\Jac(f)^{\ab}=2m+2$ and $\dim_{\mathbb{C}}\Jac(f)=(2m+2)+4(n-1)$ by either \cite[\S5]{Okke} or \S\ref{sec:highcorank}, and thus we can distinguish between all different $m$ and $n$.  The only remaining possibility is an isomorphism between a member of family five, and a member of family three or four.  But by above the dimension of $\Jac(f)$ for $f$ in family five is even, and the dimension of $\Jac(g)$ for $g$ in families three and four is odd \cite[5.10]{Kawamata}, so there can be no such isomorphisms. 
\end{proof}

%-----------------------------------------------------------------------------------------
\subsection{Overview of Type~D normal forms}\label{sec: overview D}

The previous subsections combine to give the following, which is the main result of this section.

\begin{thm}\label{main Type D}
Let $f\in\ringd_{\geq 2}$ with $\corank(f)=2$ and $\corank_2(f)=2$. Then either
\[
f\cong
\begin{cases}
z_1^2+\cdots+z_{d-2}^2 + xy^2&\\
z_1^2+\cdots+z_{d-2}^2 + xy^2 + x^{2m+1}&\mbox{with }m\geq 1\\
z_1^2+\cdots+z_{d-2}^2 + xy^2 + x^{2n}&\mbox{with }n\geq 2\\
z_1^2+\cdots+z_{d-2}^2 + xy^2 + x^{2n} + x^{2m+1}&\mbox{with }n\geq 2,\,\, n\leq m\leq 2n-2\\
z_1^2+\cdots+z_{d-2}^2 + xy^2 + x^{2m+1} + x^{2n}&\mbox{with }m\geq 1,\,\, n\geq m+1.
\end{cases}
\]
The Jacobi algebras of these potentials are all mutually non-isomorphic, and furthermore the following statements hold.
\begin{enumerate}
\item Every $f$ in the first two families satisfies $\JRdim\Jac(f)=1$.
\item\label{main Type D B} Every $f$ in the last three families satisfies $\JRdim\Jac(f)=0$.
\begin{enumerate}
\item For any fixed $n\geq 2$, the algebras in families three and four combine to give $n-1$ non-isomorphic Jacobi algebras, all of which satisfy $\dim_{\mathbb{C}}\Jac(f)^{\ab}=2n+1$ and $\dim_{\mathbb{C}}\Jac(f)=(2n+1)+4(n-1)=6n-3$.
\item In the fifth family, $\dim_{\mathbb{C}}\Jac(f)^{\ab}=2m+2$ and $\dim_{\mathbb{C}}\Jac(f)=(2m+2)+4(n-1)$. 
\end{enumerate}
\end{enumerate}
\end{thm}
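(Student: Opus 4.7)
The plan is to use the Splitting Lemma to reduce to two variables, then invoke the normal forms already established in \S\ref{sec!3roots} and \S\ref{sec!2roots} to assemble the full list, and finally distinguish the resulting Jacobi algebras by dimension invariants.

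First, by \ref{thm!splitting}, in suitable coordinates $f \cong z_1^2+\cdots+z_{d-2}^2 + g(x,y)$ with $g\in\ringtwo_{\geq 3}$. By the discussion of higher coranks in \S\ref{sec:highcorank} (together with \ref{thm: appendix main in text} to rule out $g_3=0$), the hypothesis $\corank_2(f)=2$ is equivalent to $\dcyc_xg_3$ and $\dcyc_yg_3$ being linearly independent, which by \ref{linear change} translates precisely to $g_3^{\ab}$ having either two or three distinct linear factors. The three-factor case is handled by \ref{NC3rootsnormal2}, giving $g \cong xy^2+x^3$ or $g \cong xy^2+x^3+x^{2n}$ with $n\ge 2$; these are precisely the $m=1$ entries of families 2 and 5 in the statement. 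The two-factor case is handled by \ref{cor:D two roots main}, yielding family 1, family 2 with $m\ge 2$, family 3, family 4, and family 5 with $m\ge 2$. Concatenating, the list is complete.

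For mutual non-isomorphism, families 1--2 have infinite-dimensional Jacobi algebras, so $\JRdim=1$ by \ref{lem:Jdim}, while families 3--5 have finite-dimensional Jacobi algebras, so $\JRdim=0$; this immediately separates the two groups and establishes (1) and the $\JRdim$ claim of (2). Within the infinite-dimensional group, abelianization gives $\dim_\C\Jac(xy^2)^{\ab}=\infty$ while $\dim_\C\Jac(xy^2+x^{2m+1})^{\ab}=2m+2$, separating family 1 from family 2 and distinguishing different $m$ within family 2. For the finite-dimensional group, apply the higher-corank computation of \ref{ex: d dimensions}: for $f$ in family 3 or 4, $\dim_\C\Jac(f)=6n-3$ with $\dim_\C\Jac(f)^{\ab}=2n+1$, and by \ref{cor:D two roots main} the $n-1$ distinct values of $m$ yield $n-1$ non-isomorphic algebras for each fixed $n$; this proves (2)(a). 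For $f$ in family 5, $\dim_\C\Jac(f)=4n+2m-2$ and $\dim_\C\Jac(f)^{\ab}=2m+2$, distinguishing all pairs $(n,m)$; this proves (2)(b). Finally, the total dimension is odd in families 3 and 4 but even in family 5, which rules out any cross-family isomorphism between the finite-dimensional groups.

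The conceptual heart of the theorem is really the content of \S\ref{sec!3roots} and \S\ref{sec!2roots}; in assembling them into the final statement the main thing to be careful about is the overlap at the boundary, namely the three-factor cases fitting into the $m=1$ slots of families 2 and 5, and the fact that the two families 4 and 5 partition the $(n,m)$ region $n,m\ge 1$ without duplication according to the inequality $m\lessgtr n$. Since the dimension invariants above depend only on the pair $(n,m)$, no further analytic input is needed. The hardest step is the dimension count in family 4, where the non-trivial fact $\dim_\C\Jac(f)=6n-3$ independent of $m$ (so algebras with different $m$ have the same dimension but are genuinely non-isomorphic) must be imported from \ref{cor:D two roots main}, via \cite{Kawamata} and \ref{ex: d dimensions}.
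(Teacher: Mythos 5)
Your overall structure — Splitting Lemma to reduce to $\ringtwo$, higher corank and the cubic discriminant \ref{linear change} to split into the two- and three-factor cases, then concatenating \ref{NC3rootsnormal2} and \ref{cor:D two roots main} and distinguishing algebras by $\dim_\C\Jac(f)$ and $\dim_\C\Jac(f)^{\ab}$ — matches the paper's proof closely, and your observation that the three-factor output slots into the $m=1$ entries of families two and five is exactly the organizational point the paper makes.

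There is, however, one genuine gap: the claim that \emph{``families 1--2 have infinite-dimensional Jacobi algebras, so $\JRdim=1$ by \ref{lem:Jdim}''}. Lemma \ref{lem:Jdim} is a dichotomy only under the hypothesis $\JRdim\Jac(f)\le1$; it says nothing about an upper bound. Its proof shows that infinite dimensionality forces $\JRdim\ge1$, but to conclude $\JRdim=1$ you must separately establish that the growth of $\dim_\C\big(\Jac(f)/\mathfrak{J}^n\big)$ is at most linear. This upper bound is not automatic from the hypotheses $\corank(f)=2$, $\corank_2(f)=2$, nor is it produced by \ref{cor:D two roots main}, which only records infinite dimensionality. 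The paper handles this by an explicit computation of $\dim_\C(\mathfrak{J}^j/\mathfrak{J}^{j+1})$: for $xy^2+x^3\cong x^3+y^3$ this is done in \ref{NC3rootsnormal}\eqref{x3y3i}, where each graded piece is shown to be two-dimensional; for $xy^2$ and $xy^2+x^{2m+1}$ one checks analogously (modulo $xy+yx$ and the $y^2$-relation every monomial reduces to $x^i$ or $x^iy$, so each graded piece has dimension at most two), or alternatively one invokes \ref{cor: Acon has Jdim leq 1} once \ref{D4 div to curve main} exhibits these as contraction algebras. You need to supply one of these arguments; citing \ref{lem:Jdim} alone does not close the step.

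A minor stylistic point: invoking \ref{thm: appendix main in text} to rule out $g_3=0$ is unnecessary, since if $g_3=0$ then $I\subset\n^3$ and the defining formula in \S\ref{sec:highcorank} gives $\corank_2(f)=4$ directly, contradicting the hypothesis.
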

\begin{proof}
By the Splitting Lemma~\ref{thm!splitting} the condition $\corank(f)=2$ implies that $f\cong z_1^2+\hdots+z_{d-2}^2+g$ for some $g\in\mathbb{C}\llangle x,y\rrangle_{\geq 3}$.  The condition $\corank_2(f)=2$ is then equivalent to the first two cases in \ref{linear change}, namely those $g\in\mathbb{C}\llangle x,y\rrangle_{\geq 3}$ with $g_3\neq 0$ for which $g_3^{\ab}$ has either two or three distinct linear factors.  The options for all such $g$ thus follow from combining \ref{NC3rootsnormal2} and \ref{cor:D two roots main}

The fact that $\JRdim\Jac(xy^2+x^3)=1$ follows since $\Jac(xy^2+x^3)\cong\Jac(x^3+y^3)$ by linear change in coordinates, and $\JRdim\Jac(x^3+y^3)=1$ by \ref{NC3rootsnormal}\eqref{x3y3i}.  The statements that $\JRdim\Jac(xy^2+x^{2m-1})=1$ for all $m\geq 2$ and $\JRdim\Jac(xy^2)=1$ can be shown by a very similar explicit method as in the proof of \ref{NC3rootsnormal}\eqref{x3y3i}, or alternatively by using \ref{cor: Acon has Jdim leq 1} below, once we know (in \ref{D4 div to curve main}) that all such Jacobi algebras are contraction algebras.  The stated vector space dimensions of the Jacobi algebras in all remaining cases have already been justified in the proofs of \ref{NC3rootsnormal2} and \ref{cor:D two roots main} respectively.

The fact that the above are all mutually non-isomorphic, and thus a list of normal forms, then follows. Indeed, by inspecting $\mathfrak{J}$-dimension, the only possible isomorphisms are between members of families one and two, or between members of families three, four and five.  Given we have just added the normal forms of \ref{NC3rootsnormal2} to the normal forms of \ref{cor:D two roots main}, the only remaining possible isomorphisms are between these two cases. But again, either the dimension of the abelianisation, or the dimension of the contraction algebra itself, distinguishes in all cases.
\end{proof}

%-----------------------------------------------------------------------------------------
\section{Central Elements and General Elephants}\label{central sect}

This section algebraically extracts ADE information from the normal forms in \S\ref{tables section}, using generic central elements and contracted preprojective algebras.

\subsection{The Six Algebras}\label{subsec:sixalgebras}
As notation, consider the following ADE Dynkin diagrams, which we also furnish with the information of their highest roots.
\begin{equation}
\begin{array}{c}
\begin{tikzpicture}[scale=0.7]
\node at (-0.4,-1) {\qquad$A_1$};
\node at (0,0) {$
\begin{tikzpicture}[scale=0.45]
 \node (0) at (0,0) [DBlue] {};
 \node (0a) at (0,-0.5) {$\scriptstyle 1$};
 \node[draw=none] at (0,1) {};
\end{tikzpicture}$};
\node at (2.1,-1) {$D_4$};
\node at (2,0) {\begin{tikzpicture}[scale=0.45]
 \node (0) at (0,0) [DB] {};
 \node (1) at (1,0) [DBlue] {};
 \node (1b) at (1,0.75) [DB] {};
 \node (2) at (2,0) [DB] {};
 \node (0a) at (0,-0.5) {$\scriptstyle 1$};
 \node (1a) at (1,-0.5) {$\scriptstyle 2$};
 \node (1ba) at (1.4,0.75) {$\scriptstyle 1$};
 \node (2a) at (2,-0.5) {$\scriptstyle 1$};
\draw [-] (0) -- (1);
\draw [-] (1) -- (2);
\draw [-] (1) -- (1b);
\end{tikzpicture}};
\node at (5.2,-1) {$E_6$};
\node at (5.1,0) {$\begin{tikzpicture}[scale=0.45]
 \node (m1) at (-1,0) [DB] {};
 \node (0) at (0,0) [DB] {};
 \node (1) at (1,0) [DBlue] {};
 \node (1b) at (1,0.75) [DB] {};
 \node (2) at (2,0) [DB] {};
 \node (3) at (3,0) [DB] {};
\node (m1a) at (-1,-0.5) {$\scriptstyle 1$};
 \node (0a) at (0,-0.5) {$\scriptstyle 2$};
 \node (1a) at (1,-0.5) {$\scriptstyle 3$};
 \node (1ba) at (1.4,0.75) {$\scriptstyle 2$};
 \node (2a) at (2,-0.5) {$\scriptstyle 2$};
 \node (3a) at (3,-0.5) {$\scriptstyle 1$};
\draw [-] (m1) -- (0);
\draw [-] (0) -- (1);
\draw [-] (1) -- (2);
\draw [-] (2) -- (3);
\draw [-] (1) -- (1b);
\end{tikzpicture}$};
\node at (9.25,-1) {$E_7$};
\node at (9,-0) {$\begin{tikzpicture}[yscale=0.45,xscale=-0.45]
 \node (m1) at (-1,0) [DB] {};
 \node (0) at (0,0) [DB] {};
 \node (1) at (1,0) [DBlue] {};
 \node (1b) at (1,0.75) [DB] {};
 \node (2) at (2,0) [DB] {};
 \node (3) at (3,0) [DB] {};
 \node (4) at (4,0) [DB] {};
\node (m1a) at (-1,-0.5) {$\scriptstyle 2$};
 \node (0a) at (0,-0.5) {$\scriptstyle 3$};
 \node (1a) at (1,-0.5) {$\scriptstyle 4$};
 \node (1ba) at (0.6,0.75) {$\scriptstyle 2$};
 \node (2a) at (2,-0.5) {$\scriptstyle 3$};
 \node (3a) at (3,-0.5) {$\scriptstyle 2$};
 \node (4a) at (4,-0.5) {$\scriptstyle 1$};
\draw [-] (m1) -- (0);
\draw [-] (0) -- (1);
\draw [-] (1) -- (2);
\draw [-] (2) -- (3);
\draw [-] (3) -- (4);
\draw [-] (1) -- (1b);
\end{tikzpicture}$};
\node at (14,-1) {$E_8$};
\node at (13.5,0) {$\begin{tikzpicture}[yscale=0.45,xscale=-0.45]
 \node (m1) at (-1,0) [DB] {};
 \node (0) at (0,0) [DB] {};
 \node (1) at (1,0) [DBlue] {};
 \node (1b) at (1,0.75) [DB] {};
 \node (2) at (2,0) [DBlue] {};
 \node (3) at (3,0) [DB] {};
 \node (4) at (4,0) [DB] {};
  \node (5) at (5,0) [DB] {};
\node (m1a) at (-1,-0.5) {$\scriptstyle 2$};
 \node (0a) at (0,-0.5) {$\scriptstyle 4$};
 \node (1a) at (1,-0.5) {$\scriptstyle 6$};
 \node (1ba) at (0.6,0.75) {$\scriptstyle 3$};
 \node (2a) at (2,-0.5) {$\scriptstyle 5$};
 \node (3a) at (3,-0.5) {$\scriptstyle 4$};
 \node (4a) at (4,-0.5) {$\scriptstyle 3$};
 \node (5a) at (5,-0.5) {$\scriptstyle 2$};
\draw [-] (m1) -- (0);
\draw [-] (0) -- (1);
\draw [-] (1) -- (2);
\draw [-] (2) -- (3);
\draw [-] (3) -- (4);
\draw [-] (4) -- (5);
\draw [-] (1) -- (1b);
\end{tikzpicture}$};
\end{tikzpicture}
\end{array}\label{eqn: Dynkin}
\end{equation}
To each such Dynkin diagram, there is an associated preprojective algebra $\Uppi$ (see e.g.\ \cite{CBH}), which is a finite dimensional algebra. The vertices of the Dynkin diagram give rise to idempotents in the corresponding $\Uppi$. In each diagram in \eqref{eqn: Dynkin}, let $e$ be the idempotent corresponding to the unique vertex marked $\begin{tikzpicture}\node at (0,0) [DBlue] {};\end{tikzpicture}$, except for $E_8$ when there are two cases: $e$ is either the left $\begin{tikzpicture}\node at (0,0) [DBlue] {};\end{tikzpicture}$ or the right $\begin{tikzpicture}\node at (0,0) [DBlue] {};\end{tikzpicture}$.  From this information, consider the algebra $e\Uppi e$.  

\begin{remark}\label{rem:5thisstrange}
Before relating the above to the introduction, we remark that the fifth algebra in \eqref{eqn:6keyalgebras} has full presentation
\begin{equation}
\frac{\mathbb{C}\langle x,y,z\rangle}{\left(\begin{array}{c}
x^2+y+z,\\ x^4,\,\, z^2+xyx,\,\, yxy+y^2x+yx^3 \end{array}\right)}.\label{eqn:theawkwardone}
\end{equation}
There are many equivalent presentations, with the point being that all are necessarily less pretty than the other five in \eqref{eqn:6keyalgebras}. Indeed, \ref{lem:sixalgareePie} below proves that the algebra in \eqref{eqn:theawkwardone} is isomorphic to $e\Uppi e$, where $e$ is the vertex $\begin{tikzpicture}\node at (0,0) [DBlue] {};\end{tikzpicture}$ marked $5$ in $E_8$.  This is the only time that the chosen vertex~$\begin{tikzpicture}\node at (0,0) [DBlue] {};\end{tikzpicture}$ in \eqref{eqn: Dynkin} is not the central vertex in the Dynkin diagram, and so slightly different behaviour should be expected.  We also remark that this algebra is strictly needed in order for noncommutative singularity theory to distinguish between the two different types of $E_8$ flop (of length $5$ and $6$, respectively).
\end{remark}

The upshot from \eqref{eqn: Dynkin} is that there are six algebras $e\Uppi e$, corresponding to the six different vertices marked $\begin{tikzpicture}\node at (0,0) [DBlue] {};\end{tikzpicture}$. The following result asserts that these give a presentation-free description of the six algebras of \eqref{eqn:6keyalgebras} in the introduction.
\begin{lemma}\label{lem:sixalgareePie}
Consider in order the algebras $e\Uppi e$ where $e$ is the vertex $\begin{tikzpicture}\node at (0,0) [DBlue] {};\end{tikzpicture}$ with label \textnormal{$1,\hdots, 6$} in \eqref{eqn: Dynkin}.  This list is isomorphic to the list of algebras in \eqref{eqn:6keyalgebras}, reading left to right.
\end{lemma}
\begin{proof}
When $\begin{tikzpicture}\node at (0,0) [DBlue] {};\end{tikzpicture}$ is a central vertex (which covers all cases except the awkward one in \ref{rem:5thisstrange}), the isomorphism is a direct application of \cite[Theorem 1]{Melit};  see also \cite[p53]{CB}. The final remaining case when $e$ is the vertex $\begin{tikzpicture}\node at (0,0) [DBlue] {};\end{tikzpicture}$ marked $5$ in $E_8$, can be proved using Auslander--Reiten theory; alternatively we may simply observe that $e\Uppi e$ is the factor of the algebra in \cite[1.3(5)]{Karmazyn} by the ideal generated (in the notation there) by $a,a^*,t,T_0^\upbeta,\hdots,T_1^{\updelta}$.  The result \cite[1.3(5)]{Karmazyn} then gives a presentation 
\[
e\Uppi e\cong\,\, \frac{\mathbb{C} \langle\upbeta,\upgamma\rangle }{(\upbeta^4,\,\,\upgamma\upbeta\upgamma+\upgamma^2\upbeta+\upgamma\upbeta^3,\,\, (\upgamma+\upbeta^2)^2+\upbeta\upgamma\upbeta)}
\]
which is clearly isomorphic to the algebra in \eqref{eqn:theawkwardone}.
\end{proof}

\subsection{Generic Central Sections}
For any $f\in\ringd_{\geq 2}$, set $\scrZ=\scrZ(\Jac(f))$ to be the centre of $\Jac(f)$, and write $\m_\scrZ$ for the Jacobson radical of $\scrZ$. Recall that $\mathfrak{J}$ denotes the Jacobson radical of the local ring $\Jac(f)$.

\begin{lemma}
We have $\m_\scrZ=\mathfrak{J}\cap\scrZ$ and $\scrZ/\m_\scrZ\cong\C$.  Thus $\scrZ$ is also a local ring.  % could be a field C - but we show easily next that m \not= 0.
\end{lemma}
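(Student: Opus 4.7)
The key observation is that $\Jac(f)$ itself is a local (noncommutative) ring with unique maximal two-sided ideal $\mathfrak{J}$, as recorded in \ref{radical remark}\eqref{radical remark 1}. The plan is to exploit this: I will show that an element of $\scrZ$ is a unit in $\scrZ$ if and only if it is a unit in $\Jac(f)$, and then local-ring bookkeeping yields everything.

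The first step is to check the inclusion $\m_\scrZ \supseteq \mathfrak{J}\cap\scrZ$, which is automatic once $\mathfrak{J}\cap\scrZ$ is known to be a proper ideal of $\scrZ$ (since $\m_\scrZ$ is the intersection of all maximal ideals). The substantive step is the reverse inclusion, for which I will show that $\mathfrak{J}\cap\scrZ$ contains every non-unit of $\scrZ$. For this, suppose $z\in\scrZ\setminus\mathfrak{J}$. Then $z$ is a unit in the local ring $\Jac(f)$, with two-sided inverse $z'$, and I claim $z'\in\scrZ$: for any $a\in\Jac(f)$,
\[
z'a \;=\; z'a(zz') \;=\; z'(az)z' \;=\; z'(za)z' \;=\; (z'z)(az') \;=\; az',
\]
using centrality of $z$ twice. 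Hence $z$ is already a unit in~$\scrZ$.

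Consequently the set of non-units in $\scrZ$ equals $\scrZ\cap\mathfrak{J}$. This set is closed under addition, and closed under multiplication by arbitrary elements of $\scrZ$ because $\mathfrak{J}$ is a two-sided ideal of $\Jac(f)$; hence it is an ideal of~$\scrZ$. A commutative ring whose non-units form an ideal is local with that ideal as Jacobson radical, giving $\m_\scrZ = \scrZ\cap\mathfrak{J}$. The residue statement is then immediate: the inclusion $\scrZ\hookrightarrow\Jac(f)$ descends to an injection $\scrZ/\m_\scrZ \hookrightarrow \Jac(f)/\mathfrak{J}\cong\C$, and this injection is forced to be surjective since it is a unital $\C$-algebra map into $\C$.

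There is no genuine obstacle here; the only point that needs a sentence of verification is that the inverse of a central unit is again central, which is the short computation above. The rest is routine local-ring reasoning, applied to the commutative subring~$\scrZ$.
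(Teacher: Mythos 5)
Your proof is correct and proceeds along essentially the same lines as the paper's: both identify $\m_\scrZ$ with $\mathfrak{J}\cap\scrZ$ by exploiting that $\Jac(f)$ is local and that $\scrZ$ inherits localness from it. The one place you are more explicit is worthwhile: the paper asserts tersely that $1+\mathfrak{J}\cap\scrZ$ consists of units \emph{in $\scrZ$} (citing Lam), which tacitly needs exactly your short computation that the two-sided inverse of a central unit of $\Jac(f)$ is again central.
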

\begin{proof}
Set $J_f=\lcl \updelta_1f,\hdots,\updelta_df\rcl$, then it is clear that $\mathfrak{J}\cap\scrZ=\{ g+J_f\in\scrZ\mid g\in\n \}$. This set is clearly a two-sided ideal of $\scrZ$, and further $1+\mathfrak{J}\cap\scrZ$ consists of units in $\scrZ$.  These two properties imply that $\mathfrak{J}\cap\scrZ$ equals the Jacobson radical  $\m_\scrZ$ of $\scrZ$, see e.g.\ \cite[4.5]{Lam}.  The fact that $\scrZ/\m_\scrZ\cong\C$ is clear.
\end{proof}

Generic elements of the centre $\scrZ$ will be used to intrinsically extract ADE information. 
\begin{defin}\label{def:generic element}
Given $f\in\ringd_{\geq 2}$, we say that $\Jac(f)$ has Type $X$ if for all finite dimensional vector spaces $V\subset\m_\scrZ$ such that $V\twoheadrightarrow\m_\scrZ^{\phantom 2}/\m_\scrZ^2$, there exists a Zariski open subset $U$ of $V$ such that $\Jac(f)/(u)\cong e\Uppi e$ for all $u\in U$, where $\Uppi$ is the preprojective algebra of Type~$X$, and $e$ is an idempotent marked $\begin{tikzpicture}\node at (0,0) [DBlue] {};\end{tikzpicture}$ in \eqref{eqn: Dynkin}.   
\end{defin}
Equivalently, in the language of \cite[2.5]{C3F}, $\Jac(f)$ has Type $X$ provided that a general hyperplane section $u$ of $\scrZ$ satisfies $\Jac(f)/(u)\cong e\Uppi e$ where $\Uppi$ is the preprojective algebra of Type~$X$, and $e$ is an idempotent marked $\begin{tikzpicture}\node at (0,0) [DBlue] {};\end{tikzpicture}$ in \eqref{eqn: Dynkin}.  We also remark that there are two different Type $E_8$'s in \ref{def:generic element}, corresponding to the two different choices of $\begin{tikzpicture}\node at (0,0) [DBlue] {};\end{tikzpicture}$ in $E_8$ in \eqref{eqn: Dynkin}.  This feature matches the two different $E_8$ cases in the length classification of flops \cite{KM}. 

Much like the definition of cDV singularities, \ref{def:generic element} is only designed to be useful in specific situations.  Indeed, for general $f\in\ringd_{\geq 2}$, it is not clear whether the centre $\scrZ$ of $\Jac(f)$ is noetherian, nor whether its maximal ideal $\m_\scrZ$ is finitely generated as a $\scrZ$-module. Consequently, work is required to establish that $\m_\scrZ^{\phantom 2}/\m_\scrZ^2$ is finite dimensional, which is needed for there to exist a finite dimensional vector space $V$ surjecting onto it.

When $\Jac(f)$ is finite dimensional, these difficulties disappear, since $\Jac(f)$ and thus $\scrZ$, $\m_\scrZ$ and $\m_\scrZ^{\phantom 2}/\m_\scrZ^2$ are all finite dimensional vector spaces.  Other cases are more tricky, but for our purposes the following suffices.

\begin{lemma}\label{lem: D center}
If $f$ is a normal form from \textnormal{\ref{main Type D}},  then the following statements hold.
\begin{enumerate}
\item\label{lem: D center 1} If $u\in\m_\scrZ$, then  $u\equiv \uplambda x^2 + h$ in $\Jac(f)$ for some $h\in \ringtwo_{\geq 3}$.
\item\label{lem: D center 2} If further $f$ has Type~$D_{\infty, \infty}$ or $D_{\infty,m}$, then $\scrZ\cong\C\llsq x^2\rrsq$. 
\end{enumerate}
In particular, in all cases $\dim_\C(\m_\scrZ^{\phantom 2}/\m_\scrZ^2)<\infty$.
\end{lemma}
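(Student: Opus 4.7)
The plan is to reduce to two variables via the Splitting Lemma~\ref{thm!splitting} and exploit the explicit form of the Type~$D$ Jacobi relations. In every case $\dcyc_y f = xy + yx$ and $\dcyc_x f = y^2 + q(x)$, where either $q = 0$ (in Type $D_{\infty,\infty}$) or $q(x) \in \C[x]$ has $\ord(q) \geq 2n-1 \geq 3$. Thus in $\Jac(f)$ one has $xy = -yx$ and $y^2 = -q(x)$, and every element admits a representative of the form $p(x) + r(x)\, y$ with $p, r \in \C\llsq x\rrsq$, obtained by moving all $y$'s to the right via $xy = -yx$ (with signs) and reducing $y^2 = -q(x)$ so that the $y$-exponent is at most $1$. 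Combining these two relations also gives $x^{2n-1}y = 0$ in the finite-dimensional cases, by the computation preceding \ref{CorD}. The set $\{1, x, \dots, x^{2n-2}, y, xy, \dots, x^{2n-2}y\}$ is linearly independent in each Type~$D$ algebra, as can be read off the corank table of \ref{ex: d dimensions}; when $n=\infty$ every $\{x^a, x^a y\}_{a\geq 0}$ is independent and $\Jac(f)$ has $\JRdim = 1$ by \ref{main Type D}.

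For part~\eqref{lem: D center 1}, fix $u \in \m_\scrZ$ and write $u = p(x) + r(x)\, y$ with $p(0)=0$. Since $r(x)$ commutes with $x$, the relations $yx = -xy$ and $yx^k = (-1)^k x^k y$ yield
\[
[u,x] = r(x)(yx - xy) = -2x\, r(x)\, y,
\qquad
[u,y] = \bigl(p(x) - p(-x)\bigr)\, y.
\]
Centrality of $u$ forces both of these to vanish in $\Jac(f)$. Expanding $x\,r(x)\,y = \sum_a r_a x^{a+1}y$ and using independence of $\{x^b y\}$ in the relevant range gives $r_a = 0$ for $a \leq 2n-3$, so $r(x) \in \n^{2n-2}$ and $r(x)y \in \n^{2n-1} \subseteq \n^3$. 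Likewise the odd coefficients $p_1, p_3, \dots$ up to degree $2n-2$ vanish, in particular $p_1 = 0$. Consequently $u - p_2 x^2 \in \mathfrak{J}^3$, so $u \equiv \uplambda x^2 + h$ in $\Jac(f)$ for $\uplambda = p_2$ and some $h \in \ringtwo_{\geq 3}$.

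For part~\eqref{lem: D center 2}, in $D_{\infty,\infty}$ and $D_{\infty,m}$ the monomials $\{x^a y\}_{a \geq 0}$ are all linearly independent, so the argument above forces $r(x) = 0$ identically and $p(-x) = p(x)$; hence $\scrZ \subseteq \C\llsq x^2 \rrsq$. Conversely each $x^{2k}$ commutes with both $x$ and $y$, so $\scrZ \supseteq \C\llsq x^2\rrsq$. This subalgebra is closed in $\Jac(f)$, being the intersection of the continuous kernels of $[-,x]$ and $[-,y]$, and hence the continuous map $\C\llsq t\rrsq \to \scrZ$ sending $t \mapsto x^2$ is an isomorphism of complete topological $\C$-algebras.

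The final assertion follows immediately: in Type $D_{\infty,*}$ part~\eqref{lem: D center 2} gives $\m_\scrZ/\m_\scrZ^2 \cong \C$, while in the finite-dimensional Types $D_{n,*}$ one has $\dim_\C \scrZ \leq \dim_\C \Jac(f) < \infty$ by \ref{main Type D}. The main technical input, and the most delicate step, is the linear independence of $\{x^a, x^a y\}$ in the claimed ranges, equivalent to the corank data of \ref{ex: d dimensions}; the cleanest confirmation would be an explicit Gr\"obner basis computation analogous to the one in the proof of \ref{NC3rootsnormal}\eqref{x3y3ii}.
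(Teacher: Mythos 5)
Your proof is correct and reaches the same conclusions, but via a somewhat different route than the paper's. You immediately pass to the global normal form $u \equiv p(x) + r(x)y$ and then extract constraints from the commutators $[u,x]$ and $[u,y]$, reading off vanishing of coefficients by invoking linear independence of $\{x^a\}$ and $\{x^ay\}$ up to degree roughly $2n-2$. The paper instead works \emph{locally}: it reduces modulo $\n^3$ and $\n^4$ and observes that in the finite quotients $\ringtwo/\lcl\delta_x f,\delta_y f,\n^r\rcl$ (for small $r$) the needed monomials $xy$ and $x^2y$ are visibly part of a basis, killing the linear and $xy$-coefficients in two short steps. The difference matters chiefly in what one must justify: the paper only needs the structure of $\Jac(f)/\fJ^r$ for $r\le4$, which is elementary, while your approach leans on the full independence of $\{x^a,x^ay\}$ in the expected range --- you rightly flag this as the delicate step, and it is not formally established anywhere in the paper before this lemma (Example~\ref{ex: d dimensions} is stated but not proved there). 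For part~\eqref{lem: D center 2} your argument matches the paper's almost exactly.

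One minor slip: your formula for $[u,y]$ drops the term $(r(x)-r(-x))y^2$; the correct identity is
\[
[u,y] = \bigl(p(x)-p(-x)\bigr)y + \bigl(r(x)-r(-x)\bigr)y^2,
\]
with $y^2$ then reducible via $\delta_xf$. This is harmless here because you have already constrained $r$ to high order using $[u,x]$, and in Type~$D_{\infty,\infty}$ one has $y^2=0$ anyway, but it should be stated correctly. Also, when citing ``the corank table of \ref{ex: d dimensions}'' to justify independence, note that the infinite-dimensional cases $D_{\infty,m}$ do not appear in that example; the paper handles these in part~\eqref{lem: D center 2} by explicit reduction using the two Jacobi relations, which is probably the cleanest way to fill that gap.
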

\begin{proof}
(1) In all cases $\updelta_yf=xy+yx$, and so certainly $x^2$ commutes with $y$ in $\Jac(f)$. Obviously $x^2$ commutes with $x$, thus since we are considering closed ideals, it follows that $x^2$ is central in $\Jac(f)$.   Similarly $y^2$ is central.  

We next claim that there are no elements in $\m_\scrZ$ that contain linear terms.  Write $u\in\m_\scrZ$ as $u= k + g$ for some $k=\uplambda_1x +\uplambda_2 y$ and $g\in\ringtwo_{\geq 2}$. Now $u$ remains central after factoring by $\mathfrak{J}^3$, so set $I=\lcl \updelta_xf,\updelta_yf, \n^3\rcl$, and observe that
\[
0+I = [x+I, u+I] =  2\uplambda_2xy +I
\]
For any (finite-dimensional) $\ringtwo/\lcl \updelta_xf,\updelta_yf, \n^r\rcl$, $r\ge3$,
we may choose a basis of the form
$1,x,\hdots,x^i,xy,x^2y,\hdots,x^jy$ for some $i,j\ge2$.
Since $xy$ forms part of this basis it follows that $\uplambda_2=0$. Repeating using the commutator $[y+I, u+I]$ shows that $\uplambda_1=0$. 

Thus $u= g$ for some $g\in\ringtwo_{\geq 2}$. Using the relation $xy+yx$ to move $x$'s to the left, and the other relation to move $y^2$ either to zero, to $x^2$, or into higher degree, write
\[
u\equiv \uplambda_1 x^2 + \uplambda_2 xy + h
\]
in $\Jac(f)$, for some $h\in\ringtwo_{\geq 3}$.  We next claim that $\uplambda_2=0$.  Since $u$ is central, and $x^2$ is central, it follows that $v\colonequals\uplambda_2 xy + h
$ is also central in $\Jac(f)$. In particular it is still central after factoring by $\mathfrak{J}^4$.  Set $I=\lcl \updelta_xf,\updelta_yf, \n^4\rcl$, so that
\[
0+I=[x+I, v+I]=2\uplambda_2x^2y+I
\]
and thus $2\uplambda_2 x^2y\in I$.  But $x^2y$ forms part of a basis of $\ringtwo/I$, so $\uplambda_2=0$.

\noindent
(2) Either set $I=( xy+yx, y^2)$, or $( xy+yx, y^2+x^{2m})$, so by assumption $\Jac(f)\cong\ringtwo/\lcl I\rcl$.  Consider an arbitrary element $u\in\scrZ$.  By using the first relation to move all the $x$'s to the left, and the second relation to either move $y^2$ to zero or to higher powers of $x$, we may write $u\equiv p+qy$ in $\Jac(f)$, where by \eqref{lem: D center 1} $p\in\C\llsq x\rrsq_{\geq 2}$ and $q\in\C\llsq x\rrsq_{\geq 2}$. Observe that in $\Jac(f)$
\[
0\equiv [x,u]\equiv [x,p+qy]=2xqy
\]
and so $xqy\in \lcl I\rcl$.  Thus $x q_{\leq t-3}y\in I+\n^t$ for all $t\geq 3$.  Now $\ringtwo/(I+\n^t)$ has basis $1,x,\hdots,x^{t-1}, y,xy,\hdots,x^{t-2}y$. Write $q_{\leq t-3}=\sum_{i=2}^{t-3}\uplambda_ix^i$, then the second part of this basis on the equation $x q_{\leq t-3}y\in I+\n^t$ shows that $\uplambda_2=\hdots=\uplambda_{t-3}=0$.  This holds for all~$t$, and so $q=0$.

Thus the central element $u\equiv p$.  Splitting into even and odd terms, write $u\equiv P(x^2)+xQ(x^2)$ in $\Jac(f)$ for some $P,Q\in\C\llsq x\rrsq_{\geq 1}$.  Then, in $\Jac(f)$, since $x^2$ is central
\[
0\equiv [y,u]\equiv [y,P+xQ]\equiv -2(xQ)y.
\]
Using the same argument as above, $Q=0$, and so $u\equiv P(x^2)$, as claimed.  This shows that $\scrZ\subseteq \C\llsq x^2\rrsq$, but since $x^2$ is central by \eqref{lem: D center 1}, equality holds, proving \eqref{lem: D center 2}.

For the very last statement, all the finite-dimensional $\Jac(f)$ satisfy $\dim_\C(\m_\scrZ^{\phantom 2}/\m_\scrZ^2)<\infty$. Since the only other potentials in \ref{main Type D} are those in \eqref{lem: D center 2}, where visibly $\dim_\C(\m_\scrZ^{\phantom 2}/\m_\scrZ^2)=1$, the final statement follows.
\end{proof}

It follows from \ref{lem: D center}  that $\m_\scrZ^{\phantom 2}/\m_\scrZ^2$ is finite dimensional for any $f\in\ringd_{\geq 2}$ with $\corank(f)=2$ and $\corank_2(f)=2$.  The is a rather remarkable use of normal forms: we have no method to prove such a result without using \ref{main Type D}.

\subsection{ADE preliminaries}\label{sec: ADE prelim}
The next problem is to exhibit a single element of the centre $\scrZ$ that gives an ADE quotient.  For Type $A$ and $D$ this turns out to be easy, but Type $E$ requires the following preparation.
Consider the elements
\[
g_{6,n}\colonequals 
\begin{cases}
x^2 + xyx + yx^2	&\mbox{if } n=3t+1 \mbox{ with }t\geq 1\\
x^2 + xyx + yx^2 + (-1)^{t}3^t(3t+2)x^{2t+1} &\mbox{if } n=3t+2 \mbox{ with }t\geq 1\\
x^2 + xyx + yx^2 + (-1)^{t+1}3^tt( xyx^{2t-2} + yx^{2t-1} )&\mbox{if }n=3t \mbox{ with }  t\geq 2.\\
\end{cases}
\]
The following establishes, in the cases $E_{6,n}$, that the centre of $\Jac(f)$ is non-trivial, and that $\m_\scrZ$ is at least two dimensional as a vector space.
\begin{lemma}\label{lem: E6 central}
If $f$ has Type~$E_{6,n}$, then $x^2$ is central in $\Jac(f)$, as is $g_{6,n}$.
\end{lemma}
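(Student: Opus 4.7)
The plan begins by reducing to the quotient modulo the irrelevant $z_i$ relations $\dcyc_{z_i} f = 2z_i$, leaving the two substantive relations
\[
R_1 := \dcyc_x f = 3x^2 + y^3, \qquad R_2 := \dcyc_y f = xy^2 + yxy + y^2x + n\,y^{n-1}
\]
in $\ringtwo$. The centrality of $x^2$ is immediate: $R_1 \equiv 0$ gives $y^3 \equiv -3x^2$ in $\Jac(f)$, and bracketing with $y$ yields $[y, x^2] = -\tfrac{1}{3}[y, y^3] = 0$; together with $[x, x^2] = 0$ this proves the first claim. Consequently every even power $x^{2k}$ is central in $\Jac(f)$, a fact used throughout the rest of the argument.

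For $g_{6,n}$ the plan is to check separately that $[g_{6,n}, x] = 0$ and $[g_{6,n}, y] = 0$. Commutation with $x$ is the easy half: a short direct computation using $x^2 y = yx^2$ gives $[xyx + yx^2, x] = yx^3 - x^2yx = 0$, and each correction term ($x^{2t+1}$ in the $n = 3t+2$ case, or $xyx^{2t-2} + yx^{2t-1}$ in the $n=3t$ case) telescopes to zero against $x$ by the same mechanism.

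The main obstacle, and the reason $g_{6,n}$ must be defined in three cases, is $[g_{6,n}, y]$. Expanding $[xyx + yx^2, y]$ and using $x^2y = yx^2$ telescopes down to $[x, yxy]$; substituting $yxy \equiv -(xy^2 + y^2x + n\,y^{n-1})$ from $R_2$, the $xy^2$ and $y^2x$ contributions cancel pairwise (each contributes $\pm(y^2x^2 - xy^2x)$ thanks to $x^2$ being central), leaving the residue
\[
[xyx + yx^2, y] \equiv -n\,[x, y^{n-1}].
\]
The shape of this residue depends sensitively on $n \bmod 3$, because $y^{n-1}$ reduces via $y^3 \equiv -3x^2$ to either a central monomial, a central monomial times $y$, or a central monomial times $y^2$ respectively.

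The plan is then to verify case by case that the prescribed correction exactly cancels this residue. When $n = 3t+1$, the element $y^{n-1} \equiv (-3)^t x^{2t}$ is central, so the residue already vanishes and no correction is needed. When $n = 3t+2$, one has $y^{n-1} \equiv (-3)^t x^{2t} y$, producing residue $-(3t+2)(-3)^t x^{2t}[x,y]$, and a direct expansion shows $[x^{2t+1}, y] = x^{2t}[x,y]$, so the scalar $(-1)^t 3^t(3t+2)$ appearing in $g_{6,n}$ is precisely the one forced by cancellation. When $n = 3t$, one has $y^{n-1} \equiv (-3)^{t-1} x^{2t-2} y^2$, producing residue proportional to $x^{2t-2}[x, y^2]$; expanding $[xyx^{2t-2} + yx^{2t-1}, y]$ and regrouping using centrality of $x^{2t-2}$ yields exactly $x^{2t-2}(xy^2 - y^2x)$, and the scalar $(-1)^{t+1}3^t t$ is again forced by this matching. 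No conceptual difficulty arises beyond the sign and power-of-$3$ bookkeeping in the three cases, once the identity $y^3 \equiv -3x^2$ is deployed.
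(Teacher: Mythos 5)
Your proof is correct and follows essentially the same route as the paper: establish centrality of $x^2$ from $3x^2+y^3\equiv0$, then verify $[g_{6,n},x]\equiv0$ and $[g_{6,n},y]\equiv0$ by pushing $x^2$ (hence $x^{2k}$) past $y$ and invoking $\updelta_yf\equiv0$, which is exactly the paper's reduction of the problem to $xyxy-yxyx\equiv-n[x,y^{n-1}]$. Your version identifies this residue up front and handles all three residue classes of $n\bmod 3$ explicitly, whereas the paper spells out only $n=3t+1$ and declares the other two cases similar.
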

\begin{proof}
The first statement follows from the relation $3x^2+y^3\equiv 0$, which implies that $yx^2\equiv -\tfrac{1}{3}y^4\equiv x^2y$ and thus $x^2$ commutes with $y$. Since $x^2$ clearly commutes with $x$ and we are considering closed ideals, it follows that $x^2$ is central in $\Jac(f)$. 

For the second statement, we establish the first case, with the proofs of all other cases being similar. For this, it suffices to show that $xyx + yx^2$ is also central in $\Jac(f)$ when $n=3t+1$ with $t\geq 1$.  We first claim that $xyxy-yxyx\in \lcl \updelta_xf,\updelta_yf\rcl$.  This follows since $n-1=3t$, then $y^{n-1}\equiv (-3x^2)^t$ is central, and thus the commutator
\[
\lcl\updelta_xf,\updelta_yf\rcl \ni [x,\updelta_yf]\equiv (xyxy-yxyx)+n(xy^{n-1}-y^{n-1}x)\equiv xyxy-yxyx.
\]
Using this, again with the fact that $x^2$ is central, it follows that 
\begin{align*}
[x,xyx+yx^2]&\equiv (yx^3+x^3y)-(x^3y+yx^3) =0\\
[y,xyx+yx^2]&\equiv (yxyx+y^2x^2)-(xyxy+yx^2y) \equiv 0.
\end{align*}
Thus $xyx+yx^2$ commutes with both $x$ and $y$, and so is central in $\Jac(f)$.
\end{proof}

\subsection{Extracting ADE}
We are now in a position to extract ADE using general hyperplane sections of the centre.  In what follows, in the case that $\Jac(f)$ is finite dimensional, all ideals are automatically closed.  In the cases when $\JRdim\Jac(f)=1$ this fact is also true for Type $A$ by inspection, and for Type $D$ by e.g.\ \ref{lovely completion fact} and \ref{all type D are geometric} below.  As such, in the following technically we should temporarily write $\Jac(f)/\lcl g\rcl$ when considering Type $D_{\infty,m}$ and $D_{\infty,\infty}$ until we have established \ref{lovely completion fact} and \ref{all type D are geometric}. However, since \ref{lovely completion fact} and \ref{all type D are geometric} are logically independent of what follows, we refrain from doing so, and drop the double bracket to ease notation.

\begin{thm}\label{central element main}
Consider the normal forms $A_n$, $D_{n,m}$, $D_{n,\infty}$, $E_{6,n}$, $A_{\infty}$, $D_{\infty,m}$, $D_{\infty,\infty}$ and $E_{6,\infty}$ from \S\textnormal{\ref{tables section}}.  In each case, define an element $s$ as follows
\[
\begin{tabular}{clp{3cm}c}
\toprule
{\bf Type}&{\bf Normal form}&{\bf Conditions}&$s$\\
\midrule
\textnormal{A} & $z_1^2+\hdots+z_{d-2}^2+x^2+\upvarepsilon_1y^n$ & $n\in\mathbb{N}_{\geq 2}\cup\{\infty \}$ & $y$\\[4pt]
\textnormal{D} & $z_1^2+\hdots+z_{d-2}^2+xy^2+\upvarepsilon_2x^{2n}+\upvarepsilon_3 x^{2m-1}$ & $m,n\in\mathbb{N}_{\geq 2}\cup\{\infty \}$ & $x^2$\\[4pt]
\textnormal{E} & $z_1^2+\hdots+z_{d-2}^2+x^3+xy^3+\upvarepsilon_4 y^n$ & $n\in\mathbb{N}_{\geq 4}$&{$g_{6,n}$}\\
\bottomrule
\end{tabular}
\]
where $g_{6,n}$ is defined in \S\textnormal{\ref{sec: ADE prelim}} above. Then the following statements hold.
\begin{enumerate}
\item\label{central element main 1}  The element $s$ is central in $\Jac(f)$, and $\Jac(f)/(s)\cong e\Uppi e$, where $\Uppi$ is the preprojective algebra of Type~$A_1$, $D_4$, or $E_6$, and $e$ is the idempotent marked $\begin{tikzpicture}\node at (0,0) [DBlue] {};\end{tikzpicture}$.  
\item\label{central element main 2} 
Normal forms of Type $A$ and $D$ give rise to Jacobi algebras which have Type $A$ and $D$ respectively, in the sense of \textnormal{\ref{def:generic element}}.
\end{enumerate} 
\end{thm}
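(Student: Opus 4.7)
For Part~(1), the plan is direct computation in each family. In Type~A the algebra $\Jac(f)\cong\C\llsq y\rrsq/(y^{n-1})$ (with $(y^{n-1})$ read as $0$ when $n=\infty$) is commutative, so centrality of $s=y$ is immediate and the quotient $\Jac(f)/(y)\cong\C$ matches $e\Uppi e$ for $A_1$. In Type~D the relation $\updelta_y f=xy+yx$ forces $xy=-yx$ in $\Jac(f)$, whence $x^2y=-xyx=yx^2$ shows $s=x^2$ is central. Because $n,m\geq 2$, every power $x^{2n-1}$ and $x^{2m-2}$ appearing in $\updelta_x f$ lies in the closed ideal $(x^2)$, so modulo $(x^2)$ the only surviving relations are $y^2$ and $xy+yx$, and hence
\[
\Jac(f)/(x^2)\cong \frac{\C\langle x,y\rangle}{(x^2,\,y^2,\,xy+yx)},
\]
a $4$-dimensional algebra with basis $\{1,x,y,xy\}$ that matches $e\Uppi e$ for $D_4$ at the central vertex. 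For Type~E, centrality of $g_{6,n}$ is already established in \ref{lem: E6 central}, and the isomorphism $\Jac(f)/(g_{6,n})\cong e\Uppi e$ for $E_6$ will be verified by an explicit presentation-level computation, reducing monomials modulo $\lcl\updelta_x f,\updelta_y f,g_{6,n}\rcl$ and matching against a basis of $e\Uppi e$ coming from the $E_6$ path combinatorics.

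For Part~(2), the strategy is to reduce every generic choice $u$ to the specific $s$ handled in Part~(1). In Type~A, $\Jac(f)$ is commutative, $\m_\scrZ/\m_\scrZ^2\cong\C\cdot y$, and any $u\in V$ with non-zero $y$-coefficient factors as $u=y\cdot(\text{unit})$, so $(u)=(y)$; non-vanishing of the $y$-coefficient is a Zariski open condition on $V$. Type~D is the main case. By \ref{lem: D center}(1), each $u\in\m_\scrZ$ has the form $u\equiv\lambda x^2+h$ in $\Jac(f)$ for a uniquely determined $\lambda\in\C$ and some $h$ of order $\geq 3$. The key step is to establish the containment
\[
\mathfrak{J}^3\subseteq(x^2)\quad\text{in }\Jac(f),
\]
which I would verify by checking that every cubic monomial $x^3,\,x^2y,\,xyx,\,yx^2,\,xy^2,\,yxy,\,y^2x,\,y^3$ lies in $(x^2)$: the first four directly, and the remaining four via the congruence $y^2\equiv -2n\upvarepsilon_2 x^{2n-1}-(2m-1)\upvarepsilon_3 x^{2m-2}\in(x^2)$ (using $n,m\geq 2$ or the relevant $\upvarepsilon_i=0$) combined with $xy+yx=0$. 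Closure of $(x^2)$ upgrades this from cubics to all of $\mathfrak{J}^3$. Since $x^2$ is central, any $h\in(x^2)$ has the form $h=x^2h'$, and the order condition on $h$ forces $h'\in\mathfrak{J}$. Thus $u=x^2(\lambda+h')$, and when $\lambda\ne 0$ the factor $\lambda+h'$ is a unit, so $(u)=(x^2)$; Part~(1) then delivers $\Jac(f)/(u)\cong e\Uppi e$ for $D_4$.

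It remains to identify the Zariski open locus. The assignment $u\mapsto\lambda$ is a well-defined $\C$-linear form on $\m_\scrZ$: uniqueness of $\lambda$ follows from $x^2\notin\mathfrak{J}^3$, which is a direct consequence of $\corank_2(f)=2$. Since $\m_\scrZ^2\subseteq\mathfrak{J}^4\subseteq\mathfrak{J}^3$, this form descends to $\m_\scrZ/\m_\scrZ^2\to\C$, where it sends $x^2+\m_\scrZ^2$ to $1$ and is therefore non-zero. Pulling back along the given surjection $V\twoheadrightarrow\m_\scrZ/\m_\scrZ^2$ yields a non-zero linear form on $V$ whose zero locus is a proper hyperplane, and its complement is the desired open set $U$.

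The main obstacle in this theorem lies in Part~(1) for Type~E: identifying $\Jac(f)/(g_{6,n})$ with $e\Uppi e$ is a delicate combinatorial computation that does not admit the clean structural shortcut available in Types~A and~D, and it is precisely because the analogous Part~(2) claim is still harder that it is deferred to \cite{BW2}. By contrast, the Type~A/D argument sketched for Part~(2) is essentially bookkeeping once the containment $\mathfrak{J}^3\subseteq(x^2)$ and the factorisation $u=x^2(\lambda+h')$ are in hand.
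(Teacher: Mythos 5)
Your proof of Part~(1) in Types~A and~D matches the paper exactly: centrality of $y$ in Type~A is trivial from commutativity, centrality of $x^2$ in Type~D comes from $xy\equiv -yx$, and quotienting by $x^2$ collapses the higher-order terms $x^{2n-1}$, $x^{2m-2}$ (both of order $\ge 2$) to leave the $D_4$ relations $x^2, y^2, xy+yx$. For Part~(1) Type~E you merely promise ``an explicit presentation-level computation''; that is acceptable as a sketch but elides the substantive point in the paper's actual argument: first show $x^3\equiv 0$ in the quotient, deduce $y^6\equiv 0$ and hence $y^{3t}\equiv 0$ for $t\ge 2$, obtain a $t$-independent presentation, and only then construct explicit automorphisms in both directions to prove the isomorphism with $e\Uppi e$. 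Without at least these intermediate steps, the Type~E claim remains unsubstantiated in your write-up.

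Part~(2) is where you take a genuinely different route. The paper works directly with the power-series lift $h$, uses $xy\equiv -yx$ to rewrite it as $x^2 r + xy^2 p + y^2 q$, substitutes the relation $y^2\equiv -\lambda_2 x^{2n-1}-\lambda_3 x^{2m-2}$, and reads off $u\equiv x^2\cdot(\text{unit})$ by inspecting the constant term of the bracket. You instead establish the containment $\mathfrak{J}^3\subseteq(x^2)$ via a cubic-monomial check and then factor $h=x^2 h'$. This is a cleaner, more structural argument, but as written the step ``the order condition on $h$ forces $h'\in\mathfrak{J}$'' is not self-justified, since $x^2$ is a zero-divisor in the finite-dimensional families and $h'$ is therefore not canonical. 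The fix is exactly the fact you cite elsewhere in the proposal for a different purpose: $x^2\notin\mathfrak{J}^3$. Indeed if $h'=c+j$ with $c\in\C^{*}$, $j\in\mathfrak{J}$, then $c\,x^2 = h - x^2 j \in\mathfrak{J}^3$, contradicting $x^2\notin\mathfrak{J}^3$, so $h'\in\mathfrak{J}$ and $\lambda+h'$ is a unit. You should connect this observation to the factorisation step explicitly; doing so closes the gap and makes your version a viable alternative to the paper's direct rewriting. The two approaches buy slightly different things: the paper's avoids any claim about $\mathfrak{J}^3$ and stays elementary, while yours isolates a reusable ideal containment and presents the Zariski-open locus more invariantly as the non-vanishing of a well-defined linear form on $\m_\scrZ/\m_\scrZ^2$ (the paper instead fixes a basis $b_1=x^2+\m_\scrZ^2, b_2,\dots,b_t$ and asks $\lambda_1\ne 0$, which depends on that choice but still produces a valid open set).
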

\begin{proof}
For (1), Type~$A$ is clear, since $\Jac(f)\cong\mathbb{C}\llsq y\rrsq/y^{n-1}$ or $\C\llsq y\rrsq$ depending on whether $\upvarepsilon_1$ is $1$ or $0$.  In both cases $y$ is central, and the quotient is $\Jac(f)/y\cong\C\cong e\Uppi e$ where $\Uppi$ is the preprojective algebra of Type~$A_1$.\\
Type~$D$ is similar.  The fact $x^2$ is central was justified in \ref{lem: D center}\eqref{lem: D center 1}.   But then since $(x^2)$ is a closed ideal of $\Jac(f)$, setting $\uplambda_2=(2n)\upvarepsilon_2$ and $\uplambda_3=(2m-1)\upvarepsilon_3$ it follows that
\begin{align*}
\Jac(f)/(x^2)
&\cong\frac{\ringtwo}{\lcl x^2, xy+yx, y^2+\uplambda_2x^{2n-1}+\uplambda_3x^{2m-2}\rcl}\tag{by \ref{closed ideals 101}}\\
&\cong\frac{\ringtwo}{\lcl x^2, xy+yx, y^2\rcl}\tag{since $2n-1\geq 2$ and $2m-2\geq 2$}\\ 
&\cong e\Uppi e\tag{by \ref{lem:sixalgareePie}}
\end{align*}
where $\Uppi$ is the preprojective algebra of Type~$D_4$, and $e$ is the central vertex.

Type $E$ is more involved. All proofs turn out to be similar, so here we illustrate the technique by considering the case $f$ of Type~$E_{6,n}$ with $n=3t+1$ and $t\ge2$.  Certainly $g_{6,n}$ is central by \ref{lem: E6 central}.  After rescaling the $x$ and $y$ appropriately,
\[
\Jac(f)\cong\frac{\ringtwo}{\lcl -x^2+y^3,xy^2+yxy+y^2x+y^{3t}\rcl}\,\,\reflectbox{\colonequals} \,\,\Upgamma
\]
and we work on the right-hand side. Under this identification, the element $g_{6,n}$ becomes $\uplambda x^2 + \upmu(xyx+yx^2)$ for some non-zero scalars $\uplambda$ and $\upmu$.
We now claim that for any non-zero scalars $\uplambda$ and $\upmu$ the factor 
\[
A\colonequals \frac{\Upgamma}{(\uplambda x^2 + \upmu(xyx+yx^2) )}=\frac{\ringtwo}{\lcl -x^2+y^3,xy^2+yxy+y^2x+y^{3t}, \uplambda x^2 + \upmu(xyx+yx^2)\rcl}
\] 
is isomorphic to the model algebra $B \colonequals \ringtwo / \lcl x^2, y^3, (x+y)^3 \rcl \cong e\Uppi e$ in  \ref{lem:sixalgareePie}.  The result will then follow, since for particular $\uplambda,\upmu$, there is an isomorphism $A\cong \Jac(f)/(g_{6,n})$.

To establish the claim, note first that $x^3\equiv 0$ in $A$ for any $t\ge2$, as follows. The additional relation gives $-\uplambda x^3\equiv  \upmu (x^2yx+xyx^2)$, which equals $\upmu(yx^3+x^3y)$ since $x^2$ is central.  Repeating, we may push $x^3$ into higher and higher degrees, and so $x^3$ belongs to the closed ideal defining $A$, as claimed.  Given $x^3\equiv 0$ in $A$, it follows from the first relation that $y^6\equiv x^4\equiv 0$ in $A$, and since $t\geq 2$ also that $y^{3t}\equiv 0$ in $A$. Consequently
\begin{align}
A &\cong \frac{\ringtwo}{\lcl -x^2+y^3,xy^2+yxy+y^2x,\uplambda x^2 + \upmu(xyx+yx^2), y^{3t}\rcl}\label{eq: A1}\\
&= \frac{\ringtwo}{\lcl -x^2+y^3,xy^2+yxy+y^2x,\uplambda x^2 + \upmu(xyx+yx^2)\rcl}\label{eq: A2}.
\end{align}
where the last equality holds since $y^6$ belongs to the closed ideal in \eqref{eq: A1}, and $t\geq 2$.  This latter presentation has no dependence on $t$.

Now, composing the automorphism $\upphi\colon \ringtwo\rightarrow \ringtwo$ defined by
\[
x\mapsto x - (xy+yx) + yxy,
\qquad
y\mapsto (x+2y) - y^2 - yx
\]
with $\ringd\twoheadrightarrow B$ gives a surjective homomorphism $\ringd\twoheadrightarrow B$.  It is elementary to check that the three relations of $A$ in \eqref{eq: A2} map to zero, and hence since $\upphi$ is continuous, it extends to the closure of ideals and thus induces a surjection $\upphi\colon A\twoheadrightarrow B$.

Using the same method, it is also elementary to check that the automorphism $ \ringtwo\rightarrow \ringtwo$ given by
\[
x\mapsto  - \tfrac{2}{3}x(3 - 2x) + (y -2)xy , \qquad
y\mapsto x(1 + \tfrac{25}{48}x) + y(1 + \tfrac{1}{4}y +x)
\]
descends to a surjective map $B\twoheadrightarrow A$. Thus $\dim A \leq \dim B=12$, and in particular $\dim A$ is also finite.  The previous surjection $A\twoheadrightarrow B$ then implies that $\dim A=\dim B$, so that $\upphi\colon A\to B$, being a surjective map between algebras of the same dimension, is necessarily an isomorphism.  

\medskip
\noindent
(2) For potentials of Type $A$, clearly $\Jac(f)$ is either $\C\llsq y\rrsq/(y^{n-1})$ or $\C\llsq y\rrsq$, both of which are commutative, so $\scrZ=\Jac(f)$ and further $\m_\scrZ^{\phantom 2}/\m_\scrZ^2$ is spanned by the image of~$y$.  Given any finite dimensional vector space $V\subset\m_\scrZ$ such that  $\uppi\colon V\twoheadrightarrow\m_\scrZ^{\phantom 2}/\m_\scrZ^2$, set $\scrU_1=\{\uplambda\in \mathbb{A}^1 \mid \uplambda\neq 0\}$, $\scrU=\uppi^{-1}(\scrU_1)$, and let $u\in \scrU$.  Then for all $u\in\scrU$, write $u= \uplambda y+p$ in $\Jac(f)$ for some $\uplambda\neq 0$ and some $p\in\C\llsq y\rrsq_{\geq 2}$.  In particular $u$ equals $y$ multiplied by a unit, and so $\Jac(f)/(u)\cong \Jac(f)/(y)\cong \C\cong e\Uppi e$, where $\Uppi$ is the preprojective algebra of Type $A$. 

Lastly, consider Type $D$.  By \ref{lem: D center} all potentials $f$ in \ref{main Type D} satisfy $\dim_\C(\m_\scrZ^{\phantom 2}/\m_\scrZ^2)<\infty$.  Hence we can again consider a finite dimensional vector space $V\subset\m_\scrZ$, such that  $\uppi\colon V\twoheadrightarrow\m_\scrZ^{\phantom 2}/\m_\scrZ^2$.  Since $x^2\in\m_\scrZ$ by \eqref{central element main 1}, and $\m_\scrZ$ contains no linear terms (as justified in \ref{lem: D center}\eqref{lem: D center 1}), $x^2$ is non-zero in $\m_\scrZ^{\phantom 2}/\m_\scrZ^2$.  Thus set $b_1=x^2+\m_\scrZ^2$, and extend to a basis $b_1,\hdots,b_t$.  Set $\scrU_1=\{\sum\uplambda_ib_i\mid \uplambda_1\neq 0\}$, and $\scrU=\uppi^{-1}(\scrU_1)$.  

Let $u\in \scrU$, then by \ref{lem: D center}\eqref{lem: D center 1} $u\equiv \uplambda x^2 + h$ in $\Jac(f)$, for some $h\in\ringtwo_{\geq 3}$.  The assumption $u\in\scrU$ ensures that $\uplambda\neq 0$.  By the relation $xy\equiv -yx$, we may pull all the $x$'s in $h$ to the left, and since $h$ has order at least three, afterwards each term either starts with $x^2$, or ends with $y^2$.  Thus in $\Jac(f)$
\[
h\equiv x^2 r + xy^2p + y^2q 
\] 
for some $r\in\ringtwo_{\geq 1}$, $q\in\C\llsq y\rrsq_{\geq 1}$ and $p\in\C\llsq y\rrsq$. Consequently $u\equiv x^2(\uplambda+r) + xy^2p + y^2q$,
and further using the relation $\updelta_xf$, it follows that
\[
u\equiv x^2\big(\,\uplambda + r - (\uplambda_2x^{2n-2}+\uplambda_3x^{2m-3})p - (\uplambda_2x^{2n-3}+\uplambda_3x^{2m-4})q\,\big)
\]
where again $\uplambda_2=(2n)\upvarepsilon_2$ and $\uplambda_3=(2m-1)\upvarepsilon_3$.  The term in brackets is a unit: since $n,m\geq 2$ its only degree zero term is $\uplambda$, which by assumption is non-zero.  It follows that $\Jac(f)/(u)\cong \Jac(f)/(x^2)$, and so the result follows by \eqref{central element main 1}.
\end{proof}

\begin{remark}\label{rem: need generic}
The need for taking a generic, or at least well chosen, central element in \ref{central element main} is essential.  Indeed, by \ref{lem: E6 central}, for $f$ of Type~$E_{6,n}$ the element  $x^2$ is central in $\Jac(f)$. However
\begin{equation}
\Jac(f)/(x^2)
\cong\frac{\ringtwo}{\lcl x^2, y^3,  (x+y)^3-xyx\rcl}
\ncong\frac{\ringtwo}{\lcl x^2, y^3, (x+y)^3\rcl}
= e\Uppi e\label{eqn:noniso dim 12}
\end{equation}
even although both sides have dimension twelve.  Write $\scrU$ for the open set given by the non-vanishing of the co-efficient of $x^2$, then \eqref{eqn:noniso dim 12} together with \ref{central element main}\eqref{central element main 1} assert that the isomorphism class of $\Jac(f)/(u)$ is not constant along $u\in\scrU$.  Consequently, for Type $E$ a smaller generic open set is required. 
\end{remark}

\begin{remark}
In the proof of \ref{central element main}\eqref{central element main 1} above, the inverse of $\upphi\colon A\to B$ is not the constructed map $B\twoheadrightarrow A$, rather $\upphi^{-1}$ is induced by the much more non-obvious automorphism
\begin{align*}
x&\mapsto x + \tfrac{1}2(xy+yx) - \tfrac{1}8yxy \\
y&\mapsto \tfrac{1}2(-x+y) + \tfrac{1}8(x^2-3xy-yx+y^2) + {} \\
 & \hspace{3cm} \tfrac{1}{64}(5yx^2+12yxy+16y^2x) +\tfrac{3}{64}y^2xy + \tfrac{7}{128}y^2x^2.
\end{align*}
\end{remark}

%-----------------------------------------------------------------------------------------
\section{Geometric Corollaries}\label{geo section}

The previous results have geometric consequences.  Section \ref{classify flops sect} classifies contraction algebras, up to isomorphism, from all Type~$A$ and~$D$ flopping contractions.  This immediately gives, in \S\ref{sec:classflopsfinally}, a classification of Type $A$  and $D$ flops, and it also has consequences to GV invariants. Then \S\ref{sec: construct D div-to-curve} constructs the first, and conjecturally only, infinite family of Type~$D$ divisor-to-curve contractions.  Using this, and known results from flops, we then prove that the Realisation Conjecture (\ref{realisation conj}) is true, except possibly for some exceptional cases, establishing \ref{realisation main intro} in the introduction.  The last subsection classifies contraction algebras that can arise from Type~$A$ and Type~$D_4$ divisor-to-curve contractions.

\subsection{Classification of contraction algebras for A and D flops}\label{classify flops sect}
In this section we classify contraction algebras that can arise from Type~$A$ and~$D$ flops, in both cases without referring to any classification of such flops (noting that a Type~$D$ flop classification arises as a consequence, in \S\ref{sec:classflopsfinally}).  We will show that the only possible options are those finite dimensional Jacobi algebras in \ref{thm!pagoda} and \ref{main Type D} respectively.
We use the notation of~\S\ref{geo cor section intro} freely; in particular,
$\scrR$ is commutative noetherian and in applications $\Spec\scrR$ is the base of a simple 3-fold flop.

\begin{remark}
In both \ref{the only Type A CAs} and \ref{the only Type D CAs} below we classify the contraction algebras within a given type, but in fact more is true.  By \cite[4.6]{HuaToda} if $\mathrm{B}_{\con}$ is the contraction algebra of a Type~$X$ flopping contraction which is isomorphic to a contraction algebra $\CA$ of a Type~$Y$ flopping contraction, then $X=Y$. Hence, the algebras in \ref{the only Type A CAs} below are the contraction algebras of all possible Type~$A$ flops, and only of Type~$A$ flops, and the algebras in \ref{the only Type D CAs} are the contraction algebras of all possible Type~$D$ flops, and only of Type~$D$ flops.
\end{remark}

The classification in Type~$A$ is elementary.
\begin{prop}\label{the only Type A CAs}
If $\CA$ is a contraction algebra of a Type~$A$ flopping contraction, then $\CA\cong\Jac(x^2+y^n)$ for some $n\geq 2$.   Furthermore, any other contraction algebra of any other type cannot be isomorphic to such a Jacobi algebra. 
\end{prop}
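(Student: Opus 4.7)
I would first invoke the standard fact that any contraction algebra $\CA$ of a simple smooth $3$-fold flopping contraction is of the form $\Jac(f)$ for some $f\in\ring_{\geq 2}$, and that by \ref{cor: Acon has Jdim leq 1} the flop hypothesis is equivalent to $\CA$ being finite dimensional, so $\JRdim\Jac(f)=0$. Combined with the Splitting Lemma \ref{thm!splitting} and \ref{thm: appendix main in text}, this forces $f$ to live effectively in two variables, after which the corank dichotomy of \S\ref{sec:highcorank} places $f$ into exactly one of the algebraic Types $A$, $D$, or $E$.

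To use the geometric Type $A$ hypothesis I would exploit the embedding of $\scrR$ into the centre of $\CA$. Reid's general elephant \cite{Pagoda} produces a generic element $g\in\scrR$ whose slice $\scrR/g$ is a Type $A_n$ surface singularity. Viewing $g$ as a central element of $\CA$, the induced quotient $\CA/(g)$ is the contraction algebra associated to the crepant resolution of the slice, which, since the flop is simple, is a single rational curve whose contraction algebra is $\C=e\Uppi_{A_1}e$. From $\CA/(g)\cong\C$ together with centrality of $g$, one sees that the Jacobson radical of $\CA$ coincides with the closed principal ideal generated by $g$, and consequently $\CA$ is topologically generated over $\C$ by the single central element~$g$. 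Hence $\CA$ is commutative, so \ref{when comm} gives $\corank(f)\leq 1$, and \ref{thm!pagoda} yields $f\cong x^2+y^n$ for some $n\geq 2$. Every such Jacobi algebra is realised geometrically by \ref{Type A all geometric}, completing the classification.

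For the second assertion, by \ref{central element main}\eqref{central element main 1} any contraction algebra of a Type $D$ or $E$ flopping contraction admits a central element whose quotient is $e\Uppi_{D_4}e$ or $e\Uppi_{E_6}e$, of dimension $4$ or $12$ respectively, whereas every $\Jac(x^2+y^n)$ has only one-dimensional central sections, so no such isomorphism can exist; alternatively one may directly cite \cite[4.6]{HuaToda}. The main obstacle in this plan is the geometric step of the second paragraph: rigorously identifying Reid's general-elephant slice $\scrR/g$ with the quotient $\CA/(g)$ and controlling the dimension of the latter. As the authors explicitly warn, this \emph{noncommutative general elephant} neither implies nor is implied by Reid's classical theorem, so it must be handled by a direct deformation-theoretic argument matching length-one contractions to principal central sections.
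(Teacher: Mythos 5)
Your proposal takes a genuinely different route from the paper. The paper's proof of commutativity of $\CA$ is much shorter and avoids any elephant or central-slice argument: since the flop is Type~$A$, one invokes Van den Bergh's rank-one argument \cite[A.1]{VdB1d} to see that the indecomposable CM $\scrR$-module $N$ defining the contraction algebra has rank one, so (by normality of $\scrR$) $\End_\scrR(N)\cong\scrR$; hence $\CA$, being a factor of $\End_\scrR(N)$, is a quotient of $\scrR$ and therefore commutative. Combining with \ref{when comm} and \ref{thm!pagoda} immediately gives $\CA\cong\Jac(x^2+y^n)$ or $\Jac(x^2)$, and the latter is ruled out by finite-dimensionality \cite{DW3}. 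In contrast, your plan mimics the strategy the paper reserves for Type~$D$, where no rank-one trick is available: produce a generic central slice, identify $\CA/(g)$ with the surfaces contraction algebra of the elephant, and then read off $\corank(f)\le1$.

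This is logically viable, but the gap you point to is exactly where the real work lies. You would need a Type~$A$ analogue of \ref{cut fact}: that for generic $g\in\scrR$ one has $\CA/g\cong$ the contraction algebra of $\scrY\to\Spec(\scrR/g)$, which requires the specialisation argument of \cite[5.19]{IW6} together with \cite[(3.C)]{DW1}, and then the fact (via Katz--Morrison \cite{KM}) that the length-one general elephant is $A_1$, so this surfaces contraction algebra is $\C$. Once this is done your deduction that $(g)$ equals the maximal ideal, hence $\dim_\C\mathfrak{J}/\mathfrak{J}^2=1$, hence $\corank(f)\le1$, is fine (you can drop the roundabout remark about topological generation). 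Your argument for the ``Furthermore'' clause via \ref{central element main}\eqref{central element main 1} or directly \cite[4.6]{HuaToda} agrees with the paper, which handles this part in the remark immediately preceding the proposition. In short: your plan works if you supply the Type~$A$ version of \ref{cut fact}, but the paper's own proof is strictly simpler and worth adopting here; the general-elephant machinery is needed only from Type~$D$ onwards.
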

\begin{proof}
Consider $\CA$ from an arbitrary Type~$A$ flop $\scrX\to\Spec\scrR$. By a now standard argument of Van den Bergh \cite[A.1]{VdB1d}, any indecomposable CM $\scrR$-module necessarily has rank one.  Further, since $\scrR$ is normal the endomorphism ring of any rank one CM module is isomorphic to $\scrR$.  Hence $\CA$, being defined to be a factor of $\End_\scrR(N)$ for some indecomposable CM $\scrR$-module $N$, is thus a factor of $\scrR$, and hence is commutative.

Since $\CA\cong\Jac(f)$ for some $f$, combining \ref{thm!pagoda} and \ref{when comm} we see that $\CA\cong\Jac(x^2+y^n)$ or $\CA\cong\Jac(x^2)$.  The last case is impossible, since $\dim_\C\CA<\infty$ given that $\scrX\to\Spec\scrR$ is a flop \cite{DW3}.
\end{proof}

\begin{remark}\label{Miles remark 1}
A more geometric proof of \ref{the only Type A CAs} 
uses Reid's Pagoda classification of Type~$A$ flops~\cite{Pagoda} and then applies \cite[3.10]{DW1} to conclude that $\CA\cong\Jac(x^2+y^n)$ for some $n$.  %The proof of \ref{the only Type A CAs} did not use Reid's classification, partly to show that it is not necessary, but mainly because in Type~$D$ below such a classification is not available. 
\end{remark}

Type~$D$ is much more involved, and requires multiple preliminary results. In the following, by an $\scrR$-algebra $\Upgamma$ we simply mean that there exists a homomorphism $\scrR\to Z(\Upgamma)$, where $Z(\Upgamma)$ is the centre. From \ref{radical remark}, let $\mathfrak{J}(\Upgamma)$ denote the Jacobson radical of $\Upgamma$.

\begin{lemma}\label{lovely completion fact}
Let $\Upgamma$ be an $\scrR$-algebra, where $(\scrR,\m)$ is commutative local noetherian, and suppose that $\Upgamma$ is finitely generated as an $\scrR$-module.
\begin{enumerate}
\item\label{lovely1}
The $\mathfrak{J}(\Upgamma)$-adic topology coincides with the $\m$-adic topology on $\Upgamma$.
\item\label{lovely2}
Every ideal of $\Upgamma$ is closed with respect to the $\mathfrak{J}(\Upgamma)$-adic and the $\m$-adic topologies.
\item\label{lovely3}
If $(\scrR,\m)$ is complete local, then $\Upgamma$ is complete with respect to both the $\mathfrak{J}(\Upgamma)$-adic and the $\m$-adic topologies. 
\end{enumerate}
\end{lemma}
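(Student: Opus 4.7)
The plan is to establish (1) first, since (2) and (3) then follow from (1) together with standard Noetherian-local machinery applied to the $\m$-adic topology on the finitely generated $\scrR$-module $\Upgamma$.

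For (1), the aim is to sandwich the two topologies by showing $\m\Upgamma\subseteq\mathfrak{J}(\Upgamma)$ and $\mathfrak{J}(\Upgamma)^n\subseteq\m\Upgamma$ for some $n\geq1$. Since $\scrR\to Z(\Upgamma)$, the subset $\m\Upgamma$ is a two-sided ideal. For the first inclusion I would take an arbitrary maximal right ideal $M$ of $\Upgamma$: the simple right module $\Upgamma/M$ is finitely generated as an $\scrR$-module because $\Upgamma$ is, so Nakayama forces $(\Upgamma/M)\cdot\m\neq\Upgamma/M$; since $\m$ is central, $(\Upgamma/M)\cdot\m$ is a right $\Upgamma$-submodule, and simplicity then gives $(\Upgamma/M)\cdot\m=0$, hence $\m\Upgamma\subseteq M$. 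Intersecting over all maximal right ideals yields $\m\Upgamma\subseteq\mathfrak{J}(\Upgamma)$. For the reverse direction, $\Upgamma/\m\Upgamma$ is a finite-dimensional $\scrR/\m$-algebra, hence Artinian, so its Jacobson radical $\mathfrak{J}(\Upgamma)/\m\Upgamma$ (using \ref{radical remark}) is nilpotent, which gives $\mathfrak{J}(\Upgamma)^n\subseteq\m\Upgamma$.

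For (2), I would appeal to Krull's intersection theorem: any two-sided ideal $I$ of $\Upgamma$ is in particular an $\scrR$-submodule, and $\Upgamma/I$ is a finitely generated $\scrR$-module. Since $\m\subseteq\mathfrak{J}(\scrR)$, Krull gives $\bigcap_n\m^n(\Upgamma/I)=0$, equivalently $\bigcap_n(\m^n\Upgamma+I)=I$, so $I$ is $\m$-adically closed. Closedness in the $\mathfrak{J}(\Upgamma)$-adic topology then follows from (1). For (3), the standard fact that $\m$-adic completion is exact on finitely generated modules over a Noetherian local ring applies: choose a surjection $\scrR^n\twoheadrightarrow\Upgamma$ with kernel $K$ (a finitely generated $\scrR$-submodule of $\scrR^n$, since $\scrR$ is Noetherian), and complete. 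Because $\scrR$ is already $\m$-adically complete, both $\scrR^n$ and $K$ equal their completions, and exactness forces $\Upgamma\cong\widehat\Upgamma$. Completeness in the $\mathfrak{J}(\Upgamma)$-adic topology is then immediate from (1).

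The main obstacle I anticipate is the first inclusion $\m\Upgamma\subseteq\mathfrak{J}(\Upgamma)$ in step (1): although routine in the commutative or module-finite literature, some care is needed because $\Upgamma$ is noncommutative, and one has to exploit the centrality of $\m$ to pass from a Nakayama argument on the right $\scrR$-module structure of $\Upgamma/M$ to a conclusion about the right $\Upgamma$-module structure. Once (1) is in place, (2) and (3) are essentially bookkeeping applications of Krull's intersection theorem and exactness of $\m$-adic completion.
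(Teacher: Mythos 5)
Your proposal is correct and follows essentially the same route as the paper: the key sandwich $\mathfrak{J}(\Upgamma)^n\subseteq\m\Upgamma\subseteq\mathfrak{J}(\Upgamma)$ for (1), Krull's intersection theorem on the finitely generated $\scrR$-module $\Upgamma/I$ for (2), and completeness of finitely generated modules over a complete local Noetherian ring for (3). The paper simply cites the sandwich in (1) and the completeness in (3) to Lam's textbook (20.6 and 21.34 respectively), whereas you supply the Nakayama/Artinian argument and the exactness-of-completion argument in full; both are standard and sound.
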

\begin{proof}
(1) Since $(\scrR,\m)$ is local, and $\Upgamma$ is finitely generated as an $\scrR$-module, it follows immediately from e.g.\ \cite[20.6]{Lam} that there exists $n\geq 1$ such that
\begin{equation}
\mathfrak{J}(\Upgamma)^n\subseteq \m\Upgamma\subseteq \mathfrak{J}(\Upgamma).\label{eq: topologies equal}
\end{equation}
From this, it is clear that the $\mathfrak{J}(\Upgamma)$-adic and the $\m$-adic topologies coincide.

\noindent
(2) We show that for any finitely generated $\scrR$-module $M$, with submodule $N$, then $N$ is closed in $M$. Given this, applying~\eqref{lovely1} to $M=\Upgamma$ and $N=I$ proves the result.  But since $M$ is finitely generated, and $\scrR$ is noetherian, Krull's intersection theorem \cite[8.10(1)]{Matsumura} immediately shows that $M/N$ is separated, and hence $N$ is closed in $M$. 

\noindent
(3) Again, this is well known. Since $\Upgamma$ is a finitely generated $\scrR$-module, and $(\scrR,\m)$ is complete local and noetherian, it follows from e.g.\ \cite[21.33]{Lam} that $\Upgamma$ is $\m$-adically complete, and hence the result follows by~\eqref{lovely1}.
\end{proof}

\begin{cor}\label{cor: Acon has Jdim leq 1}
If $\CA$ is a contraction algebra associated to a crepant $\scrX\to\Spec\scrR$ as above, then $\JRdim\CA=0,1$.  Furthermore, the following statements hold.
\begin{enumerate}
\item   $\JRdim\CA=0$ if and only if $\scrX\to\Spec\scrR$ is a flop. 
\item $\JRdim\CA=1$ if and only if $\scrX\to\Spec\scrR$ is a divisorial contraction to a curve. 
\end{enumerate} 
\end{cor}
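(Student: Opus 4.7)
The plan is as follows. First I would observe that, by construction, $\CA$ is a quotient of $\End_\scrR(N)$ for a suitable reflexive $\scrR$-module $N$, and hence is finitely generated as an $\scrR$-module over the complete local ring $(\scrR,\m)$. Applying \ref{lovely completion fact}, the $\mathfrak{J}$-adic and $\m$-adic topologies on $\CA$ agree, and more precisely \eqref{eq: topologies equal} yields inclusions $\mathfrak{J}^{nk}\subseteq\m^k\CA\subseteq\mathfrak{J}^k$ for some fixed $n\ge1$ and all $k\ge1$. Sandwiching lengths immediately implies that $\JRdim\CA$ equals the degree of polynomial growth of $\dim_\C \CA/\m^k\CA$. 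By the Hilbert--Samuel theorem applied to the finitely generated $\scrR$-module $\CA$, this growth is eventually polynomial of degree equal to the Krull dimension of $\Supp_\scrR(\CA)\subseteq\Spec\scrR$.

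The second step is to identify $\Supp_\scrR(\CA)$ geometrically. Using the interpretation of $\CA$ via noncommutative deformations of the reduced exceptional fibre of $\scrX\to\Spec\scrR$ \cite{DW1,DW3}, localisation at a prime $\p\in\Spec\scrR$ gives the contraction algebra of the restricted contraction $\scrX\times_{\Spec\scrR}\Spec\scrR_\p\to\Spec\scrR_\p$, and this restricted contraction is trivial precisely when $\p$ lies outside the image of the exceptional locus of $\scrX\to\Spec\scrR$. Thus $\Supp_\scrR(\CA)$ equals the (reduced) image of the exceptional locus.

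Combining the two steps gives the result. When $\scrX\to\Spec\scrR$ is a flopping contraction, the exceptional locus maps to the closed point $\m$, so $\dim\Supp_\scrR(\CA)=0$, $\dim_\C\CA<\infty$, and $\JRdim\CA=0$. When $\scrX\to\Spec\scrR$ is a divisorial contraction to a curve $C$, the image of the exceptional divisor is $C$ itself, so $\dim\Supp_\scrR(\CA)=1$ and $\JRdim\CA=1$. Since crepant contractions of the form considered produce no other possibilities, the dichotomy $\JRdim\CA\in\{0,1\}$ follows, as do both claimed equivalences (both the \textbf{if} and \textbf{only if} directions, the latter because the two geometric cases are exhaustive and yield distinct $\JRdim$).

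The main obstacle will be making precise the localisation-compatibility of the contraction algebra used in the second step, namely showing that $\CA_{\p}=0$ if and only if $\p$ avoids the image of the exceptional locus. This likely requires unpacking either the noncommutative deformation construction (verifying that the deformation functor localises appropriately) or the equivalent NCCR/tilting description, and checking that over the complement of the exceptional locus the tilting module restricts to a rank-one projective so that the defining quotient by $[\scrR]$ vanishes. The rest of the argument is essentially formal commutative algebra once this support statement is in hand.
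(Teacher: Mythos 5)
Your argument takes essentially the same route as the paper's proof: use \ref{lovely completion fact} (via~\eqref{eq: topologies equal}) to identify $\JRdim\CA$ with the Krull dimension $\dim_\scrR(\CA)$ of the finitely generated $\scrR$-module $\CA$, then compute the support over $\scrR$ and read off $0$ or $1$ according to whether the contraction is a flop or a divisor-to-curve. The step you correctly flag as the substantive obstacle — that $\Supp_\scrR(\CA)$ is precisely the contracted locus in $\Spec\scrR$ — is exactly the main result of Donovan--Wemyss \cite{DW2}, which the paper cites rather than re-deriving via the localisation argument you sketch.
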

\begin{proof}
$\CA$ is module finite over $\scrR$, being a factor of an NCCR \cite{DW1}. 

Now if $M$ is any finitely generated $\scrR$-module (e.g.\ $M=\CA$), since $(\scrR,\m)$ is local $\dim_\scrR(M)$ can be defined using the $\m$-adic topology, as the growth rate of the function $\mathrm{length}(M/\m^iM)$, see e.g.\  \cite[\S12.1]{Eisenbud}.  Taking suitable powers of the inclusions in \eqref{eq: topologies equal}, it is elementary to see that the two sets
\begin{align*}
\scrS_1&=\left\{ r\in\R \mid 
\textrm{for some $c\in\R$, }\dim_\C(\CA / \mathfrak{J}^n) \le cn^r \text{ for every $n\in\N$} \right\}\\
\scrS_2&=\left\{ r\in\R \mid 
\textrm{for some $c\in\R$, }\dim_\C(\CA / \m^n\CA) \le cn^r \text{ for every $n\in\N$} \right\}
\end{align*} 
are equal, so $\JRdim\CA=\mathrm{inf}\,\scrS_1=\mathrm{inf}\,\scrS_2=\dim_\scrR(\CA)$.  

The main result of \cite{DW2} shows that $\Supp_\scrR(\CA)$ equals the contracted locus in $\Spec \scrR$.  Hence $\dim_\scrR(\CA)$ is either $0$ for flops, or $1$ for divisor-to-curves respectively.  It follows  that $\JRdim(\CA)$ is either 0 or 1, respectively.   
\end{proof}

\begin{thm}\label{central in n2}
If $f\in\C\llangle x,y\rrangle_{\geq 3}$ and $\Jac(f)$ is complete in its $\mathfrak{J}$-adic topology, then every central element $g+\lcl\updelta_xf,\updelta_yf\rcl\in\Jac(f)$  which is not a unit satisfies $g\in\n^2$.
\end{thm}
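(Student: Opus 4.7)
The plan is to work directly with the linear part of $g$. Since $g+\lcl\updelta_xf,\updelta_yf\rcl$ is a non-unit in the local ring $\Jac(f)$, its image in $\Jac(f)/\fJ\cong\C$ vanishes, so $g$ has zero constant term. Because $\lcl\updelta_xf,\updelta_yf\rcl\subset\n^2$, the linear part of $g$ is a well-defined invariant of its class, so write $g=\alpha x+\beta y+g_{\geq 2}$ with $g_{\geq 2}\in\n^2$; the goal is to show $\alpha=\beta=0$. Centrality of $g+\lcl\cdots\rcl$ gives $[x,g],[y,g]\in\lcl\updelta_xf,\updelta_yf\rcl$, and since $[x,g_{\geq 2}]\in\n^3$, the first containment yields
\[
\beta(xy-yx)\;\in\;\lcl\updelta_xf,\updelta_yf\rcl+\n^3.
\]

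The key is then a graded analysis in degree~$2$. Since $f\in\ringtwo_{\geq 3}$, both $\updelta_xf$ and $\updelta_yf$ lie in $\n^2$, so elements of the two-sided ideal $(\updelta_xf,\updelta_yf)$ contribute in degree~$2$ only through constant-coefficient $\C$-combinations of $(\updelta_xf)_2=\updelta_xf_3$ and $(\updelta_yf)_2=\updelta_yf_3$. Closure in the $\n$-adic topology adds nothing in this fixed finite degree because $\n^n\cap\fringd_2=0$ for $n\ge 3$; hence the degree-$2$ component of $\lcl\updelta_xf,\updelta_yf\rcl+\n^3$ is exactly $\C\updelta_xf_3+\C\updelta_yf_3$. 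A direct cyclic-derivative computation on the eight cubic monomials shows that both $\updelta_xf_3$ and $\updelta_yf_3$ lie in the three-dimensional subspace $\mathrm{S}\colonequals\C\{x^2,\,xy+yx,\,y^2\}\subset\fringd_2$, since for each cubic monomial $m$, the occurrences of $x$ pair up under cyclic rotation to produce terms in which $xy$ and $yx$ appear with equal coefficients (and similarly for~$y$). As $xy-yx\notin\mathrm{S}$, the displayed containment forces $\beta=0$; the symmetric argument applied to $[y,g]$ gives $\alpha=0$, and hence $g\in\n^2$.

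The main technical point is the structural claim that closure of the Jacobi ideal does not enlarge its degree-$2$ piece, which is handled by unwinding $\lcl\cdot\rcl=\bigcap_n(\cdot+\n^n)$ and tracking orders carefully; the cyclic-derivative verification on monomials is routine. The completeness hypothesis on $\Jac(f)$ provides the natural topological framework for treating central non-units but is not invoked in the graded calculation above.
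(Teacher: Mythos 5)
Your proof is correct, and it takes a genuinely different and more elementary route than the paper's. The paper sets up a proof by contradiction: assuming $g\notin\n^2$, it completes $g'+\fJ^2$ to a basis of $\fJ/\fJ^2 \cong \n/\n^2$, invokes the presentation theorem of Buan--Iyama--Reiten--Smith (\cite[3.1]{BIRS}, which is precisely where the completeness hypothesis is used) to get a continuous surjection $\C\llangle\ell_1,\ell_2\rrangle\twoheadrightarrow\Jac(f)$ sending $\ell_2$ to the central class, deduces that $\Jac(f)$ is a quotient of $\C\llsq\ell_1,\ell_2\rrsq$ and hence commutative, and contradicts \ref{when comm}. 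Your proof instead isolates a purely degree-$2$ phenomenon: any cyclic derivative $\dcyc_i f_3$ of a noncommutative cubic lies in the $3$-dimensional subspace $\C x^2\oplus\C(xy+yx)\oplus\C y^2$ of $\fringd_2$, the commutator $xy-yx$ does not, and the closure $\lcl\updelta f\rcl$ contributes nothing new in the fixed degree~$2$. That last point, which you flag as the technical crux, is indeed sound: for any $b\in\lcl I\rcl$ and any $n\ge3$ there is $s_n\in I$ with $b-s_n\in\n^n$, so $b_2=(s_n)_2$, and since $I$ is generated by elements of order $\ge2$ the only degree-$2$ contributions from $I$ are scalar combinations of $\dcyc_xf_3$ and $\dcyc_yf_3$. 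Your approach buys two things: it is entirely self-contained (no appeal to \ref{when comm}, which itself rests on \ref{thm!pagoda}, nor to any presentation theorem), and it shows that the completeness hypothesis in the statement is not actually needed for the conclusion, a genuine strengthening that you correctly note at the end. The trade-off is that the paper's argument, while less elementary, frames the obstruction conceptually as ``a non-degenerate central linear element would force commutativity,'' which fits the narrative of \S\ref{commutativity}.
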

\begin{proof}
Set $R=\Jac(f)$, and $\mathfrak{J}=\mathfrak{J}(R)$.  Note that $I=\lcl \updelta f \rcl\subseteq\n^2$, hence by \ref{radical remark}\eqref{radical remark 1}
\begin{equation}
\mathfrak{J}^2=(\n^2+I)/I=\n^2/I. \label{rad2 statement}
\end{equation}
Consider the central element $g'=g+I$ in $R$. Since $g'$ is not a unit in $R$, certainly $g'\in\mathfrak{J}$. Further $g$  cannot be a unit in $\ring$, or it would descend to a unit, so $g\in\n$.

We now suppose that $g\notin\n^2$, and aim for a contradiction.  Since $I\subseteq \n^2$ we see that $g'=g+I\notin\mathfrak{J}^2$, and hence $0\neq g'+\mathfrak{J}^2\in\mathfrak{J}/\mathfrak{J}^2$. But by \eqref{rad2 statement} we have
\[
\frac{\mathfrak{J}}{\mathfrak{J}^2}
=
\frac{\n/I}{\n^2/I}
\cong
\frac{\n}{\n^2}
\]
and so $\dim_\C\mathfrak{J}/\mathfrak{J}^2=2$. Pick $0\neq h+\mathfrak{J}^2$ to complete $g'+\mathfrak{J}^2$ to a basis of $\mathfrak{J}/\mathfrak{J}^2$.

But now since by assumption $R$ is complete, and local, we may use \cite[3.1]{BIRS} to present $R$.  Consider the two-loop quiver $Q$, and map the trivial path to the identity of $R$, one of the loops $\ell_1$ to $h$ and the other loop $\ell_2$ to $g'$.  Then by \cite[3.1]{BIRS} the completeness of $R$ extends this to a surjective homomorphism  
\[
\upvarphi\colon\C\llangle \ell_1,\ell_2\rrangle\twoheadrightarrow R
\]
and the kernel is a closed ideal.  Since the kernel contains the relation $\ell_1\ell_2-\ell_2\ell_1$, given $g'$ is central and so commutes with $h$, it follows that $\lcl \ell_1\ell_2-\ell_2\ell_1\rcl\subseteq\Ker\upvarphi$.  In particular $\upvarphi$ induces a surjection $\C\llsq \ell_1,\ell_2\rrsq\twoheadrightarrow R$, and so $R$ is commutative.  Given this would contradict \ref{when comm}, we conclude that $g\in\hatM^2$.
\end{proof}

We will also require the following fact.
\begin{prop}\label{cut fact}
Suppose that $\CA$ is the contraction algebra associated to a $D_4$ contraction, then there is a central element $g\in\CA$ such that 
\[
\mathrm{A}_{\con}/(g)\cong \frac{\C\langle x,y\rangle}{(x^2,xy+yx,y^2)}.
\]
\end{prop}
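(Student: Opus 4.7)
The plan is to reduce $\CA$ to one of the normal forms of \ref{main Type D} and then invoke \ref{central element main}. By the results cited in \S1.4 (\cite{DW1, VdBCY}), every contraction algebra of a simple smooth $3$-fold flop or divisorial-to-curve contraction has the form $\CA \cong \Jac(f)$ for some $f \in \ringtwo_{\geq 2}$, and $\JRdim\CA \le 1$ by \ref{cor: Acon has Jdim leq 1}.

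The heart of the matter is translating the $D_4$ geometric hypothesis into the algebraic conditions $\corank(f) = 2$ and $\corank_2(f) = 2$. The bound $\corank(f) \ge 2$ follows by exclusion: $\corank(f) \le 1$ would force $\Jac(f)$ to be commutative by \ref{when comm}, hence by \ref{thm!pagoda} isomorphic to $\C\llsq y\rrsq/(y^n)$ or to $\C\llsq y \rrsq$; by Reid's Pagoda \cite{Pagoda} and \cite[3.10]{DW1} such commutative contraction algebras correspond to Type~$A$ contractions, contradicting the $D_4$ hypothesis. Excluding $\corank_2(f) = 3$ (the Type~$E$ algebraic signature $f_3 \sim x^3$) similarly requires the companion input that a cubic with a single root is the algebraic marker of Type~$E$ geometry, not of $D_4$.

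Once both corank conditions are in place, \ref{main Type D} realises $f \cong h$ for some Type~$D$ normal form $h$, and \ref{central element main}\eqref{central element main 1} applied to $h$ with central element $s = x^2$ immediately gives
\[
\Jac(h)/(x^2) \;\cong\; e\Uppi e \;=\; \C\langle x, y\rangle/(x^2,\, xy+yx,\, y^2),
\]
where $\Uppi$ is the preprojective algebra of $D_4$ and $e$ the central idempotent. Pulling this central element through the isomorphism $\CA \cong \Jac(h)$ produces the required $g \in \CA$.

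The hardest step is the exclusion $\corank_2(f) \ne 3$, as this asks to match the noncommutative Type~$E$ algebraic signature with Type~$E$ geometry without circularity. A cleaner alternative would bypass the corank analysis by working geometrically: lift a generic $g \in \m_\scrR$ through the canonical map $\scrR \to \scrZ(\CA)$, establish compatibility of the Van den Bergh--Donovan--Wemyss construction $\uEnd_\scrR(N)$ with generic hyperplane sections of the base, and then identify $\CA/(g)$ with the surface contraction algebra of the $D_4$ general elephant, which is classically $e\Uppi e$.
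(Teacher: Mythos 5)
Your primary approach is circular, and you half-recognise this yourself. Proposition~\ref{cut fact} sits early in the logical chain: the paper uses it to prove \ref{others not Type D}, which feeds into \ref{the only Type D CAs}. The corank conditions $\corank(f)=2$ and $\corank_2(f)=2$ are the algebraic signature that \ref{cut fact} is designed to \emph{establish} for $D_4$ contractions, not a hypothesis one can assume. Your exclusion $\corank(f)\ge2$ already illustrates the problem: to rule out a commutative $\CA$ you need to know that $D_4$ contraction algebras are not commutative, and the only way the paper ever shows this is via \ref{cut fact} itself (namely, by exhibiting a non-commutative $4$-dimensional factor). There is no independent route from the geometric $D_4$ hypothesis to the corank inequalities that doesn't already amount to proving the statement. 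The exclusion $\corank_2(f)\ne3$ is even more clearly unavailable at this stage, as you note.

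Your ``cleaner alternative'' at the end is in fact the paper's actual proof: choose a generic $g\in\scrR$, use the NCCR $\mathrm{A}=\End_\scrR(N)$ and the compatibility result \cite[5.19]{IW6} to get $\mathrm{A}/g\cong\End_{\scrR/g}(N/gN)$ (the genericity is needed so that $g$ avoids the associated primes of $\Ext^1_\scrR(N,N)$), then apply \cite[(3.C)]{DW1} to identify $\CA/g$ with the contraction algebra of the induced surface contraction over the $D_4$ general elephant $\scrR/g$, which by \cite[8.7]{DW1}, \cite{Melit} is $\C\langle x,y\rangle/(x^2,xy+yx,y^2)$. So: your main argument has a genuine gap (you would be assuming what is to be proved), but the fallback you sketch is essentially the right, geometrically-native argument, and you should promote it to be the proof rather than the alternative.
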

\begin{proof}
Consider the 3-fold contraction $\scrX\to\Spec \scrR$, and for generic $g\in\scrR$ consider the pullback diagram
\[
\begin{tikzpicture}
\node (A) at (0,0) {$\scrY$};
\node (B) at (2,0) {$\scrX$};
\node (a) at (0,-1) {$\Spec\scrR/g$};
\node (b) at (2,-1) {$\Spec\scrR$};
\draw[->] (A)--(B);
\draw[->] (A)--(a);
\draw[->] (a)--(b);
\draw[->] (B)--(b);
\end{tikzpicture}
\]
By assumption, $\scrR/g$ is a $D_4$ Kleinian singularity.   Now let $\mathrm{A}=\End_{\scrR}(N)$ be the NCCR associated to $\scrX\to\Spec \scrR$, and view $g\in\scrR=Z(\mathrm{A})\subset \mathrm{A}$.  Since $g$ is generic, we can find such a $g$ which is not contained in any associated prime ideal of $\Ext^1_\scrR(N,N)$, from which \cite[Proof of 5.24]{IW6} (the assumptions there are Type~$A$, but the method is general) shows that there is an isomorphism
\[
\mathrm{A}/g\cong\End_{\scrR/g}(N/gN).
\]
From this isomorphism \cite[(3.C)]{DW1} establishes that $\CA/g$ is isomorphic to the contraction algebra associated to the surfaces contraction $\scrY\to\Spec\scrR/g$. The fact that the surfaces contraction algebra for this particular $D_4$ contraction is $\C\langle x,y\rangle/ (x^2,xy+yx,y^2)$ can be deduced from \cite[8.7]{DW1}; see also \cite{Melit}.
\end{proof}

We obtain the following remarkable consequence.
\begin{cor}\label{others not Type D}
No $\Jac(f)$ with $f\in\C\llangle x,y\rrangle_{\geq 3}$ such that either $f_3=0$ or $f_3\cong x^3$, can arise as a contraction algebra of a $D_4$ flop, or a $D_4$ divisor-to-curve contraction.
\end{cor}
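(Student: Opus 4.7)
The strategy is to combine the three previous technical tools: the existence of a central hyperplane section (\ref{cut fact}), the fact that nontrivial central elements of $\Jac(f)$ (for $f\in\ring_{\ge3}$ with $\Jac(f)$ complete) lie in $\n^2$ (\ref{central in n2}), and completeness of contraction algebras over complete local cDV bases (\ref{lovely completion fact}). Together these will force a numerical bound on $\corank_2(f)$ that is violated precisely when $f_3=0$ or $f_3\cong x^3$.

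In detail, suppose for a contradiction that $\CA\cong\Jac(f)$ is a contraction algebra from a $D_4$ flop or $D_4$ divisor-to-curve contraction, with $f\in\ring_{\ge3}$ and either $f_3=0$ or $f_3\cong x^3$. Since $\CA$ is module-finite over the complete local cDV base $\scrR$, \ref{lovely completion fact}\eqref{lovely3} implies that $\CA$ is complete in its $\mathfrak{J}$-adic topology, so \ref{central in n2} applies. By \ref{cut fact} there is a central element $g\in\CA$ with
\[
\CA/(g)\;\cong\;B\colonequals \frac{\C\langle x,y\rangle}{(x^2,xy+yx,y^2)},
\]
and since the right-hand side is nonzero, $g$ is a non-unit, so \ref{central in n2} gives a representative of $g$ in $\n^2$, i.e.\ $g\in\mathfrak{J}_\CA^2$. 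A direct computation in $B$ shows that $B$ has $\C$-basis $\{1,x,y,xy\}$, hence $\dim_\C\mathfrak{J}_B^2=1$ and in particular $\dim_\C(\mathfrak{J}_B^2/\mathfrak{J}_B^3)\le1$.

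The key numerical input is then elementary. The projection $\CA\twoheadrightarrow\CA/(g)\cong B$ induces a surjection $\mathfrak{J}_\CA^2/\mathfrak{J}_\CA^3 \twoheadrightarrow \mathfrak{J}_B^2/\mathfrak{J}_B^3$. Because $g\in\mathfrak{J}_\CA^2$, any element of the two-sided ideal $(g)=\CA g\CA$ has the form $r_0 s_0\, g + (\text{terms in }\mathfrak{J}_\CA^3)$ with $r_0,s_0\in\C$, so the kernel of this surjection is at most one-dimensional, spanned by the class of $g$. Consequently
\[
\corank_2(f) \;=\; \dim_\C\!\left(\mathfrak{J}_\CA^2/\mathfrak{J}_\CA^3\right) \;\le\; \dim_\C\!\left(\mathfrak{J}_B^2/\mathfrak{J}_B^3\right) + 1 \;\le\; 2.
\]
On the other hand, if $f_3=0$ then $I=\lcl\updelta f\rcl\subseteq\n^3$, so $\n^2\cap I = I = \n^3\cap I$ and $\corank_2(f)=4$; if $f_3\cong x^3$, then the only contribution of $I$ to $\n^2/\n^3$ is the single class of $3x^2$ (from $\updelta_x f$, while $\updelta_y f\in\n^3$), giving $\corank_2(f)=3$. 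Either way $\corank_2(f)\ge3$, contradicting the bound above.

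The main obstacle, and the step doing all the real work, is securing $g\in\mathfrak{J}_\CA^2$: this is precisely \ref{central in n2}, whose proof in turn relies on completeness of $\CA$ together with the noncommutativity result \ref{when comm}. Once that placement is guaranteed, the remainder is a one-line estimate on how much $\mathfrak{J}^2/\mathfrak{J}^3$ can collapse on quotienting by a single quadratic central element.
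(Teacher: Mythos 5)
Your proof is correct and rests on exactly the same three inputs as the paper's: completeness of $\CA$ via \ref{lovely completion fact}, the placement $g\in\n^2$ from \ref{central in n2}, and the $4$\nobreakdash-dimensional quotient from \ref{cut fact}. The difference is in how you extract the contradiction. The paper truncates at degree~$3$---it quotients $\Jac(f)/(g)\cong\ring/\lcl\updelta_xf,\updelta_yf,g'\rcl$ by $\lcl\scrM_3\rcl$ and counts that the resulting algebra has dimension at least~$5$, contradicting $\dim_\C\Jac(f)/(g)=4$. You instead work with the invariant $\corank_2$: you observe that quotienting a local algebra by the two-sided ideal generated by a single central element of $\mathfrak{J}^2$ collapses $\mathfrak{J}^2/\mathfrak{J}^3$ by at most one dimension, so $\corank_2(f)\le\dim_\C(\mathfrak{J}_B^2/\mathfrak{J}_B^3)+1\le2$, while $f_3=0$ or $f_3\cong x^3$ forces $\corank_2(f)\ge3$ directly from the presentation of the Jacobi ideal. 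The two arguments encode the same numerical obstruction; yours repackages it through the corank formalism already developed in \S\ref{sec:highcorank}, which avoids the explicit truncated-quotient bookkeeping and makes the role of the degree-$2$ graded piece transparent. One small presentational nit: the phrase ``has the form $r_0s_0\,g$'' literally describes a single product $rgs$, not a general element of $(g)=\CA g\CA$, which is a finite sum $\sum r_igs_i$; since $g$ is central (it comes from $\scrR=Z(\mathrm{A})$) one has $(g)=g\CA$, and then every element of $(g)$ really is $\equiv\uplambda g\bmod\mathfrak{J}_\CA^3$ for some $\uplambda\in\C$, which is the inequality you need. Stating this once would tidy the step, but the conclusion is unaffected.
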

\begin{proof}
Given such an $f$, suppose that $\Jac(f)\cong\CA$ for a contraction algebra of a $D_4$ flop, or a $D_4$ divisor-to-curve contraction $\scrX\to\Spec\scrR$.  Since $\CA$ is module finite over~$\scrR$, being a factor of a NCCR \cite{DW1}, \ref{lovely completion fact} shows that $\CA$ and hence $\Jac(f)$ is complete with respect to its radical-adic topology, and further every ideal is closed.

Further, by \ref{cut fact} we can find a central $g$ such that $\dim_\C\Jac(f)/(g)=4$, and since $\Jac(f)\cong\CA$ is complete, we can use \ref{central in n2} to write $g=g'+\lcl\updelta_xf,\updelta_yf\rcl$ where $g'\in\hatM^2$.  But since all ideals in $\Jac(f)$ are closed, it follows that
\[
\frac{\Jac(f)}{(g)}=\frac{\Jac(f)}{\lcl g\rcl}
\stackrel{\scriptstyle \ref{closed ideals 101}}{\cong}\frac{\ring}{\lcl \updelta_xf, \updelta_yf,g'\rcl}
\]
has dimension four.  We claim that this is impossible, by exhibiting a factor with higher dimension.  Reusing the notation in \ref{when comm}, write $\scrM_3$ for the set of all noncommutative monomials of degree $3$, and then we will factor by $\lcl \scrM_3\rcl$.  In the two cases $f_3=x^3$ and $f_3=0$, the factors are, respectively
\[
\frac{\ring}{\lcl x^2,g',\scrM_3\rcl}\quad\mbox{and}\quad
\frac{\ring}{\lcl g',\scrM_3\rcl}.
\]
The right hand algebra surjects onto the left hand algebra, so it suffices to prove that $\dim_{\C}\ring /\lcl x^2,g',\scrM_3\rcl>4$.  But since $g'\in \n^2$ by \ref{central in n2}, inside the ideal we can replace $g'$ by $\uplambda_1 xy + \uplambda_2 yx +\uplambda_3 y^2$, which gives at most one linear relation between $xy$, $yx$ and $y^2$.  From this, the statement is clear.
\end{proof}

The above gives rise to the following, which is the main result in this subsection.

\begin{cor}\label{the only Type D CAs}
If $\CA$ is a contraction algebra of a Type~$D$ flopping contraction, then $\CA\cong\Jac(x^2y+x^{2n})$ for some $n\geq 2$, or $\CA\cong\Jac(xy^2 + x^{2n} + x^{2m-1})$ for some $m,n\geq 2$ with $m\leq 2n-1$. 
\end{cor}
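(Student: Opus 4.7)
\medskip
\noindent\textit{Proof plan for \ref{the only Type D CAs}.}
The plan is to show that an arbitrary contraction algebra $\CA$ of a Type~$D$ flop arises as $\Jac(f)$ for a potential $f$ satisfying the hypotheses of \ref{main Type D}, and then to extract exactly the finite-dimensional normal forms from that theorem.

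First, for any simple 3-fold flop $\scrX\to\Spec\scrR$ with smooth $\scrX$, the results recalled in \S\ref{geo cor section intro} give an isomorphism $\CA\cong\Jac(f)$ for some $f\in\ringtwo$. Since the contraction is a flop, \ref{cor: Acon has Jdim leq 1}(1) applies to give $\JRdim\Jac(f)=0$, that is $\dim_\C\Jac(f)<\infty$. In particular the Splitting Lemma~\ref{thm!splitting} is applicable: after replacing $f$ by an isomorphic representative, we may assume $f\in\ringtwo_{\geq 3}$ with $\corank(f)\in\{0,1,2\}$, since no quadratic variables can split off (this would force at least one trivial factor, contradicting the fact that $\CA$ is associated to the length~$2$ $D$-type geometry via the input algebra $\End_\scrR(N)$).

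Next I will eliminate the non-$D$ corank situations. If $\corank(f)\le 1$, then \ref{when comm} forces $\Jac(f)$ to be commutative; but then by \ref{the only Type A CAs} the algebra $\CA$ would be a Type~$A$ contraction algebra, and Hua--Toda \cite[4.6]{HuaToda} (as invoked in the remark preceding \ref{the only Type A CAs}) prevents a Type~$A$ contraction algebra from also being a Type~$D$ one. Hence $\corank(f)=2$. Applying \ref{linear change} to $f_3$, either $f_3^{\ab}$ has two or three distinct linear factors (so $\corank_2(f)=2$) or $f_3\cong x^3$ or $f_3=0$. The latter two cases are exactly the hypotheses of \ref{others not Type D}, which precisely rules out such $f$ as a contraction algebra of a $D$-type flop. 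Thus $\corank(f)=2$ and $\corank_2(f)=2$.

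With these coranks in hand, \ref{main Type D} applies, and $f$ is isomorphic to one of the five normal forms there. The first two families have $\JRdim\Jac(f)=1$, so $\dim_\C\Jac(f)=\infty$, contradicting that $\scrX\to\Spec\scrR$ is a flop. Hence $f$ is cyclically equivalent to one of the three finite-dimensional families: $xy^2+x^{2n}$, or $xy^2+x^{2n}+x^{2m+1}$ with $n\le m\le 2n-2$, or $xy^2+x^{2m+1}+x^{2n}$ with $n\ge m+1$. Writing the odd exponent in the unified shape $x^{2m-1}$ (by reindexing $m$ by one), the second and third families merge into the stated condition $m,n\ge 2$ with $m\le 2n-1$, which gives the required list.

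The main obstacle is the elimination of Type~$E$. The key step \ref{others not Type D} already does this for $D_4$, and its proof passes through \ref{cut fact} to reduce to a surface-level dimension count. For general $D_n$ one still gets a central element whose quotient is an appropriate surface contraction algebra of Type~$D_n$, and the dimension bound coming from factoring further by $\lcl\scrM_3\rcl$ in the proof of \ref{others not Type D} goes through verbatim, since that argument only uses that the central $g'$ lies in $\hatM^2$ (from \ref{central in n2}) together with the finite-dimensionality of the slice. Minor care is needed to check that one can still identify a central element with small-dimensional quotient when the general elephant is $D_n$ with $n>4$; everything else is bookkeeping.
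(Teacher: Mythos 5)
Your proposal follows the same skeleton as the paper's proof (reduce to the normal form list in \ref{main Type D}, then discard the $\JRdim=1$ families), but there is one genuine gap and one confused passage, plus a difference of route that is worth noting.

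The gap: you apply \ref{others not Type D} to conclude that a contraction algebra of a ``$D$-type flop'' cannot have $f_3=0$ or $f_3\cong x^3$, but \ref{others not Type D} is stated and proved only for $D_4$ flops and $D_4$ divisor-to-curve contractions. The missing ingredient is the theorem of Katz--Morrison \cite{KM}, invoked in the very first line of the paper's proof, which says that a length-two (Type~$D$) flop necessarily has generic hyperplane section of Type~$D_4$; this is what makes \ref{others not Type D} applicable. Your final paragraph reveals the same blind spot from the other side: you worry at length about whether one can ``identify a central element with small-dimensional quotient when the general elephant is $D_n$ with $n>4$,'' and about adapting \ref{cut fact} to $D_n$. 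That worry is moot for flopping contractions precisely because of \cite{KM} --- $n>4$ does not occur --- and labelling the paragraph ``elimination of Type~$E$'' conflates the corank $(2,3)$ case, which \ref{others not Type D} handles, with the $D_n$ elephant question, which \cite{KM} handles. (Your concern would be live for divisor-to-curve contractions, which is exactly why \ref{the only Type D CAs d2c} is stated only for $D_4$.)

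The difference in route: to rule out the commutative case $\corank(f)\le1$, the paper first uses \cite{KM} to get a $D_4$ elephant and then \ref{cut fact} to exhibit a noncommutative factor of $\CA$, so that \ref{when comm} forces $f_2=0$. You instead combine \ref{when comm}, \ref{thm!pagoda}, and the ``furthermore'' of \ref{the only Type A CAs} (which itself rests on Hua--Toda \cite[4.6]{HuaToda}) to conclude that a commutative $\CA$ would be a Type~$A$ contraction algebra and hence not Type~$D$. Both are correct; the paper's route is more self-contained once \cite{KM} is in hand, whereas yours outsources the type-distinguishing step to Hua--Toda. Finally, a small remark: your opening claim that ``no quadratic variables can split off'' because this ``would force a trivial factor'' is hand-waving and in fact superfluous, since your second paragraph correctly re-derives the stronger conclusion $\corank(f)=2$.

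Also note a typographical point: the statement as printed reads $\Jac(x^2y+x^{2n})$ where it should read $\Jac(xy^2+x^{2n})$ (as in \ref{main Type D}); your use of $xy^2$ is the intended one.
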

\begin{proof}
Consider $\CA$ from an arbitrary Type~$D$ flop.  By \cite{KM} necessarily the elephant is~$D_4$, so  $\CA$ is not commutative by \ref{cut fact}, since $\CA$ has a factor which is not commutative.  As $\CA\cong\Jac(f)$ for some $f\in\C\llangle x,y\rrangle_{\geq 2}$, appealing to \ref{when comm} then gives $f_2=0$.  

From this, \ref{others not Type D} asserts that $f_3\neq 0$, and $f_3\ncong \ell^3$.  Hence by \ref{main Type D} $\CA\cong\Jac(f)$ for some $f$ in the list stated there.  Only the bottom three families are possible,  since $\dim_\C\CA<\infty$ given the contraction is a flop \cite{DW3}.
\end{proof}

\subsection{Classification of Flops}\label{sec:classflopsfinally}

The above results for contraction algebras classify flops, given that the Donovan--Wemyss conjecture is true \cite{DW1, August, JKM}. 

\begin{thm}\label{thm:actualclassification}
With notation as above, the following statements hold.
\begin{enumerate}
\item Type $A$ flops are classified by Type $A$ normal forms in Table~\textnormal{\ref{tab!zero}}.
\item Type $D$ flops are classified by Type $D$ normal forms in Table~\textnormal{\ref{tab!zero}}.
\end{enumerate}
Furthermore, Type $E$ flops are classified by Type $E$ normal forms.
\end{thm}
\begin{proof}
The fact that the isomorphism class of the contraction algebra classifies flops is established in \cite{JKM}.  Thus (1) follows from the fact in \ref{the only Type A CAs} that the contraction algebras of Type $A$ flops are precisely those Jacobi algebras coming from the Type $A$ normal forms.  Similarly, (2) follows from \ref{the only Type D CAs} and the fact that each normal form is realisable from geometry \cite{Okke, Kawamata} (see also \ref{thm:TypeDgeoclassdiag} below).  Since the remaining Type $E$ normal forms cannot correspond to either Type $A$ or Type $D$ geometry (by either \ref{the only Type A CAs} or \ref{others not Type D}), it follows that the contraction algebras of the remaining Type $E$ flops must be isomorphic to Jacobi algebras of Type $E$ normal forms.  Note that the Type $E$ normal forms stated in Table~\textnormal{\ref{tab!zero}} are genuine examples; full details of others will appear elsewhere \cite{BW3}.
\end{proof}

The classification of contraction algebras in \ref{the only Type D CAs} then has the following consequence.

\begin{thm}\label{thm:TypeDgeoclassdiag}
There is a one-to-one correspondence between lattice points in the diagram in \textnormal{\ref{thm:TypeDclassintro}} and the base singularities $0\in\Spec\scrR$ of Type $D$ flops, given by 
\[
(n,m)\mapsto \Spec\left( \frac{\mathbb{C}\llsq u,v,x,y\rrsq}{ u^2+v^2y - x(y^{2n+1}+(x+\upvarepsilon y^m)^2)}\right)
\] 
where $\upvarepsilon=1$ if the lattice point is contained within the shaded region, and $\upvarepsilon=0$ otherwise.

In particular, Type $D$ flops do not admit moduli.  Furthermore, the following hold.
\begin{enumerate}
\item\label{thm:TypeDgeoclassdiag 1} The quasi-homogeneous Type D flops are precisely those outside the shaded region, and these are the standard Laufer family.   
\item\label{thm:TypeDgeoclassdiag 2}  The \textnormal{GV} invariants $n_1,n_2$ of the flopping contraction associated to a point $(n,m)$ are written in green. The ovals group together flops with the same \textnormal{GV} invariants.
\end{enumerate}
\end{thm}
\begin{proof}
It is immediate from \ref{thm:actualclassification} that the Type $D$ normal forms in Table~\textnormal{\ref{tab!zero}} classify Type $D$ flops. In particular, once we exhibit one flop for each Type $D$ normal form in Table~\textnormal{\ref{tab!zero}}, which has contraction algebra isomorphic to the prescribed Jacobi algebra, then the geometric classification is complete.  

In the indexing of the diagram, for $(n,m)$ with $n,m\geq 1$, consider the corresponding potential $xy^2+x^{2n+2}+x^{2m+1}$.   Under this assignment, the points $(n,m)$ in the shaded region correspond to the normal forms $D_{n+1,m+1}$, which by definition have a restriction on $m$ relative to $n$.  The lattice points not in the shaded region, namely those $(n,2n+1)$, correspond to $xy^2+x^{2n+2}+x^{4n+3}$. By \ref{CorDtwo} this is isomorphic to $xy^2+x^{2n+2}$, which is the normal form $D_{n+1,\infty}$ in Table~\textnormal{\ref{tab!zero}}.  Thus, the lattice points stated are in bijection with the Type $D$ normal forms.

Now by \cite[\S2.2]{Okke} and \cite{Kawamata}, given any potential $xy^2+x^{2n+2}+x^{2m+1}$, the claimed commutative ring in the statement is the base of a flopping contraction, and further the corresponding contraction algebra is isomorphic to $\Jac(xy^2+x^{2n+2}+x^{2m+1})$. The first statement regarding the bijection follows.

The fact that the quasi-homogeneous singularities correspond to those outside the shaded region follows from \cite[\S2.2.4]{OkkeThesis}, which computes the Milnor and Tjurina numbers.   For the final statement regarding GV invariants, by \ref{main Type D}\eqref{main Type D B} and Toda's dimension formula \cite[1.1]{TodaGV}, we can read off the GV invariants. Indeed, by \cite{TodaGV} the pair $n_1,n_2$ where $n_1=\dim_{\mathbb{C}}\Jac(f)^{\ab}$ and $n_2=\tfrac{1}{4}(\dim_{\mathbb{C}}\Jac(f)-\dim_{\mathbb{C}}\Jac(f)^{\ab}))$ are precisely the GV invariants for length two flops.  Only those pairs illustrated in the diagram in \textnormal{\ref{thm:TypeDclassintro}} (or Figure~\ref{fig!Ddiagram}) appear. 
\end{proof}

\begin{remark}
The map in \ref{thm:TypeDgeoclassdiag} is in fact well defined on all points $(n,m)$ with $m,n\geq 1$, not just  those marked in the picture in \ref{thm:TypeDclassintro}.  This follows since the commutative ring in \ref{thm:TypeDgeoclassdiag} is always the base of a Type $D$ flop, for every $(n,m)$ \cite[\S2.2]{Okke}. The point is that the domain needs to be restricted in order to obtain a bijection.  The previous result \ref{CorDtwo} shows that any lattice point $(n,m)$ with $m\geq 2n+1$ gives an isomorphic flop to the lattice point $(n,2n+1)$.  
\end{remark}

Simply inspecting \ref{thm:TypeDgeoclassdiag}\eqref{thm:TypeDgeoclassdiag 2} and the diagram in \ref{thm:TypeDclassintro} gives the following corollary, which illustrates the significant gaps in the possible GV invariants that can arise.
\begin{cor}\label{GV for D}
Consider $(a,b)\in\mathbb{N}^2$.  Then $a,b$ are the GV invariants for a Type~$D$ flopping contraction if and only if either
\begin{enumerate}
\item $(a,b)=(2m+3,m)$ for some $m\geq 1$, or
\item $(a,b)=(2n,b)$ for some $n\geq 2$, with $b\geq n-1$. 
\end{enumerate} 
Further, when $a=2m+3$ there are precisely $m+1$ distinct contraction algebras realising $(a,b)$, up to isomorphism, whilst for any given $(2n,b)$ the contraction algebra is unique.
\end{cor}

\begin{remark}
It is possible to instead index the GV invariants to the classifying potentials, and this is done in Figure~\ref{fig!Ddiagram} below.
\begin{figure}[ht]
\begin{tikzpicture}[xscale=1.4,yscale=1]

\draw[->] (-0.8,0.2) -- (5.75,0.2);

\coordinate (m1) at (-1.75,0.2);
\coordinate (0) at (0.5,-1.3);
\coordinate (1) at (-2.5,0.2);
\coordinate (2) at (1.5,-2.8);
\coordinate (3) at (-3.25,0.2);
\coordinate (4) at (2.5,-4.3);
\coordinate (5) at (-4,0.2);
\coordinate (6) at (3.5,-5.8);
\coordinate (7) at (-4.75,0.2);
\coordinate (8) at (4.5,-7.3);
\coordinate (d1) at (-0.8,0.2);
\coordinate (d2) at (-0.8,-5);

\coordinate (c1) at (0,-4.7);
\coordinate (c2) at (6,-4.7);

\draw[->,rounded corners] (intersection cs: 
first line={(m1)--(0)}, 
second line={(d1)--(d2)}) -- (0.5,-1.3) -- (5.75,-1.3);

\draw[->,rounded corners] (intersection cs: 
first line={(1)--(2)}, 
second line={(d1)--(d2)}) -- (1.5,-2.8) -- (5.75,-2.8);
\draw[->,rounded corners]  (intersection cs: 
first line={(3)--(4)}, 
second line={(d1)--(d2)}) --(2.5,-4.3) -- (5.75,-4.3);
\draw[-]  (intersection cs: 
first line={(5)--(6)}, 
second line={(d1)--(d2)}) -- (intersection cs: 
first line={(5)--(6)}, 
second line={(c1)--(c2)});
\draw[-]  (intersection cs: 
first line={(7)--(8)}, 
second line={(d1)--(d2)}) -- (intersection cs: 
first line={(7)--(8)}, 
second line={(c1)--(c2)});

\node at (0.1,0.4) {$\scriptstyle x^3+x^4$};
\node at (1.1,0.4) {$\scriptstyle x^3+x^6$};
\node at (2.1,0.4) {$\scriptstyle x^3+x^8$};
\node at (3.1,0.4) {$\scriptstyle x^3+x^{10}$};
\node at (4.1,0.4) {$\scriptstyle x^3+x^{12}$};
\node at (5.1,0.4) {$\scriptstyle x^3+x^{14}$};

\filldraw[white] (-0.8,-0.85) rectangle (0.3,-0.55);
\node at (-0.2,-0.7) {$\scriptstyle x^5 + x^4,$} ;
\node[color=green!50!black] at (0.4,-0.68) {$\scriptstyle x^4$};

\node at (1.1,-1.12) {$\scriptstyle x^5+x^6$};
\node at (2.1,-1.12) {$\scriptstyle x^5+x^8$};
\node at (3.1,-1.12) {$\scriptstyle x^5+x^{10}$};
\node at (4.1,-1.12) {$\scriptstyle x^5+x^{12}$};
\node at (5.1,-1.12) {$\scriptstyle x^5+x^{14}$};

\filldraw[white] (-0.4,-2.35) rectangle (1.3,-2.05);
\node at (-0.2,-2.2) {$\scriptstyle x^9 + x^6,$}; 
\node at (0.7,-2.2) {$\scriptstyle x^7 + x^6,$} ;
\node[color=green!50!black] at (1.3,-2.18) {$\scriptstyle x^6$};

\node at (2.1,-2.62) {$\scriptstyle x^7+x^8$};
\node at (3.1,-2.62) {$\scriptstyle x^7+x^{10}$};
\node at (4.1,-2.62) {$\scriptstyle x^7+x^{12}$};
\node at (5.1,-2.62) {$\scriptstyle x^7+x^{14}$};

\filldraw[white] (-0.4,-3.85) rectangle (2.3,-3.55);
\node at (-0.1,-3.7) {$\scriptstyle x^{13} + x^8,$} ;
\node at (0.8,-3.7) {$\scriptstyle x^{11} + x^8,$} ;
\node at (1.7,-3.7) {$\scriptstyle x^9 + x^8,$} ;
\node[color=green!50!black] at (2.3,-3.68) {$\scriptstyle x^8$} ;

\node at (3.1,-4.12) {$\scriptstyle x^9+x^{10}$};
\node at (4.1,-4.12) {$\scriptstyle x^9+x^{12}$};
\node at (5.1,-4.12) {$\scriptstyle x^9+x^{14}$};

%%%%%%%%%%%%
%% GV on top
%%%%%%%%%%%%
\node at (0,0) {$\scriptstyle 4,1$};
\node at (1,0) {$\scriptstyle 4,2$};
\node at (2,0) {$\scriptstyle 4,3$};
\node at (3,0) {$\scriptstyle 4,4$};
\node at (4,0) {$\scriptstyle 4,5$};
\node at (5,0) {$\scriptstyle 4,6$};

\filldraw[white] (-0.2,-1.15) rectangle (0.2,-0.85);
\node at (0,-1) {$\scriptstyle 5,1$};

\node at (1,-1.5) {$\scriptstyle 6,2$};
\node at (2,-1.5) {$\scriptstyle 6,3$};
\node at (3,-1.5) {$\scriptstyle 6,4$};
\node at (4,-1.5) {$\scriptstyle 6,5$};
\node at (5,-1.5) {$\scriptstyle 6,6$};

\filldraw[white] (0.8,-2.65) rectangle (1.2,-2.35);
\node at (1,-2.5) {$\scriptstyle 7,2$};

\node at (2,-3) {$\scriptstyle 8,3$};
\node at (3,-3) {$\scriptstyle 8,4$};
\node at (4,-3) {$\scriptstyle 8,5$};
\node at (5,-3) {$\scriptstyle 8,6$};

\filldraw[white] (1.8,-4.15) rectangle (2.2,-3.85);
\node at (2,-4) {$\scriptstyle 9,3$};

\node at (3,-4.5) {$\scriptstyle 10,4$};
\node at (4,-4.5) {$\scriptstyle 10,5$};
\node at (5,-4.5) {$\scriptstyle 10,6$};

\end{tikzpicture}
\caption{List of $p(x)$ for which $xy^2+p(x)$ is one
of the normal forms in $D_{n,m}$ or $D_{n,\infty}$. The pair $n_1,n_2$
associated to each $p(x)$ describes the GV invariants of any
simple flop having isomorphic contraction algebra.\label{fig!Ddiagram}}
\end{figure}
\end{remark}

\subsection{Constructing divisor to curve contractions}\label{sec: construct D div-to-curve}
In the list of potentials in \ref{main Type D}, the first appears as the contraction algebra of a divisor-to-curve contraction in \cite[2.18]{DW4}.  The second family, with $m=1$, is isomorphic to $x^3+y^3$, and so appears as a contraction algebra in \cite[2.25]{DW4}.  All the other three families are contraction algebras of $D_4$ flops by \cite{Okke, Kawamata}, and the above subsection.  

Motivated by Conjecture~\ref{realisation conj}, this subsection will fill the last remaining gap, and show that the whole of the second family in \ref{main Type D}, with arbitrary $m$, are realised as the contraction algebra of a divisor-to-curve contraction.  

\begin{remark}
In the proof below we will first construct the contraction algebraically, before passing to the formal fibre to realise the contraction algebra.  This algebraic construction is advantageous, since it conceptually distinguishes between the cases: in $\Spec R_\infty$ below, which locally realises $D_{\infty,\infty}$, the origin is $cD_4$ whilst all other points on the singular locus are $cA_2$.  In contrast, in $\Spec R_m$ below, which locally realises $D_{\infty,m}$, the origin is $cD_4$ whilst all other points on the singular locus are $cA_1$.   Compare the pictures in \cite[2.18]{DW4} and \cite[2.25]{DW4}, and also \cite{Wilson}.
\end{remark}

\begin{prop}\label{D4 div to curve main}
For $m\in\mathbb{N}\cup\{\infty\}$, consider the element of $\mathbb{C}\llsq X,Y,Z,T\rrsq$ defined by 
\[
F_m\colonequals
\left\{
\begin{array}{rl}
 Y(X^{m}+Y)^2 + XZ^2 -T^2& \mbox{if }m\geq 1\\
 Y^3 + XZ^2 -T^2& \mbox{if }m=\infty\\
 \end{array}
 \right.
\]
and set $\scrR_m=\mathbb{C}\llsq X,Y,Z,T\rrsq/F_m$. Then the following statements hold.
\begin{enumerate}
\item\label{D4-singlocus}
$\Sing(\scrR_m)^{\mathrm{red}}=(X^m+Y,Z,T)$ if $m\geq 1$, and $(Y,Z,T)$ if $m=\infty$.
\item\label{D4-blowup}
In either case, blowing up this locus gives rise to a crepant Type~$D$ divisorial contraction to a curve $\scrX_m\to\Spec \scrR_m$ where $\scrX_m$ is smooth.
\item\label{D4-potential}
The contraction algebra of $\scrX_m\to\Spec \scrR_m$ is isomorphic to $\Jac(xy^2+x^{2m+1})$ when $m\geq 1$, respectively $\Jac(xy^2)$ when $m=\infty$.
\item\label{D4-distinct}
$\scrR_m\cong\scrR_n$ if and only if $m=n$, and so the $F_m$ are all distinct up to isomorphism and so form an infinite family. 
\end{enumerate}
\end{prop}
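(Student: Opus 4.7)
The forward direction $m=n\Rightarrow\scrR_m\cong\scrR_n$ is immediate, so the task is the converse. Suppose $\uppsi\colon\scrR_m\xrightarrow{\sim}\scrR_n$ is an isomorphism of complete local rings. The plan is to reduce to the classification of Type~$D$ Jacobi algebras in Theorem~\ref{main Type D}, by first lifting $\uppsi$ to the resolutions constructed in part~\eqref{D4-blowup}. Since the reduced singular locus is intrinsic to each ring, $\uppsi$ maps the ideal of $\Sing(\scrR_m)^{\mathrm{red}}$ isomorphically onto that of $\Sing(\scrR_n)^{\mathrm{red}}$, and as blowing up an ideal is a functorial construction, $\uppsi$ lifts to an isomorphism $\scrX_m\xrightarrow{\sim}\scrX_n$ intertwining the two crepant contractions. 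The contraction algebra is in turn functorial in such isomorphisms of pairs, being built intrinsically from the NCCR attached to $\scrX\to\Spec\scrR$, so this yields an isomorphism $\CA(\scrX_m/\scrR_m)\cong\CA(\scrX_n/\scrR_n)$.

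Combining with part~\eqref{D4-potential} now gives an isomorphism between two members of the family
\[
\bigl\{\,\Jac(xy^2+x^{2m+1})\bigm|m\geq 1\,\bigr\}\cup\bigl\{\Jac(xy^2)\bigr\}.
\]
Theorem~\ref{main Type D} asserts that all members of this family have pairwise non-isomorphic Jacobi algebras, and so $m=n$ as required.

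The main obstacle I anticipate lies in making the functoriality clean at two points: first, that $\uppsi$ lifts to the blowups (which should follow from the universal property of blowing up applied to the induced isomorphism of ideal sheaves, noting that the completion does not disturb this), and second, that the contraction algebra is canonically attached to the pair $\scrX\to\Spec\scrR$ up to such isomorphisms. The latter is essentially automatic from the construction of $\CA$ via NCCRs and noncommutative deformations as in~\cite{DW1,DW3}, but warrants an explicit reference. The key conceptual point of the proof is that the heavy lifting is done by Theorem~\ref{main Type D}: once one knows that the contraction algebras in the relevant family are pairwise non-isomorphic, the geometric distinctness is essentially formal.
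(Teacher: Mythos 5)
Your proposal addresses only part~\eqref{D4-distinct}; it does not touch parts~\eqref{D4-singlocus}--\eqref{D4-potential}, which in the paper's proof involve a direct computation of the singular locus, an explicit blowup calculation with pullback of the canonical differential to verify crepancy, and a recognition of $\scrR_m$ as a slice of the universal $D_4$ flop following van~Garderen to identify the contraction algebra. Those three parts are the substantive content of the proposition, and they are also what your argument for~\eqref{D4-distinct} depends on; you would need to prove them before your part is available.

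For part~\eqref{D4-distinct} itself, your approach is essentially the same as the paper's: both reduce $\scrR_m\cong\scrR_n\Rightarrow m=n$ to an isomorphism of contraction algebras and then distinguish the Jacobi algebras. You spell out more carefully than the paper why such an isomorphism of contraction algebras should exist, via functoriality of the singular locus and of blowing up, whereas the paper takes this step for granted. You then finish by invoking the pairwise non-isomorphism claim of Theorem~\ref{main Type D}, while the paper gives a one-line self-contained argument using the dimension of the abelianisation, namely $\dim_\C\Jac(xy^2+x^{2m+1})^{\ab}=2(m+1)$, which of course is also one of the ingredients behind Theorem~\ref{main Type D}. Both routes are sound; yours is slightly more scaffolded at the front and slightly less explicit at the end. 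Neither difference is significant, but you should be explicit that appealing to Theorem~\ref{main Type D} is only legitimate once part~\eqref{D4-potential} has been established, since it is needed to know the contraction algebras lie in the family that theorem classifies.
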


\begin{proof}
(1) is immediate.\\
(2) Working first on the case of finite $m\ge1$, consider the affine algebra
\[
R_m = \frac{\C[r,s,u,v]}{u^2 - r(r-s^m)^2 - sv^2}
\]
whose completion at the origin is~$\scrR_m$, in coordinates $(r,s,u,v) = (-Y,X,T,Z)$.
The blowup along $(u,v,r-s^m)$ is covered by two affine patches: the first is $U = \Spec\C[s,y_0,y_1]$, with $y_0=u/(r-s^m)$ and $y_1=v/(r-s^m)$, the second chart is a smooth hypersurface, and the map from $U$ to the base is given by
\begin{align*}
(s,y_0,y_1)&\mapsto (y_0^2-sy_1^2, \,s, \,y_0(y_0^2-sy_1^2-s^m), \,y_1(y_0^2-sy_1^2-s^m)).
\end{align*}
The exceptional locus in $U$ is the divisor $y_0^2-sy_1^2-s^m=0$.
Pulling the canonical basis of differentials on $\Spec R_m$ back to $U$ gives
\[
f_2^*\left(\frac{dr\wedge ds \wedge dv}{u}\right) 
= 
\frac{d(y_0^2-sy_1^2)\wedge ds\wedge d(y_1(y_0^2-sy_1^2-s^m))}{y_0(y_0^2-sy_1^2-s^m)}=2 ds\wedge dy_0\wedge  dy_1
\]
which is a regular differential on~$U$, and in particular has no zero or pole along the exceptional divisor.
%which has no zero or pole along $E_2=(y_0^2-sy_1^2-s^m)$.
Thus the map is crepant, as claimed.  The case $m=\infty$ is similar, but easier, as both open charts are affine $3$-space.\\
(3) The easiest way to establish the claim is to recognise $F_m$ as a pullback from the universal $D_4$ flop and apply restriction theorems for contraction algebras. Consider the six-dimensional universal $D_4$ flop, given in \cite[(1.1)]{Karmazyn} as
\[
\mathcal{R} = \frac{\C[r,s,t,u,v,w,z]}{u^2 - rw^2 + 2zvw - sv^2 + (rs-z^2)t^2}
\]
and its universal family  $\scrY \longrightarrow \Spec\scrR$, which is an isomorphism away from the singular locus in $\Spec \mathcal{R}$. As observed by Van Garderen \cite[\S2.2.3]{OkkeThesis}, slicing by the sequence $g_1 = z$, $g_2 = r - w - s^m$, and $g_3 = t$ yields a commutative diagram
\[
\begin{tikzpicture}[yscale=1.25]
\node (A) at (0,0) {$Y = Y_3$};
\node (B) at (2,0) {$Y_2$};
\node (C) at (4,0) {$Y_1$};
\node (D) at (6,0) {$\scrY$};
\node (a) at (0,-1) {$\Spec\mathcal{R}_3$};
\node (b) at (2,-1) {$\Spec\mathcal{R}_2$};
\node (c) at (4,-1) {$\Spec\mathcal{R}_1$};
\node (d) at (6,-1) {$\Spec\mathcal{R}$};
\draw[->] (A)--(B);
\draw[->] (B)--(C);
\draw[->] (C)--(D);
\draw[->] (a)--(b);
\draw[->] (b)--(c);
\draw[->] (c)--(d);
\draw[->] (A) --node[left] {$\scriptstyle f$} (a);
\draw[->] (B) -- (b);
\draw[->] (C) -- (c);
\draw[->] (D) -- (d);
\end{tikzpicture}
\]
where $\mathcal{R}_1=\mathcal{R}/g_1$, $\mathcal{R}_2=\mathcal{R}_1/g_2$ and $\mathcal{R}_3=\mathcal{R}_2/g_3$.
The result is the affine algebra
\[
\mathcal{R}_3 = \frac{\C[r,s,u,v]}{u^2 - r(r-s^m)^2 - sv^2}
\]
whose completion at the origin is~$\scrR_m$. The pullback
$f\colon Y\rightarrow \Spec\mathcal{R}_3$ is visibly an isomorphism away from $\Sing (\mathcal{R}_3)^{\redu}=(u,v,r-s^m)$,
so in particular is birational.

Van Garderen observes that $\mathcal{R}_3$ is an integral domain \cite[2.12]{OkkeThesis} and $Y$ is smooth \cite[2.13]{OkkeThesis},
and that each $g_i$ is a {\em slice}, in the terminology of \cite[2.9]{OkkeThesis}, which
implies that $f$ is projective and surjective with $\mathbf{R}f_*\scrO=\scrO$ \cite[2.10]{OkkeThesis}.  

Furthermore, the tilting bundle yielding a derived equivalence between $\scrY$ and $\Lambda\in\CM\mathcal{R}$ restricts to give a derived equivalence between $Y$ and $\Lambda\otimes \mathcal{R}_3$ \cite[(2.11)]{OkkeThesis}. Since $g_1,g_2,g_3$ is a regular sequence, $\Lambda\otimes \mathcal{R}_3\in\CM \mathcal{R}_3$ and so in particular $f$ is crepant \cite[4.14]{IW_Qfact}. Since visibly both the blowup in \eqref{D4-blowup} and $f$ are crepant resolutions of the same variety, and both containing no flopping curves, they must be isomorphic (as varieties over the base $\Spec R_m$) since minimal models are unique up to flop. Thus the contraction algebra associated to \eqref{D4-blowup} is isomorphic to the contraction algebra of the formal fibre of $f$. But by \cite[2.8]{OkkeThesis} this is the claimed Jacobi algebra, namely $\Jac(xy^2+x^{2m+1})$ when $m\geq 1$, respectively $\Jac(xy^2)$ when $m=\infty$.\\
(4) If  $\scrR_m\cong\scrR_n$ are isomorphic, then the contraction algebras of $\scrX_m\to\Spec\scrR_m$ and of $\scrX_n\to\Spec\scrR_n$ must be isomorphic. But then their abelianisations must also be isomorphic, and so in particular must have the same dimension.  But the abelianisations have dimension $2(m+1)$ and $2(n+1)$ respectively, and hence $m=n$. 
\end{proof}

The start of this subsection, combined with \ref{D4 div to curve main}, then gives the following.
\begin{cor}\label{all type D are geometric}
All the potentials in \textnormal{\ref{main Type D}} are geometric.
\end{cor}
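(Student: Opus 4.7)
The strategy is a case-by-case verification through the five families listed in Theorem~\ref{main Type D}. By the Splitting Lemma~\ref{thm!splitting}, adding quadratic terms $z_1^2+\cdots+z_{d-2}^2$ does not alter the Jacobi algebra (up to isomorphism), so it is enough to realize each two-variable normal form as a contraction algebra. Moreover, by \ref{cor: Acon has Jdim leq 1}, the $\fJ$-dimension dictates the geometric type: families~1 and~2 (with $\JRdim=1$) must come from divisorial contractions to curves, while families~3, 4, and~5 (with $\JRdim=0$) must come from flops. This splits the task into two independent pieces.

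For the divisorial half, the plan is to invoke Proposition~\ref{D4 div to curve main} directly. Its $m=\infty$ case produces a smooth crepant divisor-to-curve contraction whose contraction algebra is $\Jac(xy^2)$, realising family~1, while its $m\geq 1$ cases produce analogous contractions with contraction algebra $\Jac(xy^2+x^{2m+1})$, realising the entirety of family~2. (Historically the $m=\infty$ case appeared in \cite[2.18]{DW4} and the $m=1$ case reduces via a linear change of coordinates to $x^3+y^3$, covered in \cite[2.25]{DW4}, but \ref{D4 div to curve main} subsumes both and, crucially, handles the previously missing cases $m\geq 2$.)

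For the flop half, I would cite the constructions of Type~$D_4$ flops in \cite{Okke} and \cite{Kawamata}: these produce, for each admissible $(n,m)$, a simple smooth $3$-fold flop whose contraction algebra is isomorphic to $\Jac(xy^2+x^{2n})$, $\Jac(xy^2+x^{2n}+x^{2m+1})$ with $n\leq m\leq 2n-2$, or $\Jac(xy^2+x^{2m+1}+x^{2n})$ with $n\geq m+1$, thereby covering families~3, 4, and~5 respectively. The classification \ref{the only Type D CAs} serves only as a consistency check, confirming that no additional finite-dimensional Type~$D$ contraction algebras can exist.

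Thus the entire corollary is a bookkeeping exercise that simply aggregates Proposition~\ref{D4 div to curve main} and the existence statements in \cite{Okke, Kawamata}. The real content and only genuine obstacle -- constructing the new infinite family of Type~$D$ divisor-to-curve contractions with $m\geq 2$ -- has already been addressed in \ref{D4 div to curve main}; there is no further hard step.
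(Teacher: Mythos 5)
Your proposal is correct and follows essentially the same route as the paper: the paper itself proves this corollary by combining Proposition~\ref{D4 div to curve main} for the two infinite-$\mathfrak{J}$-dimension families (noting the $m=\infty$ and $m=1$ cases were already in \cite{DW4}) with the flop constructions of \cite{Okke,Kawamata} for the three finite-dimensional families, exactly as you outline. Your extra observation that \ref{cor: Acon has Jdim leq 1} dictates in advance whether a flop or a divisor-to-curve contraction must be sought is a harmless streamlining, and you correctly identify \ref{the only Type D CAs} as a consistency check rather than a source of existence.
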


In turn, this establishes \ref{realisation main intro} in the introduction.
\begin{cor}\label{realisation main}
Conjecture~\textnormal{\ref{realisation conj}} is true, except for the one remaining unresolved case when $f\cong x^3+\scrO_4$, where some further analysis is required.
\end{cor}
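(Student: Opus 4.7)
The plan is to assemble the earlier classification results into a single case analysis driven by the corank invariants. First I would apply the Splitting Lemma~\ref{thm!splitting} to write $f \cong x_1^2 + \cdots + x_r^2 + g$ with $g$ of order $\geq 3$ in $d-r = \corank(f)$ variables. Theorem~\ref{thm: appendix main in text} then forces $\corank(f) \leq 2$ under the hypothesis $\JRdim\Jac(f) \leq 1$, so after a linear change of coordinates on the residual variables we may assume the non-degenerate portion of $f$ lives in at most two variables, which are renamed $x,y$.

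Second, I would partition the surviving cases according to the second corank as in \S\ref{sec:highcorank}. In Type~A ($\corank(f) \leq 1$), Theorem~\ref{thm!pagoda} produces the normal form $\sum z_i^2 + \upvarepsilon y^n$ and Corollary~\ref{Type A all geometric} realises every such Jacobi algebra geometrically: the case $\upvarepsilon = 1$ comes from Reid's $m$-Pagoda flops, while $\upvarepsilon = 0$ arises from the unique crepant resolution of $\C\llsq x,y,z\rrsq^{\frac{1}{2}(1,1,0)}$. In Type~D ($\corank(f) = 2$, $\corank_2(f) = 2$), Theorem~\ref{main Type D} gives five explicit families of normal forms, and Corollary~\ref{all type D are geometric} shows each is geometric: the finite-dimensional flopping cases follow from \cite{Okke, Kawamata} combined with the contraction algebra classification in Corollary~\ref{the only Type D CAs}, while the infinite-dimensional divisor-to-curve cases are precisely the explicit family $F_m$ constructed in Proposition~\ref{D4 div to curve main}.

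The only remaining case is Type~E, where $\corank(f) = 2$ and $\corank_2(f) = 3$. Here Lemma~\ref{linear change} forces $f_3^{\ab} \cong x^3$ up to linear change of variables, so $f \cong \sum z_i^2 + x^3 + \scrO_4$, which is exactly the exceptional form flagged in the statement. Exhaustiveness of the Types~A, D, E partition is guaranteed by Theorem~\ref{thm: appendix main in text} combined with the numerical dichotomy $\corank_2(f) \in \{2,3\}$ on the surviving cases. The main obstacle, and the reason the corollary singles Type~E out, is that although the subfamilies $E_{6,n}$ and $E_{6,\infty}$ of \S\ref{tables section} are verifiably geometric, a full geometric realisation of every potential of the form $x^3 + \scrO_4$ with $\JRdim\Jac(f) \leq 1$ requires a substantially more delicate analysis of the associated $cE_n$ singularities, which is deferred to the companion paper \cite{BW2}.
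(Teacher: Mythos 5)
Your proof is correct and follows essentially the same case analysis as the paper: reduce to two variables via the Splitting Lemma and Theorem~\ref{thm: appendix main in text}, then split on $f_2$ and $f_3$ (equivalently by corank and second corank), invoking \ref{Type A all geometric} in Type~$A$, \ref{all type D are geometric} in Type~$D$, and flagging $f_3\cong x^3$ as the outstanding Type~$E$ case. The only cosmetic inaccuracy is the attribution in the Type~$D$ flop case: the geometric realisation there comes directly from \cite{Okke, Kawamata} as assembled at the start of \S\ref{sec: construct D div-to-curve}, not via Corollary~\ref{the only Type D CAs}, which runs in the opposite direction (classifying which contraction algebras can occur, rather than producing a geometric model for a given Jacobi algebra).
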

\begin{proof}
Every $\CA\cong\Jac(f)$ for some $f\in\C\llangle x,y\rrangle_{\geq 2}$. If $f_2\neq 0$ the result is \ref{Type A all geometric}, so we can assume $f_2=0$. We need $f_3\neq 0$ so that $\JRdim\Jac(f)\leq 1$, and \ref{linear change} splits into three cases.  The first two cases are covered by \ref{main Type D}, and \ref{all type D are geometric} asserts that these are all geometric.  The only remaining, unresolved, case from \ref{linear change}  is when $f_3\cong x^3$.
\end{proof}

\begin{remark} It is possible to change variables to see that all type $D$ normal forms can be realised in a uniform way.  Indeed, the contraction algebra associated to the $cD_4$ singularity
\[
\frac{\mathbb{C}\llsq u,v,x,y\rrsq}{u^2+v^2y-x(\upvarepsilon_2y^{2n+1}-(x-\upvarepsilon_3y^m)^2)}
\]
realises the general Type $D$ potential $f=xy^2+\upvarepsilon_2x^{2n}+\upvarepsilon_3 x^{2m-1}$, with the convention that each $\upvarepsilon_i$ is either $0$ or $1$.
\end{remark}

\begin{remark}
Much like in Pagoda \cite{Pagoda} for Type $A$, it is also possible to view each Type $D$ divisor-to-curve contraction as an infinite limit of flops.  The Type $D$ situation is more delicate, since there are more possible directions in which to take such a limit.  In relation to the classification of Type $D$ flops in \ref{thm:TypeDgeoclassdiag}, the following are the limits which give rise to the divisor-to-curve contractions in \ref{D4 div to curve main}.
\begin{center}
\begin{tikzpicture}[xscale=0.6,yscale=0.6]
\filldraw[color=gray!50!white] (0.5,1.6)--(3.5,7.5)--(8.6,7.5)--(8.6,0.5)--(0.5,0.5)--cycle;
\draw[->] (1,1)--(9,1);
\draw[->] (1,2)--(9,2);
\draw[->] (2,3)--(9,3);
\draw[->] (2,4)--(9,4);
\draw[->] (3,5)--(9,5);
\draw[->] (3,6)--(9,6);
\draw[->] (4,7)--(9,7);

\draw[->] (1,3)--(3.5,8);

%\node at (10,1) {$\scriptstyle xy^2+x^3$};
%\node at (10,2) {$\scriptstyle xy^2+x^5$};
%\node at (10,3) {$\scriptstyle xy^2+x^7$};
%\node at (10,4) {$\scriptstyle xy^2+x^{9}$};
%\node at (10,5) {$\scriptstyle xy^2+x^{11}$};
%\node at (10,6) {$\scriptstyle xy^2+x^{13}$};
%\node at (10,7) {$\scriptstyle xy^2+x^{15}$};
\node at (9.5,1) {$\scriptstyle F_1$};
\node at (9.5,2) {$\scriptstyle F_2$};
\node at (9.5,3) {$\scriptstyle F_3$};
\node at (9.5,4) {$\scriptstyle F_4$};
\node at (9.5,5) {$\scriptstyle F_5$};
\node at (9.5,6) {$\scriptstyle F_6$};
\node at (9.5,7) {$\scriptstyle F_7$};

\node at (3.75,8.25) {$\scriptstyle F_\infty$};
%\node at (3.75,8.25) {$\scriptstyle xy^2$};

\def\coords{1,...,8}
\foreach \y in {1,...,3}
\foreach \x in \coords \node at (\x,\y) {$\scriptstyle \bullet$};
\foreach \x in {2,...,8} \node at (\x,4) {$\scriptstyle \bullet$};
\foreach \x in {2,...,8} \node at (\x,5) {$\scriptstyle \bullet$};
\foreach \x in {3,...,8} \node at (\x,6) {$\scriptstyle \bullet$};
\foreach \x in {3,...,8} \node at (\x,7) {$\scriptstyle \bullet$};

\node at (9,3.5) {$\hdots$};
\node at (7,8) {$\vdots$};
\end{tikzpicture}
\end{center}
Without the normal forms from noncommutative singularity theory, it is hard to either see or predict the above purely geometrically.
\end{remark}

\subsection{Classification for A and D divisor-to-curve contractions}
This section is the divisor-to-curve analogue of \S\ref{classify flops sect}.

\begin{prop}\label{the only Type A CAs d2c}
If $\CA$ is a contraction algebra of a Type~$A$ divisor-to-curve contraction, then $\CA\cong\Jac(x^2)$. 
\end{prop}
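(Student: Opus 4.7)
The plan is to mirror the proof of Proposition~\ref{the only Type A CAs}, with the key twist coming from the difference between flops and divisor-to-curve contractions as detected by $\mathfrak{J}$-dimension.

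First I would establish commutativity of $\CA$ by repeating the argument in \ref{the only Type A CAs}.  Since the contraction is Type~$A$, its base $\Spec\scrR$ is $cA_n$, and by the Van den Bergh argument \cite[A.1]{VdB1d} every indecomposable CM $\scrR$-module has rank one.  As $\scrR$ is normal, the endomorphism ring of any such module is $\scrR$ itself, so $\CA$, being a quotient of $\End_\scrR(N)$ for an indecomposable CM module $N$, is a quotient of $\scrR$, and therefore commutative.  This part is identical to the flop case and is not sensitive to whether the contraction is divisorial or not.

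Next, since $\CA\cong\Jac(f)$ for some $f\in\ringtwo_{\geq 2}$, the commutativity just established combined with \ref{when comm} gives $\corank(f)\leq 1$.  Applying the Splitting Lemma together with \ref{thm!pagoda} then forces $\CA$ to be isomorphic to either $\Jac(x^2+y^n)$ for some $n\geq 2$, or $\Jac(x^2)$.

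Finally, the dichotomy in \ref{cor: Acon has Jdim leq 1} is the input that distinguishes this proposition from \ref{the only Type A CAs}: since $\scrX\to\Spec\scrR$ is a divisorial contraction to a curve, $\JRdim\CA=1$, so in particular $\dim_\C\CA=\infty$.  From \ref{thm!pagoda}(1)--(2) the Jacobi algebras $\Jac(x^2+y^n)$ all have $\dim_\C\Jac(x^2+y^n)=n-1<\infty$, whilst $\Jac(x^2)\cong\C\llsq y\rrsq$ has $\JRdim=1$.  Hence the only option compatible with $\JRdim\CA=1$ is $\CA\cong\Jac(x^2)$, as required.  No step here is an obstacle of substance; the only point to watch is that the commutativity argument via rank~one CM modules genuinely applies to the divisor-to-curve setting too, but this is clear since it depends only on the Type~$A$ nature of the cDV singularity $\scrR$ and not on the flavour of contraction.
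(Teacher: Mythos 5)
Your proposal is correct and follows essentially the same route as the paper's own proof: commutativity via the rank-one CM module argument, then the constraint $\dim_\C\CA=\infty$ for a divisor-to-curve contraction, then \ref{when comm} and \ref{thm!pagoda} to single out $\Jac(x^2)$. The only cosmetic difference is that you deduce infinite dimensionality from \ref{cor: Acon has Jdim leq 1} rather than citing \cite{DW3} directly, but these are the same fact.
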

\begin{proof} 
For the same homological reason as in \ref{the only Type A CAs}, in Type~$A$ the contraction algebra $\CA$ is necessarily commutative.  Since the contraction is divisor-to-curve, necessarily $\dim_\C\CA=\infty$ \cite{DW3}.  Combining \ref{when comm} and \ref{thm!pagoda}, we see that $\CA\cong\Jac(x^2)$, since $x^2$ is the only infinite dimensional example in \ref{thm!pagoda}.
\end{proof}

The following is the analogue of \ref{the only Type D CAs}. However, it is slightly weaker, due to two key geometric facts having only been developed in the flops setting: (1) Katz--Morrison \cite{KM} asserts that Type~$D$ is generically Type~$D_4$ for flops, but this is open in the divisor-to-curve setting, and (2) Hua--Toda \cite{HuaToda} asserts that the isomorphism class of a contraction algebra determines the type, but again only for flops.

However, we can say the following, without using any geometric classifications. 
\begin{prop}\label{the only Type D CAs d2c}
If $\CA$ is a contraction algebra of a Type~$D_4$ divisor-to-curve contraction, then $\CA\cong\Jac(x^2y)$, or $\CA\cong\Jac(xy^2+x^{2m+1})$ for some $m\geq 2$. 
\end{prop}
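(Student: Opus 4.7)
The plan is to adapt the proof of \ref{the only Type D CAs} for flops, with the only substantive change appearing at the final step, where the finite-dimensionality alternative is replaced by the infinite-dimensionality alternative.

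Consider $\CA$ arising from an arbitrary Type $D_4$ divisor-to-curve contraction $\scrX \to \Spec\scrR$. The crucial point is that \ref{cut fact} was stated and proved for any crepant Type $D_4$ 3-fold contraction with smooth source, not just flops: its proof uses only Reid's general elephant (which holds whenever the base is cDV), together with \cite[5.19]{IW6} and \cite[(3.C)]{DW1}, none of which assume the contraction is a flop. Consequently, $\CA$ admits a non-commutative quotient isomorphic to $\C\langle x,y\rangle/(x^2, xy+yx, y^2)$, and so $\CA$ itself is not commutative.

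Writing $\CA \cong \Jac(f)$ for some $f \in \C\llangle x,y\rrangle_{\geq 2}$, \ref{when comm} then forces $f_2 = 0$. Likewise \ref{others not Type D} rules out $f_3 = 0$ and $f_3 \cong x^3$: its proof works equally well here, requiring only completeness of $\Jac(f)$ (which follows from \ref{lovely completion fact}, since $\CA$ is module-finite over the complete local $\scrR$) together with the central element supplied by the analogue of \ref{cut fact}. Hence $f_3 \neq 0$ with $f_3^{\ab}$ having two or three distinct linear factors, and so \ref{main Type D} identifies $\CA$ with the Jacobi algebra of one of the five listed families.

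Since $\scrX \to \Spec\scrR$ is a divisorial contraction to a curve, \ref{cor: Acon has Jdim leq 1} gives $\JRdim \CA = 1$ and hence $\dim_\C \CA = \infty$. This eliminates the last three families of \ref{main Type D}, whose Jacobi algebras are finite-dimensional, leaving only $f \cong xy^2$ or $f \cong xy^2 + x^{2m+1}$ for some $m \geq 1$; the swap $x \leftrightarrow y$ rewrites the first case as $\Jac(x^2 y)$ in the stated form. There is no serious technical obstacle, since every input has already been established for general $D_4$ crepant contractions; the only adjustment from the flop argument is the direction of the dimension inequality at the final step, turning the bottom three families into the top two. The mild discrepancy at $m=1$, where $xy^2+x^3\cong x^3+y^3$ lies in the three-root cubic case of \ref{NC3rootsnormal2}, is geometrically harmless: by \ref{D4 div to curve main} it is realised by $F_1$, whose general elephant analysis confirms it is of type $D_4$, so it is not excluded by the argument.
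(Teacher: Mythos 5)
Your argument reproduces the paper's own proof step for step: use \ref{cut fact} to exhibit a noncommutative quotient of $\CA$ (noting that Reid's general elephant and the fibre-product argument of \ref{cut fact} apply to any crepant $D_4$ contraction with smooth source, not just flops), deduce $f_2=0$ via \ref{when comm}, apply \ref{others not Type D} to rule out $f_3=0$ and $f_3\cong x^3$, land in the list of \ref{main Type D}, and eliminate the finite-dimensional families using $\JRdim\CA=1$ from \ref{cor: Acon has Jdim leq 1}. The reduction is identical.

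The subtlety you flagged at $m=1$ is genuine, and it is worth being crisper about where things stand. Your argument (and the paper's own proof, which says only ``the top two families are possible'') produces the conclusion $\CA\cong\Jac(xy^2)$ or $\CA\cong\Jac(xy^2+x^{2m+1})$ for $m\geq 1$; nothing in the chain of lemmas distinguishes $m=1$ from $m\geq 2$. The stated proposition asserts $m\geq 2$, so there is indeed an off-by-one discrepancy between what the argument delivers and what is claimed. Your instinct to check whether $m=1$ can occur geometrically is the right test: $\Jac(xy^2+x^3)\cong\Jac(x^3+y^3)$ is realised by $F_1$ in \ref{D4 div to curve main}, and one may check directly (e.g.\ slicing $F_1$ by $X=Y$ gives $T^2=Y(4Y^2+Z^2)$, a cubic with three distinct linear factors, hence $D_4$) that this contraction has $D_4$ general elephant. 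So $m=1$ is not excluded and the bound in the proposition appears to be a slip, with $m\geq 1$ the intended range. Be aware, though, that the last clause of your write-up (``whose general elephant analysis confirms it is of type $D_4$'') asserts something the paper itself does not record — \ref{D4 div to curve main} only says ``Type~$D$'' — so if you intend to rely on it you should include the elephant computation rather than attribute it to the cited result.
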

\begin{proof}
The proof is very similar to \ref{the only Type D CAs}. The algebra $\CA$ is not commutative by \ref{cut fact}, since $\CA$ has a factor which is not commutative.  As $\CA\cong\Jac(f)$ for some $f\in\C\llangle x,y\rrangle_{\geq 2}$, appealing to \ref{when comm} then gives $f_2=0$.  

From this, \ref{others not Type D} asserts that $f_3\neq 0$, and $f_3\ncong \ell^3$.  Hence by \ref{main Type D} $\CA\cong\Jac(f)$ for some $f$ in the list stated there.  Only the top two families are possible,  since $\dim_\C\CA=\infty$ given the contraction is divisor-to-curve \cite{DW3}.
\end{proof}

\appendix
\section{\texorpdfstring{$\mathfrak{J}$}{J}-dimension restrictions}\label{J appendix}
The papers \cite{ISmok,IS} introduce several new ideas that substantially strengthen the Golod--Shafarevich estimates \cite{GolodShafarevich} for the growth of algebraic Jacobi rings, and prove that almost all have cubic or higher growth.  In this appendix we extend the main results of \cite{IS}  into the setting of formal noncommutative Jacobi algebras of \ref{defin:Jacobi}, in a manner which should be viewed as the analogue of Vinberg's \cite{Vinberg} extension of Golod--Shafarevich  into the setting of topological rings.

\subsection{Algebraic Notation}\label{app:notation}
Throughout this appendix we let $d\geq 2$ and consider $\fringd=\mathbb{C}\langle x_1,\hdots,x_d\rangle$. An element $F\in\mathbb{C}\langle \mathsf{x}\rangle$ is called a \emph{superpotential} if it is cyclically symmetric, in the sense of~\ref{def:cyc symm}. For $m\geq k\geq 3$ write 
\[
\SP_{k,m}=\{ F\in\fringd\mid F \mbox{ is a superpotential with } F_j=0 \mbox{ for }j< k \mbox{ and }j>m\},
\]
where $F_j$ is the homogeneous component of $F$ of degree $j$, as in \S\ref{sect: power notation}.  In the special case $m=k$, write $\SP_{k}\colonequals \SP_{k,k}$, which consists of all homogeneous superpotentials of degree~$k$, together with zero.  Throughout, we will write elements of $\fringd$ and $\ringd$ by small letters $f$ and $g$, and superpotentials by capital letters $F$, $G$.

With the (left) strike-off derivatives $\partial_i$ defined as in \eqref{defin:strike off}, the \emph{algebraic Jacobi algebra} associated to a superpotential $F$ is the algebra
\[
\AlgJac(F)\colonequals\frac{\mathbb{C}\langle x_1,\hdots,x_d\rangle}{(\partial_{1}F,\hdots, \partial_{d}F)}
= \frac{\fringd}{I_F}
\]
where $I_F=(\partial_{1}F,\hdots \partial_{d}F)$ is the two-sided ideal generated by $\partial_1F,\hdots, \partial_{d}F$.
We write $\m=(x_1,\dots,x_d)\subset\fringd$, a maximal two-sided ideal,
and denote its image in $\AlgJac(F)$ by $\mathfrak{R}=\m/I_F$, the powers of which are
$\mathfrak{R}^i=(\m^i+I_F)/I_F$.

The use of strike-off derivatives $\partial_i$ on superpotentials, as we use here to align
with the statements and results of \cite{ISmok,IS},
or cyclic derivatives $\dcyc_i$ on any potential, as in \ref{sec:differentiation}, give equivalent theories but with minor differences in detail,
which we address in~\S\ref{sec: power series}.

\subsection{Exact Potentials and Hilbert series}\label{sec: exact pot}
%There are two main reasons why passing to superpotentials $F$ is technically convenient.
The differentiation package has two useful tools.
The first is the following Euler relation.
\begin{lemma}\label{lem:Euler}\cite[3.5]{ISmok}
If $F$ is a superpotential, then $\sum_{i=1}^d[x_i,\partial_iF]=0$.
\end{lemma}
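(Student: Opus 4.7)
The plan is to reduce the identity to the statement $\sigma(F) = F$, where $\sigma$ denotes the one-step cyclic-shift operator sending a monomial $x_{i_1}\cdots x_{i_k}$ to $x_{i_2}\cdots x_{i_k}x_{i_1}$, and then invoke cyclic symmetry.

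The key observation is that the two halves of
\[
\sum_{i=1}^d[x_i,\partial_iF]=\sum_{i=1}^d x_i\partial_iF-\sum_{i=1}^d\partial_iF\cdot x_i
\]
have transparent interpretations in terms of the strike-off rule~\eqref{defin:strike off}. For $\sum_i x_i\partial_i F$, the derivative $\partial_i$ removes a leading $x_i$ and multiplication on the left puts it back; on any monomial of positive degree this is the identity, with the single $i$ for which $\partial_i m\ne0$ being the index of the leading letter. Since $F$ has no constant term (being a superpotential of degree $\geq3$), the first sum is exactly $F$. The second sum $\sum_i \partial_iF\cdot x_i$ does the same thing except that it puts the $x_i$ back on the right, producing the one-step cyclic rotation of each monomial. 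Hence $\sum_i \partial_iF\cdot x_i=\sigma(F)$.

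The remaining step is $\sigma(F)=F$, which is where the superpotential hypothesis is used. By~\ref{def:cyc symm}, $\mathrm{cyc}(F_k)=kF_k$ in each degree $k$. Since $\mathrm{cyc}|_{\deg k}=\sum_{j=0}^{k-1}\sigma^j$ and $\sigma^k=\mathrm{id}$ on degree $k$ polynomials, the operator $k^{-1}\mathrm{cyc}$ is, in characteristic zero, precisely the projection onto the $\sigma$-fixed subspace. Thus $\mathrm{cyc}(F_k)=kF_k$ forces $\sigma(F_k)=F_k$ for every $k$, and so $\sigma(F)=F$.

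Combining, $\sum_i[x_i,\partial_iF]=F-\sigma(F)=0$. There is no real obstacle: the argument is a direct unwinding of the definitions of strike-off derivative and of cyclic symmetry, with the only nontrivial ingredient being the elementary fact that the averaging operator for the cyclic group acts as a projection in characteristic zero.
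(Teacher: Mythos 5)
Your proof is correct. The paper itself gives no argument for this lemma, simply citing \cite[3.5]{ISmok}, and your verification is the standard one: identifying $\sum_i x_i\partial_i F$ with $F$ (since $F$ has no constant term) and $\sum_i \partial_i F\cdot x_i$ with the one-step rotation $\sigma(F)$, then deducing $\sigma(F_k)=F_k$ from $\sym(F_k)=kF_k$ because $\tfrac{1}{k}\sym=\tfrac{1}{k}\sum_{j=0}^{k-1}\sigma^j$ is, in characteristic zero, the projection onto the $\sigma$-fixed subspace of the degree-$k$ piece.
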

The second is a sequence of right $\AlgJac(F)$-modules  
\begin{equation}
0\to \AlgJac(F)\xrightarrow{\mathsf{d}_3}\AlgJac(F)^{\oplus d}\xrightarrow{\mathsf{d}_2}\AlgJac(F)^{\oplus d}\xrightarrow{\mathsf{d}_1}\AlgJac(F)\xrightarrow{\mathsf{d}_0}\mathbb{C}\to 0 \label{defin:exact}
\end{equation}
defined in e.g.\ \cite[3.4]{ISmok}.  The precise form of the morphisms $\mathsf{d}_i$ will not concern us, as below we will only require the following two facts.
\begin{enumerate}
\item \cite[3.6]{ISmok} For any superpotential $F$, the sequence \eqref{defin:exact} is a complex, which is exact at the three right-most non-zero terms.
\item\label{deg of di} If further $F$ is homogeneous, say $0\neq F\in\mathsf{SP}_k$, then the morphisms in the complex \eqref{defin:exact} satisfy $\deg(\mathsf{d}_3)=1$, $\deg(\mathsf{d}_2)=k-2$, $\deg(\mathsf{d}_1)=1$, and $\deg(\mathsf{d}_0)=0$. 
%Note that $\mathsf{d}_0(f)=f_0$ is the natural augmentation map.  
\end{enumerate}

\begin{defin}
An element $F\in\SP_{k,m}$ is called \emph{exact} if \eqref{defin:exact} is exact.  
\end{defin}
If $G$ is homogeneous, then the ideal $(\partial_{1}G,\hdots, \partial_{d}G)$ is a homogeneous ideal and so the graded decomposition of $\fringd$ induces a decomposition
\begin{equation}
\AlgJac(G) =\bigoplus_{i\geq 0}\AlgJac(G)_i.\label{eq: alg Jac graded}
\end{equation}
For $G\in\mathsf{SP}_k$,
the boundary maps $\mathsf{d}_i$ are homogeneous, and so furthermore the sequence \eqref{defin:exact}
also decomposes into graded pieces, or {\em homogeneous slices}, each of which is a complex of finite-dimensional vector spaces, exact at the codomains of the restrictions of $\mathsf{d}_0$, $\mathsf{d}_1$ and $\mathsf{d}_2$.

\begin{defin}
For $G\in\mathsf{SP}_k$, \eqref{eq: alg Jac graded} determines the {\em Hilbert series}
of $\AlgJac(G)$
\[
\scrH_G = \sum_{i\ge0} \dim_\C\big(\AlgJac(G)_i\big) t^i \in \C\llsq t\rrsq.
\]
\end{defin}
\noindent
Throughout, recall that $\SP_{k}$ and $\SP_{k,m}$ are defined only when $k\geq 3$.  The following is an easy consequence of the degree of the morphisms in \eqref{deg of di} above, see e.g.\ \cite[\S3]{ISmok}. 

\begin{cor}\label{cor:hgsHilb}
If $G\in\SP_k$ is exact, then $\scrH_G = (1-dt+dt^{k-1}-t^{k})^{-1}$.
\end{cor}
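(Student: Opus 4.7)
The plan is to use the fact that exactness of the sequence~\eqref{defin:exact} forces each homogeneous slice (in a given total degree) to be an exact sequence of finite-dimensional vector spaces, whose alternating dimension sum must therefore vanish. Translating this vanishing into the language of Hilbert series yields the claimed rational expression.

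More concretely, first I would unpack the graded structure. Since $G \in \SP_k$ is homogeneous, the Jacobian ideal $I_G = (\partial_1 G, \dots, \partial_d G)$ is homogeneous, so $\AlgJac(G) = \bigoplus_{i\ge0} \AlgJac(G)_i$ as in~\eqref{eq: alg Jac graded}. The boundary maps $\mathsf{d}_0,\mathsf{d}_1,\mathsf{d}_2,\mathsf{d}_3$ are homogeneous, of degrees $0,1,k-2,1$ respectively by~\eqref{deg of di}. Thus, after introducing the appropriate degree shifts, the complex~\eqref{defin:exact} can be rewritten as a complex of graded right $\AlgJac(G)$-modules in which every differential has degree~$0$:
\[
0 \longrightarrow \AlgJac(G)(-k) \xrightarrow{\mathsf{d}_3} \AlgJac(G)^{\oplus d}(-(k-1)) \xrightarrow{\mathsf{d}_2} \AlgJac(G)^{\oplus d}(-1) \xrightarrow{\mathsf{d}_1} \AlgJac(G) \xrightarrow{\mathsf{d}_0} \mathbb{C} \longrightarrow 0.
\]

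Next I would restrict to the degree-$n$ slice for each $n \ge 0$. Under the exactness hypothesis on~$G$, this slice is an exact sequence of finite-dimensional complex vector spaces. Writing $h_n = \dim_{\mathbb{C}} \AlgJac(G)_n$ (with the convention $h_n = 0$ for $n<0$), the alternating sum of dimensions gives
\[
h_n - d\,h_{n-1} + d\,h_{n-(k-1)} - h_{n-k} = \delta_{n,0} \qquad \text{for all } n \ge 0,
\]
where $\delta_{n,0}$ accounts for the single contribution from $\mathbb{C}$ in degree~$0$.

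The final step is purely formal: multiplying this identity by $t^n$ and summing over $n \ge 0$ yields
\[
(1 - dt + dt^{k-1} - t^k)\,\scrH_G = 1,
\]
which gives $\scrH_G = (1 - dt + dt^{k-1} - t^k)^{-1}$, as required. There is no real obstacle here: once the grading bookkeeping is set up correctly, the argument is a standard Euler-characteristic calculation. The only care needed is tracking the degree shifts $0,1,k-1,k$ correctly, which follow directly from the degrees of the differentials recalled in~\eqref{deg of di}.
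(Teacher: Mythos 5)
Your proof is correct and follows exactly the argument the paper gestures at: the paper states the corollary as "an easy consequence of the degree of the morphisms" in~\eqref{deg of di}, which is precisely your degree-shifted-Euler-characteristic computation. The bookkeeping of shifts $0,-1,-(k-1),-k$ and the resulting recursion $h_n - d\,h_{n-1} + d\,h_{n-k+1} - h_{n-k} = \delta_{n,0}$ is exactly what is needed.
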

%\begin{proof}
%The only homogeneous slice \eqref{eq:exactHilbseq} of \eqref{defin:exact} not considered
%in \ref{lem: exact Hilb} is when $j=0$, but that is simply 
%the identification $\AlgJac(G)_0=\C$ of the augmentation map $\mathsf{d}_0$.
%Therefore multiplying \eqref{eq:degreecount} by $t^j$ and summing over all $j\ge0$ gives
%\[
%\scrH_G - dt\scrH_G + dt^{k-1}\scrH_G - t^k\scrH_G - 1 = 0.\qedhere
%\]
%\end{proof}
%The linear independence in \ref{lem: exact Hilb}\eqref{lem: exact Hilb 1} extends to one-sided independence over $\fringd$.
%\begin{cor}\label{inject1}
%Let $G\in\SP_k$ be exact. 
%If $\sum_{i=1}^d (\partial_iG)u_i=0$ for $u_1,\dots,u_d\in\fringd$, then $u_i=0$ for all $i$.
%\end{cor}
%\begin{proof}
%Since $\fringd$ is graded, we can assume that the $u_i$ are also homogeneous, say of degree $t$.  From here is a simple induction, with the $t=0$ case being \ref{lem: exact Hilb}\eqref{lem: exact Hilb 1}. For each $i$, clearly we may write $u_i=\sum_{j=1}^dw_jx_j$ for some $w_j$.  Hence 
%\[
%0=\sum_{i=1}^d (\partial_iG)u_i=\sum_{i=1}^d(\partial_iG)\sum_{j=1}^dw_jx_j=\sum_{j=1}^d\left(\sum_{i=1}^d(\partial_iG)w_i\right) x_j.
%\]
%Given this holds in the free algebra, it follows that $\sum_{i=1}^d(\partial_iG)w_i=0$, and so by induction the result follows.
%\end{proof}
%
\noindent
It will be convenient to consider the following subset of homogeneous superpotentials
\[
\mathsf{ESP}_k\colonequals\{ G\in\mathsf{SP}_k \mid G\mbox{ is an exact potential, and }\AlgJac(G)\xrightarrow{x_1\cdot}\AlgJac(G)\mbox{ is injective}\}.
\]
%\begin{lemma}\label{inject:101}
%If $G\in\mathsf{ESP}_k$ then 
%there is a set $N\subset\fringd$ of monomials, satisfying $x_1N\subset N$, 
%which descends to a basis of the $\C$-vector space $\AlgJac(G)$.
%\end{lemma}
%
%\begin{proof}
%Setting $M_0 =\{1\}$, inductively by degree in \eqref{eq: alg Jac graded} we can construct sets of monomials $M_j$ of $\fringd$ of degree $j$ such that the following two conditions are satisfied.
%\begin{enumerate}
%\item\label{Closed under x1}  There is an inclusion $x_1M_j\subseteq M_{j+1}$. 
%\item $M_j$ descends to give a basis of $\AlgJac(G)_j$.  
%\end{enumerate}
%It is clear that the necessarily disjoint union $N= \bigcup_{j\ge0} M_j$ descends to a basis of $\AlgJac(G)$, since it does so in each degree.
%\end{proof}
%
%
%
Recall that $d\geq 2$ is the number of variables in $\fringd$.
\begin{lemma}[{\cite[2.1, 2.2]{IS}}]\label{lem: ESP non empty}
If $k\geq 3$ with $(d,k)\neq(2,3)$, then $\ESP_k\neq\emptyset$.
\end{lemma}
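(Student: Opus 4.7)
The plan is to produce, for each allowed pair $(d,k)$, an explicit homogeneous superpotential $G\in\SP_k$ lying in $\ESP_k$, following the strategy of Iyudu--Shkarin. The argument splits according to whether $k\geq 4$ (where $d\geq 2$ suffices) or $k=3$ (where we need $d\geq 3$, the exclusion $(2,3)$ being exactly the obstruction that fails Golod--Shafarevich).

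First, for $k\geq 4$ and any $d\geq 2$, I would take the diagonal superpotential
\[
G \;=\; \tfrac{1}{k}\sum_{i=1}^d x_i^k,
\]
which is manifestly cyclically symmetric, with $\partial_i G = x_i^{k-1}$. Thus $\AlgJac(G)\cong \fringd/(x_1^{k-1},\dots,x_d^{k-1})$ is a monomial algebra. A direct count of noncommutative words that avoid each factor $x_i^{k-1}$ gives the Hilbert series in closed form, and one verifies it equals $(1-dt+dt^{k-1}-t^k)^{-1}$. Comparing with \ref{cor:hgsHilb} and using that \eqref{defin:exact} is always a complex whose exactness is determined by its graded Euler characteristic, this establishes exactness. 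Injectivity of left multiplication by $x_1$ on $\AlgJac(G)$ then follows from a transparent monomial argument: if $w$ is a basis word (avoiding each pattern $x_i^{k-1}$) and $x_1w$ lies in the monomial ideal, then $x_1w$ must contain some $x_i^{k-1}$ as a factor, which, since $w$ itself does not, forces $w$ to begin with $x_1^{k-2}$ and $x_1w$ to begin with $x_1^{k-1}$; but then the tail of $w$ remains a legal basis word and $w$ itself is nonzero, a contradiction.

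Second, for $k=3$ with $d\geq 3$, the candidate is the totally cyclic cubic
\[
G \;=\; \textstyle\sum_{i<j<l}\sym(x_ix_jx_l) \;+\; \textstyle\sum_i \tfrac{1}{3}x_i^3,
\]
or any other of the Sklyanin-type cubics used in \cite[2.2]{IS}. Its strike-off derivatives are linearly independent quadratic forms, and the Koszul dual computation places $\AlgJac(G)$ in the Koszul class with Hilbert series $(1-dt+dt^2-t^3)^{-1}$, yielding exactness. The $x_1$-regularity is known from the theory of these algebras, essentially because they are domains (or more precisely, their associated graded with respect to a suitable order is a monomial algebra in which $x_1$ is regular).

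The main obstacle is the $x_1$-regularity clause, not the exactness. Exactness is a purely numerical fact, verifiable from the Hilbert series via \ref{cor:hgsHilb}, but injectivity of left multiplication by $x_1$ on $\AlgJac(G)$ is a stronger structural statement that can easily fail for superficially similar superpotentials (indeed, this is exactly where the $(d,k)=(2,3)$ case breaks). Our route above handles both cases by choosing $G$ so that $\AlgJac(G)$ admits a monomial normal form in which left multiplication by $x_1$ acts by a manifestly injective prepending operation on basis words, and we simply inherit the detailed verification from \cite[\S2]{IS}.
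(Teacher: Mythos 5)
Your proposal fails at the first concrete step: the diagonal superpotential $G = \tfrac{1}{k}\sum_i x_i^k$ does \emph{not} lie in $\ESP_k$; in fact it fails both defining conditions. Take $d=2$, $k=4$, so $\AlgJac(G)=\C\langle x,y\rangle/(x^3,y^3)$. Counting length-$4$ words avoiding $xxx$ and $yyy$ gives $\dim_\C\AlgJac(G)_4 = 10$, whereas the coefficient of $t^4$ in $(1-2t+2t^3-t^4)^{-1}=\tfrac{1}{(1-t)^2(1-t^2)}$ is $9$. So the Hilbert series does not match the putative minimal one, and by \ref{cor!nonempty}(2) this already rules out exactness. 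The regularity hypothesis also fails here: $x\cdot x^2y = x^3y = 0$ in $\AlgJac(G)$ while $x^2y\ne0$, so left multiplication by $x_1$ is not injective. The flaw in your monomial argument is that a basis word may legitimately begin with $x_1^{k-2}$, in which case prepending $x_1$ kills it.

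There is also a logical gap in the step from Hilbert series to exactness. The complex \eqref{defin:exact} is known to be exact at the three right-most terms; vanishing of the graded Euler characteristic forces only $\dim H_3 = \dim H_4$ in each degree, not $H_3=H_4=0$. The paper uses the implication in the \emph{other} direction (\ref{cor!nonempty}(2): exact implies minimal Hilbert series), and precisely because of this the proof of \ref{cor!nonempty} logically depends on \ref{lem: ESP non empty} — one must first exhibit an exact potential by hand, and only then deduce that its Hilbert series is minimal. Working backwards from a Hilbert series computation would be circular in this framework.

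The paper's actual construction is considerably more delicate than any monomial/diagonal form: for $k\ge d$ it sets $G=\sym\bigl(\sum_{\upsigma\in\mathfrak{S}_{d-1}}x_d^{k-d+1}x_{\upsigma(1)}\cdots x_{\upsigma(d-1)}\bigr)$, whose cyclic derivatives are genuinely multi-term (not monomial), and for $k<d$ it uses a long-cycle monomial plus correction terms $\sum_{j\in\scrS}x_jx_d\mathsf{m}_j$ engineered to achieve exactness. Both exactness and $x_1$-regularity are then verified directly for these specific forms in \cite[2.1, 2.2]{IS}; the verification cannot be shortcut through the Hilbert series.
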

\begin{proof}
Set
\[
g=\begin{cases}
\sum_{\upsigma\in\mathfrak{S}_{d-1}}x_d^{k-d+1}x_{\upsigma(1)}\hdots x_{\upsigma(d-1)}&\mbox{if }k\geq d\\
x_dx_{d-1}\hdots x_{d-k+1}+\sum_{j\in\scrS} x_jx_d\mathsf{m}_j&\mbox{if }k< d
\end{cases}
\]
where $\mathfrak{S}_{d-1}$ is the symmetric group, $\scrS=\{j\mid 1\leq j\leq d-1,\, j\neq d-k+1\}$, and the $\mathsf{m}_j$ are explicit monomials explained in \cite[2.2]{IS}. It is a reasonably elementary calculation to show that $G=\sym(g)\in\mathsf{ESP}_k$; see \cite[2.1, 2.2]{IS}.
\end{proof}
It turns out, although we do not need this, that if $d=2$ then $\mathsf{ESP}_3=\emptyset$.  This is why the argument in \ref{thm: appendix main} below fails in the $(d,k)=(2,3)$ case.

Recalling that $\mathfrak{R}^i=(\m^i+I_F)/I_F$, the following is one of the main insights of \cite{IS}.
\begin{cor}\label{cor:x inj}
If $F\in\mathsf{SP}_{k,m}$ with $F_k\in\mathsf{ESP}_k$, then left multiplication
\[
\AlgJac(F)/\mathfrak{R}^j\xrightarrow{x_1\cdot}\AlgJac(F)/\mathfrak{R}^{j+1}
\]
is injective for all $j\geq 1$.
\end{cor}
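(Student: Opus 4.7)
The plan is to transfer the injectivity of left multiplication by $x_1$ on $\AlgJac(F_k)$, which is built into the hypothesis $F_k\in\ESP_k$, to the $\mathfrak{R}$-adic graded quotients of $\AlgJac(F)$. First I would make the standard reduction to the associated graded ring: the finite filtration on $\AlgJac(F)/\mathfrak{R}^{j+1}$ by the images of $\mathfrak{R}^s$ has subquotients $\mathfrak{R}^s/\mathfrak{R}^{s+1}$, and left multiplication by $x_1$ respects these filtrations. Hence the map in the corollary is injective as soon as each induced map $x_1\cdot\colon\mathfrak{R}^s/\mathfrak{R}^{s+1}\to\mathfrak{R}^{s+1}/\mathfrak{R}^{s+2}$ is injective for every $s\ge 0$. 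Equivalently, the corollary reduces to injectivity of $x_1\cdot$ on the associated graded algebra $\mathrm{gr}_\mathfrak{R}\AlgJac(F)$.

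Next, because $F\in\SP_{k,m}$ satisfies $F_{<k}=0$, the leading form of each generator $\partial_iF=\partial_iF_k+\partial_iF_{>k}$ of $I_F$ is the homogeneous element $\partial_iF_k$, so that $(\partial_1F_k,\dots,\partial_dF_k)$ is contained in the ideal of leading forms of~$I_F$. This inclusion produces a canonical surjection of graded $\mathbb{C}$-algebras
\[
\pi\colon \AlgJac(F_k)\twoheadrightarrow\mathrm{gr}_\mathfrak{R}\AlgJac(F),
\]
and $\pi$ evidently intertwines left multiplication by~$x_1$ on both sides. Consequently, if $\pi$ is an isomorphism then the injectivity of $x_1\cdot$ on $\AlgJac(F_k)$ given by $F_k\in\ESP_k$ transfers to injectivity of $x_1\cdot$ on $\mathrm{gr}_\mathfrak{R}\AlgJac(F)$, and the corollary follows.

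The remaining task, and the substance of the proof, is to show that $\pi$ is an isomorphism. My plan here is a Hilbert-series comparison. For the source, \ref{cor:hgsHilb} gives $\scrH_{F_k}=(1-dt+dt^{k-1}-t^k)^{-1}$, the essential input being full exactness of the complex \eqref{defin:exact} for the homogeneous $F_k\in\ESP_k$. For the target, one applies \eqref{defin:exact} to the inhomogeneous $F$; by the general exactness statement in \S\ref{sec: exact pot} it remains exact at its three rightmost positions for any superpotential. Filtering this complex by powers of $\mathfrak{R}$ and passing to the associated graded identifies the leading parts of the boundary maps $\mathsf{d}_0,\mathsf{d}_1,\mathsf{d}_2$ with the corresponding homogeneous maps for $F_k$, in the degrees made explicit by \ref{deg of di}. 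An Euler-characteristic argument on graded pieces, combined with the surjectivity of $\pi$ already in hand, then forces the Hilbert series of $\mathrm{gr}_\mathfrak{R}\AlgJac(F)$ to agree term by term with $\scrH_{F_k}$, whence $\pi$ is an isomorphism.

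The main obstacle is precisely this last step. Since $\mathsf{d}_2$ is built from second derivatives of $F$, its leading behaviour under the $\mathfrak{R}$-filtration is controlled by $F_k$ only up to correction terms of strictly higher filtration degree, and this bookkeeping must be done with care. The degree data of \ref{deg of di} ensures that these corrections live in filtration degree strictly higher than the leading terms, so that the $\mathrm{gr}_\mathfrak{R}$ of the complex for $F$ sits cleanly above the homogeneous complex for $F_k$; converting this into the required Hilbert series equality, and thereby into the isomorphism of associated graded algebras, is where the technical work concentrates.
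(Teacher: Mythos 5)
Your reduction to injectivity on the graded pieces $\mathfrak{R}^s/\mathfrak{R}^{s+1}\xrightarrow{x_1\cdot}\mathfrak{R}^{s+1}/\mathfrak{R}^{s+2}$ is correct, and so is the existence of a graded surjection $\pi\colon\AlgJac(F_k)\twoheadrightarrow\mathrm{gr}_\mathfrak{R}\AlgJac(F)$ intertwining left multiplication by $x_1$. Note that the paper itself does not supply a proof of \ref{cor:x inj} --- it cites Iyudu--Shkarin --- so you are necessarily working independently. But the route you have chosen has a genuine gap at the step you rightly call the ``substance of the proof''.

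The problem is twofold. First, since injectivity of $x_1\cdot$ does not descend along a proper intertwining surjection (the map $\C[x]\twoheadrightarrow\C[x]/(x^2)$ already shows this), you really do need $\pi$ to be an isomorphism, and for that you need the lower bound $\dim(\mathfrak{R}^j/\mathfrak{R}^{j+1})\geq\dim\AlgJac(F_k)_j$; the surjection supplies only the opposite inequality. You propose to obtain the lower bound from the three-position exactness of \eqref{defin:exact} for the inhomogeneous $F$, passed to the associated graded. That step does not go through as sketched: for a filtered complex, one only has $\mathrm{im}\,\mathrm{gr}(\mathsf{d})\subseteq\mathrm{gr}(\mathrm{im}\,\mathsf{d})$ and $\mathrm{gr}(\ker\mathsf{d})\subseteq\ker\mathrm{gr}(\mathsf{d})$, so exactness of the original complex at a given spot does not give exactness of the associated graded complex there (the passage is governed by a spectral sequence, and cancellation on later pages is exactly what is being suppressed). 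An Euler-characteristic count on graded slices of a complex not known to be exact in those slices does not pin down the Hilbert series of $\mathrm{gr}_\mathfrak{R}\AlgJac(F)$.

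Second, the intermediate claim ``$\pi$ is an isomorphism'' is strictly stronger than the corollary: injectivity of $x_1\cdot$ on $\mathrm{gr}_\mathfrak{R}\AlgJac(F)$ only says the graded two-sided ideal $\ker\pi\subset\AlgJac(F_k)$ is closed under left $x_1$-division, which does not force $\ker\pi=0$. The dimension equalities that would establish $\pi$ iso are precisely those extracted in the inductive Golod--Shafarevich/Euler-relation argument of \ref{thm: appendix main}, but that argument uses \ref{cor:x inj} (via the construction of an $x_1$-stable monomial basis) as an input, so invoking it here is circular. A viable proof must get by with a weaker statement than an isomorphism of associated graded algebras, or must supply an independent argument for the missing lower bound.
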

\begin{proof}
This is \cite[3.1]{IS}.  A proof in the notation used here is in \href{https://arxiv.org/abs/2111.05900v1}{arXiv:2111.05900v1}, as Proposition A.12.
% For convenience, and in the notation used here, a proof can also be found in the first ArXiv version of arXiv:2111.05900, as Proposition A.12.
\end{proof}
\subsection{Very general elements}

Fixing, once and for all, a basis $f_1,f_2,\dots,f_r$ of $\mathsf{SP}_k$,
we treat $\mathsf{SP}_k$ as an irreducible algebraic family of superpotentials, and identify
it with $\C^r=\Spec\C[\mathsf{t}]$, where $\mathsf{t}=t_1,\dots,t_ r$ are parameter variables
and any element of $\mathsf{SP}_k$ is the specialisation $\mathsf{t}=a$ at a point $a\in\C^r$ of
the `generic' superpotential $G_{\mathsf{t}} = \sum t_if_i$.
In particular, $\mathsf{SP}_k$ inherits the Zariski topology from~$\C^r$.
Under this identification, it is natural to abbreviate $G_a\in\mathsf{SP}_k$ by $a\in\mathsf{SP}_k$.
%(This is a low brow but nevertheless precise approach to the {\em variety of algebras}
%$\scrW_k=\{\AlgJac(G)\mid G\in\mathsf{SP}_k\}$ used in \cite{ISmok,IS}.)

\begin{lemma}[c.f.\ {\cite[3.9]{ISmok}}]\label{lem:Zaropens}
Fix $(d,k)$ and consider $\mathsf{SP}_k$ with its Zariski topology as above.
For any $i\ge0$,
\begin{enumerate}
\item\label{Zaropens1}
there is a non-empty Zariski open subset $U_i\subset\mathsf{SP}_k$
on which $\dim\AlgJac(G_a)_i$ is constant for all $a\in U_i$, and takes the minimum value of any $G\in\mathsf{SP}_k$.
\item\label{Zaropens2}
there is a largest Zariski open subset $V_i\subset U_i\cap U_{i+1}$
on which the rank of the restriction
$\mathsf{d}_3\colon \AlgJac(G_a)_i\to (\AlgJac(G_a)^{\oplus d})_{i+1}$ for any $a\in V_i$
is the maximum possible for a linear map between spaces of these dimensions
(i.e.\ is injective).
\item\label{Zaropens3}
there is a largest Zariski open subset $W_i\subset V_i\cap U_{i+k-1}$
on which the rank of the restriction $\mathsf{d}_2\colon (\AlgJac(G_a)^{\oplus d})_{i+1}\to(\AlgJac(G_a)^{\oplus d})_{i+k-1}$
for any $a\in W_i$ is the
maximum possible for a linear map whose kernel contains the image of $\mathsf{d}_3$.
\end{enumerate}
Thus if $W_i$ not empty, then for $a\in W_i$, the homogeneous slice of \eqref{defin:exact} with the domain of $\mathsf{d}_3$ in degree~$i$
is an exact sequence of finite-dimensional vector spaces.
\end{lemma}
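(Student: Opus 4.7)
The plan is to check exactness of the homogeneous slice at each term separately. First I would record the slice explicitly. Since $G_a \in \SP_k$ is homogeneous, the boundary maps $\mathsf{d}_3, \mathsf{d}_2, \mathsf{d}_1, \mathsf{d}_0$ in \eqref{defin:exact} are homogeneous of degrees $1, k-2, 1, 0$ respectively, so the complex decomposes as a direct sum of subcomplexes indexed by the degree of the leftmost term. For index $i$ the slice reads
\[
0 \to \AlgJac(G_a)_i \xrightarrow{\mathsf{d}_3} (\AlgJac(G_a)^{\oplus d})_{i+1} \xrightarrow{\mathsf{d}_2} (\AlgJac(G_a)^{\oplus d})_{i+k-1} \xrightarrow{\mathsf{d}_1} \AlgJac(G_a)_{i+k},
\]
which is a complex of finite-dimensional vector spaces; the augmentation $\mathsf{d}_0$ has already disappeared since $i+k \geq k \geq 3 > 0$.

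Next I would invoke the general exactness statement \cite[3.6]{ISmok}: the full complex \eqref{defin:exact} is exact at its three rightmost non-zero terms, and since all maps are graded, this exactness passes to every graded slice. Applied here, this immediately yields exactness at $(\AlgJac(G_a)^{\oplus d})_{i+k-1}$, and gives surjectivity of $\mathsf{d}_1$ onto $\AlgJac(G_a)_{i+k}$ (the augmentation being zero in positive degrees). Only two potential failures of exactness remain: injectivity of $\mathsf{d}_3$, and exactness at the middle term $(\AlgJac(G_a)^{\oplus d})_{i+1}$.

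For the leftmost term, injectivity of $\mathsf{d}_3$ on the slice is exactly the defining condition of $V_i$, so it holds for $a \in W_i \subset V_i$. For the middle, since $\mathsf{d}_2 \circ \mathsf{d}_3 = 0$ we have $\Im \mathsf{d}_3 \subseteq \Ker \mathsf{d}_2$, and by injectivity $\dim \Im \mathsf{d}_3 = \dim \AlgJac(G_a)_i$. The maximum rank of a linear map $(\AlgJac(G_a)^{\oplus d})_{i+1} \to (\AlgJac(G_a)^{\oplus d})_{i+k-1}$ whose kernel contains $\Im \mathsf{d}_3$ is therefore $\dim (\AlgJac(G_a)^{\oplus d})_{i+1} - \dim \AlgJac(G_a)_i$, and this bound is attained on $W_i$ by definition. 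Rank–nullity then forces $\dim \Ker \mathsf{d}_2 = \dim \AlgJac(G_a)_i = \dim \Im \mathsf{d}_3$, whence $\Im \mathsf{d}_3 = \Ker \mathsf{d}_2$, establishing exactness at the middle.

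The proof is essentially bookkeeping: each Zariski open set $V_i$ and $W_i$ has been precisely designed to supply the corresponding exactness condition at one term, and the `free' exactness from \cite[3.6]{ISmok} handles the rest. There is no substantive obstacle to surmount here, only the need to track the numerical consequence of the maximum-rank condition defining $W_i$ via rank–nullity, and to verify at the start that $V_i \subset U_i \cap U_{i+1}$ and $W_i \subset V_i \cap U_{i+k-1}$ together keep all the relevant graded dimensions constant across the open set, so that the rank conditions being discussed are well-defined and do indeed cut out Zariski open loci.
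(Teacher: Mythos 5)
Your argument for the concluding ``Thus'' implication is correct, and in fact it is carried out in more detail than in the paper, which states this consequence in the lemma but does not prove it, devoting its proof entirely to parts~(1)--(3). The rank--nullity bookkeeping, combined with the exactness at the three rightmost non-zero terms from \cite[3.6]{ISmok}, is the right way to get exactness at the two remaining positions from the defining conditions on $V_i$ and $W_i$.

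The real content of the lemma, however, is the existence of the Zariski open subsets asserted in (1), (2), (3), and this you have not actually established --- you defer it to a single sentence at the end. For (2) and (3) your remark that rank conditions cut out Zariski open loci is essentially the right reason, and matches the paper: on $U_i\cap U_{i+1}$ (resp.\ $V_i\cap U_{i+k-1}$) the graded dimensions are constant, so $\mathsf{d}_3$ (resp.\ $\mathsf{d}_2$) is represented by a matrix of fixed size whose entries depend algebraically on the parameters, and the locus where its rank is at least a given value is the complement of the common vanishing locus of the corresponding minors. But part~(1) is not a ``rank condition on a map between the $\AlgJac$ pieces'' in that sense: one has to show that minimising $a\mapsto\dim\AlgJac(G_a)_i$ is itself an open condition. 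The paper proves this by fixing a monomial basis of $\fringd_i$, choosing a square coefficient submatrix that detects the kernel of $\fringd_i\to\AlgJac(G_{a_0})_i$ at a point $a_0$ achieving the minimum, and observing that invertibility of that submatrix --- and hence the minimum --- persists on a Zariski open neighbourhood. Equivalently, $(I_{G_a})_i\subset\fringd_i$ is the row space of a matrix with entries polynomial in $a$, so its dimension is lower semicontinuous and $\dim\AlgJac(G_a)_i$ is upper semicontinuous. None of this appears in your write-up, and it is the genuinely substantive step.

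A further small caveat in your ``Thus'' step: you assert the maximal rank of a linear map $(\AlgJac^{\oplus d})_{i+1}\to(\AlgJac^{\oplus d})_{i+k-1}$ with kernel containing $\Im\mathsf{d}_3$ is $\dim(\AlgJac^{\oplus d})_{i+1}-\dim\AlgJac_i$, but without further information this is a priori $\min\bigl(\dim(\AlgJac^{\oplus d})_{i+1}-\dim\AlgJac_i,\ \dim(\AlgJac^{\oplus d})_{i+k-1}\bigr)$; you should either check the relevant inequality from the remaining exactness in the complex, or simply read the lemma's ``maximum possible'' as shorthand for the condition $\Ker\mathsf{d}_2=\Im\mathsf{d}_3$, which is how the open set $W_i$ is being used in practice.
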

\begin{proof}
(1) This is \cite[2.2]{IS1}, proved as in \cite[2.1]{IS1}.
The point is this: let $a_0\in\SP_k$ be a point at which the minimum dimension is achieved,
and choose a subset of the set $\Phi_i$ of all monomials of degree~$i$
for which the (square) coefficient matrix of a basis of the kernel of $\fringd_i\to\AlgJac(G_{a_0})_i$
with respect to $\Phi_i$ is invertible (a section of the surjection). The entries of this square matrix are algebraic in~$a$, so 
it remains invertible as $a$ varies in a Zariski open subset $a_0\in U_i\subset\mathsf{SP}_k$. Thus the kernel cannot get smaller, so must have constant dimension, and thus the minimum is also achieved for all~$a\in U_i$.

\noindent
(2) Within $U_i\cap U_{i+1}$, the map
$\mathsf{d}_3\colon \AlgJac(G_a)_i\to(\AlgJac(G_a)^{\oplus d})_{i+1}$
is determined by a matrix of fixed size whose entries are functions in $a$.
Maximising its rank is an open condition on $a$, since it occurs on
the complement of a locus of vanishing minors.
(It is possible that all relevant minors vanish identically, in which case
the conclusion is simply that $V_i$ is empty.)

\noindent
(3) Similarly, maximising the rank of~$\mathsf{d}_2$ is an open condition prescribed by
minors that are functions in~$a$. The condition that
the kernel contains the image of $\mathsf{d}_3$ is already
imposed by the entries of the matrix, since $a\in V_i$ and \eqref{defin:exact} is a complex.
\end{proof}

\begin{cor}[{\cite[3.2]{ISmok}}]\label{cor!nonempty}
Let $k\ge3$ and $(d,k)\ne(2,3)$.
\begin{enumerate}
\item
For each $i\ge0$, the minimum value achieved by $\dim\AlgJac(G)_i$ for any $G\in\SP_k$,
as in \textnormal{\ref{lem:Zaropens}\eqref{Zaropens1}}, is the coefficient of $t^i$ in the expansion of  $(1-dt+dt^{k-1}-t^{k})^{-1}$.
\item
If $G\in\SP_k$ is exact, then $\scrH_G$ attains the minimum coefficient
for every term~$t^i$.
\end{enumerate}
In particular, each $W_i$ in \textnormal{\ref{lem:Zaropens}\eqref{Zaropens2}} is non-empty,
and the minimum Hilbert series in the set $\left\{\scrH_F\mid F\in\mathsf{SP}_{k}\right\}$ is $(1-dt+dt^{k-1}-t^{k})^{-1}$.
\end{cor}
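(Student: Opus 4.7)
The plan is to extract everything from the existence of an exact homogeneous superpotential, combined with the Euler characteristic of degree-wise slices of~\eqref{defin:exact}. By \ref{lem: ESP non empty}, under the hypotheses there exists $G_0\in\mathsf{ESP}_k$; by \ref{cor:hgsHilb}, $\scrH_{G_0}=(1-dt+dt^{k-1}-t^k)^{-1}$. Writing $c_n\colonequals[t^n]\scrH_{G_0}$ and $m_n\colonequals\min_{G\in\mathsf{SP}_k}\dim\AlgJac(G)_n$, the inequality $m_n\le c_n$ is immediate, and the strategy is to show the matching lower bound $m_n\ge c_n$ by strong induction on~$n$. Once this is established, (2) follows at once from \ref{cor:hgsHilb}, and the ``In particular'' clause follows because $m_j=c_j=\dim\AlgJac(G_0)_j$ for every $j$ places $G_0$ in each $U_j$, whence exactness of~\eqref{defin:exact} at $G_0$ places $G_0$ in each $V_i$ and each $W_i$.

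For the base cases $n\le k-1$, the Jacobian ideal is generated in degree $k-1$, so $(I_F)_n=0$ for $n<k-1$ forces $m_n=d^n=c_n$; and $\dim(I_F)_{k-1}\le d$ with equality attained at $G_0$, giving $m_{k-1}=d^{k-1}-d=c_{k-1}$. For the inductive step $n\ge k$, the inductive hypothesis combined with $\dim\AlgJac(G_0)_j=c_j$ gives $\dim\AlgJac(G_0)_j=m_j$ for $j<n$, so $G_0\in U_{n-k}\cap U_{n-k+1}\cap U_{n-1}$. Exactness of~\eqref{defin:exact} at $G_0$ further gives $\mathsf{d}_3|_{n-k}$ injective (placing $G_0\in V_{n-k}$) and $\ker\mathsf{d}_2|_{n-k+1}=\operatorname{im}\mathsf{d}_3|_{n-k}$ (placing $G_0\in W_{n-k}$), so in particular $W_{n-k}$ is non-empty.

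Now pick any $G$ in the non-empty Zariski open $W_{n-k}\cap U_n\subset\mathsf{SP}_k\cong\C^r$, which is non-empty as the intersection of two non-empty opens in the irreducible variety $\mathsf{SP}_k$. Since $G\in W_{n-k}$, the entire degree-$n$ slice
\[
0\to A_{n-k}\xrightarrow{\mathsf{d}_3}A^d_{n-k+1}\xrightarrow{\mathsf{d}_2}A^d_{n-1}\xrightarrow{\mathsf{d}_1}A_n\xrightarrow{\mathsf{d}_0}\C_n\to 0
\]
of~\eqref{defin:exact} is exact (the rightmost three positions are always exact). Taking Euler characteristic and using $G\in U_{n-k}\cap U_{n-k+1}\cap U_{n-1}\cap U_n$ to identify each dimension with the corresponding $m_j$ yields
\[
m_n=m_{n-k}-dm_{n-k+1}+dm_{n-1}\qquad(n\ge1),
\]
which by the inductive hypothesis equals $c_{n-k}-dc_{n-k+1}+dc_{n-1}$; this is precisely the recurrence satisfied by the coefficients of $(1-dt+dt^{k-1}-t^k)^{-1}$, completing the induction.

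The main obstacle I anticipate is the seemingly circular interplay between the non-emptiness of $W_{n-k}$ and the identity $m_n=c_n$; the resolution is to use the single exact potential $G_0$ from \ref{lem: ESP non empty} as an inductive witness that simultaneously forces $W_{n-k}$ to be non-empty \emph{and} supplies the correct target value for~$m_n$ via Euler characteristic on the fully exact slice. The hypothesis $(d,k)\ne(2,3)$ enters only through \ref{lem: ESP non empty}.
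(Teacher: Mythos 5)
Your proof is correct and takes essentially the same route the paper intends: it delegates to \cite[3.2, 3.8]{ISmok}, which is precisely this Euler-characteristic induction on degree-slices of \eqref{defin:exact} over a very general superpotential, anchored on the exact potential supplied by \ref{lem: ESP non empty}. The one small imprecision -- ``the rightmost three positions are always exact'' should be read as exactness at the codomains of $\mathsf{d}_2$, $\mathsf{d}_1$, $\mathsf{d}_0$ (that is, at $A^d_{n-1}$, $A_n$, $\C_n$), since exactness at the codomain of $\mathsf{d}_3$ is exactly what membership in $W_{n-k}$ supplies -- does not affect the argument.
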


\begin{proof}
The proof follows \cite[3.2, 3.8]{ISmok}, which is for $d=2$ but generalises with no change,
using the existence of the exact potentials in \ref{lem: ESP non empty}.
\end{proof}

%By induction on $m\ge1$.
%Consider $V=V_0\cap V_1\cap \hdots \cap V_{m-k}$.
%Any exact potential lies in~$V$ by~\ref{cor:hgsHilb},
%so $V$ is nonempty by~\ref{lem: ESP non empty}.
%Now consider $W=V\cap W_0\cap W_1\cap\hdots\cap W_{m-k}\subset U_{m-1}$.
%Once again, any exact potential lies in~$W$, so $W\subset\SP_k$ is a nonempty Zariski open.
%
%Therefore we may find a potential $G\in W\cap U_m$.
%Writing $A=\AlgJac(G)$, the degree $m-k$ slice of \ref{cor:hgsHilb} is
%\begin{equation}\label{eq!Am}
%0\rightarrow A_{m-k} \xrightarrow{\mathsf{d}_3} (A^d)_{m-k+1}\xrightarrow{\mathsf{d}_2} (A^d)_{m-1} \xrightarrow{\mathsf{d}_1} A_m \rightarrow 0.
%\end{equation}
%The map $\mathsf{d}_3$ is injective (as $W\subset V_m$) and $\mathsf{d}_2$
%has corresponding maximal possible rank for a complex (as $W\subset W_{m-k}$).
%Thus $\dim\Im{\mathsf{d}_2}$ is the same as the corresponding dimension for any
%exact potential in~$W$.
%But \eqref{eq!Am} is exact at the domain and codomain of $\mathsf{d}_1$,
%and so $\Im{\mathsf{d}_2}=\ker\mathsf{d}_1$ and $\dim A_m=d_m$.

For formal power series $\phi(t)$ and $\uppsi(t)$, write $\phi\ge\uppsi$ to mean that
the coefficients of $\phi-\uppsi$ are all non-negative.
If $\mathcal{P}$ is a family of power series, then $\uppsi\in\mathcal{P}$ is called the {\em minimum}
if $\phi\ge\uppsi$ for all $\phi\in\mathcal{P}$, noting that the minimum does not necessarily exist.

\begin{prop}\label{prop: find homog general}
If $k\ge3$ and $(d,k)\neq (2,3)$, then there exists a countable intersection~$\scrU$ of non-empty Zariski opens of $\mathsf{SP}_k$ such that $G\in\mathsf{ESP}_k$ for all $G\in\scrU$.
\end{prop}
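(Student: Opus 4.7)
The plan is to construct $\scrU$ as the intersection of two countable families of non-empty Zariski opens in $\mathsf{SP}_k$: one family forcing exactness of the complex~\eqref{defin:exact}, and a second family forcing injectivity of left multiplication by~$x_1$ on each graded piece of $\AlgJac(G)$. Since $G$ is homogeneous, $\AlgJac(G)=\bigoplus_i\AlgJac(G)_i$ and $x_1\cdot$ is graded of degree one, so injectivity on each graded piece is equivalent to injectivity on all of $\AlgJac(G)$, which is the second defining condition of $\ESP_k$.

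For the exactness piece, I would take $\scrU_{\mathrm{ex}}\colonequals\bigcap_{i\ge0}W_i$, where the $W_i$ are the Zariski opens supplied by \ref{lem:Zaropens}\eqref{Zaropens3}. Each $W_i$ is non-empty by \ref{cor!nonempty}, and on $\scrU_{\mathrm{ex}}$ every homogeneous slice of~\eqref{defin:exact} is exact; since the only degree-by-degree exactness that fails in general is at the leftmost term, and it is restored on every $W_i$, every $G\in\scrU_{\mathrm{ex}}$ is exact.

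For the injectivity piece, fix $i\ge0$ and restrict attention to $U_i\cap U_{i+1}$, where the source and target of $x_1\cdot\colon\AlgJac(G_a)_i\to\AlgJac(G_a)_{i+1}$ have locally constant dimension by \ref{lem:Zaropens}\eqref{Zaropens1}. Injectivity is then a Zariski open condition, cut out by non-vanishing of a suitable set of maximal minors of a matrix whose entries are polynomial in the parameters~$a$; call this open subset $T_i\subseteq U_i\cap U_{i+1}$. To see $T_i$ is non-empty, use the explicit exact potential $G_0\in\mathsf{ESP}_k$ from \ref{lem: ESP non empty}, whose Hilbert series equals the universal minimum $(1-dt+dt^{k-1}-t^k)^{-1}$ by \ref{cor:hgsHilb}, so by \ref{cor!nonempty} this $G_0$ attains the pointwise-minimum dimension in every degree and hence lies in every $U_j$. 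Since $G_0\in\mathsf{ESP}_k$, left multiplication by~$x_1$ is injective on $\AlgJac(G_0)$, and therefore restricts injectively to each graded piece, placing $G_0\in T_i$.

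Setting $\scrU\colonequals\scrU_{\mathrm{ex}}\cap\bigcap_{i\ge0}T_i$ then gives the required countable intersection of non-empty Zariski opens, with $G\in\mathsf{ESP}_k$ for every $G\in\scrU$. The main conceptual obstacle is the double-duty required of the witness $G_0$: a single exact potential must certify non-emptiness of every $W_i$ and every $T_i$ simultaneously. The route around this is precisely the content of \ref{cor:hgsHilb} and \ref{cor!nonempty}, which pin the Hilbert series of any exact $G$ to the universal lower bound, so that exactness at one $G_0$ forces membership in every open of both families at once; the verification $(d,k)\ne(2,3)$ enters only through \ref{lem: ESP non empty}, which guarantees that such a $G_0$ exists in the first place.
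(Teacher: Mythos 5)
Your proof takes essentially the same route as the paper: $\scrU_1 = \bigcap_i W_i$ for exactness, a second countable intersection of opens where left multiplication by $x_1$ is injective degree by degree, and intersection of the two. The only difference is cosmetic — you spell out that the injectivity-is-open argument should be posed on $U_i\cap U_{i+1}$ where dimensions are locally constant, and you use only the $\ESP_k$ witness from~\ref{lem: ESP non empty} for non-emptiness where the paper also offers the alternative of evaluating at $\mathsf{t}=0$.
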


\begin{proof}
Let $\scrU_1$ be the countable intersection over all $i\ge0$ of the Zariski open subsets $W_i$ of~\ref{lem:Zaropens}.
By \ref{cor!nonempty}, each $W_i$ is non-empty, thus this intersection is non-empty.
Similarly, there is another such countable intersection $\scrU_2\subset\mathsf{SP}_k$
on which the left-multiplication map $x_1\colon \AlgJac(G)\to\AlgJac(G)$
is injective, as injectivity maximises rank in each degree.
Since $x_1$ is injective on the free algebra (at $\mathsf{t}=0$), or again applying
\ref{lem: ESP non empty}, this intersection is also non-empty.
Thus $\scrU=\scrU_1\cap\scrU_2$ is a non-empty countable intersection.
\end{proof}

For a given $F\in \SP_{k,m}$,
recall the notation $\m$ and $I_F$ from~\S\ref{app:notation}.
The {\em Poincar\'{e} series} of $F$ is defined to be
\begin{equation}
\scrP_F=\sum_{i\geq 0}\dim_\mathbb{C}\big(\tfrac{\fringd}{\m^{i+1}+I_F}\big)t^i.\label{eq: Poincare alg}
\end{equation}
This measures the growth of the quotients of $\AlgJac(F)$ by the ideals $(\m^{i+1}+I_F)/I_F$.
The general situation is more delicate than the homogeneous case of \ref{prop: find homog general},
but still a minimum is achieved by a very general element.

\begin{cor}\label{cor: find nonhomog F}
If $k\ge3$ and $(d,k)\neq (2,3)$, then for any $m\geq k$ there exists $F\in\mathsf{SP}_{k,m}$ such that the following statements hold.
\begin{enumerate}
\item $\scrP_F$ is the minimum in $\{ \scrP_H\mid H\in\mathsf{SP}_{k,m}\}$.
\item $F_{\ord(F)}\in\scrU$, where $\scrU\subset\mathsf{SP}_k$ is defined in~\textnormal{\ref{prop: find homog general}}.
\end{enumerate}
\end{cor}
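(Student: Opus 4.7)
The plan is to show that conditions (1) and (2) each cut out a countable intersection of non-empty Zariski open subsets of $\SP_{k,m}$, and then invoke the standard fact that a countable intersection of non-empty Zariski opens in an irreducible affine variety over the uncountable field $\C$ is non-empty. An $F$ satisfying both conditions is then any element of the intersection of the two.

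For condition (1), the approach is direct upper semi-continuity, in close analogy with \ref{lem:Zaropens}\eqref{Zaropens1}. The image of $I_F$ in the finite dimensional vector space $\fringd/\m^{i+1}$ is spanned by the vectors $a\cdot \partial_j F\cdot b \bmod \m^{i+1}$ as $j\in\{1,\dots,d\}$ and $a,b$ range over monomials of total degree at most $i-k+1$. The coordinates of these spanning vectors depend polynomially on $F\in\SP_{k,m}$, so the dimension of their span is lower semi-continuous in $F$, and hence $F\mapsto\dim\fringd/(\m^{i+1}+I_F)$ is upper semi-continuous. Let $p_i$ denote the minimum value achieved and $U_i\subset\SP_{k,m}$ the non-empty Zariski open on which it is attained. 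Any $F\in\bigcap_{i\ge0}U_i$ then has $\scrP_F=\sum_i p_it^i$, which is minimal term by term and therefore minimal in $\{\scrP_H\mid H\in\SP_{k,m}\}$.

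For condition (2), consider the surjective linear projection $\pi\colon\SP_{k,m}\to\SP_k$ sending $F\mapsto F_k$. By \ref{prop: find homog general}, $\scrU\subset\SP_k$ is a countable intersection of non-empty Zariski opens, so $\pi^{-1}(\scrU)\subset\SP_{k,m}$ is too. Moreover $0\notin\scrU$, since $\AlgJac(0)=\fringd$ has Hilbert series $(1-dt)^{-1}$, which strictly exceeds the minimum series $(1-dt+dt^{k-1}-t^k)^{-1}$ of \ref{cor!nonempty}. Hence every $F\in\pi^{-1}(\scrU)$ satisfies $F_k\ne0$, so $\ord(F)=k$ and $F_{\ord(F)}=F_k\in\scrU$.

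Assembling, the set $(\bigcap_{i\ge0}U_i)\cap\pi^{-1}(\scrU)$ is a countable intersection of non-empty Zariski opens in the irreducible affine variety $\SP_{k,m}$ over $\C$, and therefore non-empty; any element of it satisfies both (1) and (2). The main delicate point in the argument is the upper semi-continuity in the first step, since $I_F$ is a two-sided ideal rather than a left ideal generated by $\partial_1F,\dots,\partial_dF$. This is handled by the finite-dimensional spanning set exhibited above, allowing the entire step to proceed via standard rank-of-minors reasoning exactly as in the homogeneous case \ref{lem:Zaropens}.
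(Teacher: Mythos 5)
Your proof is correct and follows the paper's own argument essentially verbatim: minimise each Poincar\'e coefficient on a countable intersection of non-empty Zariski opens in $\SP_{k,m}$, intersect with the preimage of $\scrU$ under $F\mapsto F_k$, and invoke uncountability of $\C$. Your explicit observation that $0\notin\scrU$ (via the Hilbert series of $\fringd$ strictly exceeding the minimum at degree $k-1$), which guarantees $\ord(F)=k$ so that $F_{\ord(F)}=F_k$, is a small point the paper leaves implicit and is a worthwhile precision.
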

\begin{proof}
Similar to \ref{lem:Zaropens}\eqref{Zaropens1} (cf.\ \cite[2.1]{IS1}),
minimising each coefficient of the Poincar\'e series is an open condition in a family, and so the
minimum is realised on 
a countable intersection of non-empty Zariski open subsets $\scrV\subset\mathsf{SP}_{k,m}$.
% N.B. Elements of this set V ONLY achieve the minimum - there are no exactness or other claims.

Consider the map $\mathsf{SP}_{k,m}\to\mathsf{SP}_{k}$ given by $F\mapsto F_k$.
Intersecting the preimage of $\scrU\subset\mathsf{SP}_{k}$ with $\scrV$ determines a countable intersection of open subsets of
$\mathsf{SP}_{k,m}$ on which the claims hold.
Since $\C$ is uncountable, this set contains a closed point.
\end{proof}

\subsection{Power Series}\label{sec: power series}

Given $f\in\ringd$, its Poincar\'{e} series is defined to be
\[
\hat\scrP_f=\sum(\dim_{\mathbb{C}}\Jac(f)/\mathfrak{J}^i)t^i
\]
where recall that $\mathfrak{J}$ is the Jacobson radical of $\Jac(f)$.

When $F\in\mathsf{SP}_{k,m}$, we can view $F$ as either a polynomial superpotential and form $\scrP_F$ in \eqref{eq: Poincare alg}, or we can view $F$ as an element of $\ringd$ and form $\hat{\scrP}_F$.  Since $\Jac(f)$ is defined with respect to the cyclic derivatives $\updelta$, and $\AlgJac(f)$ is defined with respect to strike-off derivatives $\partial$, it is not quite true that $\hat{\scrP}_F=\scrP_F$.  

\begin{lemma}\label{lem: Poincare are equal}
Given $G\in\mathsf{SP}_{k,m}$, then $\scrP_G=\hat{\scrP}_{\mathbb{G}}$, where $\mathbb{G}=\sum_{i=k}^m\tfrac{1}{i}G_i$.
Thus, if $G\in\mathsf{SP}_k$, then $\hat{\scrP}_{\mathbb{G}}=\scrP_G=\tfrac{1}{1-t}\scrH_G$ where $\mathbb{G}=\tfrac{1}{k}G$.
\end{lemma}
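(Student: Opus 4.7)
The plan is to exploit the relation $\delta_j = \partial_j \circ \sym$ together with Lemma~\ref{lem!sym}\eqref{lem!sym1} to identify the cyclic Jacobian ideal of $\mathbb{G}$ with the strike-off Jacobian ideal of $G$, and then to reduce the equality of Poincar\'e series to the observation that $\m$-adic truncations commute with taking images of closed ideals generated by polynomials.

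First, for each cyclically symmetric homogeneous piece $G_i$ of degree $i$, Lemma~\ref{lem!sym}\eqref{lem!sym1} gives $\sym(G_i) = i\, G_i$ as elements of $\fringd$ (an honest equality, not merely $\sim$, because $G_i$ is cyclically symmetric and $\sym$ is $\C$-linear). Therefore $\delta_j G_i = \partial_j \sym(G_i) = i\, \partial_j G_i$, and so
\[
\delta_j \mathbb{G} \;=\; \sum_{i=k}^{m} \tfrac{1}{i}\, \delta_j G_i \;=\; \sum_{i=k}^{m} \partial_j G_i \;=\; \partial_j G.
\]
In particular, the two-sided ideals $(\delta_1\mathbb{G},\dots,\delta_d\mathbb{G})$ and $I_G=(\partial_1 G,\dots,\partial_d G)$ coincide inside $\fringd$.

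Next, I would compare coefficients of the two Poincar\'e series. The inclusion $\fringd\hookrightarrow\ringd$ induces a $\C$-algebra isomorphism $\fringd/\m^{i+1}\xrightarrow{\sim}\ringd/\n^{i+1}$, by the very definition of $\ringd$ as the $\m$-adic completion. Under this isomorphism the image of the closed ideal $\lcl \delta_j\mathbb{G}\rcl$ modulo $\n^{i+1}$ equals the image of the (unclosed) ideal $(\delta_j\mathbb{G})_{\ringd}$, since any element of the closure differs from a polynomial combination by an element of $\n^{i+1}$; and any element $\sum a_s(\partial_j G)b_s$ with $a_s,b_s\in\ringd$ can, modulo $\n^{i+1}$, be represented with polynomial $a_s,b_s$, hence lies in the image of $I_G$. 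Consequently
\[
\frac{\fringd}{\m^{i+1}+I_G} \;\cong\; \frac{\ringd}{\n^{i+1}+\lcl \delta_j\mathbb{G}\rcl} \;=\; \frac{\Jac(\mathbb{G})}{\mathfrak{J}^{i+1}},
\]
taking dimensions and assembling gives $\scrP_G=\hat\scrP_{\mathbb{G}}$.

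For the homogeneous case $G\in\mathsf{SP}_k$, clearly $\mathbb{G}=\tfrac{1}{k}G$. Since $I_G$ is then homogeneous, $\AlgJac(G)$ is graded and $\fringd/(\m^{i+1}+I_G)\cong\bigoplus_{j=0}^{i}\AlgJac(G)_j$; thus the coefficient of $t^i$ in $\scrP_G$ equals $\sum_{j\le i}\dim\AlgJac(G)_j$, which is the coefficient of $t^i$ in $\tfrac{1}{1-t}\,\scrH_G$. The only subtle point in the whole argument is verifying that closed and unclosed ideals have the same image modulo $\n^{i+1}$; everything else is bookkeeping around the identity $\delta_j=\partial_j\circ\sym$.
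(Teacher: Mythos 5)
The paper states this lemma without a proof; the computation it has in mind is exactly the truncation isomorphism \eqref{eq:truncate to alg} printed immediately after, namely $\Jac(f)/\mathfrak{J}^i\cong\AlgJac(\sym f_{<i})/\mathfrak{R}^i$. Your proposal is correct and is substantially the same argument: you establish $\updelta_j\mathbb{G}=\partial_jG$ from the identity $\updelta_j=\partial_j\circ\sym$, then translate between the $\m$-adic truncations of $\fringd$ and the $\n$-adic truncations of $\ringd$, checking that passing to the closure of the Jacobian ideal does not alter the image modulo $\n^{i+1}$. The only wrinkle is a misattribution: the equality $\sym(G_i)=i\,G_i$ is not a consequence of Lemma~\ref{lem!sym}\eqref{lem!sym1}, which only gives $\sym(m)\sim \deg(m)m$ up to cyclic equivalence; rather, it is the \emph{definition} of cyclically symmetric (Remark~\ref{def:cyc symm}), which applies because elements of $\SP_{k,m}$ are superpotentials by definition. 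With that citation corrected, the argument is complete, and the homogeneous case via the graded decomposition of $\AlgJac(G)$ yielding $\scrP_G=\tfrac{1}{1-t}\scrH_G$ is also handled correctly.
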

Now for $f\in\ringd$ and for any $i\geq 0$, set $\sff_i=f_{\leq i}$ and $\mathsf{F}_i=\sym(\sff_i)$.  Then, continuing the notation of \S\ref{completion section}, and
recalling that $\mathfrak{R}^i=(\m^i+I_F)/I_F$,
\begin{align}\label{eq:truncate to alg}
\Jac(f)/\mathfrak{J}^i
&\cong \frac{\ringd}{\lcl \n^i,\updelta_1f,\hdots,\updelta_df\rcl} \nonumber\\
&\cong\frac{\fringd}{(\m^i,\updelta_1\sff_i,\hdots,\updelta_d\sff_i)} \\
&\cong\frac{\fringd}{\big(\m^i,\partial_1(\sym\sff_i),\hdots,\partial_d(\sym\sff_i)\big)} 
\cong\AlgJac(\mathsf{F}_i)/\mathfrak{R}^i \nonumber
\end{align}
This gives a term-by-term algebraicisation of the Poincar\'{e} series, by 
\[
\hat{\scrP}_f=\sum_{i\geq 0}(\dim_{\mathbb{C}}\AlgJac(\mathsf{F}_i)/\mathfrak{R}^i)t^i.
\]

\subsection{Main Result}
The main result, \ref{thm: appendix main}, requires the following elementary lemma.
\begin{lemma}\label{lem: b lemma}
For $d\in\R$ and $k\geq 2$ consider the formal power series
\[
\frac{1}{(1-t)(1-dt+dt^{k-1}-t^k)}=\sum_{i\geq 0}b_it^i.
\]
Setting $b_j=0$ for $j<0$, the following statements hold.
\begin{enumerate}
\item\label{lem: b lemma 1}
There is an equality $b_j=db_{j-1}-db_{j-k+1}+b_{j-k}+1$ for all $j\geq 0$.
\item\label{lem: b lemma 2}
$b_0=1$, and further $b_j=1+d+\hdots +d^{j}$ for all $1\leq j\leq k-2$.
\end{enumerate}
\end{lemma}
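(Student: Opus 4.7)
The plan is to derive the claimed recurrence directly from the defining generating function, rather than from any inductive manipulation of coefficients.

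First, I would rewrite the defining identity
\[
A(t) = \frac{1}{(1-t)(1-dt+dt^{k-1}-t^k)}, \qquad A(t) = \sum_{i \ge 0} b_i t^i,
\]
in the form
\[
(1-dt+dt^{k-1}-t^k)\,A(t) \;=\; \frac{1}{1-t} \;=\; \sum_{j \ge 0} t^j.
\]
Extracting the coefficient of $t^j$ on both sides (using the convention $b_j = 0$ for $j<0$ to handle small indices uniformly) yields
\[
b_j - d\,b_{j-1} + d\,b_{j-k+1} - b_{j-k} \;=\; 1 \qquad (j \ge 0),
\]
which, rearranged, is precisely the recurrence in part~\eqref{lem: b lemma 1}. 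This is the whole content of~\eqref{lem: b lemma 1}, and the main (only) step is spotting that it is the coefficient identity of the $(1-t)$-cleared form, rather than of the original product.

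For part~\eqref{lem: b lemma 2} I would simply specialise the recurrence just obtained. In the range $0 \le j \le k-2$ both indices $j-k+1$ and $j-k$ are strictly negative, so by the convention $b_{j-k+1} = b_{j-k} = 0$ and the recurrence collapses to
\[
b_j = d\,b_{j-1} + 1 \qquad (0 \le j \le k-2).
\]
At $j=0$ this gives $b_0 = 1$, and induction on $j$ then gives $b_j = 1 + d + d^2 + \cdots + d^j$ for $1 \le j \le k-2$, as claimed. No obstacle is anticipated: the only point to be careful about is that the reduction $b_{j-k+1}=b_{j-k}=0$ requires $j \le k-2$ (so that $j-k+1<0$), which is exactly the stated range.
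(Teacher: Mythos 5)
Your proof is correct, and for part~\eqref{lem: b lemma 1} it takes a genuinely cleaner route than the paper's. The paper multiplies $A(t)$ by the fully expanded product $(1-t)(1-dt+dt^{k-1}-t^k)$, which produces a homogeneous six-term recurrence
\[
b_j - (d+1)b_{j-1} + db_{j-2} + db_{j-k+1} - (d+1)b_{j-k} + b_{j-k-1} = 0 \quad (j\neq 0),
\]
and then extracts the claimed inhomogeneous four-term formula by an induction in which one $b_{j-1}$ is split off and the inductive hypothesis $b_{j-1} = db_{j-2}-db_{j-k}+b_{j-k-1}+1$ is substituted, with terms cancelling. Your version instead clears only the second factor, keeping $1/(1-t)$ on the right as the source of the constant inhomogeneous term; the desired four-term recurrence then falls out immediately from equating coefficients, with no induction needed. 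Both are valid, but yours replaces an inductive cancellation argument with a one-line coefficient extraction. Part~\eqref{lem: b lemma 2} is handled the same way in both proofs: for $0\le j\le k-2$ the indices $j-k+1$ and $j-k$ are negative, so the recurrence collapses to $b_j = db_{j-1}+1$, from which $b_0 = 1$ and $b_j = 1+d+\cdots+d^j$ follow by a trivial induction.
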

\begin{proof}
(1) Treating the series as a sum over $i\in\Z$, with $b_{<0}=0$, and multiplying up shows
at once that $b_0=1$ and that for any~$j\in\Z\setminus\{0\}$ 
\[
b_j - (d+1)b_{j-1} + db_{j-2} + db_{j-k+1} - (d+1)b_{j-k} + b_{j-k-1} = 0.
\]
When $j=0$, the claimed equality in~\eqref{lem: b lemma 1} holds.
For $j\ge1$, splitting off a single $b_{j-1}$ summand from the equation above we see by induction that
\begin{align*}
b_j &= db_{j-1} - db_{j-2} - db_{j-k+1} + (d+1)b_{j-k} - b_{j-k-1} \\
&\qquad\qquad + (db_{j-2}-db_{j-k}+b_{j-k-1}+1) \\
&= db_{j-1} - db_{j-k+1} + b_{j-k} + 1.
\end{align*}

\noindent
(2) For $1\le j\le k-2$, the equality in~\eqref{lem: b lemma 1} reads $b_j = b_{j-1}d + 1$, and so the result follows since $b_0=1$.
\end{proof}

The following is the main result of this appendix, and it asserts that, in almost all cases, the $\mathfrak{J}$-dimension of $\Jac(f)$ is $\geq 3$.  In particular, in almost all cases the Jacobi algebra is infinite dimensional, as a vector space.  Recall from \S\ref{sect: power notation} that $\ringd_{\geq k}$ consists of all those $f\in\ringd$ for which $f_j=0$ for all $j<k$, and note that $0\in\ringd_{\geq k}$.
\begin{thm}\label{thm: appendix main}
Suppose that $d=2$ and $k\geq 4$, or $d\geq 3$ and $k\geq 3$.  If $f\in\ringd$ has order $k$, then $\JRdim\Jac(f)\geq 3$.
\end{thm}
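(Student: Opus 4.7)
The plan is to transfer the $\mathfrak{J}$-adic growth of $\Jac(f)$ to the algebraic Jacobi estimates via the truncation isomorphism~\eqref{eq:truncate to alg}. First I would fix $n\geq 1$ and use \eqref{eq:truncate to alg} to identify
\[
\dim_{\mathbb{C}}\big(\Jac(f)/\mathfrak{J}^n\big) \;=\; \dim_{\mathbb{C}}\big(\AlgJac(\mathsf{F}_n)/\mathfrak{R}^n\big),
\]
where $\mathsf{F}_n = \sym(f_{\leq n}) \in \SP_{k,n}$ since $\ord(f)=k$. The right-hand side is precisely the $(n-1)$-th coefficient of the Poincar\'e series $\scrP_{\mathsf{F}_n}$, so it suffices to show this coefficient grows at least as $n^3$. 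By Corollary~\ref{cor: find nonhomog F} applied with $m=n$, there exists $F^{(n)}\in\SP_{k,n}$ whose Poincar\'e series is the pointwise minimum in $\{\scrP_H\mid H\in\SP_{k,n}\}$ and whose leading component $F^{(n)}_k$ lies in the set $\scrU\subset\mathsf{ESP}_k$ of Proposition~\ref{prop: find homog general}; here the assumption $(d,k)\neq(2,3)$ is precisely what guarantees $\scrU\neq\emptyset$. Minimality then gives $\scrP_{\mathsf{F}_n}\geq \scrP_{F^{(n)}}$ coefficient-wise.

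The heart of the proof, and the main obstacle, is to establish the \emph{lower} bound
\[
\scrP_{F^{(n)}} \;\geq\; \sum_{j\geq 0} b_j t^j \;=\; \frac{1}{(1-t)(1-dt+dt^{k-1}-t^k)},
\]
where the $b_j$ are as in Lemma~\ref{lem: b lemma}. For homogeneous $G\in\SP_k\subset\SP_{k,n}$ this is an immediate consequence of the minimum Hilbert series statement of Corollary~\ref{cor!nonempty} together with Lemma~\ref{lem: Poincare are equal}. For the general $F^{(n)}$ it is not a formal consequence, since the natural comparison via initial forms only yields a surjection $\AlgJac(F^{(n)}_k)\twoheadrightarrow \mathrm{gr}\,\AlgJac(F^{(n)})$, which gives the wrong-direction inequality $\scrH(\mathrm{gr}\,\AlgJac(F^{(n)}))\leq\scrH(\AlgJac(F^{(n)}_k))$. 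To promote this surjection to an isomorphism when $F^{(n)}_k\in\mathsf{ESP}_k$, I would combine the $x_1$-injectivity on the successive truncations $\AlgJac(F^{(n)})/\mathfrak{R}^j$ supplied by Corollary~\ref{cor:x inj} with the exactness of the three rightmost positions of the complex~\eqref{defin:exact}, which holds for every superpotential. Together with Euler's relation (Lemma~\ref{lem:Euler}) these rule out unexpected syzygies among the $\partial_iF^{(n)}$ producing initial forms outside $I_{F^{(n)}_k}$, so the surjection must be an isomorphism on each graded piece, forcing $\scrH(\mathrm{gr}\,\AlgJac(F^{(n)}))=(1-dt+dt^{k-1}-t^k)^{-1}$. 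Dividing by $(1-t)$ yields the displayed bound via Lemma~\ref{lem: Poincare are equal}.

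Finally, I would extract the cubic growth of $b_j$ by direct factorisation of the denominator, using the hypotheses on $(d,k)$. For $d=3$, $k=3$ one has $1-3t+3t^2-t^3=(1-t)^3$, so $\sum b_jt^j=(1-t)^{-4}$ and $b_j=\binom{j+3}{3}\sim j^3/6$. For $d=2$, $k=4$ the factorisation $1-2t+2t^3-t^4=(1-t)^3(1+t)$ gives $\sum b_jt^j=(1-t)^{-4}(1+t)^{-1}$, again with cubic growth. In every remaining case ($d=2$, $k\geq 5$ or $d\geq 4$, $k\geq 3$), the polynomial $(1-t)(1-dt+dt^{k-1}-t^k)$ has a real root strictly inside the unit disk, so the coefficients $b_j$ grow exponentially and in particular $b_j\gtrsim j^3$ for $j$ large. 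Combining the three steps gives $\dim_{\mathbb{C}}(\Jac(f)/\mathfrak{J}^n)\geq b_{n-1}\gtrsim n^3$, and hence $\JRdim\Jac(f)\geq 3$ as required.
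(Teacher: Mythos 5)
Your reduction steps match the paper's setup: you convert the $\mathfrak{J}$-adic growth of $\Jac(f)$ into finite truncations via~\eqref{eq:truncate to alg}, you invoke~\ref{cor: find nonhomog F} to produce a Poincar\'e-minimising superpotential whose leading form lies in $\mathsf{ESP}_k$, and your final factorisation analysis of $(1-t)^{-1}(1-dt+dt^{k-1}-t^k)^{-1}$ is exactly how the paper justifies cubic (resp.\ exponential) growth afterwards. The gap is in the middle step, which is the actual substance of the theorem.

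You claim the surjection $\AlgJac\big(F^{(n)}_k\big)\twoheadrightarrow \mathrm{gr}\,\AlgJac\big(F^{(n)}\big)$ becomes an isomorphism by citing the $x_1$-injectivity from~\ref{cor:x inj}, the partial exactness of~\eqref{defin:exact}, and the Euler relation~\ref{lem:Euler}; the justification offered is that these ``rule out unexpected syzygies among the $\partial_iF^{(n)}$ producing initial forms outside $I_{F^{(n)}_k}$.'' That sentence does not constitute an argument. The leading-form ideal of $I_{F^{(n)}}$ could a priori strictly contain $I_{F^{(n)}_k}$: that containment is exactly what the Golod--Shafarevich philosophy cannot rule out for free, and none of the cited ingredients directly controls the initial ideal. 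In particular, $x_1$-injectivity on each truncation says the maps $\AlgJac(F^{(n)})/\mathfrak{R}^j\to\AlgJac(F^{(n)})/\mathfrak{R}^{j+1}$ are injective, but that alone does not pin down the Hilbert function of $\mathrm{gr}\,\AlgJac(F^{(n)})$.

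The paper does not attempt any such gr-isomorphism. Instead it runs an induction on the coefficients $a_j$ (the pointwise minimum dimensions), using the explicit decomposition $I = VI + R\,\fringd$, then applies $\uppi_{j+1}$ and splits $R = R'\oplus\langle\partial_1F\rangle$. The Euler relation is used precisely to rewrite $(\partial_1F)x_1$ in terms of the other summands, and the $x_1$-injectivity is what improves the estimate for the $\partial_1F$ contribution from $\sfb_{j+2-k}$ to $\sfb_{j+2-k}-\sfb_{j+1-k}$. Adding up dimensions produces the recursive inequality $\sfb_{j+1}\geq d\sfb_j - d\sfb_{j+2-k} + \sfb_{j+1-k} + 1$, which by~\ref{lem: b lemma}\eqref{lem: b lemma 1} matches the recursion defining $b_j$ and closes the induction. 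This careful degree-by-degree counting is the missing heart of the argument. Your outer scaffolding is fine, but without replacing the asserted gr-isomorphism by either the Vinberg inequality or an honest proof of the isomorphism, the proof has a hole exactly where the difficulty lies.
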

The proof will show that the coefficients of the Poincar\'e series of $\Jac(f)$
are no smaller than those of
\[
\frac{1}{(1-t)(1-dt+dt^{k-1}-t^k)}.
\]
When $d=2$ and $k=4$ this lower bound is $1/\big((1-t)^3(1-t^2)\big)$,
and when $d=k=3$ it is $1/(1-t)^4$, both of which have polynomial growth of degree~3.
For all other $d,k$ in the scope of the theorem, the growth is exponential
and $\JRdim\Jac(f)=\infty$.
\begin{proof}
Associated to the fixed $k=\ord(f)$ and $d$ is the power series
\[
\frac{1}{(1-t)(1-dt+dt^{k-1}-t^k)}=\sum_{i\geq 0}b_it^i.
\]
The $b_i$s are integers that depend only on $k$ and $d$.  Similarly, associated to~$k$ and~$d$ are the positive integers $a_0,a_1,\hdots$ defined to be
\begin{align*}
a_j&\colonequals \textnormal{min}\{\dim_{\mathbb{C}}\big(\tfrac{\Jac(g)}{\mathfrak{J}^{j+1}}\big)\mid g\in\ringd_{\geq k}\}\\
&= \textnormal{min}\{\dim_{\mathbb{C}}\big(\tfrac{\Jac(g)}{\mathfrak{J}^{j+1}}\big)\mid g\in\fringd_{\geq k}\}\tag{truncate terms mod $\mathfrak{J}^{j+1}$}\\
&=\textnormal{min}\{\dim_{\mathbb{C}}\big(\tfrac{\Jac(g)}{\mathfrak{J}^{j+1}}\big)\mid g\in\fringd_{\geq k}, \,g_{t}=0\mbox{ for }t>j+1\}\\
&=
\begin{cases}
\dim_{\mathbb{C}}\big(\tfrac{\ringd}{\n^{j+1}}\big)&\mbox{if }j\leq k-2\\
\textnormal{min}\left\{\dim_{\mathbb{C}}\big(\tfrac{\AlgJac(G)}{\mathfrak{R}^{j+1}}\big)\mid G\in\mathsf{SP}_{k,j+1}\right\}&\mbox{if }j\geq k-1.
\end{cases}
.\tag{by \eqref{eq:truncate to alg}}
\end{align*}
Certainly $\hat{\scrP}_f\geq \sum_{i\geq 0}a_it^i$, by the minimality of the $a_i$.  We claim that $a_i=b_i$ for all $i\geq 0$, since then 
\[
\hat{\scrP}_f\geq \sum_{i\geq 0}a_it^i=\sum_{i\geq 0}b_it^i=\frac{1}{(1-t)(1-dt+dt^{k-1}-t^k)},
\]
which has the prescribed growth as in the statement of the result.

So, from here on we discard the original $f$, and instead prove that $a_i=b_i$ for all $i\geq 0$. This is a statement which depends only on $k$ and $d$.  

\medskip
Since by assumption $(d,k)\neq (2,3)$, by \ref{lem: ESP non empty} $\mathsf{ESP}_k\neq \emptyset$ and so choose $G\in\mathsf{ESP}_k$.  Since $G$ is exact,  by \ref{cor:hgsHilb} $\scrH_G=(1-dt+dt^{k-1}-t^k)^{-1}$, and so by \ref{lem: Poincare are equal} for $\mathbb{G}=\tfrac{1}{k}G$ we have
\[
\hat{\scrP}_{\mathbb{G}}=\frac{1}{1-t}\cdot\scrH_G=\frac{1}{(1-t)(1-dt+dt^{k-1}-t^k)}=\sum_{i\geq 0}b_it^i.
\]
Since $\mathbb{G}$ exists, it follows immediately by minimality of the $a_i$s that $a_i\leq b_i$ for all $i\geq 0$. 

Now clearly $a_0=b_0=1$, and further for all $1\leq j\leq k-2$ we have $a_j=\dim_{\mathbb{C}}\big(\tfrac{\ringd}{\n^{j+1}}\big)$, which equals $b_j$ by \ref{lem: b lemma}\eqref{lem: b lemma 2}.  Further, since $d$ relations of degree $k$ can cut down the dimension of $\ringd/\mathfrak{J}^{k+1}$ by at most $d$, it follows that $a_{k-1}\geq 1+d+\hdots +d^k - d$.  This equals $b_{k-1}$ by \ref{lem: b lemma}\eqref{lem: b lemma 2}, and so $a_{k-1}\geq b_{k-1}$, which in turn forces $a_{k-1}=b_{k-1}$.

Thus, by induction we can suppose that $a_j=b_j$ for all $0\leq j\leq s$, for some $s\geq k-1$.  The proof will be completed once we show that $a_{s+1}=b_{s+1}$.  

Now by \ref{cor: find nonhomog F} applied to $m=s+2$, there exists $F\in\mathsf{SP}_{k,s+2}$ for which $\scrP_{F}$ is the minimum in $\{ \scrP_H\mid H\in\mathsf{SP}_{k,s+2}\}$, and further $F_k\in\scrU$ where $\scrU$ is from \ref{prop: find homog general}.  By the first of these facts, since for all $j\leq s+1$ by truncation
\[
a_{j}=\textnormal{min}\left\{\dim_{\mathbb{C}}\big(\tfrac{\AlgJac(\mathsf{H})}{\mathfrak{R}^{j+1}}\big)\mid \mathsf{H}\in\mathsf{SP}_{k,j+1}\right\}
=\textnormal{min}\left\{\dim_{\mathbb{C}}\big(\tfrac{\AlgJac(\mathsf{H})}{\mathfrak{R}^{j+1}}\big)\mid \mathsf{H}\in\mathsf{SP}_{k,s+2}\right\}
\]
it follows that 
\begin{equation}
\scrP_{F}=\sum_{j=0}^{s+1}a_jt^i +\scrO_{s+2}.\label{eq: up to s plus 1}
\end{equation}
On the other hand, since $F_k\in\scrU$, by definition $F_k\in\mathsf{ESP}_k$. Thus, by \ref{cor:x inj} for all $j\geq 1$ the left multiplication by $x_1$
\[
\AlgJac(F)/\mathfrak{R}^j\xrightarrow{x_1\cdot}\AlgJac(F)/\mathfrak{R}^{j+1}
\]
is injective. 

Set $I=(\partial_1F,\hdots,\partial_dF)\subset\fringd$. The above asserts that there is an injection
\[
\frac{\fringd}{I+\m^j}\xrightarrow{x_1\cdot}\frac{\fringd}{I+\m^{j+1}}
\]
for all $j\geq 1$. This allows us to pick inductively sets of monomials $M_j$ of $\fringd$ of degree~$j$, starting with $M_0=\{1\}$, such that the following two conditions are satisfied.
\begin{enumerate}
\item\label{closed under x1}  There is an inclusion $x_1M_j\subseteq M_{j+1}$. 
\item The necessarily disjoint union $N_j=M_0\cup\hdots\cup M_j$ projects down to give a basis of $\fringd/(I+\m^{j+1})$.  
\end{enumerate}
To fix notation, set $B_j=\mathrm{Span}_{\mathbb{C}}(N_j)\subset\fringd$, and define $\sfb_j$ via the equality
\[
\dim_{\mathbb{C}}B_j=\dim_{\mathbb{C}}\big(\tfrac{\fringd}{I+\m^{j+1}}\big) = \sfb_j.
\] 
Note that the second equality implies that $\scrP_F=\sum_{i\geq 0}\sfb_jt^j$.  

Write $V=\mathrm{Span}_{\mathbb{C}}\{x_1,\hdots,x_d\}\subseteq \fringd$, $R=\mathrm{Span}_{\mathbb{C}}\{\partial_1F,\hdots,\partial_dF\}\subseteq \fringd$, and for $j\geq 0$ consider the quotient map $\uppi_j\colon\fringd\to\fringd/\m^{j+1}$.
By the definition of $\sfb_{j}$, for every $j\ge0$
\begin{equation}\label{pijI}
\dim_{\mathbb{C}}\uppi_{j}(I)=1+d+\hdots+d^{j}-\sfb_{j}.
\end{equation}

We now apply the adapted Vinberg argument:
elements of the two-sided ideal $I$ are sums of elements, each of which either starts
with an $x_i$ or starts with a derivative $\partial_iF$, so we can write
\[
I=VI+R\,\fringd.
\]
Applying $\uppi_{j+1}$ then gives an equality
\[
\uppi_{j+1}(I)=\uppi_{j+1}(VI) +\uppi_{j+1}(R\,\fringd).
\]
Since $B_j$ descends to span $\fringd/(I+\m^{j+1})$, every element of $\fringd$ may be written as an element in $B_j$, plus an element in $I$, plus an element in $\m^{j+1}$.  Projecting down this sum via $\uppi_{j+1}$, and noting that $RI\subset R\,\fringd$ gets absorbed into $\uppi_{j+1}(I)$,
and elements of $R$ have degree $\geq k$, it follows that mod $\m^{j+1}$ there is an equality
\[
\uppi_{j+1}(I)=\uppi_{j+1}(VI) +\uppi_{j+1}(RB_{j+2-k}).
\]
Write $R'=\mathrm{Span}_{\mathbb{C}}\{\partial_2F,\hdots,\partial_dF\}\subseteq \fringd$, then using the Euler relation $\sum_{i=0}^d[x_i,\partial_iF]=0$ of \ref{lem:Euler} we may get rid of any appearance of the product $(\partial_1F)x_1$ at the cost of terms in the other summands. It follows that
\[
\uppi_{j+1}(I)=\uppi_{j+1}(VI) +\uppi_{j+1}(R'B_{j+2-k})+\uppi_{j+1}((\partial_1F)B^+_{j+2-k}),
\]
where $B^+_{j+2-k}=\mathrm{Span}_{\mathbb{C}}\{n\in N_{j+2-k}\mid n\neq x_1m \mbox{ for any }m\}$.

The proof is completed by estimating the dimension of each of the three individual summands.
Applying \eqref{pijI} for the first summand, 
\[
\dim_{\C}\uppi_{j+1}(I)\leq d(1+d+\hdots+d^j-\sfb_j)+ (d-1)\sfb_{j+2-k}+(\sfb_{j+2-k}-\sfb_{j+1-k})
\]
since $\dim_{\mathbb{C}}B^+_{j+2-k}\leq\sfb_{j+2-k}-\sfb_{j+1-k}$
 holds by construction of the $N_j$ in \eqref{closed under x1}. 
Plugging \eqref{pijI} for $\uppi_{j+1}$
into the above displayed equation, and then cancelling, it follows that
\[
1-\sfb_{j+1}\leq -d\sfb_j+d\sfb_{j+2-k}-\sfb_{j+1-k}.
\]
which after re-arranging gives
\begin{equation}
\sfb_{j+1}\geq \sfb_jd-\sfb_{j+2-k}d+\sfb_{j+1-k}+1.\label{eq: c inequality}
\end{equation}
Since $\scrP_F=\sum_{i\geq 0}\sfb_jt^j$, by \eqref{eq: up to s plus 1} we see that $\sfb_j=a_j$ for $0\leq j\leq s+1$, and hence
\begin{align*}
a_{s+1}&\geq a_sd-a_{s+2-k}d+a_{s+1-k}+1 \tag{\eqref{eq: c inequality} for $j=s$}\\
&= b_sd-b_{s+2-k}d+b_{s+1-k}+1 \tag{$a_j=b_j$ for $j\leq s$ by induction}\\
&=b_{s+1}\tag{\ref{lem: b lemma}\eqref{lem: b lemma 1} for $j=s+1$}
\end{align*}
Since we already know $a_{s+1}\leq b_{s+1}$ by minimality, the above forces $a_{s+1}=b_{s+1}$.  Hence by induction $a_{j}= b_{j}$ for all $j\geq 0$, and the result follows.
\end{proof}

\begin{remark}
The above theorem establishes that often $\JRdim\Jac(f)\geq 3$, whilst earlier sections considered the case $\JRdim\Jac(f)\leq 1$.  In general we do not know what, if anything, satisfies $1<\JRdim\Jac(f)< 3$,  even in the case when $f$ is a polynomial.   Nor do we know whether $\JRdim\Jac(f)$ is always an integer. 
\end{remark}

\subsection{Contractibility Consequence}
The following is an immediate geometric consequence of the above.

\begin{thm}\label{thm:lastcontractmain}
Let $\mathrm{C}\subset\scrX$ be an irreducible rational curve in a smooth \textnormal{CY} \textnormal{3}-fold, with \textnormal{NC} deformation algebra $\Lambda_{\mathrm{def}}$, such that $\scrN_{\mathrm{C}|\scrX}\neq (-3,1)$.  Then $\mathrm{\Curve}\subset\scrX$ contracts to a point suitably locally, without contracting a divisor, if and only if $\dim_\mathbb{C}\Lambda_{\mathrm{def}}<\infty$.
\end{thm}
\begin{proof}
The case of $(-1,-1)$ and $(-2,0)$-curves is of course well known \cite{Pagoda,DW1}.  The point is that $(-4,2),(-5,3),\hdots$ curves never contract \cite[4.1]{Laufer}, and further as a consequence of \cite{VdBCY} their noncommutative deformations are given as the Jacobi algebra quotient of a free power series ring in $3,4,\hdots$ variables.  By \ref{thm: appendix main}, these can never be finite dimensional. Thus in all these cases the statement is true, just since the curves never contract, and the deformation algebras are always infinite dimensional.
\end{proof}

%-----------------------------------------------------------------------------------------

\end{document}